\newtheorem{theorem}{Theorem}[section]
\newtheorem{corollary}[theorem]{Corollary}
\newtheorem{fact}[theorem]{Fact}
\newtheorem{lemma}[theorem]{Lemma}
\newtheorem{proposition}[theorem]{Proposition}
\newtheorem{question}[theorem]{Question}
\newtheorem*{xclm}{Claim}
\theoremstyle{definition}
\newtheorem{definition}[theorem]{Definition}
\newtheorem*{xrem}{Remark}
\numberwithin{equation}{section}
\DeclareMathOperator{\cals}{\mathcal{S}}
\DeclareMathOperator{\calspdf}{\mathcal{S}_{pdfin}}
\DeclareMathOperator{\calsf}{\mathcal{S}_{fin}}
\DeclareMathOperator{\calsc}{\mathcal{S}_{ctbl}}
\DeclareMathOperator{\dom}{dom}
\DeclareMathOperator{\ran}{ran}
\DeclareMathOperator{\mov}{mov}
\DeclareMathOperator{\pdfin}{pdfin}
\DeclareMathOperator{\fin}{fin}
\DeclareMathOperator{\seq}{seq}
\DeclareMathOperator{\orb}{orb}
\DeclareMathOperator{\symg}{sym_{\mathcal{G}}}
\DeclareMathOperator{\fixg}{fix_{\mathcal{G}}}
\DeclareMathOperator{\cov}{cov}
\DeclareMathOperator{\hght}{ht}
\DeclareMathOperator{\scc}{succ}
\DeclareMathOperator{\prd}{pred}
\DeclareMathOperator{\proj}{pr}
\begin{document}


\baselineskip=17pt


\title[Factorials of Infinite Cardinals]{Factorials of infinite cardinals in $\mathsf{ZF}$}

\author[G. Shen]{Guozhen Shen}
\address{Academy of Mathematics and Systems Science\\
Chinese Academy of Sciences\\
No.55 Zhongguancun East Road\\
Beijing 100190\\
People's Republic of China}
\address{University of Chinese Academy of Sciences\\
No.19(A) Yuquan Road\\
Beijing 100049\\
People's Republic of China}
\email{shen\_guozhen@outlook.com}

\author[J. Yuan]{Jiachen Yuan}
\email{yuanjiachen15@mails.ucas.ac.cn}

\date{}

\begin{abstract}
For a set $x$, let $\cals(x)$ be the set of all permutations of $x$.
We study several aspects of this notion in $\mathsf{ZF}$.
The main results are as follows:
\begin{enumerate}
  \item $\mathsf{ZF}$ proves that for all sets $x$, if $\cals(x)$ is Dedekind infinite,
        then there are no finite-to-one maps from $\cals(x)$ into $\calsf(x)$,
        where $\calsf(x)$ is the set of all permutations of $x$
        which move only finitely many elements.
  \item $\mathsf{ZF}$ proves that for all sets $x$, the cardinality of $\cals(x)$
        is strictly greater than that of $[x]^2$.
  \item It is consistent with $\mathsf{ZF}$ that there exists an infinite set $x$ such that
        the cardinality of $\cals(x)$ is strictly less than that of $[x]^3$.
  \item It is consistent with $\mathsf{ZF}$ that there exists an infinite set $x$ such that
        there is a finite-to-one map from $\cals(x)$ into $x$.
\end{enumerate}
\end{abstract}

\subjclass[2010]{Primary 03E10, 03E25}

\keywords{$\mathsf{ZF}$, cardinal, permutation, finite-to-one map, Dedekind finite}

\maketitle

\section{Introduction}
In \cite{DawsonHoward1976}, Dawson and Howard defined $\mathfrak{a}!$, the factorial of a cardinal $\mathfrak{a}$,
as the cardinality of the set of all permutations of a set which is of cardinality~$\mathfrak{a}$.
In $\mathsf{ZFC}$, $\mathfrak{a}!=2^\mathfrak{a}$ for all infinite cardinals $\mathfrak{a}$.
However, Dawson and Howard proved that, without $\mathsf{AC}$ (i.e., the axiom of choice),
we cannot conclude any relationship between $\mathfrak{a}!$
and $2^\mathfrak{a}$ for an arbitrary infinite cardinal $\mathfrak{a}$.
On the other hand, they proved in $\mathsf{ZF}$ that
for all cardinals $\mathfrak{a}\geqslant3$, $\mathfrak{a}<\mathfrak{a}!$.
Recently, in \cite{SonpanowVejjajiva2018}, Sonpanow and Vejjajiva generalized this result by proving in $\mathsf{ZF}$
that for all infinite cardinals $\mathfrak{a}$ and all natural numbers $n$, $\mathfrak{a}^n<\mathfrak{a}!$.

In \cite{Forster2003}, Forster proved in $\mathsf{ZF}$ that for all infinite sets $x$,
there are no finite-to-one maps from $\wp(x)$ into $x$, where $\wp(x)$ is the power set of $x$.
In \cite{SonpanowVejjajiva2017}, Sonpanow and Vejjajiva gave a condition that makes Forster's theorem,
with $\wp(x)$ replaced by $\cals(x)$, provable in $\mathsf{ZF}$:
They showed in $\mathsf{ZF}$ that for all infinite sets $x$,
if there exists a permutation $f$ of $x$ without fixed points such that $f\circ f=\mathrm{id}_x$,
where $\mathrm{id}_x$ is the identity permutation of $x$,
then there are no finite-to-one maps from $\cals(x)$ into $x$.

In this paper, we thoroughly investigate the properties of $\mathfrak{a}!$ for infinite cardinals $\mathfrak{a}$.
Our first main result is a common generalization of the results mentioned above:
We prove in $\mathsf{ZF}$ that for all sets $x$, if $\cals(x)$ is Dedekind infinite,
then there are no finite-to-one maps from $\cals(x)$ into $\calsf(x)$.
Actually, we prove a more general result:
We say that a set $x$ is power Dedekind finite if the power set of $x$ is Dedekind finite.
For a set $x$, let $\calspdf(x)$ be the set of all permutations of $x$
which move only power Dedekind finitely many elements.
We prove in $\mathsf{ZF}$ that for all sets $x$, if $\cals(x)$ is Dedekind infinite,
then there are no Dedekind finite to one maps from $\cals(x)$ into $\calspdf(x)$.

Many statements concerning $\cals(x)$, including Sonpanow and Vejjajiva's two results stated above,
can be deduced as corollaries of this theorem. Among these corollaries,
we shall mention the following generalization of Dawson and Howard's result:
For all cardinals $\mathfrak{a}$, $[\mathfrak{a}]^2<\mathfrak{a}!$.
On the other hand, a Shelah-type permutation model is constructed in order to
show that the following statement is consistent with $\mathsf{ZF}$:
There exists a Dedekind infinite set $x$ such that the cardinality of $\cals(x)$ is
strictly less than that of~$[x]^3$ and such that there is a surjection from $x$ onto $\cals(x)$.

Finally, we construct a new permutation model in which there is an infinite set $x$
such that there exists a finite-to-one map from $\cals(x)$ into $x$.
This result shows that Forster's theorem,
with $\wp(x)$ replaced by $\cals(x)$, cannot be proved in $\mathsf{ZF}$.
Our results also settle several open problems from \cite{Tachtsis2018}.

\section{Preliminaries}
Throughout this paper, we shall work in $\mathsf{ZF}$
(i.e., the Zermelo-Fraenkel set theory without the axiom of choice).
In this section, we indicate briefly our use of some terminology and notation.
The cardinal of $x$, which we shall denote by $|x|$, is the least ordinal $\alpha$ equinumerous to $x$,
if $x$ is well-orderable, and the set of all sets $y$ of least rank which are equinumerous to~$x$,
otherwise (cf.~\cite[III.2.2]{Levy1979}). We shall use lower case German letters
$\mathfrak{a}$, $\mathfrak{b}$, $\mathfrak{c}$, $\mathfrak{d}$ for cardinals.
For a function $f$, we use $\dom(f)$ for the domain of $f$, $\ran(f)$ for the range of $f$,
$f[x]$ for the image of $x$ under $f$, $f^{-1}[x]$ for the inverse image of $x$ under $f$,
and $f\upharpoonright x$ for the restriction of $f$ to $x$.

We write $x\preccurlyeq y$ to express that there is an injection from $x$ into $y$,
and $x\preccurlyeq^\ast y$ to express that there is a surjection from a subset of $y$ onto $x$.
For all cardinals $\mathfrak{a}$, $\mathfrak{b}$, $\mathfrak{a}\leqslant\mathfrak{b}$
($\mathfrak{a}\leqslant^\ast\mathfrak{b}$) means that there are sets $x$, $y$ such that
$|x|=\mathfrak{a}$, $|y|=\mathfrak{b}$, and $x\preccurlyeq y$ ($x\preccurlyeq^\ast y$).
We use $\mathfrak{a}\nleqslant\mathfrak{b}$ ($\mathfrak{a}\nleqslant^\ast\mathfrak{b}$) to denote
the negation of $\mathfrak{a}\leqslant\mathfrak{b}$ ($\mathfrak{a}\leqslant^\ast\mathfrak{b}$).
If $f$ is an injection from $x$ into $y$ and $g$ is an injection from $y$ into $z$,
then $g\circ f$, the composition of $g$ and $f$, is an injection from $x$ into $z$.
Hence, if $\mathfrak{a}\leqslant\mathfrak{b}$ and $\mathfrak{b}\leqslant\mathfrak{c}$ then
$\mathfrak{a}\leqslant\mathfrak{c}$. It is the same case when we replace $\leqslant$ by $\leqslant^\ast$.
It is also easily verifiable that for all cardinals $\mathfrak{a}$, $\mathfrak{b}$,
if $\mathfrak{a}\leqslant\mathfrak{b}$ then $\mathfrak{a}\leqslant^\ast\mathfrak{b}$,
and if $\mathfrak{a}\leqslant^\ast\mathfrak{b}$ then $2^\mathfrak{a}\leqslant2^\mathfrak{b}$.

We shall frequently use expressions like ``from ... one can explicitly define ...''
in our formulations, for which we make the following convention.
Let $\varphi(p_1,\dots,p_m,x_0,\dots,x_n)$ and $\psi(p_1,\dots,p_m,x_0,\dots,x_n,y)$
be formulas of set theory with no free variables other than indicated.
When we say that \emph{from} $x_0,\dots,x_n$ \emph{such that}
$\varphi(p_1,\dots,p_m,x_0,\dots,x_n)$, \emph{one can explicitly define a} $y$
\emph{such that} $\psi(p_1,\dots,p_m,x_0,\dots,x_n,y)$, we mean the following:
\begin{quote}
There exists a class function $G$ without free variables such that if $\varphi(p_1,\dots,p_m,x_0,\dots,x_n)$,
then $(x_0,\dots,x_n)$ is in the domain of $G$ and $\psi(p_1,\dots,p_m,x_0,\dots,x_n,G(x_0,\dots,x_n))$.
\end{quote}
For example, according to this convention, the second part of Theorem~\ref{cbt}
states that there exists a class function $G$ without free variables such that
if $f$ is an injection from $x$ into $y$ and $g$ is an injection from $y$ into $x$,
then $G(f,g)$ is defined and is a bijection from $x$ onto $y$.

\begin{theorem}[Cantor-Bernstein]\label{cbt}
If $\mathfrak{a}\leqslant\mathfrak{b}$ and $\mathfrak{b}\leqslant\mathfrak{a}$ then we have $\mathfrak{a}=\mathfrak{b}$.
Moreover, from an injection $f:x\to y$ and an injection $g:y\to x$,
one can explicitly define a bijection $h:x\twoheadrightarrow y$.
\end{theorem}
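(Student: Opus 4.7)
My plan is the classical Dedekind construction, which is naturally parameter-free and hence furnishes the required class function. Given injections $f:x\to y$ and $g:y\to x$, I would recursively define sets $C_n\subseteq x$ for $n\in\omega$ by $C_0=x\setminus g[y]$ and $C_{n+1}=g[f[C_n]]$, set $C=\bigcup_{n\in\omega}C_n$, and define
\[ h(a)=\begin{cases} f(a) & \text{if } a\in C,\\ g^{-1}(a) & \text{if } a\in x\setminus C.\end{cases} \]
Since $x=\dom(f)$ and $y=\dom(g)$ are recoverable from $f$ and $g$, the entire recipe is a first-order construction in $f$ and $g$, so setting $G(f,g)=h$ yields the desired class function.

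It remains to verify that $h$ is a bijection. For well-definedness, note that $C_0\subseteq C$ gives $x\setminus C\subseteq g[y]$, so $g^{-1}(a)$ makes sense on the second branch. For injectivity, both $f\!\upharpoonright\! C$ and $g^{-1}\!\upharpoonright\!(x\setminus C)$ are injective on their respective domains, and if $f(a)=g^{-1}(b)$ with $a\in C_n$ and $b\in x\setminus C$, then $b=g(f(a))\in g[f[C_n]]=C_{n+1}\subseteq C$, contradicting $b\notin C$. For surjectivity, given $y_0\in y$, consider $g(y_0)\in g[y]$: if $g(y_0)\notin C$ then $h(g(y_0))=g^{-1}(g(y_0))=y_0$; otherwise $g(y_0)\in C_{n+1}$ for some $n\geqslant 0$ (since $g(y_0)\in g[y]$ excludes membership in $C_0$), so $g(y_0)=g(f(a))$ for some $a\in C_n$, and the injectivity of $g$ gives $y_0=f(a)=h(a)$.

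There is no substantive obstacle here. The one point that requires care in the choiceless setting is simply to fix a concrete construction, rather than invoke an abstract fixed-point argument that might conceal implicit choices; the $\omega$-recursion above does exactly this, and every verification step is a direct set-theoretic manipulation that appeals to no selection principle.
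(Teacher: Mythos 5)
Your proof is correct and is essentially the standard explicit Cantor--Bernstein construction; the paper itself gives no argument here but simply cites Levy, whose proof is this same parameter-free $\omega$-recursion. All verification steps (well-definedness, injectivity, surjectivity) check out, and the construction is visibly a class function of $(f,g)$ as the paper's convention requires.
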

\begin{proof}
Cf.~\cite[III.2.8]{Levy1979}.
\end{proof}

For all cardinals $\mathfrak{a}$, $\mathfrak{b}$, $\mathfrak{a}<\mathfrak{b}$
means that $\mathfrak{a}\leqslant\mathfrak{b}$ but not $\mathfrak{b}\leqslant\mathfrak{a}$.
By Theorem~\ref{cbt}, if $\mathfrak{a}<\mathfrak{b}$ and $\mathfrak{b}\leqslant\mathfrak{c}$,
or if $\mathfrak{a}\leqslant\mathfrak{b}$ and $\mathfrak{b}<\mathfrak{c}$, then $\mathfrak{a}<\mathfrak{c}$.

\subsection{Dedekind finiteness and power Dedekind finiteness}
It is well-known that, if $\mathsf{ZF}$ is consistent,
we cannot prove in $\mathsf{ZF}$ that every infinite set includes a denumerable subset,
and we cannot even prove in $\mathsf{ZF}$ that the power set of an infinite set includes a denumerable subset.
This suggests us to introduce the following definition.

\begin{definition}
A set $x$ is \emph{Dedekind infinite} (\emph{power Dedekind infinite})
if $\omega\preccurlyeq x$ ($\omega\preccurlyeq\wp(x)$);
otherwise $x$ is \emph{Dedekind finite} (\emph{power Dedekind finite}).
A cardinal $\mathfrak{a}$ is \emph{Dedekind infinite} (\emph{power Dedekind infinite})
if $\aleph_0\leqslant\mathfrak{a}$ ($\aleph_0\leqslant2^\mathfrak{a}$);
otherwise $\mathfrak{a}$ is \emph{Dedekind finite} (\emph{power Dedekind finite}).
\end{definition}

\begin{xrem}
The name ``power Dedekind finite'' was first introduced by Blass
in a manuscript which is not intended for publication (cf.~\cite{Blass2013}).
This notion was called ``III-finite'' by Levy in \cite{Levy1958},
``weakly Dedekind finite'' by Degen in \cite{Degen1994},
and ``$C$-finite'' by Herrlich in \cite{Herrlich2011}.
In \cite{Truss1974}, Truss denoted the class of all power Dedekind finite cardinals by $\Delta_4$.
\end{xrem}

It is obvious that all Dedekind infinite cardinals are power Dedekind infinite,
and all power Dedekind infinite cardinals are infinite.
The following result about Dedekind infinite cardinals is useful.

\begin{theorem}[Dedekind]\label{dedt}
For all cardinals $\mathfrak{a}$, if there are sets $x$, $y$ such that $|y|=|x|=\mathfrak{a}$
and $y$ is a proper subset of $x$, then $\mathfrak{a}$ is Dedekind infinite.
\end{theorem}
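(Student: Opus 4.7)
The plan is to reduce the hypothesis to the existence of a bijection $f\colon x\to y$, then use $f$ together with any witness point of $x\setminus y$ to iterate and produce an injection from $\omega$ into $x$. Since $|y|=|x|$ (whether $|x|$ is realized as an ordinal or via Scott's trick), both $x$ and $y$ are equinumerous with a common set, so a bijection $f\colon x\to y$ exists; since $y$ is a proper subset of $x$, we may fix an element $a\in x\setminus y$.

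The key step is to define a sequence $(a_n)_{n\in\omega}$ in $x$ by primitive recursion on $\omega$: set $a_0 = a$ and $a_{n+1} = f(a_n)$. This is a legitimate recursive definition in $\mathsf{ZF}$, and produces a function $g\colon\omega\to x$ with $g(n)=a_n$. To conclude $\omega\preccurlyeq x$, I would show that $g$ is injective.

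For injectivity, the essential observation is that $a_0=a\notin y$ while $a_{n+1}=f(a_n)\in\ran(f)=y$ for every $n\in\omega$; hence $a_0$ differs from every $a_{n+1}$. Now suppose toward a contradiction that the set of pairs $(n,m)$ with $n<m$ and $a_n=a_m$ is nonempty, and let $(n_0,m_0)$ be such a pair with $n_0$ minimal. If $n_0=0$, the previous observation gives a contradiction. Otherwise $n_0\geqslant 1$, and applying the injectivity of $f$ to $a_{n_0}=f(a_{n_0-1})=f(a_{m_0-1})=a_{m_0}$ yields $a_{n_0-1}=a_{m_0-1}$, contradicting the minimality of $n_0$. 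Thus $g$ is injective, witnessing $\omega\preccurlyeq x$, so $\mathfrak{a}$ is Dedekind infinite.

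I do not anticipate a real obstacle; the only point that requires a moment of care is the very first one, namely that the cardinal equation $|y|=|x|$ truly yields a bijection between the two sets under the Scott's trick definition of $|\cdot|$ recalled in the preliminaries. Once that is observed, the iteration of $f$ starting from a point of $x\setminus y$ is the standard Dedekind construction and proceeds in $\mathsf{ZF}$ without any appeal to choice.
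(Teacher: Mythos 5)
Your proof is correct and is the standard Dedekind argument: the paper itself does not spell out a proof but simply cites Levy (III.1.20), where essentially this same construction --- iterating a bijection $f\colon x\to y$ from a point $a\in x\setminus y$ and proving injectivity by minimal counterexample --- is carried out. Your attention to the fact that $|y|=|x|$ under the Scott-trick definition still yields an actual bijection is the right point of care, and no appeal to choice is needed anywhere.
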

\begin{proof}
Cf.~\cite[III.1.20]{Levy1979}.
\end{proof}

For power Dedekind infinite cardinals, we have the following theorem.

\begin{theorem}[Kuratowski]\label{kurt}
For every cardinal $\mathfrak{a}$, $\mathfrak{a}$ is power Dedekind infinite
iff $\aleph_0\leqslant^\ast\mathfrak{a}$ iff $2^{\aleph_0}\leqslant2^\mathfrak{a}$.
\end{theorem}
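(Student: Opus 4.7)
The plan is to establish the cycle $\aleph_0 \leqslant^\ast \mathfrak{a} \Rightarrow 2^{\aleph_0} \leqslant 2^\mathfrak{a} \Rightarrow \aleph_0 \leqslant 2^\mathfrak{a} \Rightarrow \aleph_0 \leqslant^\ast \mathfrak{a}$, yielding the triple equivalence. Fix throughout a set $x$ with $|x| = \mathfrak{a}$.

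For the first step, given a surjection $g : y \twoheadrightarrow \omega$ with $y \subseteq x$, the map $S \mapsto g^{-1}[S]$ is an explicit injection $\wp(\omega) \to \wp(x)$ (injective because $g$ is onto), so $2^{\aleph_0} \leqslant 2^\mathfrak{a}$. The second step is immediate from Cantor's theorem $\aleph_0 \leqslant 2^{\aleph_0}$ together with transitivity of $\leqslant$.

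The third implication $\aleph_0 \leqslant 2^\mathfrak{a} \Rightarrow \aleph_0 \leqslant^\ast \mathfrak{a}$ is the content. Given an injection $f : \omega \to \wp(x)$, I would canonically produce a denumerable family $\{V_k : k \in \omega\}$ of pairwise disjoint nonempty subsets of $x$; the map $g : \bigcup_k V_k \to \omega$ sending $y \in V_k$ to $k$ is then the required surjection. For each $n$ let $\mathcal{B}_n \subseteq \wp(x)$ be the finite Boolean algebra generated by $f(0), \ldots, f(n)$, whose atoms are precisely the nonempty sets of the form $\bigcap_{i \leqslant n} f(i)^{\varepsilon_i}$ (with $f(i)^1 = f(i)$ and $f(i)^0 = x \setminus f(i)$), canonically labelled by $\varepsilon \in 2^{n+1}$. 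Writing $a_n$ for the number of atoms of $\mathcal{B}_n$, the sequence $(a_n)$ is non-decreasing; since $|\mathcal{B}_n| = 2^{a_n}$ while $\mathcal{B}_n$ already contains the $n+1$ distinct elements $f(0), \ldots, f(n)$, one concludes $a_n \to \infty$. Consequently $\mathcal{B} := \bigcup_n \mathcal{B}_n$ is an infinite Boolean subalgebra of $\wp(x)$ of cardinality $\leqslant \aleph_0$, so it admits a canonical enumeration $\{B_i : i \in \omega\}$ inherited from the enumeration of Boolean terms in $f(0), f(1), \ldots$.

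The main obstacle is now the canonical extraction of a denumerable antichain from $\mathcal{B}$, since atoms of $\mathcal{B}_{n+1}$ refine those of $\mathcal{B}_n$ and a naive stage-by-stage choice of atoms tends to produce a nested chain rather than disjoint sets. I would handle this by a case split based on whether $\mathcal{B}$ is atomic at infinity. If $\mathcal{B}$ has infinitely many atoms, let $V_k$ be the $k$-th atom (as an element of $\mathcal{B}$) to appear in the canonical enumeration of $\mathcal{B}$; distinct atoms of a Boolean algebra are disjoint by minimality, so the $V_k$'s do the job. Otherwise $\mathcal{B}$ has only finitely many atoms $b_1, \ldots, b_m$, and the element $U := x \setminus (b_1 \cup \cdots \cup b_m)$ lies in $\mathcal{B}$; since $\mathcal{B}$ is infinite whereas its atomic part is finite, the Boolean algebra $\{B \cap U : B \in \mathcal{B}\}$ on $U$ is both infinite and atomless. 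In an infinite atomless Boolean algebra every nonempty element admits a proper nonempty sub-element, so one can recursively define $V_k$ to be $B_{i_k} \cap (U \setminus (V_0 \cup \cdots \cup V_{k-1}))$, where $i_k$ is the least index $i$ for which this intersection is a proper nonempty subset of $U \setminus (V_0 \cup \cdots \cup V_{k-1})$. Either way the resulting $V_k$'s form the desired canonically defined denumerable antichain, completing the proof.
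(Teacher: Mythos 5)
Your proof is correct. Note first that the paper does not actually prove Theorem~\ref{kurt}; it only cites Tarski and Halbeisen, so there is no in-paper argument to match against. Your two easy implications are exactly the general facts recorded in the paper's preliminaries ($\mathfrak{a}\leqslant^\ast\mathfrak{b}$ implies $2^\mathfrak{a}\leqslant2^\mathfrak{b}$, plus Cantor and transitivity), and your treatment of the substantive direction is sound and genuinely choice-free: the generated algebra $\mathcal{B}=\bigcup_n\mathcal{B}_n$ is infinite because $2^{a_n}=|\mathcal{B}_n|\geqslant n+1$, it carries a canonical enumeration via Boolean terms, the case distinction (infinitely many atoms versus finitely many atoms with an infinite atomless relative algebra on $U$) is a definable dichotomy, and in the atomless case the ``least index'' recursion always proceeds because any nonempty $d\in\mathcal{B}$ with $d\subsetneq W_k\subseteq U$ satisfies $d=B_i\cap W_k$ for some $i$. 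The one point worth making explicit is why $\mathcal{B}\!\restriction\!U$ is infinite when the atoms $b_1,\dots,b_m$ are removed: each $B\cap(b_1\cup\dots\cup b_m)$ takes at most $2^m$ values, so $B\mapsto B\cap U$ cannot be finite-to-one onto a finite set; you assert this but a referee would want the sentence. Your argument is essentially the classical Kuratowski--Tarski proof in Boolean-algebra dress: the atoms of $\mathcal{B}_n$ are the fibres of the trace map $y\mapsto\{i\leqslant n\mid y\in f(i)\}$, and your atomic/atomless split plays the role of the usual dichotomy on the tree of nonempty cells. Compared with a route through the paper's own Lemma~\ref{sh52} (every infinite subset of $\wp(\omega)$ is power Dedekind infinite), your version is more self-contained, at the cost of redoing that combinatorial core by hand.
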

\begin{proof}
Cf.~\cite[pp.~94--95]{Tarski1924} or \cite[Proposition~5.4]{Halbeisen2017}.
\end{proof}

Using Kuratowski's theorem, one can easily prove that
the class of all power Dedekind finite sets is
closed under unions (cf.~\cite[Theorem~1]{Truss1974}).

\begin{definition}
A function $f$ is a \emph{finite} (\emph{Dedekind finite}; \emph{power Dedekind finite}) \emph{to one map}
if for all $z\in\ran(f)$, $f^{-1}[\{z\}]$ is finite (Dedekind finite; power Dedekind finite).
We write $x\preccurlyeq_{\mathrm{fto}}y$ ($x\preccurlyeq_{\mathrm{dfto}}y$; $x\preccurlyeq_{\mathrm{pdfto}}y$)
to express that there is a finite (Dedekind finite; power Dedekind finite) to one map from $x$ into $y$,
and $\mathfrak{a}\leqslant_{\mathrm{fto}}\mathfrak{b}$ ($\mathfrak{a}\leqslant_{\mathrm{dfto}}\mathfrak{b}$;
$\mathfrak{a}\leqslant_{\mathrm{pdfto}}\mathfrak{b}$) to express that there are sets $x$, $y$ such that
$|x|=\mathfrak{a}$, $|y|=\mathfrak{b}$, and $x\preccurlyeq_{\mathrm{fto}}y$
($x\preccurlyeq_{\mathrm{dfto}}y$; $x\preccurlyeq_{\mathrm{pdfto}}y$).
We use $\mathfrak{a}\nleqslant_{\mathrm{fto}}\mathfrak{b}$ ($\mathfrak{a}\nleqslant_{\mathrm{dfto}}\mathfrak{b}$;
$\mathfrak{a}\nleqslant_{\mathrm{pdfto}}\mathfrak{b}$) to denote
the negation of $\mathfrak{a}\leqslant_{\mathrm{fto}}\mathfrak{b}$
($\mathfrak{a}\leqslant_{\mathrm{dfto}}\mathfrak{b}$; $\mathfrak{a}\leqslant_{\mathrm{pdfto}}\mathfrak{b}$).
\end{definition}

All injections are finite-to-one maps, and hence if $\mathfrak{a}\leqslant\mathfrak{b}$
then $\mathfrak{a}\leqslant_{\mathrm{fto}}\mathfrak{b}$. It is also obvious that
if $\mathfrak{a}\leqslant_{\mathrm{fto}}\mathfrak{b}$ then $\mathfrak{a}\leqslant_{\mathrm{pdfto}}\mathfrak{b}$,
and if $\mathfrak{a}\leqslant_{\mathrm{pdfto}}\mathfrak{b}$ then $\mathfrak{a}\leqslant_{\mathrm{dfto}}\mathfrak{b}$.
If $f$ is a finite-to-one map from $x$ into $y$ and $g$ is a finite-to-one map from $y$ into $z$,
then $g\circ f$ is a finite-to-one map from $x$ into $z$. Hence, if $\mathfrak{a}\leqslant_{\mathrm{fto}}\mathfrak{b}$
and $\mathfrak{b}\leqslant_{\mathrm{fto}}\mathfrak{c}$ then $\mathfrak{a}\leqslant_{\mathrm{fto}}\mathfrak{c}$.
The following three facts are Fact~2.8 and Corollaries~2.9~\&~2.11 of \cite{Shen2017}, respectively.

\begin{fact}\label{sh02}
If $f$ is a Dedekind finite (power Dedekind finite) to one map from $x$ into $y$,
and $g$ is a Dedekind finite (power Dedekind finite) to one map from $y$ into $z$,
then $g\circ f$ is a Dedekind finite (power Dedekind finite) to one map from $x$ into $z$.
Hence, if $\mathfrak{a}\leqslant_{\mathrm{dfto}}\mathfrak{b}$ and $\mathfrak{b}\leqslant_{\mathrm{dfto}}\mathfrak{c}$
then $\mathfrak{a}\leqslant_{\mathrm{dfto}}\mathfrak{c}$ (if $\mathfrak{a}\leqslant_{\mathrm{pdfto}}\mathfrak{b}$
and $\mathfrak{b}\leqslant_{\mathrm{pdfto}}\mathfrak{c}$ then $\mathfrak{a}\leqslant_{\mathrm{pdfto}}\mathfrak{c}$).
\end{fact}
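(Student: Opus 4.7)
The plan is to reduce the whole statement to showing, for each $w$ in the range of $g\circ f$, that the fibre
$A:=(g\circ f)^{-1}[\{w\}]$ is Dedekind finite (respectively power Dedekind finite). Setting $B:=g^{-1}[\{w\}]$, one observes $A=f^{-1}[B]$, and $A$ partitions as the disjoint union of the preimages $f^{-1}[\{b\}]$ for $b\in B$. By hypothesis $B$ is Dedekind finite (resp.\ power Dedekind finite), and each summand $f^{-1}[\{b\}]$ is Dedekind finite (resp.\ power Dedekind finite). So the task is exactly to show that this particular ``$f$-indexed union of small sets over a small index set'' is itself small. Once the composition statement is in hand, transitivity of $\leqslant_{\mathrm{dfto}}$ and $\leqslant_{\mathrm{pdfto}}$ on cardinals follows by composing any witnessing maps after identifying representatives via Theorem~\ref{cbt}.

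For the Dedekind finite case I would argue by contradiction. Suppose $h:\omega\to A$ is an injection. Then $f\circ h$ maps $\omega$ into the Dedekind finite set $B$, so its range is a countable Dedekind finite subset of $B$, hence finite (any infinite countable set is denumerable, hence Dedekind infinite). Writing $\ran(f\circ h)=\{b_1,\dots,b_k\}$, the preimages $(f\circ h)^{-1}[\{b_i\}]$ partition $\omega$ into finitely many pieces, at least one of which is infinite; restricting $h$ to that piece injects a denumerable set into some fibre $f^{-1}[\{b_i\}]$, contradicting the hypothesis on $f$.

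For the power Dedekind finite case I would use Kuratowski's Theorem~\ref{kurt}: if $A$ were power Dedekind infinite there would be a surjection $\pi:A\to\omega$. For each $b\in B$ let $C_b:=\pi[f^{-1}[\{b\}]]$. The image of a power Dedekind finite set under any function is again power Dedekind finite (from $\mathfrak{a}\leqslant^\ast\mathfrak{b}\Rightarrow 2^\mathfrak{a}\leqslant 2^\mathfrak{b}$, as recorded in the preliminaries), so each $C_b$ is a power Dedekind finite subset of $\omega$, hence finite (subsets of $\omega$ are well-orderable, and well-orderable sets are power Dedekind finite iff finite). Since $\pi$ is surjective, $\bigcup_{b\in B}C_b=\omega$. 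Define $\rho:B\to\omega$ by $\rho(b)=\max(C_b)+1$ when $C_b\neq\emptyset$ and $\rho(b)=0$ otherwise. If $\rho[B]$ were bounded by some $N$, then $\omega=\bigcup_b C_b\subseteq N$, absurd; so $\rho[B]$ is an infinite, hence denumerable, subset of $\omega$, making it power Dedekind infinite. But $\rho[B]$ is simultaneously an image of the power Dedekind finite set $B$, a contradiction.

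The main obstacle is packaging the power Dedekind finite step without invoking any fragment of choice: one cannot simply ``count'' a power Dedekind finite union of finite sets directly. Kuratowski's characterization is the lever that makes it work, because it replaces the opaque condition ``$\wp(A)$ is Dedekind infinite'' by the concrete surjection $\pi:A\to\omega$, which can be pushed forward along the fibres of $f$ and then combined with the easy observation that power Dedekind finite subsets of $\omega$ are finite. Everything else, including the reduction to a single fibre and the transition from the map version to the cardinal version, is routine.
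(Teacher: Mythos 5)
Your proof is correct. Note that the paper gives no in-text argument for this fact at all --- it simply cites Fact~2.8 of \cite{Shen2017} --- so there is nothing to compare routes against; what you have supplied is a sound, self-contained proof. The reduction to a single fibre $(g\circ f)^{-1}[\{w\}]=f^{-1}\bigl[g^{-1}[\{w\}]\bigr]$ is exactly right, the Dedekind-finite case via the pigeonhole on the finite range of $f\circ h$ is standard and correct, and the power-Dedekind-finite case correctly leverages Theorem~\ref{kurt} (extending the partial surjection onto $\omega$ to a total one is harmless since the fibre is non-void). Your $\rho(b)=\max(C_b)+1$ device works because each $C_b$ is a finite subset of $\omega$, so no choice is needed to pick maxima; an unbounded image of $\omega$ forces $\rho[B]$ to be a denumerable surjective image of the power Dedekind finite set $B$, which is the desired contradiction. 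One small simplification: at that last step you could instead invoke the closure of power Dedekind finite sets under unions (stated in the paper right after Theorem~\ref{kurt}), since $\{C_b\mid b\in B\}$ is a surjective image of $B$ and each $C_b$ is finite, so $\bigcup_{b\in B}C_b=\omega$ would be power Dedekind finite --- but your version is self-contained and equally valid. The appeal to Theorem~\ref{cbt} in the cardinal-level transitivity step is unnecessary (equality of cardinals already furnishes a bijection between the two representatives of $\mathfrak{b}$), but this is cosmetic and not an error.
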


\begin{fact}\label{sh03}
If $\mathfrak{a}$ is Dedekind infinite (power Dedekind infinite)
and $\mathfrak{a}\leqslant_{\mathrm{dfto}}\mathfrak{b}$
($\mathfrak{a}\leqslant_{\mathrm{pdfto}}\mathfrak{b}$) then
also $\mathfrak{b}$ is Dedekind infinite (power Dedekind infinite).
\end{fact}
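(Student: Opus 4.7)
The fact splits into two parallel claims, Dedekind and power Dedekind, which I would handle by essentially direct constructions. In both cases the guiding principle is the same: the hypothesis on $f$ forces preimages of "small" subsets of $y$ to be "small" in $x$, so anything large in $x$ must be witnessed by something large in $y$.

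For the Dedekind case, fix sets $|x|=\mathfrak{a}$, $|y|=\mathfrak{b}$, an injection $h \colon \omega \to x$ witnessing Dedekind infiniteness, and a Dedekind-finite-to-one map $f \colon x \to y$. The first move is to compose: since $h$ has singleton (hence finite) fibers, Fact~\ref{sh02} shows $f\circ h \colon \omega \to y$ is Dedekind-finite-to-one. But each of its fibers is a subset of $\omega$, and a Dedekind finite subset of $\omega$ is automatically finite, so $f\circ h$ is actually finite-to-one. If $\ran(f\circ h)$ were finite, $\omega$ would be a finite union of finite sets, which is impossible. Hence $\ran(f\circ h)$ is infinite, and the assignment $z \mapsto \min(f\circ h)^{-1}[\{z\}]$ injects it into $\omega$; an infinite subset of $\omega$ is Dedekind infinite, so $\mathfrak{b}$ is Dedekind infinite.

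For the power Dedekind case, I would argue by contradiction. Suppose $\mathfrak{b}$ is power Dedekind finite while $f \colon x \to y$ is power-Dedekind-finite-to-one and $\omega \preccurlyeq \wp(x)$ via a family $(B_n)_{n \in \omega}$ of pairwise distinct subsets of $x$. Look at the map $n \mapsto f[B_n] \in \wp(y)$. If its range were finite, some single $C \subseteq y$ would equal $f[B_n]$ for every $n$ in some infinite $I \subseteq \omega$, so each such $B_n$ would sit inside $f^{-1}[C]$. Now $C$ is a subset of the power Dedekind finite set $y$ hence itself power Dedekind finite, the family $\{f^{-1}[\{c\}] : c \in C\}$ is a surjective image of $C$ and so still power Dedekind finite, and each member is power Dedekind finite by hypothesis on $f$; invoking closure of the class of power Dedekind finite sets under unions (stated right after Kuratowski's theorem) yields that $f^{-1}[C]$ is power Dedekind finite, contradicting the injection $n \mapsto B_n$ from $\omega$ into $\wp(f^{-1}[C])$ for $n \in I$. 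Hence the range $\{f[B_n] : n \in \omega\}$ is infinite, and, exactly as before, the minimum-index map injects it into $\omega$ to make it Dedekind infinite. Thus $\omega \preccurlyeq \wp(y)$ and $\mathfrak{b}$ is power Dedekind infinite.

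The main obstacle, and what distinguishes this from the analogous statement for injections, is the union-closure step in the power Dedekind case: one must know that a power-Dedekind-finite-indexed union of power Dedekind finite sets remains power Dedekind finite, a fact proved via Kuratowski's theorem. Granted that, both cases reduce to the uniform pigeonhole-plus-minimum-index trick, which ultimately rests on the fact that any map out of $\omega$ has a well-orderable range, and a well-orderable infinite set is Dedekind infinite.
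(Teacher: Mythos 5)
Your proof is correct. Note that the paper does not actually prove this fact; it is quoted as Corollary~2.9 of the cited reference \cite{Shen2017}, so there is no in-paper argument to compare against, but your reasoning is the standard one underlying that citation: reduce to a map out of $\omega$ (respectively, a sequence of distinct subsets of $x$), use that Dedekind finite subsets of $\omega$ are finite together with the minimum-index injection, and, in the power Dedekind case, the closure of power Dedekind finite sets under power Dedekind finite unions. Both halves, including the surjective-image and union-closure steps you single out, check out.
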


\begin{fact}\label{sh04}
If $\mathfrak{a}^n$ is Dedekind infinite (power Dedekind infinite) then
also $\mathfrak{a}$ is Dedekind infinite (power Dedekind infinite).
\end{fact}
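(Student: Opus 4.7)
The plan is to proceed by induction on $n$, handling both halves of the fact (Dedekind infinite and power Dedekind infinite) in parallel. The case $n=1$ is trivial. Since $\mathfrak{a}^{n}=\mathfrak{a}\cdot\mathfrak{a}^{n-1}$, the inductive step reduces to the following single-product statement: if $A\times B$ is Dedekind infinite (power Dedekind infinite), then at least one of $A$, $B$ is Dedekind infinite (power Dedekind infinite). Applying this with $B$ of cardinality $\mathfrak{a}^{n-1}$, together with the inductive hypothesis and Fact~\ref{sh03}, closes the induction.

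For the Dedekind infinite case, I would fix an injection $f\colon\omega\to A\times B$ and split according to whether $g=\pi_{B}\circ f$ is finite-to-one, where $\pi_{B}$ denotes projection to the second factor. If some fiber $g^{-1}[\{b\}]$ is infinite, then because infinite subsets of $\omega$ have order type $\omega$, restricting $\pi_{A}\circ f$ to this fiber and precomposing with the canonical bijection of the fiber with $\omega$ yields an injection $\omega\to A$. Otherwise $g$ itself is finite-to-one, and one recursively defines an injection $h\colon\omega\to B$ by setting $h(k)=g(m_{k})$ where $m_{k}$ is the least $m\in\omega$ with $g(m)\notin\{h(0),\dots,h(k-1)\}$; finite-to-oneness guarantees such an $m$ exists at each stage.

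For the power Dedekind infinite case, I would invoke Theorem~\ref{kurt} and instead prove that $\omega\leqslant^{\ast}|A\times B|$ implies $\omega\leqslant^{\ast}|A|$ or $\omega\leqslant^{\ast}|B|$. Fix a surjection $f\colon S\twoheadrightarrow\omega$ with $S\subseteq A\times B$ and, for each $a\in\pi_{A}[S]$, set $\sigma_{a}=\{f(a,b):(a,b)\in S\}\subseteq\omega$. If some $\sigma_{a}$ is infinite, then $b\mapsto f(a,b)$ is a surjection from $\{b:(a,b)\in S\}\subseteq B$ onto $\sigma_{a}$, which composed with the order isomorphism $\sigma_{a}\cong\omega$ gives $\omega\leqslant^{\ast}|B|$. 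If instead every $\sigma_{a}$ is finite, then because $\bigcup_{a}\sigma_{a}=\omega$ the image of $a\mapsto\max\sigma_{a}$ is unbounded in $\omega$, and composing with the canonical bijection of this infinite subset of $\omega$ with $\omega$ gives a surjection from a subset of $A$ onto $\omega$. The only $\mathsf{ZF}$ subtlety throughout is to keep every extraction canonical (least elements, order isomorphisms with subsets of $\omega$) so that no choice function on an arbitrary family is ever invoked; with this observed, the induction goes through in both halves.
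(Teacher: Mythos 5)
Your proof is correct; note, however, that the paper itself gives no argument for this fact --- it is imported as Corollary~2.11 of \cite{Shen2017} --- so there is no in-text proof to match against. Your scheme (reduce to the binary statement via the canonical bijection $x^n\approx x\times x^{n-1}$ and induct, then prove that a Dedekind infinite, resp.\ power Dedekind infinite, product $A\times B$ has a Dedekind infinite, resp.\ power Dedekind infinite, factor by a dichotomy on fibers) is sound $\mathsf{ZF}$ reasoning: in the Dedekind case an infinite fiber of $\pi_B\circ f$ yields an injection $\omega\to A$ because $f$ is injective while the second coordinate is constant on the fiber, and in the finite-to-one case the range of $\pi_B\circ f$ is infinite and is enumerated by least witnesses, with no choice invoked; in the power Dedekind case the passage through Theorem~\ref{kurt} and the unboundedness of $a\mapsto\max\sigma_a$ when every $\sigma_a$ is finite are both correct. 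Two minor remarks. First, Fact~\ref{sh03} is not actually needed in your induction: the bijection $x^n\approx x\times x^{n-1}$ already transfers (power) Dedekind infiniteness, so the inductive hypothesis suffices. Second, there are slightly shorter standard routes: for the Dedekind half one can enumerate $\bigcup_{m\in\omega}\ran(f(m))$ directly by first occurrences in the listing $f(0)(0),\dots,f(0)(n-1),f(1)(0),\dots$ and observe that this set must be infinite (else $f$ would inject $\omega$ into the $n$-th power of a finite set), and the power Dedekind half can be derived from the closure of power Dedekind finite sets under unions (recorded after Theorem~\ref{kurt}) together with closure under surjective images. Your direct fiber argument is a perfectly acceptable self-contained alternative.
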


\subsection{Some special cardinals}
For a permutation $f$ of $x$, we write $\mov(f)$ for the set
$\{z\in x\mid f(z)\neq z\}$ (i.e., the elements of $x$ moved by $f$).

\begin{definition}
Let $x$ be an arbitrary set and let $\mathfrak{a}=|x|$.
\begin{enumerate}
\item $\cals(x)=\{f\mid f\text{ is a permutation of }x\}$;
      $\mathfrak{a}!=|\cals(x)|$.
\item $\calspdf(x)=\{f\in\cals(x)\mid\mov(f)\text{ is power Dedekind finite}\}$;\\
      $\calspdf(\mathfrak{a})=|\calspdf(x)|$.
\item $\calsf(x)=\{f\in\cals(x)\mid\mov(f)\text{ is finite}\}$;
      $\calsf(\mathfrak{a})=|\calsf(x)|$.
\item $\pdfin(x)=\{y\subseteq x\mid y\text{ is power Dedekind finite}\}$;\\
      $\pdfin(\mathfrak{a})=|\pdfin(x)|$.
\item $\fin(x)=\{y\subseteq x\mid y\text{ is finite}\}$;
      $\fin(\mathfrak{a})=|\fin(x)|$.
\item $\seq(x)=\{f\mid f\text{ is a function from some }n\in\omega\text{ into }x\}$;\\
      $\seq(\mathfrak{a})=|\seq(x)|$.
\item $\seq^{1\text{-}1}(x)=\{f\mid f\text{ is an injection from some }n\in\omega\text{ into }x\}$;\\
      $\seq^{1\text{-}1}(\mathfrak{a})=|\seq^{1\text{-}1}(x)|$.
\end{enumerate}
\end{definition}

Below we list some basic properties of these cardinals. We first note that
$\calsf(\mathfrak{a})\leqslant\calspdf(\mathfrak{a})\leqslant\mathfrak{a}!\leqslant\mathfrak{a}^\mathfrak{a}$
and that $\fin(\mathfrak{a})\leqslant^\ast\seq^{1\text{-}1}(\mathfrak{a})\leqslant\seq(\mathfrak{a})$.
The next two facts are Facts~2.13~\&~2.14 of \cite{Shen2017}, respectively.

\begin{fact}\label{sh05}
For all infinite cardinals $\mathfrak{a}$, both $\fin(\mathfrak{a})$ and $2^\mathfrak{a}$ are power Dedekind infinite.
\end{fact}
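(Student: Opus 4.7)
By Kuratowski's theorem (Theorem~\ref{kurt}), both claims reduce to exhibiting surjections onto $\omega$. Specifically, I must show $\aleph_0\leqslant^\ast\fin(\mathfrak{a})$ and $\aleph_0\leqslant^\ast 2^\mathfrak{a}$, since these are equivalent to power Dedekind infiniteness of $\fin(\mathfrak{a})$ and of $2^\mathfrak{a}$ respectively.

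First I would fix a set $x$ with $|x|=\mathfrak{a}$ and define the obvious candidate map $c:\fin(x)\to\omega$ by $c(y)=|y|$ (which makes sense because finite subsets of $x$ are well-orderable and hence carry a natural number cardinality). The map $c$ is clearly a function, so the only content is surjectivity: for every $n\in\omega$ there exists some $y\in\fin(x)$ with $|y|=n$. This I would prove by a straightforward induction on $n$: the empty set witnesses $n=0$, and given $y\subseteq x$ with $|y|=n$, the infiniteness of $x$ forces $x\setminus y\neq\emptyset$ (otherwise $x=y$ would be finite, contradicting that $\mathfrak{a}$ is infinite), so picking any $z\in x\setminus y$ gives $y\cup\{z\}\in\fin(x)$ of cardinality $n+1$. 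Thus $c$ is a surjection from $\fin(x)$ onto $\omega$, so $\aleph_0\leqslant^\ast\fin(\mathfrak{a})$, and by Kuratowski's theorem $\fin(\mathfrak{a})$ is power Dedekind infinite.

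For the second claim I would observe that $\fin(x)\subseteq\wp(x)$, so the same map $c$ (regarded as defined on the subset $\fin(x)$ of $\wp(x)$) witnesses $\aleph_0\preccurlyeq^\ast\wp(x)$, i.e., $\aleph_0\leqslant^\ast 2^\mathfrak{a}$; Kuratowski's theorem then gives power Dedekind infiniteness of $2^\mathfrak{a}$. Alternatively one can note $\fin(\mathfrak{a})\leqslant 2^\mathfrak{a}$, hence $\fin(\mathfrak{a})\leqslant_{\mathrm{pdfto}}2^\mathfrak{a}$, and apply Fact~\ref{sh03}.

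There is no real obstacle here; the only point requiring a moment's care is the induction showing that an infinite set has finite subsets of every natural cardinality, which crucially uses only the definition of infinite (``not equinumerous to any natural number'') and not any form of choice.
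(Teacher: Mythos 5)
Your proof is correct; the paper gives no proof of this fact, merely citing it as Fact~2.13 of \cite{Shen2017}, and your argument (the cardinality map $y\mapsto|y|$ is a surjection from $\fin(x)$ onto $\omega$, combined with $\fin(x)\subseteq\wp(x)$ and Theorem~\ref{kurt}) is exactly the standard one used there. The only point worth checking is the one you flag: the induction establishing finite subsets of every size is a pure existence statement, so no choice is needed, and the map $c$ is definable since finite sets have canonical natural-number cardinalities.
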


\begin{fact}\label{sh06}
If $\seq^{1\text{-}1}(\mathfrak{a})$ is Dedekind infinite then also $\mathfrak{a}$ is Dedekind infinite.
\end{fact}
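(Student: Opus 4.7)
The plan is to work directly: assume $\seq^{1\text{-}1}(\mathfrak{a})$ is Dedekind infinite, fix a set $x$ with $|x|=\mathfrak{a}$ and an injection $F : \omega \to \seq^{1\text{-}1}(x)$, and produce from $F$ an injection $G : \omega \to x$. This will witness $\omega \preccurlyeq x$, i.e., that $\mathfrak{a}$ is Dedekind infinite.

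I would define $G$ by recursion on $\omega$. At stage $k$, suppose $G(0), \ldots, G(k-1) \in x$ have already been produced and let $B_k = \{G(0), \ldots, G(k-1)\}$, a finite subset of $x$. The key observation is that $\seq^{1\text{-}1}(B_k)$ is itself finite, since a finite set of size $k$ supports only finitely many injections from finite ordinals. Because $F$ is an injection from $\omega$, it follows that $\{n \in \omega : \ran(F(n)) \subseteq B_k\}$ must be finite (otherwise $F$ would inject an infinite subset of $\omega$ into the finite set $\seq^{1\text{-}1}(B_k)$). Hence there is a least $n_k \in \omega$ with $\ran(F(n_k)) \not\subseteq B_k$, and then a least $j_k \in \dom(F(n_k)) \subseteq \omega$ with $F(n_k)(j_k) \notin B_k$. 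Set $G(k) = F(n_k)(j_k)$. By construction $G(k) \in x \setminus B_k$, so $G(k) \neq G(i)$ for every $i < k$, and thus $G$ is injective.

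The only delicate point worth checking is that no appeal to choice is made at any stage. Each $F(n)$, being an injection with domain a finite ordinal, already supplies a canonical well-ordering of its range, so that the distinguishing indices $n_k$ and $j_k$ can be taken as least elements of nonempty subsets of $\omega$, and thus exist automatically in $\mathsf{ZF}$. I do not anticipate any serious obstacle; the argument mirrors the standard pattern used to show that a variety of finite-combinatorial constructions over a Dedekind finite set remain Dedekind finite, the crucial structural input being the finiteness of $\seq^{1\text{-}1}(B)$ for any finite $B$.
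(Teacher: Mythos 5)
Your proof is correct. The paper itself gives no argument for this fact (it is quoted as Fact~2.14 of \cite{Shen2017}), so there is nothing to compare against line by line, but your construction is sound and choice-free: the recursion step is fully determined by $F$ and $G\upharpoonright k$ because $n_k$ and $j_k$ are least elements of definable nonempty subsets of $\omega$, and the finiteness of $\seq^{1\text{-}1}(B_k)$ for finite $B_k$ is exactly the right pigeonhole input. An equivalent, slightly shorter variant: each $F(n)$ canonically enumerates its own range, so $\bigcup_{n\in\omega}\ran(F(n))$ is a well-orderable subset of $x$ which cannot be finite (else $F$ would inject $\omega$ into the finite set $\seq^{1\text{-}1}(B)$ for that finite union $B$), hence $x$ contains a denumerable subset.
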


\begin{fact}\label{sh10}
For every infinite cardinal $\mathfrak{a}$,
both $\calsf(\mathfrak{a})$ and $\mathfrak{a}!$ are power Dedekind infinite.
\end{fact}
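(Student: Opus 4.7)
The plan is to apply Kuratowski's theorem (Theorem~\ref{kurt}): it suffices to produce a surjection from a subset of $\calsf(x)$ onto $\omega$, which yields $\aleph_0 \leqslant^\ast \calsf(\mathfrak{a})$ and hence the power Dedekind infiniteness of $\calsf(\mathfrak{a})$. Once this is in hand, the statement for $\mathfrak{a}!$ follows without further work: as noted just before the fact, $\calsf(\mathfrak{a}) \leqslant \mathfrak{a}!$, whence $\aleph_0 \leqslant 2^{\calsf(\mathfrak{a})} \leqslant 2^{\mathfrak{a}!}$ by the monotonicity of $2^{\cdot}$ under $\leqslant$ recorded in the preliminaries; equivalently, one may quote Fact~\ref{sh03} applied to the trivial inequality $\calsf(\mathfrak{a}) \leqslant_{\mathrm{pdfto}} \mathfrak{a}!$.

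The main step is to consider the well-defined map $g : \calsf(x) \to \omega$ given by $g(f) = |\mov(f)|$, and to show that its range is exactly $\omega \setminus \{1\}$: the identity hits $0$, no permutation can move precisely one point, and for every $n \geq 2$ the infiniteness of $\mathfrak{a}$ yields an injection $\iota : n \to x$ (otherwise $x$ would be finite), after which the cyclic permutation sending $\iota(k)$ to $\iota((k+1) \bmod n)$ on $\ran(\iota)$ and fixing every other point of $x$ lies in $\calsf(x)$ and has exactly $n$ moved points. Composing $g$ with a fixed bijection $\omega \setminus \{1\} \to \omega$ then gives a surjection from $\calsf(x)$ onto $\omega$, as required.

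There is essentially no obstacle here; the argument is elementary. The one fine point worth flagging is that, for each individual $n$, the production of an injection $\iota : n \to x$ is a pure existence statement and needs no appeal to choice (a finite induction on $n$ suffices, using only that no initial segment of $\omega$ is in bijection with the infinite set $x$); in particular there is no need for a uniform choice of $\iota$ across all $n$. The rest amounts to verifying that $f \mapsto |\mov(f)|$ is a bona fide class function, which is immediate from the definition.
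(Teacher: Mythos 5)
Your proposal is correct and is essentially the paper's own argument: both exhibit a surjection from $\calsf(x)$ onto $\omega$ via the size of $\mov(t)$ and invoke Kuratowski's theorem, the only cosmetic difference being that the paper sends $t\mapsto|\mov(t)|-1$ (with the identity mapped to $0$) instead of composing with a bijection $\omega\setminus\{1\}\to\omega$. The passage from $\calsf(\mathfrak{a})$ to $\mathfrak{a}!$ is likewise handled the same way, via the inclusion $\calsf(x)\subseteq\cals(x)$.
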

\begin{proof}
Let $x$ be an arbitrary infinite set. Let $f$ be the function on $\calsf(x)$ such that
for all $t\in\calsf(x)$, $f(t)=0$, if $t=\mathrm{id}_x$, and $f(t)=|\mov(t)|-1$, otherwise.
Since $f$ is a surjection from $\calsf(x)$ onto $\omega$,
both $\calsf(x)$ and $\cals(x)$ are power Dedekind infinite sets.
\end{proof}

\begin{fact}\label{sh11}
For all power Dedekind finite cardinals $\mathfrak{a}$, $\mathfrak{a}!$ is Dedekind finite.
\end{fact}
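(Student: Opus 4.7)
The plan is to prove the statement via its contrapositive. Fix a set $x$ with $|x|=\mathfrak{a}$; assuming $\mathfrak{a}$ is power Dedekind finite, i.e., that $\wp(x)$ is Dedekind finite, I will show $\cals(x)$ is Dedekind finite, so that $\mathfrak{a}!$ is Dedekind finite.

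The key observation is that every permutation of $x$, viewed as its graph, is a subset of $x\times x$, so $\cals(x)\subseteq\wp(x\times x)$. Since every subset of a Dedekind finite set is Dedekind finite (pre-compose any injection from $\omega$ with the inclusion), it will suffice to show that $\wp(x\times x)$ is Dedekind finite, that is, that $x\times x$ is itself power Dedekind finite.

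To see this I would write
\[
x\times x \;=\; \bigcup_{a\in x}\bigl(\{a\}\times x\bigr).
\]
Each summand $\{a\}\times x$ is in bijection with $x$ and is therefore power Dedekind finite, and the indexing family $\{\{a\}\times x:a\in x\}$ is in bijection with $x$ via $a\mapsto\{a\}\times x$, so is also power Dedekind finite. Invoking the closure of the class of power Dedekind finite sets under unions (the consequence of Kuratowski's theorem, due to Truss, that is explicitly quoted in the preliminaries immediately after Theorem~\ref{kurt}), I conclude that $x\times x$ is power Dedekind finite. Hence $\wp(x\times x)$ is Dedekind finite, and the inclusion $\cals(x)\subseteq\wp(x\times x)$ completes the argument.

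The only step with real mathematical content is the closure of power Dedekind finiteness under unions, and since it is already on record in the preliminaries I do not expect it to be a genuine obstacle. If one preferred a self-contained argument avoiding that citation, the same conclusion could be extracted directly from Kuratowski's theorem: from a surjection $s$ from a subset $S\subseteq x\times x$ onto $\omega$ one analyzes the sets $N_a=\{s(a,b):(a,b)\in S\}$, and either some $N_a$ is infinite (yielding $\aleph_0\leqslant^\ast x$ via $s$ restricted to $\{a\}\times x$) or the map $a\mapsto\max(N_a)+1$ has unbounded image in $\omega$ (again yielding $\aleph_0\leqslant^\ast x$), contradicting the power Dedekind finiteness of $x$ via Theorem~\ref{kurt}.
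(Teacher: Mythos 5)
Your proof is correct and follows essentially the same route as the paper's: the paper bounds $\mathfrak{a}!\leqslant\mathfrak{a}^{\mathfrak{a}}\leqslant(2^{\mathfrak{a}})^{\mathfrak{a}}=2^{\mathfrak{a}^{2}}$ and invokes Fact~\ref{sh04} to conclude that $\mathfrak{a}^{2}$ is power Dedekind finite, which is exactly your inclusion $\cals(x)\subseteq\wp(x\times x)$ combined with the power Dedekind finiteness of $x\times x$. The only difference is that you reprove the $n=2$ case of Fact~\ref{sh04} directly (via closure of power Dedekind finite sets under unions, or via Kuratowski's theorem) rather than citing it, and that step is carried out correctly.
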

\begin{proof}
By Fact~\ref{sh04}, $\mathfrak{a}^2$ is power Dedekind finite.
Since $\mathfrak{a}!\leqslant\mathfrak{a}^\mathfrak{a}\leqslant(2^\mathfrak{a})^\mathfrak{a}=2^{\mathfrak{a}^2}$,
$\mathfrak{a}!$ is Dedekind finite.
\end{proof}

\begin{fact}\label{sh01}
For all cardinals $\mathfrak{a}$, $\calsf(\mathfrak{a})\leqslant_{\mathrm{fto}}\fin(\mathfrak{a})$.
\end{fact}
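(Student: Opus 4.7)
The plan is to exhibit the obvious map $F:\calsf(x)\to\fin(x)$ given by $F(f)=\mov(f)$, where $x$ is any set with $|x|=\mathfrak{a}$, and verify that it is finite-to-one.

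First I would check that $F$ is well-defined: by the very definition of $\calsf(x)$, every $f\in\calsf(x)$ has $\mov(f)$ finite, so $\mov(f)\in\fin(x)$. Next, I would analyze the fibers. Fix a finite $A\subseteq x$ and consider $F^{-1}[\{A\}]$. Any $f$ in this fiber satisfies $f(z)=z$ for all $z\in x\setminus A$, and the restriction $f\upharpoonright A$ is a permutation of $A$ with no fixed points, i.e., a derangement of~$A$. Conversely, any derangement of $A$ extends uniquely to a member of $F^{-1}[\{A\}]$ by defining it to be the identity off $A$. Hence the fiber is in bijection with the set of derangements of $A$, which is finite (of cardinality at most $|A|!$, where $|A|\in\omega$). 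In particular, $F^{-1}[\{\emptyset\}]=\{\mathrm{id}_x\}$ is a singleton.

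Combining these two observations, $F$ is a finite-to-one map from $\calsf(x)$ into $\fin(x)$, which gives $\calsf(\mathfrak{a})\leqslant_{\mathrm{fto}}\fin(\mathfrak{a})$ as required. There is no real obstacle here: the construction is completely explicit in $\mathsf{ZF}$ and does not require any form of choice, since derangements of a finite set are parametrized by finite data and do not need to be selected uniformly across fibers.
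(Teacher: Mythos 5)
Your proof is correct and takes exactly the same approach as the paper, which also maps each $t\in\calsf(x)$ to $\mov(t)$ and observes this is finite-to-one. Your fiber analysis via derangements simply spells out the verification the paper leaves implicit.
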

\begin{proof}
For any set $x$, the function $g$ defined on $\calsf(x)$ given by
$g(t)=\mov(t)$ is a finite-to-one map from $\calsf(x)$ into $\fin(x)$.
\end{proof}

\begin{fact}\label{sh14}
For every set $x$, the function $f$ defined on $\calspdf(x)$ given by
$f(t)=\mov(t)$ is a Dedekind finite to one map from $\calspdf(x)$ into $\pdfin(x)$.
Hence for all cardinals $\mathfrak{a}$,
$\calspdf(\mathfrak{a})\leqslant_{\mathrm{dfto}}\pdfin(\mathfrak{a})$.
\end{fact}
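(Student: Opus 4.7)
The plan is to unwind both halves of the claim directly from the definitions, using Fact~\ref{sh11} as the main ingredient. First, I would verify the easy half: if $t\in\calspdf(x)$, then by definition $\mov(t)$ is a power Dedekind finite subset of $x$, hence $f(t)=\mov(t)\in\pdfin(x)$. So $f$ is a well-defined function from $\calspdf(x)$ into $\pdfin(x)$.

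The substantive part is to show that each fibre $f^{-1}[\{y\}]$, for $y\in\pdfin(x)$, is Dedekind finite. The key observation is that a permutation $t$ of $x$ with $\mov(t)=y$ must fix every element of $x\setminus y$ and permute $y$ with no fixed points; conversely, any fixed-point-free permutation of $y$ extends uniquely (by the identity on $x\setminus y$) to an element of $\calspdf(x)$ whose $\mov$ is exactly $y$. In particular, sending $t\mapsto t\upharpoonright y$ gives an injection from $f^{-1}[\{y\}]$ into $\cals(y)$.

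Now $y$ is power Dedekind finite, so the cardinal $|y|$ is power Dedekind finite, and Fact~\ref{sh11} gives that $|y|!=|\cals(y)|$ is Dedekind finite. Since every subset of a Dedekind finite set is Dedekind finite, $f^{-1}[\{y\}]$ is Dedekind finite, as required. This establishes that $f$ is a Dedekind finite to one map from $\calspdf(x)$ into $\pdfin(x)$, and the inequality $\calspdf(\mathfrak{a})\leqslant_{\mathrm{dfto}}\pdfin(\mathfrak{a})$ for $\mathfrak{a}=|x|$ is an immediate translation.

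There is no real obstacle here; the only point one has to be a touch careful about is that the definability is uniform (the class function $y\mapsto(t\mapsto t\upharpoonright y)$ is given by an explicit formula), which is what the ``explicitly define'' convention from the preliminaries requires, but this is automatic from the construction above.
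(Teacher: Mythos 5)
Your proof is correct and follows essentially the same route as the paper: restrict each $t$ in the fibre over $y$ to $y$, obtaining an injection into $\cals(y)$, and invoke Fact~\ref{sh11} to see that $\cals(y)$ is Dedekind finite. The extra remarks about fixed-point-free permutations and uniform definability are harmless but not needed for this statement.
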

\begin{proof}
Take an arbitrary $y\in\pdfin(x)$ and let $u=\{t\in\cals(x)\mid\mov(t)=y\}$.
It suffices to show that $u$ is Dedekind finite.
By Fact~\ref{sh11}, $\cals(y)$ is Dedekind finite.
Since the function $g$ defined on $u$ given by $g(t)=t\upharpoonright y$
is an injection from $u$ into $\cals(y)$, $u$ is also Dedekind finite.
\end{proof}

\begin{fact}\label{sh07}
For all cardinals $\mathfrak{a}$, $\aleph_0\cdot\mathfrak{a}\leqslant\seq(\mathfrak{a})$.
Hence for all non-zero cardinals $\mathfrak{a}$, $\seq(\mathfrak{a})$ is Dedekind infinite.
\end{fact}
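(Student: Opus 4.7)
The plan is to produce an explicit injection $\omega\times x\hookrightarrow\seq(x)$ for any set $x$ of cardinality $\mathfrak{a}$, and then deduce the Dedekind infiniteness of $\seq(\mathfrak{a})$ from the first assertion together with the assumption $\mathfrak{a}\neq 0$.

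For the first assertion, fix a set $x$ with $|x|=\mathfrak{a}$. I would define $\Phi:\omega\times x\to\seq(x)$ by sending $(n,a)$ to the constant function on $n+1$ with value $a$; that is, $\Phi(n,a)$ is the unique $f\in\seq(x)$ with $\dom(f)=n+1$ and $f(i)=a$ for every $i<n+1$. This $\Phi$ lands in $\seq(x)$ because $n+1\in\omega$. To verify injectivity, note that from $\Phi(n,a)$ one recovers $n$ as $|\dom(\Phi(n,a))|-1$ and $a$ as $\Phi(n,a)(0)$, so $\Phi$ is an injection and therefore $\aleph_0\cdot\mathfrak{a}\leqslant\seq(\mathfrak{a})$.

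For the second assertion, assume $\mathfrak{a}\neq 0$. Then $1\leqslant\mathfrak{a}$, so $\aleph_0=\aleph_0\cdot 1\leqslant\aleph_0\cdot\mathfrak{a}$, and combining with the inequality just established gives $\aleph_0\leqslant\seq(\mathfrak{a})$, which is exactly the statement that $\seq(\mathfrak{a})$ is Dedekind infinite. Equivalently, picking any $a_0\in x$, the map $n\mapsto\Phi(n,a_0)$ is itself an injection of $\omega$ into $\seq(x)$.

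There is essentially no obstacle here: the only point one has to be slightly careful about is that the definition of $\seq(x)$ uses functions whose domains are natural numbers (not natural numbers minus one), so one must use $n+1$ rather than $n$ as the length in order to both cover the case $n=0$ and keep the recovery of $n$ from $\dom(\Phi(n,a))$ unambiguous. Since the construction of $\Phi$ is explicit and uses no choice principle, the argument goes through verbatim in $\mathsf{ZF}$.
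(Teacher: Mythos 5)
Your proof is correct and essentially identical to the paper's: the paper's injection $g(n,z)=(n+1)\times\{z\}$ is precisely the constant function on $n+1$ with value $z$, which is your $\Phi$. The deduction of Dedekind infiniteness from $\mathfrak{a}\neq 0$ is also the same.
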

\begin{proof}
For any set $x$, the function $g$ on $\omega\times x$ given by
$g(n,z)=(n+1)\times\{z\}$ is an injection from $\omega\times x$ into $\seq(x)$.
\end{proof}

\begin{fact}\label{sh08}
For all Dedekind finite cardinals $\mathfrak{a}$, $\seq(\mathfrak{a})\leqslant_{\mathrm{dfto}}\aleph_0$.
\end{fact}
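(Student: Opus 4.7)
The plan is to use the natural length map. For any set $x$, define $f : \seq(x) \to \omega$ by $f(s) = \dom(s)$; this is a total function, and the preimage $f^{-1}[\{n\}]$ is exactly $x^n$, the set of functions from $n$ into $x$. Thus $f$ will witness $\seq(\mathfrak{a}) \leqslant_{\mathrm{dfto}} \aleph_0$ provided each fiber $x^n$ is Dedekind finite whenever $x$ is Dedekind finite.

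The second step is an induction on $n$. The base case $n = 0$ is trivial since $x^0 = \{\emptyset\}$. For the inductive step, $x^{n+1}$ is in canonical bijection with $x^n \times x$, so it suffices to establish the auxiliary fact that the product of two Dedekind finite sets is Dedekind finite. This is the one nontrivial ingredient; I would prove it as its own sublemma so that it can be invoked transparently.

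For the product sublemma, suppose $A$ and $B$ are Dedekind finite and let $h : \omega \to A \times B$ be an injection. Consider the projection $\pi_1 \circ h : \omega \to A$ with range $S \subseteq A$, and partition $\omega$ into the fibers $N_a = (\pi_1 \circ h)^{-1}[\{a\}]$ for $a \in S$. If some $N_a$ is infinite then, being an infinite subset of $\omega$, it is equinumerous with $\omega$, so $h \upharpoonright N_a$ embeds $\omega$ into $\{a\} \times B$, yielding $\omega \preccurlyeq B$, contrary to $B$ being Dedekind finite. Otherwise every $N_a$ is finite; since $\omega$ is not a finite union of finite sets, $S$ must be infinite, and the map $a \mapsto \min(N_a)$ is an injection of $S$ into $\omega$, making $S$ a well-orderable infinite set, so $\omega \preccurlyeq S \preccurlyeq A$, again a contradiction.

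The main obstacle is precisely this product sublemma: the case-split on whether some fiber of the projection is infinite relies essentially on the canonical well-ordering of $\omega$, which lets one select the representative $\min(N_a)$ from each fiber without invoking any form of choice. Once the sublemma is in hand, the induction giving Dedekind finiteness of each $x^n$, and hence the conclusion that $\dom$ is a Dedekind-finite-to-one map from $\seq(x)$ into $\omega$, is routine.
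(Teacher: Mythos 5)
Your proof is correct and follows essentially the same route as the paper: both use the length map $t\mapsto\dom(t)$ and reduce the claim to the Dedekind finiteness of each fiber $x^n$. The only difference is that the paper obtains the latter by citing Fact~\ref{sh04} (imported from \cite{Shen2017}), whereas you prove it from scratch via the product sublemma; your argument for that sublemma is a correct $\mathsf{ZF}$ proof.
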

\begin{proof}
For every Dedekind finite set $x$, by Fact~\ref{sh04}, $x^n$ is Dedekind finite for any $n\in\omega$,
and hence the function $f$ on $\seq(x)$ given by $f(t)=\dom(t)$
is a Dedekind finite to one map from $\seq(x)$ into $\omega$.
\end{proof}

\begin{lemma}\label{sh08a}
For all non-zero cardinals $\mathfrak{a}$, $\seq(\seq(\mathfrak{a}))=\seq(\mathfrak{a})$.
\end{lemma}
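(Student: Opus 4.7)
The strategy is to apply Cantor--Bernstein (Theorem~\ref{cbt}), so I need injections in both directions. Writing $\mathfrak{a} = |x|$ with $x \neq \emptyset$, the inequality $\seq(\mathfrak{a}) \leqslant \seq(\seq(\mathfrak{a}))$ is immediate: sending $f \in \seq(x)$ to the one-term sequence $\langle f \rangle \in \seq(\seq(x))$ is obviously an injection. The entire content of the lemma is therefore in producing an explicit injection $\Phi \colon \seq(\seq(x)) \to \seq(x)$.

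The plan for $\Phi$ is to decompose each $F = \langle f_0,\dots,f_{k-1}\rangle$ into two pieces: a \emph{shape} $\sigma_F = \langle n_0,\dots,n_{k-1}\rangle \in \seq(\omega)$, where $n_i = |f_i|$, and a \emph{content} $C_F = f_0 \frown \cdots \frown f_{k-1} \in x^{N_F}$ with $N_F = n_0 + \cdots + n_{k-1}$. Since $\seq(\omega)$ is well-orderable in $\mathsf{ZF}$, I fix an injection $\phi \colon \seq(\omega) \to \omega$; I also fix a bijection $\pi \colon \omega \times \omega \to \omega$ with the property $\pi(u,v) \geqslant v$ for all $u,v$ (Cantor's pairing function works). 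Using any $a \in x$, declare $\Phi(F)$ to be the sequence of length $m_F := \pi(\phi(\sigma_F), N_F)$ whose first $N_F$ entries agree with $C_F$ and whose remaining $m_F - N_F$ entries are all equal to $a$; this is well defined because $m_F \geqslant N_F$.

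Injectivity is a clean two-step recovery. If $\Phi(F_1) = \Phi(F_2)$, comparing lengths gives $\pi(\phi(\sigma_{F_1}), N_{F_1}) = \pi(\phi(\sigma_{F_2}), N_{F_2})$, so by the bijectivity of $\pi$ and the injectivity of $\phi$ one obtains $\sigma_{F_1} = \sigma_{F_2}$ and $N_{F_1} = N_{F_2}$; then the first $N_{F_1}$ entries of $\Phi(F_1) = \Phi(F_2)$ give $C_{F_1} = C_{F_2}$, and the shared shape dictates exactly how to cut the content back into the $f_i$'s. The only subtle point is that the length of $\Phi(F)$ alone must recover the shape, so that padding with $a$'s cannot be mistaken for data; pairing $\phi(\sigma_F)$ with $N_F$ via the bijection $\pi$ is engineered precisely to guarantee this, and I do not anticipate any further obstacle.
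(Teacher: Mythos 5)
Your proof is correct. Note, however, that the paper does not actually prove this lemma itself: it simply cites Ellentuck's \emph{Generalized idempotence in cardinal arithmetic} (Lemma~2), so your argument is a self-contained replacement rather than a variant of the paper's proof. Your coding is sound: the easy injection $f\mapsto\langle f\rangle$ gives $\seq(\mathfrak{a})\leqslant\seq(\seq(\mathfrak{a}))$, and for the reverse direction the decomposition into shape $\sigma_F\in\seq(\omega)$ and concatenated content $C_F\in x^{N_F}$, with the length of $\Phi(F)$ set to $\pi(\phi(\sigma_F),N_F)\geqslant N_F$, does let you recover first $\sigma_F$ and $N_F$ from the length alone (via the bijectivity of $\pi$ and injectivity of $\phi$), then $C_F$ from the initial segment, then the cut points from $\sigma_F$; the padding by a fixed $a\in x$ cannot be confused with data precisely because the cut position $N_F$ is read off the length, not off the entries. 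Two minor points of hygiene: the injection $\phi\colon\seq(\omega)\to\omega$ exists because $\seq(\omega)$ is explicitly \emph{countable} in $\mathsf{ZF}$ (the paper itself uses $\seq(\aleph_0)=\aleph_0$ in the proof of Lemma~\ref{sh08b}), not merely well-orderable as you say; and picking a single $a\in x$ requires no choice since $x\neq\emptyset$. Everything is explicitly definable from $x$ and $a$, so the argument is fully $\mathsf{ZF}$-legitimate, and the same two-part shape/content coding is in the spirit of the injections the paper constructs in Lemma~\ref{sh08b} and Lemma~\ref{sh33}.
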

\begin{proof}
Cf.~\cite[Lemma~2]{Ellentuck1966}.
\end{proof}

\begin{lemma}\label{sh08b}
For any $\mathfrak{a}\neq0$,
$\seq(\mathfrak{a})=\seq^{1\text{-}1}(\mathfrak{a}+\aleph_0)=\aleph_0\cdot\seq^{1\text{-}1}(\mathfrak{a})$.
\end{lemma}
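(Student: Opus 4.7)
The plan is to establish both equalities by exhibiting explicit injections and invoking Cantor-Bernstein. Fix $x$ with $|x|=\mathfrak{a}$, chosen disjoint from $\omega$ so that $|x\cup\omega|=\mathfrak{a}+\aleph_0$. For the forward direction $\seq(\mathfrak{a})\leqslant\seq^{1\text{-}1}(\mathfrak{a}+\aleph_0)$ I would encode a general finite sequence as an injective one by tagging repetitions. Fix a pairing function $\langle\cdot,\cdot\rangle:\omega\times\omega\to\omega$; given $t:n\to x$, define $t':n\to x\cup\omega$ by $t'(i)=t(i)$ if $i$ is the first occurrence of the value $t(i)$ in $t$, and $t'(i)=\langle i,j_i\rangle$ otherwise, where $j_i$ is the least $j<i$ with $t(j)=t(i)$. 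The values landing in $x$ are pairwise distinct by the first-occurrence rule, and those landing in $\omega$ are pairwise distinct since their first coordinates differ; so $t'$ is injective. One recovers $t$ from $t'$ by reading $t(i)=t'(i)$ at positions with $t'(i)\in x$, and $t(i)=t'(j_i)\in x$ at positions with $t'(i)=\langle i,j_i\rangle\in\omega$.

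For the opposite inequality $\seq^{1\text{-}1}(\mathfrak{a}+\aleph_0)\leqslant\seq(\mathfrak{a})$, I would note that $\mathfrak{a}\leqslant\seq(\mathfrak{a})$ via constant length-$1$ sequences and $\aleph_0\leqslant\seq(\mathfrak{a})$ by Fact~\ref{sh07} (since $\mathfrak{a}\neq0$), so $\mathfrak{a}+\aleph_0\leqslant\seq(\mathfrak{a})$; hence $\seq^{1\text{-}1}(\mathfrak{a}+\aleph_0)\leqslant\seq(\mathfrak{a}+\aleph_0)\leqslant\seq(\seq(\mathfrak{a}))=\seq(\mathfrak{a})$ by Lemma~\ref{sh08a}, yielding the first equality. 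For the second equality, to see $\aleph_0\cdot\seq^{1\text{-}1}(\mathfrak{a})\leqslant\seq^{1\text{-}1}(\mathfrak{a}+\aleph_0)$, I would send $(k,t)$ with $t:m\to x$ injective to the injection $s:m+k\to x\cup\omega$ extending $t$ by $s(m+j)=j$ for $j<k$. For the reverse $\seq^{1\text{-}1}(\mathfrak{a}+\aleph_0)\leqslant\aleph_0\cdot\seq^{1\text{-}1}(\mathfrak{a})$, given an injection $s:n\to x\cup\omega$ I would extract $t\in\seq^{1\text{-}1}(x)$ by listing the entries of $s$ that lie in $x$ in the natural order of their positions, and encode the leftover combinatorial data---the length $n$, the subset $s^{-1}[x]\subseteq n$, and the injection $s\upharpoonright s^{-1}[\omega]$ into $\omega$---as a single natural number via a canonical enumeration that is uniform in the size $m$ of $\dom(t)$.

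The step requiring the most care is the forward map $\seq(\mathfrak{a})\hookrightarrow\seq^{1\text{-}1}(\mathfrak{a}+\aleph_0)$, as it is the only place where a genuinely new idea is needed: one must \emph{neutralize} repeated values in $t$ while still storing enough information in $\omega$ to recover the original sequence. The pairing-based tag $\langle i,j_i\rangle$ does this uniformly and choice-freely, since the first coordinate $i$ alone guarantees injectivity while the second coordinate $j_i$ records where to look for the repeated $x$-value. The remaining pieces are bookkeeping: disjointness of $x$ and $\omega$ can be arranged by an initial relabeling, and the encoding of finite combinatorial data as a natural number is routine because the relevant sets of tuples are explicitly well-orderable.
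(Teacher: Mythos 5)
Your proposal is correct and follows essentially the same route as the paper: both directions rest on the same two key constructions, namely tagging repeated values of $t\in\seq(x)$ with natural numbers that record an earlier occurrence (the paper uses the bare $\max\{k<n\mid t(k)=t(n)\}$, observing it is already injective without a pairing function), and splitting an injective sequence over $x\cup\omega$ into its $x$-part together with a single natural-number code for the residual finite data. The only structural difference is that you close the argument with two separate applications of Cantor--Bernstein on four injections, whereas the paper chains the three inequalities $\seq(\mathfrak{a})\leqslant\seq^{1\text{-}1}(\mathfrak{a}+\aleph_0)\leqslant\aleph_0\cdot\seq^{1\text{-}1}(\mathfrak{a})\leqslant\seq(\mathfrak{a})$ into a single cycle, using Fact~\ref{sh07} and Lemma~\ref{sh08a} for the last link just as you do.
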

\begin{proof}
Let $x$ be a set disjoint from $\omega$ such that $|x|=\mathfrak{a}$.
Let $f$ be the function on $\seq(x)$ such that for all $t\in\seq(x)$,
$f(t)$ is the function defined on $\dom(t)$ given by
\[
f(t)(n)=
\begin{cases}
  t(n),                      & \text{if for all $k<n$, $t(k)\neq t(n)$;}\\
  \max\{k<n\mid t(k)=t(n)\}, & \text{otherwise.}
\end{cases}
\]
Clearly, for all $t\in\seq(x)$, $f(t)\in\seq^{1\text{-}1}(x\cup\omega)$.
Moreover, $f$ is injective, since for all $t\in\seq(x)$,
$t$ is recursively determined by $f(t)$ in the following way:
$\dom(t)=\dom(f(t))$, and for all $n\in\dom(t)$, $t(n)=f(t)(n)$,
if $f(t)(n)\in x$, and $t(n)=t(f(t)(n))$, if $f(t)(n)\in\omega$.
Hence $\seq(\mathfrak{a})\leqslant\seq^{1\text{-}1}(\mathfrak{a}+\aleph_0)$.

Obviously, $\seq(\aleph_0)=\aleph_0$, and hence there exists an injection $p$
from $\seq(\omega)\times\seq(\omega)\times\seq(\omega)$ into $\omega$.
Let $g$ and $h$ be functions on $\seq^{1\text{-}1}(x\cup\omega)$ such that
for all $t\in\seq^{1\text{-}1}(x\cup\omega)$,
$g(t)$ is the enumerating function of $t^{-1}[x]$ and
$h(t)$ is the enumerating function of $t^{-1}[\omega]$.
Then it is easy to verify that the function $u$
defined on $\seq^{1\text{-}1}(x\cup\omega)$ given by
\[
u(t)=\bigl(p\bigl(t\circ(h(t)),g(t),h(t)\bigr),t\circ(g(t))\bigr)
\]
is an injection from $\seq^{1\text{-}1}(x\cup\omega)$ into
$\omega\times\seq^{1\text{-}1}(x)$, which implies that
$\seq^{1\text{-}1}(\mathfrak{a}+\aleph_0)\leqslant\aleph_0\cdot\seq^{1\text{-}1}(\mathfrak{a})$.
Finally, by Fact~\ref{sh07} and Lemma~\ref{sh08a}, we have that
$\aleph_0\cdot\seq^{1\text{-}1}(\mathfrak{a})\leqslant
\aleph_0\cdot\seq(\mathfrak{a})\leqslant\seq(\seq(\mathfrak{a}))=\seq(\mathfrak{a})$.
\end{proof}

\begin{corollary}\label{sh09}
For all cardinals $\mathfrak{a}$, $\seq(\mathfrak{a})=\seq^{1\text{-}1}(\mathfrak{a})$
iff either $\mathfrak{a}=0$ or $\mathfrak{a}$ is Dedekind infinite.
\end{corollary}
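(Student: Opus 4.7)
The plan is to use Lemma~\ref{sh08b} as the main engine, combined with the absorption of $\aleph_0$ by Dedekind infinite cardinals, and to handle the converse via Facts~\ref{sh07} and~\ref{sh06}.

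For the ``if'' direction, I would split into two cases. If $\mathfrak{a}=0$, then both $\seq(\mathfrak{a})$ and $\seq^{1\text{-}1}(\mathfrak{a})$ are the cardinality of $\{\emptyset\}$, so they coincide. If $\mathfrak{a}$ is Dedekind infinite, the key observation is that $\mathfrak{a}+\aleph_0=\mathfrak{a}$ (a standard $\mathsf{ZF}$ fact: given an injection $\omega\hookrightarrow\mathfrak{a}$, one builds a bijection between $\mathfrak{a}$ and $\mathfrak{a}+\aleph_0$ by shifting along the embedded copy of $\omega$). Plugging this into the first equality of Lemma~\ref{sh08b} yields $\seq(\mathfrak{a})=\seq^{1\text{-}1}(\mathfrak{a}+\aleph_0)=\seq^{1\text{-}1}(\mathfrak{a})$.

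For the ``only if'' direction, suppose $\seq(\mathfrak{a})=\seq^{1\text{-}1}(\mathfrak{a})$ and $\mathfrak{a}\neq 0$. By Fact~\ref{sh07}, $\seq(\mathfrak{a})$ is Dedekind infinite, so the equality forces $\seq^{1\text{-}1}(\mathfrak{a})$ to be Dedekind infinite as well. Fact~\ref{sh06} then gives that $\mathfrak{a}$ itself is Dedekind infinite, as required.

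The argument is mostly a bookkeeping matter rather than a substantive obstacle, since Lemma~\ref{sh08b} already does the heavy lifting. The only point where I would pause is the absorption identity $\mathfrak{a}+\aleph_0=\mathfrak{a}$ for Dedekind infinite $\mathfrak{a}$; depending on the conventions of the paper I would either cite it as a standard fact (e.g.\ from Levy's textbook) or give the one-line injection-shifting proof explicitly. Everything else follows by direct substitution into results already established in the preliminaries.
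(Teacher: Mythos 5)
Your proposal is correct and follows essentially the same route as the paper: the converse via Facts~\ref{sh07} and~\ref{sh06}, and the forward direction by establishing $\mathfrak{a}+\aleph_0=\mathfrak{a}$ for Dedekind infinite $\mathfrak{a}$ (the paper writes out the same shifting bijection explicitly) and then substituting into Lemma~\ref{sh08b}.
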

\begin{proof}
If $\seq(\mathfrak{a})=\seq^{1\text{-}1}(\mathfrak{a})$ and $\mathfrak{a}\neq0$,
then, by Fact~\ref{sh07}, $\seq^{1\text{-}1}(\mathfrak{a})$ is Dedekind infinite,
and therefore, by Fact~\ref{sh06}, $\mathfrak{a}$ is also Dedekind infinite.
For the other direction, since $\seq(0)=\seq^{1\text{-}1}(0)$,
assume that $\mathfrak{a}$ is Dedekind infinite.
Let $x$ be a set disjoint from $\omega$ such that $|x|=\mathfrak{a}$,
and let $f$ be an injection from $\omega$ into $x$.
Let $g$ be the function on $x\cup\omega$ such that for all $z\in x\setminus\ran(f)$, $g(z)=z$,
and such that for all $n\in\omega$, $g(f(n))=f(2n)$ and $g(n)=f(2n+1)$.
Since $g$ is a bijection from $x\cup\omega$ onto $x$, $\mathfrak{a}+\aleph_0=\mathfrak{a}$
and therefore, by Lemma~\ref{sh08b},
$\seq(\mathfrak{a})=\seq^{1\text{-}1}(\mathfrak{a}+\aleph_0)=\seq^{1\text{-}1}(\mathfrak{a})$.
\end{proof}

\subsection{Some notation on permutations}
For a permutation $f$ of $x$, we define $f^{(n)}$ by recursion on $n$ as follows:
$f^{(0)}=\mathrm{id}_x$; $f^{(n+1)}=f\circ f^{(n)}$. Let $f\in\cals(x)$ and let $z\in x$.
The \emph{orbit} of $z$ under $f$, which we shall denote by $\orb(f,z)$,
is the countable set $\{v\in x\mid\exists n\in\omega(v=f^{(n)}(z)\text{ or }z=f^{(n)}(v))\}$.
An orbit $\orb(f,z)$ is said to be \emph{trivial} if $f(z)=z$;
otherwise it is \emph{non-trivial}.
Clearly, $\orb(f,z)$ is trivial if and only if $\orb(f,z)=\{z\}$.
It is also obvious that the orbits of $f$ form a partition of $x$
and the non-trivial orbits of $f$ form a partition of $\mov(f)$.

\begin{fact}\label{sh12}
Let $x$ be a set and let $\mathfrak{a}=|x|$.
For a permutation $f$ of $x$, let $\mathfrak{b}$ be the cardinality of the set of all non-trivial orbits of $f$.
Then $2^\mathfrak{b}\leqslant\mathfrak{a}!$.
\end{fact}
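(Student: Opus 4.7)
The plan is to construct an explicit injection from $\wp(N)$ into $\cals(x)$, where $N$ is the set of all non-trivial orbits of $f$, so that $|N|=\mathfrak{b}$ and hence $2^\mathfrak{b}=|\wp(N)|\leqslant|\cals(x)|=\mathfrak{a}!$.

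For every subset $S\subseteq N$, define a map $g_S\colon x\to x$ by
\[
g_S(z)=
\begin{cases}
 f(z), & \text{if }z\in\bigcup S,\\
 z,    & \text{otherwise.}
\end{cases}
\]
Since every orbit of $f$ is $f$-invariant, $\bigcup S$ is $f$-invariant, so $f\upharpoonright\bigcup S$ is a permutation of $\bigcup S$ and $g_S$ is a permutation of $x$. Thus the assignment $S\mapsto g_S$ is a well-defined function from $\wp(N)$ into $\cals(x)$.

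To see that this assignment is injective, suppose $S\neq T$ and without loss of generality pick $O\in S\setminus T$. Choose any $z\in O$; because $O$ is non-trivial, $f(z)\neq z$. Since the orbits of $f$ partition $x$, the element $z$ belongs to no orbit other than $O$, so $z\notin\bigcup T$. Hence $g_T(z)=z$ while $g_S(z)=f(z)\neq z$, which gives $g_S\neq g_T$. This produces the required injection $\wp(N)\hookrightarrow\cals(x)$, and the definition is manifestly uniform in $f$ and $x$ (no choice is used anywhere).

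The argument is essentially a direct construction, so there is no genuine obstacle; the only thing to be careful about is confirming that each $g_S$ really is a permutation (which reduces to the $f$-invariance of unions of orbits) and that distinct $S$'s produce distinct permutations (which reduces to the fact that orbits are disjoint and non-trivial orbits contain a point actually moved by $f$).
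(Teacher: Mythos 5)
Your construction is exactly the paper's: both define, for each set $u$ of non-trivial orbits, the permutation that agrees with $f$ on $\bigcup u$ and is the identity elsewhere, and both verify injectivity via a point moved by $f$ in an orbit belonging to one set but not the other. The proposal is correct and matches the paper's proof.
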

\begin{proof}
Let $y=\{\orb(f,z)\mid z\in\mov(f)\}$. Then $\mathfrak{b}=|y|$.
Let $g$ be the function on $\wp(y)$ such that for all $u\subseteq y$,
$g(u)$ is the permutation of $x$ given by
\[
g(u)(z)=
\begin{cases}
  f(z), & \text{if $z\in\bigcup u$;}\\
  z,    & \text{otherwise.}
\end{cases}
\]
It is easily verifiable that $g$ is an injection from $\wp(y)$ into $\cals(x)$.
\end{proof}

Let $f\in\cals(x)$ be such that all orbits of $f$ are finite and let $y\subseteq x$.
The permutation of $y$ \emph{induced} by $f$, which we shall denote by $f\rhd y$,
is defined as follows: For all $z\in y$, $(f\rhd y)(z)=f^{(n+1)}(z)$,
where $n$ is the least natural number such that $f^{(n+1)}(z)\in y$.
Note that for every $z\in y$, $\orb(f\rhd y,z)=\orb(f,z)\cap y$.
Note also that all orbits of a permutation in $\calspdf(x)$ are finite.

For $t\in\seq^{1\text{-}1}(x)$, we use $(t(0);\dots;t(n-1))_x$, where $n=\dom(t)$,
to denote the permutation of $x$ which moves $t(0)$ to $t(1)$, $t(1)$ to $t(2)$, \dots,
$t(n-2)$ to $t(n-1)$, and $t(n-1)$ to $t(0)$, and fixes all other elements of $x$.
In particular, for two distinct elements $z$, $v$ of $x$,
$(z;v)_x$ is the transposition that interchanges $z$ and $v$.

\begin{fact}\label{sh13}
For all cardinals $\mathfrak{a}$,
$\seq^{1\text{-}1}(\mathfrak{a})\leqslant_{\mathrm{fto}}\calsf(\mathfrak{a})$
and $\seq(\mathfrak{a})\leqslant\aleph_0\cdot\calsf(\mathfrak{a})$.
\end{fact}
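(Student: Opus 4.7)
My plan is to produce an explicit finite-to-one map $F\colon\seq^{1\text{-}1}(x)\to\calsf(x)$ for the first assertion, then upgrade it to an injection $\seq^{1\text{-}1}(x)\to\omega\times\calsf(x)$ for the second and combine with Lemma~\ref{sh08b}. Fix a set $x$ with $|x|=\mathfrak{a}$. The case $|x|\leqslant 1$ is immediate since $\seq^{1\text{-}1}(x)$ is then finite and any map into $\calsf(x)$ is vacuously finite-to-one. For $|x|\geqslant 2$, I pick any two distinct $a,b\in x$ (existence is all that is needed in $\mathsf{ZF}$) and define $\phi\colon x\to x$ by $\phi(a)=b$ and $\phi(z)=a$ for $z\neq a$; then $\phi$ is fixed-point-free. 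I set
\[
F(t)=\begin{cases}
\mathrm{id}_x & \text{if }\dom(t)=0,\\
(t(0);\phi(t(0)))_x & \text{if }\dom(t)=1,\\
(t(0);t(1);\dots;t(n-1))_x & \text{if }\dom(t)=n\geqslant 2.
\end{cases}
\]
Each $F(t)$ lies in $\calsf(x)$ because $\mov(F(t))$ is finite in every case.

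For the finite-to-one property I examine fibers directly: the fiber of $\mathrm{id}_x$ is $\{\emptyset\}$; the fiber of a transposition $(u;v)_x$ contains at most four elements, namely the length-$2$ sequences $(u,v),(v,u)$ together with at most two length-$1$ sequences $(z)$ satisfying $\{z,\phi(z)\}=\{u,v\}$; and the fiber of an $n$-cycle with $n\geqslant 3$ consists precisely of its $n$ cyclic rotations. Hence $F$ is finite-to-one, giving $\seq^{1\text{-}1}(\mathfrak{a})\leqslant_{\mathrm{fto}}\calsf(\mathfrak{a})$.

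For the second assertion, the case $\mathfrak{a}=0$ is trivial. Otherwise, Lemma~\ref{sh08b} gives $\seq(\mathfrak{a})=\aleph_0\cdot\seq^{1\text{-}1}(\mathfrak{a})$, so it suffices (using $\aleph_0\cdot\aleph_0=\aleph_0$) to exhibit an injection $H\colon\seq^{1\text{-}1}(x)\to\omega\times\calsf(x)$. The plan is to pair $F(t)$ with a natural-number label distinguishing $t$ inside the finite fiber $F^{-1}[\{F(t)\}]$: for cycle fibers ($n\geqslant 3$) the $n$ cyclic rotations correspond bijectively to the elements of $\mov(F(t))$ via the starting point $t(0)$, so I would index them by walking the cycle from an element of $\mov(F(t))$ singled out by a canonical set-theoretic rule, and the small fibers over $\mathrm{id}_x$ and the transpositions are indexed by inspection.

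The main obstacle is this canonical indexing step in $\mathsf{ZF}$: each fiber is finite and therefore admits some enumeration, but without the axiom of choice one cannot simply pick enumerations uniformly across all $\sigma\in\calsf(x)$. The remedy is to exploit the intrinsic cyclic structure of $F$'s cycle fibers together with a definable tie-breaking rule (for example the recursive cumulative-hierarchy comparison used to select an element of minimal rank in the finite set $\mov(F(t))$); checking that such a procedure yields a globally well-defined injection consistent across all fibers is the technical heart of the argument.
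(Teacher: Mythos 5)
Your construction of $F$ and the verification that it is finite-to-one are correct, so the first inequality $\seq^{1\text{-}1}(\mathfrak{a})\leqslant_{\mathrm{fto}}\calsf(\mathfrak{a})$ is established. Your treatment of short sequences (via the fixed-point-free map $\phi$) differs from the paper's device, which instead appends a fixed anchor element $z\in x$ to every sequence not already containing $z$; both work for finite-to-one-ness.

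The second half has a genuine gap, and although you correctly locate it, your proposed remedy does not work. To turn $F$ into an injection into $\omega\times\calsf(x)$ you must, uniformly in $t$, single out one rotation of each cycle $F(t)$, i.e.\ choose a distinguished starting point in $\mov(F(t))$ for all cycles in the range of $F$ simultaneously. This is an instance of choice from finite sets, which is not provable in $\mathsf{ZF}$ (even choice from pairs can fail). Selecting ``the element of minimal rank'' does not resolve it: a finite set of sets may contain several elements of the same minimal rank, and there is no definable tie-breaking rule --- via the Jech--Sochor embedding of a symmetric model one can arrange that $\mov(F(t))$ admits automorphisms of the ambient structure permuting its elements, so no formula can uniformly pick one out. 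The paper circumvents this precisely by building the anchor into the map itself: fixing $z\in x$ once and for all, it sends $t$ to $(t(0);\dots;t(n-1))_x$ if $z\in\ran(t)$ and to $(t(0);\dots;t(n-1);z)_x$ otherwise, so that every cycle in the range moves $z$; the label $t^{-1}(z)$ (respectively $\dom(t)+1$ in the second case) then disambiguates each fiber canonically, with no choice required. If you modify your $F$ accordingly, the rest of your argument (invoking Lemma~\ref{sh08b}) goes through.
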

\begin{proof}
If $\mathfrak{a}=0$ then $\calsf(\mathfrak{a})=\seq^{1\text{-}1}(\mathfrak{a})=\seq(\mathfrak{a})=1$.
Otherwise, let $x$ be a set such that $|x|=\mathfrak{a}$ and let $z\in x$.
Let $f$ be the function on $\seq^{1\text{-}1}(x)$ such that
for all $t\in\seq^{1\text{-}1}(x)$, $f(t)=(t(0);\dots;t(n-1))_x$, if $z\in\ran(t)$,
and $f(t)=(t(0);\dots;t(n-1);z)_x$, otherwise, where $n=\dom(t)$.
Then $f$ is a finite-to-one map from $\seq^{1\text{-}1}(x)$ into $\calsf(x)$,
and hence $\seq^{1\text{-}1}(\mathfrak{a})\leqslant_{\mathrm{fto}}\calsf(\mathfrak{a})$.
Let $g$ be the function defined on $\seq^{1\text{-}1}(x)$ given by
\[
g(t)=
\begin{cases}
  (t^{-1}(z),f(t)), & \text{if $z\in\ran(t)$;}\\
  (\dom(t)+1,f(t)), & \text{otherwise.}
\end{cases}
\]
Then it is easily verifiable that $g$ an injection
from $\seq^{1\text{-}1}(x)$ into $\omega\times\calsf(x)$, which implies that
$\seq^{1\text{-}1}(\mathfrak{a})\leqslant\aleph_0\cdot\calsf(\mathfrak{a})$.
Now, by Lemma~\ref{sh08b}, we get that
$\seq(\mathfrak{a})=\aleph_0\cdot\seq^{1\text{-}1}(\mathfrak{a})\leqslant\aleph_0\cdot\calsf(\mathfrak{a})$.
\end{proof}

\section{Permutations that move power Dedekind finitely many elements}
In this section, we prove our first main result that for all cardinals $\mathfrak{a}$,
if $\mathfrak{a}!$ is Dedekind infinite, then $\mathfrak{a}!\nleqslant_{\mathrm{dfto}}\calspdf(\mathfrak{a})$.
Our proof is based on ideas in \cite{Shen2017}, which are originally from \cite{Specker1954}.
The strategy is as follows:

Assume towards a contradiction that there exists a set $x$ such that there is a Dedekind finite to one map
from $\cals(x)$ into $\calspdf(x)$ and such that there is an injection $h$ from $\omega$ into $\cals(x)$.
We first prove a kind of Cantor's theorem for $\cals(x)$, and then, by transfinitely iterating this theorem,
we extend $h$ to an injection $H$ from $\mathrm{Ord}$
(i.e., the proper class of all ordinals) into $\cals(x)$, which is a contradiction.

\begin{theorem}\label{ctfs}
From functions $f:x\to\cals(x)$ and $g:\ran(f)\to\cals(x)$
such that for all $t\in\ran(f)$, $\emptyset\neq\mov(g(t))\subseteq f^{-1}[\{t\}]$,
one can explicitly define a $u\in\cals(x)\setminus\ran(f)$.
\end{theorem}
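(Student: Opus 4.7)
My plan is a diagonal construction producing an explicit permutation $u$ that, for every $t\in\ran(f)$, is forced to disagree with $t$ on the nonempty witness set $\mov(g(t))$.

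Two preliminary observations will make the bookkeeping painless. First, the sets $\mov(g(t))$ for $t\in\ran(f)$ are pairwise disjoint, since $\mov(g(t))\subseteq f^{-1}[\{t\}]$ and distinct fibers of $f$ are disjoint. Second, each $\mov(g(t))$ is $g(t)$-invariant (the support of a permutation is invariant under it), so $g(t)\upharpoonright\mov(g(t))$ is a fixed-point-free bijection of $\mov(g(t))$. Together these let us freely paste different partial permutations along the various $\mov(g(t))$.

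For the construction, associate to each $t\in\ran(f)$ the yes/no question: does $t$ coincide with $g(t)$ on all of $\mov(g(t))$? Define $u\colon x\to x$ by
\[
u(z)=\begin{cases} g(f(z))(z) & \text{if } z\in\mov(g(f(z))) \text{ and the answer for } f(z) \text{ is no},\\ z & \text{otherwise.}\end{cases}
\]
This is explicit in $(f,g)$. By the two observations, on each $\mov(g(t))$ the map $u$ is either the identity or the bijection $g(t)\upharpoonright\mov(g(t))$, and $u$ is the identity off $\bigcup_{t\in\ran(f)}\mov(g(t))$; hence $u\in\cals(x)$.

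It remains to verify $u\notin\ran(f)$. Suppose toward a contradiction that $u=t$ for some $t\in\ran(f)$, and examine $\mov(g(t))\neq\emptyset$. If the answer for $t$ is yes (so $t\upharpoonright\mov(g(t))=g(t)\upharpoonright\mov(g(t))$), then by construction $u\upharpoonright\mov(g(t))$ is the identity, forcing $g(t)\upharpoonright\mov(g(t))$ to be the identity, which contradicts the definition of $\mov(g(t))$. If the answer is no, then by construction $u\upharpoonright\mov(g(t))=g(t)\upharpoonright\mov(g(t))$, so $u=t$ yields $t\upharpoonright\mov(g(t))=g(t)\upharpoonright\mov(g(t))$, contradicting the ``no''. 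The main subtlety is precisely this toggle: the naive attempt ``let $u$ be the product of all $g(t)$'s'' fails to diagonalize against any $t$ that happens to coincide with $g(t)$ on $\mov(g(t))$, so $u$ must flip to the identity in exactly those cases, and conversely it must flip away from the identity when $t$ disagrees with $g(t)$ on the support.
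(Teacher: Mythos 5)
Your proof is correct and follows essentially the same diagonal construction as the paper: on each fiber you toggle between the identity and $g(t)$ according to whether $t$ already agrees with $g(t)$ there, the only cosmetic difference being that you test agreement on $\mov(g(t))$ while the paper tests it on the whole fiber $f^{-1}[\{t\}]$ (both choices diagonalize correctly). Your explicit verification that $u$ is a permutation and the two-case contradiction are exactly the content the paper leaves implicit.
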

\begin{proof}
We use Cantor's diagonal construction:
Let $u$ be the permutation of $\dom(f)$ such that for all $t\in\ran(f)$ and all $z\in f^{-1}[\{t\}]$,
\[
u(z)=
\begin{cases}
  z,       & \text{if $t\upharpoonright f^{-1}[\{t\}]=g(t)\upharpoonright f^{-1}[\{t\}]$;}\\
  g(t)(z), & \text{otherwise.}
\end{cases}
\]
For all $t\in\ran(f)$, $u\upharpoonright f^{-1}[\{t\}]\neq t\upharpoonright f^{-1}[\{t\}]$,
and hence $u\notin\ran(f)$.
\end{proof}

\begin{lemma}\label{sh15}
From an infinite ordinal $\alpha$, one can explicitly define an injection $f:\fin(\alpha)\to\alpha$.
\end{lemma}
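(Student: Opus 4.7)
The plan is to factor the desired injection through the standard $\mathsf{ZF}$ fact that, for every infinite ordinal $\alpha$, there is a parameter-free definable bijection $\alpha\times\alpha\to\alpha$. Concretely, I would first recall the Gödel pairing function $\Gamma:\mathrm{Ord}\times\mathrm{Ord}\to\mathrm{Ord}$ obtained by transfinite recursion as the order-isomorphism between $\mathrm{Ord}\times\mathrm{Ord}$, under the well-order comparing pairs first by their maximum coordinate and then lexicographically, and $\mathrm{Ord}$ itself. A classical computation shows that $\Gamma\upharpoonright(\alpha\times\alpha)$ is a bijection onto $\alpha$ whenever $\alpha$ is an infinite ordinal, and since no parameters enter the definition of $\Gamma$, it is a bona fide class function.

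Iterating, I would define by recursion on $n\in\omega$ the class function $\Gamma_n$ by $\Gamma_0(\emptyset)=0$, $\Gamma_1=\mathrm{id}$, and $\Gamma_{n+1}(\sigma)=\Gamma(\Gamma_n(\sigma\upharpoonright n),\sigma(n))$, so that each $\Gamma_n$ restricts to an injection $\alpha^n\to\alpha$ for every infinite $\alpha$. Next, given $s\in\fin(\alpha)$, let $\sigma_s:|s|\to\alpha$ be the unique strictly $\in$-increasing enumeration of $s$, which is explicitly definable from $s$ alone since $s$ is finite and $\in$-well-ordered. Set
\[
f(s)=\Gamma\bigl(|s|,\,\Gamma_{|s|}(\sigma_s)\bigr).
\]
Because $\alpha$ is infinite we have $\omega\subseteq\alpha$, so $|s|\in\alpha$ and $\Gamma_{|s|}(\sigma_s)\in\alpha$; hence $f(s)\in\alpha$. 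Injectivity of $f$ is immediate: if $f(s)=f(s')$ then injectivity of $\Gamma$ gives $|s|=|s'|$ and $\Gamma_{|s|}(\sigma_s)=\Gamma_{|s|}(\sigma_{s'})$, and injectivity of $\Gamma_{|s|}$ then yields $\sigma_s=\sigma_{s'}$, so $s=s'$.

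The only nontrivial step is the claim that $\Gamma\upharpoonright(\alpha\times\alpha)$ actually lands inside $\alpha$ for every infinite $\alpha$, equivalently, that the initial segment of length $\alpha$ in the Gödel well-order on $\mathrm{Ord}\times\mathrm{Ord}$ is exactly $\alpha\times\alpha$. This is the standard $\mathsf{ZF}$ lemma on ordinal multiplication of cardinals and requires no form of choice, so I would simply quote it. Everything else is bookkeeping, and the uniform definability of $\sigma_s$, $\Gamma_n$, and $\Gamma$ shows that $f$ is obtained from $\alpha$ by an explicit class function, as required by the conventions of the paper.
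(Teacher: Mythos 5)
Your construction has a genuine gap at the step you yourself single out as ``the only nontrivial step'': the claim that the G\"odel pairing $\Gamma$ restricts to a bijection of $\alpha\times\alpha$ onto $\alpha$ for \emph{every} infinite ordinal $\alpha$. The classical theorem you want to quote is about infinite \emph{cardinals} (alephs), not arbitrary infinite ordinals, and it fails already for $\alpha=\omega+1$: under the G\"odel well-order the predecessors of $(\omega,\omega)$ consist of the $\omega$-many pairs with maximum below $\omega$ followed by the pairs $(0,\omega),(1,\omega),\dots$ and $(\omega,0),(\omega,1),\dots$, so $\Gamma(\omega,\omega)=\omega\cdot3\notin\omega+1$. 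Thus $\Gamma$ does not even map $(\omega+1)\times(\omega+1)$ into $\omega+1$, and your $f(s)=\Gamma\bigl(|s|,\Gamma_{|s|}(\sigma_s)\bigr)$ need not land in $\alpha$. (The ordinals closed under $\Gamma$ form a proper subclass containing the infinite cardinals; a general infinite ordinal is not among them.)

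The rest of your argument (the iterated pairings $\Gamma_n$, the canonical increasing enumeration $\sigma_s$, the injectivity computation) is fine and uniformly definable, so the proof can be repaired, but only by inserting a \emph{uniformly definable} bijection between $\alpha$ and the initial ordinal $|\alpha|$ and conjugating your map through it. That insertion is not free: since the lemma demands an explicit class function of $\alpha$, you cannot simply ``choose'' a bijection $\alpha\to|\alpha|$, and producing a canonical one is essentially the same order of work as the lemma itself (it is the content of the standard Hessenberg-style argument, and of the result the paper cites, \cite[Theorem~5.19]{Halbeisen2017}, for which the paper gives no proof of its own). As written, your proof reduces the lemma to a statement that is false in the generality in which you invoke it.
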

\begin{proof}
Cf.~\cite[Theorem~5.19]{Halbeisen2017}.
\end{proof}

\begin{lemma}\label{sh16}
From a finite-to-one map $f:\alpha\to x$, where $\alpha$ is an infinite ordinal,
one can explicitly define an injection $g:\alpha\to x$.
\end{lemma}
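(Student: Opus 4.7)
The plan is to exploit two facts: $\ran(f)$, being a surjective image of the ordinal $\alpha$, is automatically well-orderable, and a finite-to-one map can only inflate cardinality by a factor of $\aleph_0$, which is absorbed by any infinite well-orderable cardinal. Concretely, I will build injections $\alpha \hookrightarrow y \times \omega$ and $y \times \omega \hookrightarrow y$, where $y = \ran(f) \subseteq x$, and compose them.

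First, let $y = \ran(f)$ and define $\iota : y \to \alpha$ by $\iota(z) = \min f^{-1}[\{z\}]$. This is well-defined because each fiber is a nonempty finite subset of the ordinal $\alpha$, and it is plainly injective. Let $\beta$ be the order type of $\iota[y] \subseteq \alpha$, and let $\sigma : \beta \to y$ be the bijection obtained by composing the order-isomorphism $\beta \to \iota[y]$ with $\iota^{-1}$. For each $z \in y$ the finite set $f^{-1}[\{z\}]$ has its canonical increasing enumeration $e_z : n_z \to f^{-1}[\{z\}]$ with $n_z = |f^{-1}[\{z\}]| \in \omega$, so I can define $\phi : \alpha \to y \times \omega$ by $\phi(\gamma) = (f(\gamma), e_{f(\gamma)}^{-1}(\gamma))$. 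This is an injection: the first coordinate identifies the fiber containing $\gamma$, and the second pins down the position of $\gamma$ within it. Since $\alpha$ is infinite and injects into $y \times \omega$, the ordinal $\beta$ must also be infinite.

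It then suffices to absorb the $\omega$-factor. By standard ordinal arithmetic in $\mathsf{ZF}$, from any infinite ordinal $\beta$ one can explicitly define an injection $p : \beta \times \omega \to \beta$ (for instance by restricting G\"odel's canonical pairing on ordinals). Transporting $p$ along $\sigma$ gives an injection $q : y \times \omega \to y$, namely $q(z, n) = \sigma(p(\sigma^{-1}(z), n))$, and the composition $g = q \circ \phi : \alpha \to y \subseteq x$ is the required injection. Every step is defined by a formula in $(f, \alpha)$ alone, meeting the explicit-definability convention. I foresee no serious obstacle; the only conceptual content is the observation that $\ran(f)$ is a well-orderable subset of $x$ equinumerous with $\alpha$.
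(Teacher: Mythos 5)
Your proof is correct, and it is essentially the standard argument; the paper gives no proof of its own here but defers to \cite[Lemma~3.3]{Shen2017}, which proceeds along the same lines (well-order $\ran(f)$ via the minima of the fibres, code each $\gamma<\alpha$ by its fibre together with its position in that fibre, and absorb the resulting $\aleph_0$ factor into the infinite well-ordered cardinal). One small point of justification: that $\beta$ is infinite does not follow from ``$\alpha$ is infinite and injects into $y\times\omega$'' (an infinite ordinal can inject into $y\times\omega$ with $y$ finite, e.g.\ $\alpha=\omega$ into $1\times\omega$); the correct and equally immediate reason is that if $y$ were finite then $\alpha$ would be a finite union of finite fibres, hence finite.
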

\begin{proof}
Cf.~\cite[Lemma~3.3]{Shen2017}.
\end{proof}

In \cite[Theorem~2.2]{VejjajivaPanasawatwong2014}, Vejjajiva and Panasawatwong proved a lemma
which states that from a set $x$ and an injection $f:\alpha\to\pdfin(x)$, where $\alpha$ is
an infinite ordinal, one can explicitly define a surjection $g:x\twoheadrightarrow\alpha$.
This lemma was originally proved by Halbeisen and Shelah
(cf.~\cite[Theorem~3]{HalbeisenShelah1994}) for $\fin(x)$.
The key step of our proof is a corresponding lemma for $\calspdf(x)$:

\begin{lemma}\label{sh17}
From an injection $f:\alpha\to\calspdf(x)$, where $\alpha$ is an infinite ordinal,
one can explicitly define a pair of functions $(g,h)$ such that $g$ is a surjection from $x$ onto $\alpha$,
$h$ is a function from $\alpha$ into $\cals(x)$, and for all $\beta<\alpha$,
$\emptyset\neq\mov(h(\beta))\subseteq g^{-1}[\{\beta\}]$.
\end{lemma}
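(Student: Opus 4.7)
The plan is to reduce the problem to the Vejjajiva--Panasawatwong lemma for $\pdfin(x)$ and then to recover the family $h$ of permutations by exploiting the additional data carried by $f$ over and above the move set of each $f(\beta)$.

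First I would convert $f$ into an injection of $\alpha$ into $\pdfin(x)$. Define $F : \alpha \to \pdfin(x)$ by $F(\beta) = \mov(f(\beta))$; this is valid because $f(\beta) \in \calspdf(x)$. For every $y \in \pdfin(x)$, the fiber $F^{-1}[\{y\}]$ embeds into $\cals(y)$ via $\beta \mapsto f(\beta)\upharpoonright y$, which is injective since $f$ is injective and distinct permutations of $x$ with move set contained in $y$ are distinguished by their restriction to $y$. By Fact~\ref{sh11}, $\cals(y)$ is Dedekind finite, so this fiber is a Dedekind finite subset of the well-ordered set $\alpha$, hence finite. Thus $F$ is finite-to-one, and Lemma~\ref{sh16} produces an injection $\hat F : \alpha \to \pdfin(x)$. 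Applying the Vejjajiva--Panasawatwong lemma to $\hat F$ then yields an explicit surjection $g : x \twoheadrightarrow \alpha$.

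Given $g$, the natural candidate for $h(\beta)$ is the permutation of $x$ that agrees with $f(\beta) \rhd g^{-1}[\{\beta\}]$ on $g^{-1}[\{\beta\}]$ and with $\mathrm{id}_x$ elsewhere; this is well-defined because every orbit of a permutation in $\calspdf(x)$ is finite. By construction $\mov(h(\beta)) \subseteq g^{-1}[\{\beta\}]$, and $h$ depends explicitly only on $f$ and $g$.

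The main obstacle is proving $\mov(h(\beta)) \neq \emptyset$ for every $\beta$, equivalently, that at least one $f(\beta)$-orbit meets $g^{-1}[\{\beta\}]$ in two or more points. This is not delivered automatically by V--P: the surjection $g$ is built from $\hat F$ alone, which knows only the move sets $\mov(f(\beta))$ and not the way $f(\beta)$ permutes them. To overcome this, I would open up V--P's construction (itself a pdfin-variant of the Halbeisen--Shelah argument for $\fin$) and modify the step where each $\beta$ receives a ``witness'' element of $\hat F(\beta)$: replace witness elements by entire non-trivial $f(\beta)$-orbits inside $\mov(f(\beta))$, so that the resulting fiber $g^{-1}[\{\beta\}]$ contains a full orbit. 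This is structurally available because $\mov(f(\beta))$ is a non-empty pdfin union of its finite non-trivial $f(\beta)$-orbits, and the set of those orbits sits inside $\cals(\mov(f(\beta)))$, which is itself Dedekind finite and controllable. With this refinement, $f(\beta) \rhd g^{-1}[\{\beta\}]$ is non-trivial, and the pair $(g,h)$ satisfies the required properties. I expect the bulk of the technical work to lie precisely in carrying out this orbit-level adaptation of the V--P recursion.
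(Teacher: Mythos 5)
Your reduction to a finite-to-one map $F:\beta\mapsto\mov(f(\beta))$ and the use of Lemma~\ref{sh16} plus the Vejjajiva--Panasawatwong machinery to obtain a surjection $g:x\twoheadrightarrow\alpha$ matches the opening of the paper's proof. But the step you defer --- arranging $\mov(h(\beta))\neq\emptyset$ --- is the entire content of the lemma, and the mechanism you propose for it (make each fiber $g^{-1}[\{\beta\}]$ contain a full non-trivial $f(\beta)$-orbit, then set $h(\beta)=f(\beta)\rhd g^{-1}[\{\beta\}]$) provably cannot work. Take $\alpha$ infinite, $x=\{*\}\cup\{z_\beta\mid\beta<\alpha\}$ with these elements distinct, and $f(\beta)=(*;z_\beta)_x$. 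Every non-trivial orbit of every $f(\beta)$ is $\{*,z_\beta\}$, and all of them contain $*$; so at most one fiber of any surjection $g$ can contain a full non-trivial orbit. Worse, for any set $S$ with $*\notin S$ and any $\beta$, the induced permutation $f(\beta)\rhd S$ is the identity (each $z_{\beta'}\in S$ is either fixed by $f(\beta)$ or sent to $*\notin S$ and then back to itself), so your $h(\beta)$ has empty move set for all but at most one $\beta$. No refinement of the V--P recursion that keeps $h(\beta)$ of the form ``$f(\text{something})$ induced on the fiber'' can repair this.

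What the paper actually does is substantially more involved and contains ideas absent from your sketch. It works with the atoms $\Omega(\gamma)$ of the partition generated by the sets $\mov(f(\beta))$, pairs each atom $\Omega(\gamma)$ with the atom $\Omega(\zeta_\gamma)$ receiving the $f(\xi_\gamma)$-image of $\Omega(\gamma)$, and extracts by a greedy recursion a family of pairwise disjoint pairs on which the induced permutation $f(\xi_\gamma)\rhd(\Omega(\gamma)\cup\Omega(\zeta_\gamma))$ is non-trivial. For the ordinals not absorbed by this matching it introduces a second matching along the relation $\asymp$ (two atoms are related when their elements have a common image in a third atom) and, crucially, uses for those fibers permutations built from the partial bijections $\Gamma(\gamma,\delta)$ and their inverses --- in the example above these are exactly the transpositions $(z_\gamma;z_\delta)_x$, which are \emph{not} induced by any $f(\beta)$. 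A final finite-to-one absorption argument (the map $\Delta$, using power Dedekind finiteness of the atoms) shows the leftover ordinals can be folded into the matched ones. Your proposal correctly locates the difficulty but supplies neither the pairing of atoms, nor the non-induced permutations needed for the $\asymp$-matched fibers, nor the absorption argument; as written it is a plan whose central step fails on a two-element-orbit example.
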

\begin{proof}
Let $\alpha$ be an infinite ordinal and let $f$ be an injection from $\alpha$ into $\calspdf(x)$.
Since $\alpha=\dom(f)$ and $x=\dom(f(0))$,
it suffices to explicitly define such a pair $(g,h)$ from $\alpha$, $x$, $f$.

Let $\Phi$ be the function defined on $\alpha$ given by $\Phi(\beta)=\mov(f(\beta))$.
Then by Facts~\ref{sh02}~\&~\ref{sh14}, $\Phi$ is a Dedekind finite to one map from $\alpha$ into $\pdfin(x)$.
Since all Dedekind finite subsets of $\alpha$ are finite,
$\Phi$ is finite-to-one.

Let $\sim$ be the equivalence relation on $x$ such that for all $z$, $v\in x$,
\[
z\sim v\text{ if and only if }\forall\beta<\alpha\bigl(z\in\Phi(\beta)\leftrightarrow v\in\Phi(\beta)\bigr).
\]
Let $\Pi=\{[z]_\sim\mid z\in\bigcup_{\beta<\alpha}\Phi(\beta)\}$,
where $[z]_\sim$ is the equivalence class of~$z$ with respect to the equivalence relation $\sim$.

\begin{xclm}
We can explicitly define a bijection $\Omega:\alpha\twoheadrightarrow\Pi$.
\end{xclm}
\begin{proof}[Proof of Claim]
For each $z\in\bigcup_{\beta<\alpha}\Phi(\beta)$, let $\eta_z=\min\{\beta<\alpha\mid z\in\Phi(\beta)\}$.
Let $\Psi$ be the function defined on $\bigcup_{\beta<\alpha}\Phi(\beta)$ given by
\[
\Psi(z)=\bigl\{\gamma<\alpha\bigm|z\in\Phi(\gamma)\wedge
\forall\beta<\gamma\bigl(\Phi(\eta_z)\cap\Phi(\beta)\neq\Phi(\eta_z)\cap\Phi(\gamma)\bigr)\bigr\}.
\]
Note that for all $z\in\bigcup_{\beta<\alpha}\Phi(\beta)$, $\eta_z=\min\Psi(z)$.
For every $z\in\bigcup_{\beta<\alpha}\Phi(\beta)$,
since the function that maps each $\gamma\in\Psi(z)$ to $\Phi(\eta_z)\cap\Phi(\gamma)$
is an injection from $\Psi(z)$ into $\wp(\Phi(\eta_z))$ and $\Phi(\eta_z)$ is power Dedekind finite,
$\Psi(z)$ is a finite subset of $\alpha$. We claim that for all $z$, $v\in\bigcup_{\beta<\alpha}\Phi(\beta)$,
\begin{equation}\label{sh18}
z\sim v\text{ if and only if }\Psi(z)=\Psi(v).
\end{equation}
In fact, obviously, if $z\sim v$ then $\eta_z=\eta_v$ and $\Psi(z)=\Psi(v)$.
For the other direction, assume that $\Psi(z)=\Psi(v)$.
Then $\eta_z=\min\Psi(z)=\min\Psi(v)=\eta_v$. Take an arbitrary $\beta<\alpha$.
Let $\gamma=\min\{\delta<\alpha\mid\Phi(\eta_z)\cap\Phi(\delta)=\Phi(\eta_z)\cap\Phi(\beta)\}$.
If $z\in\Phi(\beta)$, then $z\in\Phi(\gamma)$ and hence $\gamma\in\Psi(z)=\Psi(v)$,
which implies that $v\in\Phi(\gamma)$ and $v\in\Phi(\beta)$.
Similarly, if $v\in\Phi(\beta)$ then $z\in\Phi(\beta)$. Hence $z\sim v$.

Now, by \eqref{sh18}, the function
$\Lambda=\{([z]_\sim,\Psi(z))\mid z\in\bigcup_{\beta<\alpha}\Phi(\beta)\}$
is an injection from $\Pi$ into $\fin(\alpha)$.
By Lemma~\ref{sh15}, we can explicitly define an injection $p:\fin(\alpha)\to\alpha$.
Let $r$ be the well-ordering of $\Pi$ induced by $p\circ\Lambda$;
that is, $r=\{(c,d)\mid c,d\in\Pi\text{ and }p(\Lambda(c))\in p(\Lambda(d))\}$.
Let $\theta$ be the order type of $\langle\Pi,r\rangle$,
and let $\Theta$ be the unique isomorphism of $\langle\theta,\in\rangle$ onto $\langle\Pi,r\rangle$.
Then $p\circ\Lambda\circ\Theta$ is an injection from $\theta$ into $\alpha$.

Let $\Xi$ be the function on $\alpha$ given by
$\Xi(\beta)=\{\delta<\theta\mid\Theta(\delta)\subseteq\Phi(\beta)\}$.
For every $\beta<\alpha$, since $\Phi(\beta)$ is power Dedekind finite, $\Xi(\beta)\in\fin(\theta)$.
Since $\Phi(\beta)=\bigcup_{\delta\in\Xi(\beta)}\Theta(\delta)$ for any $\beta<\alpha$
and $\Phi$ is finite-to-one, $\Xi$ is a finite-to-one map from $\alpha$ into $\fin(\theta)$.
Then by Lemma~\ref{sh16}, we can explicitly define an injection $t:\alpha\to\fin(\theta)$.
By Lemma~\ref{sh15}, we can explicitly define an injection $q:\fin(\theta)\to\theta$.
Then $q\circ t$ is an injection from $\alpha$ into $\theta$.

Therefore, by Theorem~\ref{cbt}, we can explicitly define a bijection $u:\alpha\twoheadrightarrow\theta$.
Now the function $\Omega=\Theta\circ u$ is a bijection from $\alpha$ onto $\Pi$.
\end{proof}

Now we turn back to the construction of $(g,h)$.
For each $\gamma<\alpha$, let $\xi_\gamma=\min\{\beta<\alpha\mid\Omega(\gamma)\subseteq\Phi(\beta)\}$, and let
\[
\zeta_\gamma=\min\bigl\{\delta<\alpha\bigm|\Omega(\delta)\subseteq\Phi(\xi_\gamma)
\wedge\exists z\in\Omega(\gamma)\bigl(f(\xi_\gamma)(z)\in\Omega(\delta)\bigr)\bigr\}.
\]
Then we have
\begin{equation}\label{sh19}
\forall\gamma<\alpha\bigl(\mov\bigl(f(\xi_\gamma)\rhd(\Omega(\gamma)\cup\Omega(\zeta_\gamma))\bigr)\neq\emptyset\bigr).
\end{equation}
We define by recursion a function $F$ on an initial segment of $\mathrm{Ord}$ as follows:
\[
F(\beta)=\min\bigl\{\gamma<\alpha\bigm|\{\gamma,\zeta_\gamma\}\cap
\bigcup_{\delta<\beta}\{F(\delta),\zeta_{F(\delta)}\}=\emptyset\bigr\},
\]
as long as such a $\gamma<\alpha$ exists.
Clearly, $F$ is an injection from some ordinal into $\alpha$.
Let $A=\ran(F)$ and let $B=\bigcup_{\gamma\in A}\{\gamma,\zeta_\gamma\}$.
Then we have
\begin{equation}\label{sh20}
\forall\gamma,\delta\in A\bigl(\gamma\neq\delta\rightarrow
\{\gamma,\zeta_\gamma\}\cap\{\delta,\zeta_\delta\}=\emptyset\bigr).
\end{equation}
There cannot be a $\gamma\in\alpha\setminus B$ such that $\zeta_\gamma\notin B$,
since otherwise the least such $\gamma$ would be in the range of $F$, which is a contradiction.
Therefore
\begin{equation}\label{sh21}
\forall\gamma\in\alpha\setminus B\bigl(\zeta_\gamma\in B\bigr).
\end{equation}

Let $\asymp$ be the irreflexive and symmetric relation on $\alpha\setminus B$
such that for all $\gamma$, $\delta\in\alpha\setminus B$, $\gamma\asymp\delta$ if and only if
\[
\gamma\neq\delta\wedge\zeta_\gamma=\zeta_\delta\wedge\exists z\in\Omega(\gamma)\exists v\in\Omega(\delta)
\bigl(f(\xi_\gamma)(z)=f(\xi_\delta)(v)\in\Omega(\zeta_\gamma)\bigr).
\]
Let $\Gamma$ be the function defined on
$\{(\gamma,\delta)\mid\gamma,\delta\in\alpha\setminus B\text{ and }\gamma\asymp\delta\}$ given by
\[
\Gamma(\gamma,\delta)=\bigl\{(z,v)\in\Omega(\gamma)\times\Omega(\delta)
\bigm|f(\xi_\gamma)(z)=f(\xi_\delta)(v)\in\Omega(\zeta_\gamma)\bigr\}.
\]
Note that for all $\gamma$, $\delta\in\alpha\setminus B$,
if $\gamma\asymp\delta$ then $\Gamma(\gamma,\delta)$ is a non-empty injection
from a subset of $\Omega(\gamma)$ into $\Omega(\delta)$ and
the inverse of $\Gamma(\gamma,\delta)$ is $\Gamma(\delta,\gamma)$.
We define, by mutual recursion, two functions $G$ and $H$ on an initial segment of $\mathrm{Ord}$ as follows:
\begin{align*}
   G(\beta)=\min\bigl\{\gamma\in\alpha\setminus B & \bigm|\exists\delta\in\alpha\setminus B
            \bigl(\gamma\asymp\delta\wedge\{\gamma,\delta\}\cap(G[\beta]\cup H[\beta])=\emptyset\bigr)\bigr\};\\
   H(\beta)=\min\bigl\{\delta\in\alpha\setminus B & \bigm|G(\beta)\asymp\delta\wedge
            \{G(\beta),\delta\}\cap(G[\beta]\cup H[\beta])=\emptyset\bigr\},
\end{align*}
as long as
\[
\exists\gamma\in\alpha\setminus B\exists\delta\in\alpha\setminus B
\bigl(\gamma\asymp\delta\wedge\{\gamma,\delta\}\cap(G[\beta]\cup H[\beta])=\emptyset\bigr).
\]
Clearly, $G$ and $H$ are injections from some ordinal into
$\alpha\setminus B$ such that $\ran(G)\cap\ran(H)=\emptyset$.
Let $C=\ran(G)$ and let $D=\ran(G)\cup\ran(H)$.
For each $\gamma\in C$, let $\rho_\gamma=H(G^{-1}(\gamma))$.
Then we have that $D=\bigcup_{\gamma\in C}\{\gamma,\rho_\gamma\}$ and
\begin{equation}\label{sh22}
\forall\gamma\in C\bigl(\gamma\asymp\rho_\gamma\wedge\forall\delta\in C
\bigl(\gamma\neq\delta\rightarrow\{\gamma,\rho_\gamma\}\cap\{\delta,\rho_\delta\}=\emptyset\bigr)\bigr).
\end{equation}
There cannot be a $\gamma\in\alpha\setminus(B\cup D)$ such that
$\gamma\asymp\delta$ for some $\delta\in\alpha\setminus(B\cup D)$,
since otherwise the least such $\gamma$ would be in the range of $G$,
which is a contradiction. Therefore
\begin{equation}\label{sh23}
\forall\gamma,\delta\in\alpha\setminus(B\cup D)\bigl(\neg\,\gamma\asymp\delta\bigr).
\end{equation}

Now, in view of \eqref{sh20}, \eqref{sh22}, and \eqref{sh21},
we define a function $\Delta$ on $\alpha$ by setting, for $\beta<\alpha$,
\[
\Delta(\beta)=
\begin{cases}
  \beta,                                                                           & \text{if $\beta\in A\cup C$;}\\
  \text{the unique $\gamma\in A$ such that }\beta=\zeta_\gamma,                    & \text{if $\beta\in B\setminus A$;}\\
  \text{the unique $\gamma\in C$ such that }\beta=\rho_\gamma,                     & \text{if $\beta\in D\setminus C$;}\\
  \text{the unique $\gamma\in A$ such that }\zeta_\beta\in\{\gamma,\zeta_\gamma\}, & \text{if $\beta\notin B\cup D$.}
\end{cases}
\]
We claim that
\begin{equation}\label{sh24}
\Delta\text{ is a finite-to-one map from $\alpha$ into }A\cup C.
\end{equation}
Clearly, it suffices to show that for all $\gamma\in A$,
$\{\beta\in\alpha\setminus(B\cup D)\mid\zeta_\beta\in\{\gamma,\zeta_\gamma\}\}$ is finite.
For this purpose, in turn, it suffices to show that for all $\delta\in B$,
$\{\beta\in\alpha\setminus(B\cup D)\mid\zeta_\beta=\delta\}$ is finite.
Take an arbitrary $\delta\in B$, and let
$u=\{\beta\in\alpha\setminus(B\cup D)\mid\zeta_\beta=\delta\}$.
Let $t$ be the function defined on $u$ given by
\[
t(\beta)=\bigl\{w\in\Omega(\delta)\bigm|\exists z\in\Omega(\beta)\bigl(f(\xi_\beta)(z)=w\bigr)\bigr\}.
\]
Then for all $\beta\in u$, by the definition of $\zeta_\beta$,
$t(\beta)$ is a non-void subset of $\Omega(\delta)$.
We show that
\begin{equation}\label{sh25}
\forall\beta,\gamma\in u\bigl(\beta\neq\gamma\rightarrow t(\beta)\cap t(\gamma)=\emptyset\bigr).
\end{equation}
Assume towards a contradiction that for two distinct elements $\beta$, $\gamma$ of $u$,
$t(\beta)\cap t(\gamma)\neq\emptyset$. Let $w\in t(\beta)\cap t(\gamma)$.
Then we have $\exists z\in\Omega(\beta)(f(\xi_\beta)(z)=w)$
and $\exists v\in\Omega(\gamma)(f(\xi_\gamma)(v)=w)$, and therefore $\beta\asymp\gamma$,
contradicting \eqref{sh23}. Thus \eqref{sh25} is proved, and therefore
$t$ is an injection from $u$ into $\wp(\Omega(\delta))$.
Since $\Omega(\delta)$ is power Dedekind finite,
we get that $u$ is a finite subset of $\alpha$,
and thus \eqref{sh24} is proved.

By \eqref{sh24} and Lemma~\ref{sh16},
we can explicitly define an injection $q$ from $\alpha$ into $A\cup C$.
Let $p$ be the function defined on $\alpha$ given by
\[
p(\beta)=
\begin{cases}
  \Omega(q(\beta))\cup\Omega(\zeta_{q(\beta)}), & \text{if $q(\beta)\in A$;}\\
  \Omega(q(\beta))\cup\Omega(\rho_{q(\beta)}),  & \text{if $q(\beta)\in C$.}
\end{cases}
\]
Then for all $\beta<\alpha$, $p(\beta)$ is a non-void subset of $x$ and,
by \eqref{sh20} and \eqref{sh22},
\[
\forall\beta,\gamma<\alpha\bigl(\beta\neq\gamma\rightarrow p(\beta)\cap p(\gamma)=\emptyset\bigr).
\]

Now we define functions $g$ and $h$ as follows:
Define $g$ to be the function on $x$ such that for all $z\in x$,
\[
g(z)=
\begin{cases}
  \text{the unique $\beta<\alpha$ such that }z\in p(\beta), & \text{if $z\in\bigcup\ran(p)$;}\\
  0,                                                        & \text{otherwise.}
\end{cases}
\]
Then $g$ is a surjection from $x$ onto $\alpha$ such that for all $\beta<\alpha$,
$p(\beta)\subseteq g^{-1}[\{\beta\}]$.
Define $h$ to be the function on $\alpha$ such that for all $\beta<\alpha$,
if $q(\beta)\in A$, then $h(\beta)$ is the permutation of $x$ given by
\[
h(\beta)(z)=
\begin{cases}
  (f(\xi_{q(\beta)})\rhd p(\beta))(z), & \text{if $z\in p(\beta)$;}\\
  z,                                   & \text{otherwise,}
\end{cases}
\]
and if $q(\beta)\in C$, then $h(\beta)$ is the permutation of $x$ given by
\[
h(\beta)(z)=
\begin{cases}
  \Gamma(q(\beta),\rho_{q(\beta)})(z), & \text{if $z\in\dom(\Gamma(q(\beta),\rho_{q(\beta)}))$;}\\
  \Gamma(\rho_{q(\beta)},q(\beta))(z), & \text{if $z\in\ran(\Gamma(q(\beta),\rho_{q(\beta)}))$;}\\
  z,                                   & \text{otherwise.}
\end{cases}
\]
Then for all $\beta<\alpha$ such that $q(\beta)\in A$, by \eqref{sh19},
\[
\emptyset\neq\mov(h(\beta))\subseteq p(\beta)\subseteq g^{-1}[\{\beta\}].
\]
On the other hand, for all $\beta<\alpha$ such that $q(\beta)\in C$, by \eqref{sh22},
$q(\beta)\asymp\rho_{q(\beta)}$, and hence, by the definition of $\Gamma$,
$\Gamma(q(\beta),\rho_{q(\beta)})$ is a non-empty injection
from a subset of $\Omega(q(\beta))$ into $\Omega(\rho_{q(\beta)})$ and
the inverse of $\Gamma(q(\beta),\rho_{q(\beta)})$ is $\Gamma(\rho_{q(\beta)},q(\beta))$,
which implies that
\[
\emptyset\neq\mov(h(\beta))\subseteq p(\beta)\subseteq g^{-1}[\{\beta\}].
\]
To sum up, $h$ is a function from $\alpha$ into $\cals(x)$ such that for all $\beta<\alpha$,
$\emptyset\neq\mov(h(\beta))\subseteq g^{-1}[\{\beta\}]$, which completes the proof.
\end{proof}

Now we are ready to prove our first main theorem.

\begin{theorem}\label{tspdf}
For all cardinals $\mathfrak{a}$, if $\mathfrak{a}!$ is Dedekind infinite,
then we have $\mathfrak{a}!\nleqslant_{\mathrm{dfto}}\calspdf(\mathfrak{a})$,
and hence $\calspdf(\mathfrak{a})<\mathfrak{a}!$.
\end{theorem}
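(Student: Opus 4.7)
The plan is to argue by contradiction, following the strategy sketched before the statement. Suppose $\mathfrak{a}!$ is Dedekind infinite and fix a set $x$ with $|x|=\mathfrak{a}$ together with a Dedekind finite to one map $F:\cals(x)\to\calspdf(x)$. Fix also an injection $h_0:\omega\to\cals(x)$, which exists because $\mathfrak{a}!$ is Dedekind infinite. I will construct, by transfinite recursion on $\alpha\in\mathrm{Ord}$, an injective class function $H:\mathrm{Ord}\to\cals(x)$ extending $h_0$. Since $H^{-1}$ would then be a class function on a subset of $\cals(x)$ whose range is the proper class $\mathrm{Ord}$, Replacement delivers the desired contradiction.

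For the recursive step, assume $\alpha\geqslant\omega$ and an injection $H\upharpoonright\alpha:\alpha\to\cals(x)$ has been produced. Consider the composition $F\circ(H\upharpoonright\alpha):\alpha\to\calspdf(x)$. Each of its fibres is a Dedekind finite subset of the ordinal $\alpha$, hence finite, so this composition is a finite-to-one map from $\alpha$ into $\calspdf(x)$. By Lemma~\ref{sh16} one can then explicitly extract an injection $f:\alpha\to\calspdf(x)$, and Lemma~\ref{sh17} applied to $f$ produces a pair $(g,\tilde h)$ where $g:x\twoheadrightarrow\alpha$ is a surjection, $\tilde h:\alpha\to\cals(x)$ is a function, and $\emptyset\neq\mov(\tilde h(\beta))\subseteq g^{-1}[\{\beta\}]$ for every $\beta<\alpha$.

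Now I feed this data into Theorem~\ref{ctfs}. Define $\varphi:x\to\cals(x)$ by $\varphi(z)=H(g(z))$, and $\psi:\ran(\varphi)\to\cals(x)$ by $\psi(H(\beta))=\tilde h(\beta)$; the latter is well-defined because $H\upharpoonright\alpha$ is injective, and $\ran(\varphi)=H[\alpha]$ because $g$ is surjective. A direct verification gives $\varphi^{-1}[\{H(\beta)\}]=g^{-1}[\{\beta\}]$, so $\emptyset\neq\mov(\psi(H(\beta)))\subseteq\varphi^{-1}[\{H(\beta)\}]$ for every $\beta<\alpha$. The hypotheses of Theorem~\ref{ctfs} hold for $(\varphi,\psi)$, and the theorem explicitly yields some $u\in\cals(x)\setminus\ran(\varphi)=\cals(x)\setminus H[\alpha]$. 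I set $H(\alpha):=u$, preserving injectivity.

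Because every step in the recursion is built from clauses of the form ``one can explicitly define'' (specifically Theorem~\ref{cbt}, Lemma~\ref{sh16}, Lemma~\ref{sh17}, and Theorem~\ref{ctfs}), the construction yields a definable class function $H:\mathrm{Ord}\to\cals(x)$, and the Replacement argument above applies. This establishes $\mathfrak{a}!\nleqslant_{\mathrm{dfto}}\calspdf(\mathfrak{a})$; combined with the obvious injection $\calspdf(x)\hookrightarrow\cals(x)$, which shows $\calspdf(\mathfrak{a})\leqslant\mathfrak{a}!$, this yields $\calspdf(\mathfrak{a})<\mathfrak{a}!$. The real obstacle is hidden in Lemma~\ref{sh17}, whose delicate orbit analysis is precisely what allows the transfinite iteration of the Cantor-style diagonalisation of Theorem~\ref{ctfs} to go through; modulo that lemma and Lemma~\ref{sh16}, the contradiction above is essentially routine.
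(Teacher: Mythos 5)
Your proof is correct and follows essentially the same route as the paper's: compose the hypothesised Dedekind finite to one map with $H\upharpoonright\alpha$, upgrade it to an injection via Lemma~\ref{sh16}, apply Lemma~\ref{sh17}, and transport the resulting pair along $H\upharpoonright\alpha$ to feed Theorem~\ref{ctfs}; your $\varphi$ and $\psi$ are exactly the paper's $(H\upharpoonright\alpha)\circ g$ and $p\circ(H\upharpoonright\alpha)^{-1}$. No gaps.
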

\begin{proof}
Assume towards a contradiction that there exists a cardinal $\mathfrak{a}$
such that $\mathfrak{a}!$ is Dedekind infinite and
such that $\mathfrak{a}!\leqslant_{\mathrm{dfto}}\calspdf(\mathfrak{a})$.
Let $x$ be a set such that $|x|=\mathfrak{a}$.
Let $h$ be an injection from $\omega$ into $\cals(x)$,
and let $\Phi$ be a Dedekind finite to one map from $\cals(x)$ into $\calspdf(x)$.
In what follows, we get a contradiction by constructing by recursion
an injection $H$ from the proper class $\mathrm{Ord}$ into the set $\cals(x)$.

For $n\in\omega$, we set $H(n)=h(n)$.
Now, we assume that $\alpha$ is an infinite ordinal and that
$H\upharpoonright\alpha$ is an injection from $\alpha$ into $\cals(x)$.
By Fact~\ref{sh02}, $\Phi\circ(H\upharpoonright\alpha)$ is
a Dedekind finite to one map from $\alpha$ into $\calspdf(x)$.
Since all Dedekind finite subsets of $\alpha$ are finite,
$\Phi\circ(H\upharpoonright\alpha)$ is finite-to-one.
Then by Lemma~\ref{sh16}, $\Phi\circ(H\upharpoonright\alpha)$
explicitly provides an injection $f:\alpha\to\calspdf(x)$.
By Lemma~\ref{sh17}, from $f$, we can explicitly define a pair of functions $(g,p)$
such that $g$ is a surjection from $x$ onto $\alpha$,
$p$ is a function from $\alpha$ into $\cals(x)$, and for all $\beta<\alpha$,
$\emptyset\neq\mov(p(\beta))\subseteq g^{-1}[\{\beta\}]$.
Therefore, $(H\upharpoonright\alpha)\circ g$ is a surjection from $x$ onto $H[\alpha]$,
and $p\circ(H\upharpoonright\alpha)^{-1}$ is a function from $H[\alpha]$ into~$\cals(x)$
such that for all $t\in H[\alpha]$,
\[
\emptyset\neq\mov\bigl(\bigl(p\circ(H\upharpoonright\alpha)^{-1}\bigr)(t)\bigr)
\subseteq\bigl((H\upharpoonright\alpha)\circ g\bigr)^{-1}[\{t\}].
\]
Therefore, by Theorem~\ref{ctfs}, we can explicitly define an $H(\alpha)\in\cals(x)\setminus H[\alpha]$
from $H\upharpoonright\alpha$ (and $\Phi$).
\end{proof}

\begin{corollary}\label{sh26}
For all sets $x$, if $\calspdf(x)\neq\cals(x)$ then $|\calspdf(x)|<|\cals(x)|$.
\end{corollary}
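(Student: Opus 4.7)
The plan is to combine the main theorem just proved (Theorem~\ref{tspdf}) with Dedekind's theorem (Theorem~\ref{dedt}). The crucial observation is that $\calspdf(x)$ is always a subset of $\cals(x)$, so the hypothesis $\calspdf(x)\neq\cals(x)$ actually means that $\calspdf(x)$ is a \emph{proper} subset of $\cals(x)$. This is exactly the setup needed to invoke Dedekind's theorem.

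I would argue as follows. First note that the inclusion $\calspdf(x)\subseteq\cals(x)$ yields $|\calspdf(x)|\leqslant|\cals(x)|$ via the identity injection. Suppose toward a contradiction that $|\calspdf(x)|=|\cals(x)|$. Since by hypothesis $\calspdf(x)$ is a proper subset of $\cals(x)$, Theorem~\ref{dedt} (Dedekind) applied with $\mathfrak{a}=|\cals(x)|$ forces $|\cals(x)|$ to be Dedekind infinite.

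But now Theorem~\ref{tspdf} applies: from the Dedekind infiniteness of $|\cals(x)|=\mathfrak{a}!$ (where $\mathfrak{a}=|x|$), we conclude $\calspdf(\mathfrak{a})<\mathfrak{a}!$, i.e.\ $|\calspdf(x)|<|\cals(x)|$, contradicting the assumed equality. Hence $|\calspdf(x)|\neq|\cals(x)|$, and combined with $|\calspdf(x)|\leqslant|\cals(x)|$ we get the desired strict inequality $|\calspdf(x)|<|\cals(x)|$.

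There is no real obstacle here; the whole argument is a two-line combination of Theorem~\ref{dedt} and Theorem~\ref{tspdf}, and the only thing one must be careful about is distinguishing the set-theoretic statement $\calspdf(x)\neq\cals(x)$ (which, given the inclusion, is equivalent to proper inclusion) from the cardinal statement $|\calspdf(x)|\neq|\cals(x)|$ --- the whole point of Dedekind's theorem is to bridge these two in the case that the common cardinal is Dedekind finite.
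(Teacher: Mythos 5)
Your proof is correct and is essentially the paper's own argument: assume $|\calspdf(x)|=|\cals(x)|$, invoke Theorem~\ref{dedt} on the proper inclusion to get Dedekind infiniteness of $\cals(x)$, and then contradict via Theorem~\ref{tspdf}. The only (harmless) extra step you make explicit is passing from $\leqslant$ together with $\neq$ to $<$ at the end.
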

\begin{proof}
Let $x$ be a set such that $\calspdf(x)$ is a proper subset of $\cals(x)$.
Assume towards a contradiction that $|\calspdf(x)|=|\cals(x)|$.
Then by Theorem~\ref{dedt}, $\cals(x)$ is Dedekind infinite,
and hence, by Theorem~\ref{tspdf}, $|\calspdf(x)|<|\cals(x)|$,
which is a contradiction.
\end{proof}

In fact, even without assuming $|\calspdf(x)|=|\cals(x)|$,
$\calspdf(x)\neq\cals(x)$ implies that $\cals(x)$ is Dedekind infinite,
as shown in the following theorem, which is a kind of Kuratowski's theorem for $\cals(x)$.

\begin{theorem}\label{ktfs}
For all sets $x$, $\cals(x)$ is Dedekind infinite iff $\calspdf(x)\neq\cals(x)$
(i.e., there exists a permutation of $x$ which moves power Dedekind infinitely many elements)
iff $\wp(\omega)\preccurlyeq\cals(x)$.
\end{theorem}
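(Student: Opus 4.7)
The plan is to prove this as a ring of three implications, the first two of which are essentially immediate, and the third of which contains the main content.

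First, $\wp(\omega)\preccurlyeq\cals(x)$ forces $\cals(x)$ to be Dedekind infinite, since $n\mapsto\{n\}$ injects $\omega$ into $\wp(\omega)$ and hence $\omega\preccurlyeq\cals(x)$. Next, if $\cals(x)$ is Dedekind infinite, then writing $\mathfrak{a}=|x|$, the cardinal $\mathfrak{a}!$ is Dedekind infinite, so Theorem~\ref{tspdf} gives $\calspdf(\mathfrak{a})<\mathfrak{a}!$; in particular $\calspdf(x)\neq\cals(x)$.

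For the substantive implication, suppose $\calspdf(x)\neq\cals(x)$ and fix some $f\in\cals(x)\setminus\calspdf(x)$, so that $\mov(f)$ is power Dedekind infinite. Let $O=\{\orb(f,z)\mid z\in\mov(f)\}$ be the set of non-trivial orbits of $f$, so that each element of $O$ is a countable set and $\mov(f)=\bigcup O$. I will split on whether $O$ is power Dedekind infinite. If it is, then by Theorem~\ref{kurt} applied to $\mathfrak{b}=|O|$ we have $2^{\aleph_0}\leqslant 2^\mathfrak{b}$, while Fact~\ref{sh12} gives $2^\mathfrak{b}\leqslant\mathfrak{a}!$, and chaining produces $\wp(\omega)\preccurlyeq\cals(x)$. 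If instead $O$ is power Dedekind finite, then Truss's closure result (every power Dedekind finite union of power Dedekind finite sets is power Dedekind finite, noted just after Theorem~\ref{kurt}) forces at least one orbit $o\in O$ to fail to be power Dedekind finite, for otherwise $\mov(f)=\bigcup O$ would be power Dedekind finite; since orbits are countable, such an $o$ is countably infinite. Fix any $z\in o$; then $n\mapsto f^{(n)}(z)$ is an injection $\omega\to x$, because $f^{(m)}(z)=f^{(n)}(z)$ with $m<n$ would make $o$ have at most $n-m$ elements. Writing $v_n=f^{(n)}(z)$, for each $A\subseteq\omega$ define $\pi_A\in\cals(x)$ by letting $\pi_A$ swap $v_{2n}$ with $v_{2n+1}$ for each $n\in A$, and fix every other element of $x$. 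Distinct $A$'s give distinct $\pi_A$'s by inspecting the value at $v_{2n}$ for any $n$ in the symmetric difference, so $A\mapsto\pi_A$ is an injection $\wp(\omega)\to\cals(x)$.

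The main technical point is the non-trivial case where $O$ itself is power Dedekind finite: here one must use the closure of the class of power Dedekind finite sets under unions to conclude that some orbit must be (countably) infinite, and then exploit that single infinite orbit as a ``rail'' along which to encode subsets of $\omega$ via swaps. Everything else is bookkeeping: the easy direction invokes $n\mapsto\{n\}$, the middle direction invokes Theorem~\ref{tspdf}, and the power-Dedekind-infinite-$O$ subcase is a direct application of Fact~\ref{sh12} together with Kuratowski's theorem.
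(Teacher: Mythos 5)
Your proof is correct, and its skeleton is the same as the paper's: the same ring of three implications, with the substantive direction ($\calspdf(x)\neq\cals(x)\Rightarrow\wp(\omega)\preccurlyeq\cals(x)$) resting on the same two pillars, namely Fact~\ref{sh12} combined with Kuratowski's theorem when there are power Dedekind infinitely many non-trivial orbits, and a single denumerable orbit used to encode subsets of $\omega$ otherwise. Two local differences are worth recording. First, for ``$\cals(x)$ Dedekind infinite $\Rightarrow\calspdf(x)\neq\cals(x)$'' you invoke the already established Theorem~\ref{tspdf} (which yields the stronger conclusion $\calspdf(\mathfrak{a})<\mathfrak{a}!$), whereas the paper argues directly from Lemma~\ref{sh17}: from an injection $\omega\to\calspdf(x)$ it extracts the pair $(g,h)$ and assembles the single permutation $u(z)=h(g(z))(z)$, which visibly moves power Dedekind infinitely many elements. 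Your route is shorter and legitimate, since Theorem~\ref{tspdf} precedes this theorem and does not depend on it; both ultimately run through Lemma~\ref{sh17}. Second, you split on whether the set $O$ of non-trivial orbits is power Dedekind finite, using the union-closure of power Dedekind finite sets (noted after Theorem~\ref{kurt}) to extract a denumerable orbit in the second case, while the paper splits on whether a denumerable orbit exists and, in the ``all orbits finite'' case, uses Fact~\ref{sh03} to see that $O$ is power Dedekind infinite. The two splits are interchangeable; yours trades Fact~\ref{sh03} for the union-closure lemma, and your explicit injection $A\mapsto\pi_A$ is just an unwinding of the paper's appeal to $\wp(\omega)\approx\cals(\omega)\preccurlyeq\cals(x)$.
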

\begin{proof}
Suppose that $\cals(x)$ is Dedekind infinite.
Assume towards a contradiction that $\calspdf(x)=\cals(x)$.
Let $f$ be an injection from $\omega$ into $\calspdf(x)$.
By Lemma~\ref{sh17}, there are functions $g$ and $h$ such that
$g$ is a surjection from $x$ onto $\omega$,
$h$ is a function from $\omega$ into $\cals(x)$,
and for all $n\in\omega$, $\emptyset\neq\mov(h(n))\subseteq g^{-1}[\{n\}]$.
Then the permutation $u$ of $x$ given by
\[
u(z)=h(g(z))(z)
\]
moves power Dedekind infinitely many elements, which is a contradiction.
Therefore, if $\cals(x)$ is Dedekind infinite, then $\calspdf(x)\neq\cals(x)$.

Now we suppose that there is a permutation $t$ of $x$
which moves power Dedekind infinitely many elements.
If there exists a $z\in x$ such that $\orb(t,z)$ is denumerable,
then $x$ is Dedekind infinite, and hence
$\wp(\omega)\approx\cals(\omega)\preccurlyeq\cals(x)$.
Otherwise, all orbits of $t$ are finite.
Let $y=\{\orb(t,z)\mid z\in\mov(t)\}$.
Since the function $q$ defined on $\mov(t)$ given by
$q(z)=\orb(t,z)$ is a finite-to-one map from $\mov(t)$ into $y$
and $\mov(t)$ is power Dedekind infinite, by Fact~\ref{sh03},
$y$ is also power Dedekind infinite.
Thus, by Theorem~\ref{kurt} and Fact~\ref{sh12},
$\wp(\omega)\preccurlyeq\wp(y)\preccurlyeq\cals(x)$.
Therefore $\calspdf(x)\neq\cals(x)$ implies that
$\wp(\omega)\preccurlyeq\cals(x)$, which completes the proof.
\end{proof}

\subsection{Some further results}
Next, we develop some further properties of~$\calspdf(\mathfrak{a})$.
By Theorem~\ref{ktfs}, for all cardinals $\mathfrak{a}$,
if $\mathfrak{a}!$ is Dedekind infinite, then $2^{\aleph_0}\leqslant\mathfrak{a}!$.
The following theorem is a generalization of this result.

\begin{theorem}\label{sh48}
For all cardinals $\mathfrak{a}$, if $\mathfrak{a}!$ is Dedekind infinite,
then we have $2^{\aleph_0}\cdot\calspdf(\mathfrak{a})\leqslant\mathfrak{a}!$.
\end{theorem}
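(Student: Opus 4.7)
The plan is to construct an injection $\Phi\colon\wp(\omega)\times\calspdf(x)\to\cals(x)$, where $|x|=\mathfrak{a}$. The strategy is to fix a permutation $t\in\cals(x)$ whose orbits are all finite but whose set $y$ of non-trivial orbits is power Dedekind infinite, then ``switch on'' specified unions of orbits of $t$ to encode $\wp(\omega)$ and precompose by $\sigma\in\calspdf(x)$ to encode the second coordinate; the key feature of the encoding is that the ``$A$-part'' will move a power Dedekind infinite set of points whenever $A$ is altered, and this cannot be cancelled by the small supports of any $\sigma,\sigma'\in\calspdf(x)$.

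To produce such a $t$, I split into cases. If $x$ is Dedekind infinite, pick a denumerable $D\subseteq x$ and let $t$ be the product of disjoint transpositions of pairs in $D$. If $x$ is Dedekind finite, Theorem~\ref{ktfs} applied to $\cals(x)$ being Dedekind infinite furnishes a $t\in\cals(x)$ with $\mov(t)$ power Dedekind infinite, and every orbit of $t$ is then automatically finite, as a denumerable orbit would yield a denumerable subset of the Dedekind finite set $x$. In either case $z\mapsto\orb(t,z)$ is a finite-to-one surjection from $\mov(t)$ onto $y$, so Fact~\ref{sh03} shows $y$ is power Dedekind infinite. I next manufacture pairwise disjoint power Dedekind infinite subsets $(E_n)_{n<\omega}$ of $y$ as follows: Theorem~\ref{kurt} supplies a surjection $\pi\colon y'\twoheadrightarrow\omega$ for some $y'\subseteq y$; composing with a fixed bijection $\omega\to\omega\times\omega$ yields $\tilde\pi\colon y'\twoheadrightarrow\omega\times\omega$, and I take $E_n=\tilde\pi^{-1}[\{n\}\times\omega]$. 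Each $E_n$ surjects onto $\{n\}\times\omega$ and hence is power Dedekind infinite, and the $E_n$'s are disjoint. Define $\iota\colon\wp(\omega)\to\wp(y)$ by $\iota(A)=\bigcup_{n\in A}E_n$; this is injective, and for distinct $A,A'$ the symmetric difference $\iota(A)\triangle\iota(A')$ contains $E_m$ for any $m\in A\triangle A'$.

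For each $S\subseteq y$, let $t_S\in\cals(x)$ be the permutation agreeing with $t$ on $\bigcup S$ and being the identity elsewhere, and set $\Phi(A,\sigma)=t_{\iota(A)}\circ\sigma$. To verify injectivity, suppose $\Phi(A,\sigma)=\Phi(A',\sigma')$; rearranging gives $\sigma\circ(\sigma')^{-1}=t_{\iota(A)}^{-1}\circ t_{\iota(A')}$. The left-hand side lies in $\calspdf(x)$ because the class of power Dedekind finite sets is closed under unions, while an orbit-by-orbit computation shows the right-hand side has movement set $\bigcup(\iota(A)\triangle\iota(A'))$. If $A\neq A'$ this set contains $\bigcup E_m$, which surjects onto $E_m$ and hence onto $\omega$, making it power Dedekind infinite and contradicting membership in $\calspdf(x)$. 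Therefore $A=A'$ and then $\sigma=\sigma'$. The subtlety I expect to require the most care is arranging the $E_n$ to be power Dedekind \emph{infinite} rather than merely non-empty: a straightforward application of Kuratowski's theorem only yields disjoint non-empty fibres, which would allow $\iota(A)\triangle\iota(A')$ to be swallowed by the small supports of $\sigma$ and $\sigma'$; the reindexing through $\omega\times\omega$ inflates the fibres just enough to block this.
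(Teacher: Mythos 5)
Your proof is correct, but the mechanism of the injection differs from the paper's in an interesting way. Both arguments start identically: produce a permutation $g$ of $x$ whose non-trivial orbits are all finite and form a power Dedekind infinite set $y$ (your case split on whether $x$ is Dedekind infinite is equivalent to the paper's case split on whether the witness from Theorem~\ref{ktfs} has an infinite orbit), and both encode $a\subseteq\omega$ by ``switching on'' $g$ over a union of power Dedekind infinite blocks of orbits obtained from Kuratowski's theorem. They diverge in how the two coordinates are combined. The paper uses a surjection $h:y\twoheadrightarrow\omega\times\omega\times\omega$ and, for each $t\in\calspdf(x)$, selects a level $m_t$ whose orbits are disjoint from those met by $\mov(t)$; it then defines $\Phi(a,t)$ piecewise (equal to $t$ on $\mov(t)$, to $g$ on the level-$m_t$ encoding region, and the identity elsewhere) and proves injectivity by explicitly decoding $a$ and $t$ from $\Phi(a,t)$. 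You instead set $\Phi(A,\sigma)=t_{\iota(A)}\circ\sigma$ and prove injectivity by cancellation: $t_{\iota(A)}^{-1}\circ t_{\iota(A')}$ must lie in $\calspdf(x)$, yet its movement set is $\bigcup\bigl(\iota(A)\mathbin{\triangle}\iota(A')\bigr)$, which contains a power Dedekind infinite set whenever $A\neq A'$. Your route dispenses entirely with the level-separation device (a surjection onto $\omega\times\omega$ suffices, and no analogue of $m_t$ is needed), at the cost of replacing the paper's direct decoding with a group-theoretic argument; the paper's version has the mild advantage that $(a,t)$ is recovered explicitly from $\Phi(a,t)$, but for the stated inequality only injectivity is required, so both are complete proofs.
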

\begin{proof}
Let $x$ be a set such that $|x|=\mathfrak{a}$.
Since $\cals(x)$ is Dedekind infinite,
by Theorem~\ref{ktfs}, there exists a permutation $f$ of $x$
which moves power Dedekind infinitely many elements.
We claim that there is a permutation $g$ of $x$
which has power Dedekind infinitely many non-trivial orbits.
In fact, if all orbits of $f$ are finite, then,
since the function $q$ defined on $\mov(f)$ given by $q(z)=\orb(f,z)$
is finite-to-one and $\mov(f)$ is power Dedekind infinite, by Fact~\ref{sh03},
$\{\orb(f,z)\mid z\in\mov(f)\}$ is power Dedekind infinite,
and hence it suffices to take $g=f$.
Otherwise, there exists a $z\in x$ such that $\orb(f,z)$ is denumerable,
and therefore $x$ is Dedekind infinite.
Hence, if $p$ is an injection from $\omega$ into $x$,
it suffices to take $g$ to be the permutation of $x$
which interchanges $p(2n)$ and $p(2n+1)$ for all $n\in\omega$
and which fixes all other elements of $x$.

Let $y$ be the set of all non-trivial orbits of $g$.
Clearly, for all $u\subseteq y$, $g\upharpoonright\bigcup u$
is a permutation of $\bigcup u$ without fixed points.
Since $y$ is power Dedekind infinite, by Theorem~\ref{kurt},
there is a surjection $h:y\twoheadrightarrow\omega\times\omega\times\omega$.
For each $t\in\calspdf(x)$, let
\[
m_t=\min\bigl\{n\in\omega\bigm|h\bigl[\{w\in y\mid\mov(t)\cap w\neq\emptyset\}\bigr]
\cap\bigl(\omega\times\{n\}\times\omega\bigr)=\emptyset\bigr\}.
\]
Such an $n\in\omega$ exists, since $\{w\in y\mid\mov(t)\cap w\neq\emptyset\}$
is power Dedekind finite and its image under $h$ is finite.

Now, let $\Phi$ be the function on $\wp(\omega)\times\calspdf(x)$ such that
for all $a\subseteq\omega$ and all $t\in\calspdf(x)$,
$\Phi(a,t)$ is the permutation of $x$ given by
\[
\Phi(a,t)(z)=
\begin{cases}
  t(z), & \text{if $z\in\mov(t)$;}\\
  g(z), & \text{if $z\in\bigcup h^{-1}[a\times\{m_t\}\times\omega]$;}\\
  z,    & \text{otherwise.}
\end{cases}
\]
Note that for all $a\subseteq\omega$ and all $t\in\calspdf(x)$,
$\mov(\Phi(a,t))$ is the union of $\mov(t)$ and $\bigcup h^{-1}[a\times\{m_t\}\times\omega]$.
Hence, for all $a\subseteq\omega$ and all $t\in\calspdf(x)$,
if $\Phi(a,t)\in\calspdf(x)$ then $a=\varnothing$ and $t=\Phi(a,t)$, and otherwise
\[
a=\bigl\{i\in\omega\bigm|\textstyle\bigcup h^{-1}[\{i\}\times\{k\}\times\omega]\subseteq\mov(\Phi(a,t))\bigr\}
\]
and $t$ is the permutation of $x$ given by
\[
t(z)=
\begin{cases}
  \Phi(a,t)(z), & \text{if $z\in\mov(\Phi(a,t))\setminus\bigcup h^{-1}[\omega\times\{k\}\times\omega]$;}\\
  z,            & \text{otherwise,}
\end{cases}
\]
where $k$ is the unique $n\in\omega$ such that
the intersection of $\mov(\Phi(a,t))$ and
$\bigcup h^{-1}[\omega\times\{n\}\times\omega]$ is power Dedekind infinite.
Therefore, $\Phi$ is an injection from $\wp(\omega)\times\calspdf(x)$ into $\cals(x)$,
and hence $2^{\aleph_0}\cdot\calspdf(\mathfrak{a})\leqslant\mathfrak{a}!$.
\end{proof}

\begin{corollary}\label{sh49}
For all cardinals $\mathfrak{a}$, if $\mathfrak{a}!$ is Dedekind infinite,
then we have $2^{\aleph_0}\cdot\seq(\mathfrak{a})\leqslant\mathfrak{a}!$.
\end{corollary}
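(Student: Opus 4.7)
The plan is to combine Theorem~\ref{sh48} with Fact~\ref{sh13}, which already gives the inequality $\seq(\mathfrak{a})\leqslant\aleph_0\cdot\calsf(\mathfrak{a})$. Since $\calsf(\mathfrak{a})\leqslant\calspdf(\mathfrak{a})$ in general, this immediately provides the bridge between $\seq(\mathfrak{a})$ and the cardinal $\calspdf(\mathfrak{a})$ that appears on the left-hand side of Theorem~\ref{sh48}.

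Concretely, I would argue as follows. First, by Fact~\ref{sh13} we have $\seq(\mathfrak{a})\leqslant\aleph_0\cdot\calsf(\mathfrak{a})\leqslant\aleph_0\cdot\calspdf(\mathfrak{a})$. Multiplying by $2^{\aleph_0}$ and using $\aleph_0\leqslant 2^{\aleph_0}$ (so that $2^{\aleph_0}\cdot\aleph_0=2^{\aleph_0}$), this yields
\[
2^{\aleph_0}\cdot\seq(\mathfrak{a})\leqslant 2^{\aleph_0}\cdot\aleph_0\cdot\calspdf(\mathfrak{a})
=2^{\aleph_0}\cdot\calspdf(\mathfrak{a}).
\]
Finally, under the hypothesis that $\mathfrak{a}!$ is Dedekind infinite, Theorem~\ref{sh48} gives $2^{\aleph_0}\cdot\calspdf(\mathfrak{a})\leqslant\mathfrak{a}!$, and composing the two inequalities yields the desired conclusion.

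There is essentially no obstacle here: the corollary is a one-line consequence of Theorem~\ref{sh48} together with the already-established Fact~\ref{sh13}. The only thing to be careful about is the absorption identity $2^{\aleph_0}\cdot\aleph_0=2^{\aleph_0}$, which is immediate from $\aleph_0\leqslant 2^{\aleph_0}$ and the Cantor--Bernstein theorem (Theorem~\ref{cbt}). No case analysis on whether $\mathfrak{a}$ is Dedekind finite or infinite is required, since the Dedekind infiniteness of $\mathfrak{a}!$ is the only hypothesis invoked (through Theorem~\ref{sh48}), and Fact~\ref{sh13} holds for all cardinals.
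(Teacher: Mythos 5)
Your proposal is correct and follows essentially the same route as the paper: the paper's proof is the single chain $2^{\aleph_0}\cdot\seq(\mathfrak{a})\leqslant2^{\aleph_0}\cdot\calsf(\mathfrak{a})\leqslant2^{\aleph_0}\cdot\calspdf(\mathfrak{a})\leqslant\mathfrak{a}!$, invoking exactly Fact~\ref{sh13} and Theorem~\ref{sh48}, with the same implicit absorption $2^{\aleph_0}\cdot\aleph_0=2^{\aleph_0}$ that you make explicit. Nothing is missing.
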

\begin{proof}
For all cardinals $\mathfrak{a}$, if $\mathfrak{a}!$ is Dedekind infinite,
then, by Fact~\ref{sh13} and Theorem~\ref{sh48},
$2^{\aleph_0}\cdot\seq(\mathfrak{a})\leqslant2^{\aleph_0}\cdot\calsf(\mathfrak{a})
\leqslant2^{\aleph_0}\cdot\calspdf(\mathfrak{a})\leqslant\mathfrak{a}!$.
\end{proof}

\begin{lemma}\label{sh43}
From two permutations $f$, $g\in\calspdf(x)$, one can explicitly define
a permutation $h\in\calspdf(x)$ such that $\mov(h)=\mov(f)\cup\mov(g)$.
\end{lemma}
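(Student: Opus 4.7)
Let $y := \mov(f) \cup \mov(g)$. The plan is to partition $y$ into the orbits of the group generated by $f$ and $g$, and on each such orbit to place a canonically chosen fixed-point-free permutation, extending by the identity outside $y$. First I observe that $y$ is power Dedekind finite, as it is a finite union of power Dedekind finite sets (closure of the class of power Dedekind finite sets under finite unions was noted after Theorem~\ref{kurt}); moreover, both $f$ and $g$ send $y$ to $y$ and act as the identity on $x \setminus y$, since $\mov(f)$ is $f$-invariant and $f$ fixes every point of $y \setminus \mov(f)$ pointwise, and symmetrically for $g$.

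Next I would show that every orbit $O \subseteq y$ of the group $\langle f,g\rangle$ is finite. Fix $z_0 \in O$ and enumerate the words over the four-letter alphabet $\{f,g,f^{-1},g^{-1}\}$ in length-lexicographic order as $w_0, w_1, \dots$; the map $n \mapsto w_n(z_0)$ is a surjection $\omega \twoheadrightarrow O$, so $O$ is well-orderable of order type at most $\omega$. If $O$ were denumerable, then $\wp(O)$ would be equinumerous with $\wp(\omega)$ and hence uncountable, making $O$ power Dedekind infinite; this would contradict $O \subseteq y$.

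For each such orbit $O$, since every element of $y$ is moved by $f$ or $g$, we have $|O| \geqslant 2$, and $H := \langle f\upharpoonright O,\, g\upharpoonright O\rangle$ is a finite subgroup of $\cals(O)$ acting transitively on $O$. A standard counting argument---each point-stabilizer has size $|H|/|O|$, so the union of stabilizers over $O$ has size at most $|H| - |O| + 1 < |H|$---yields a derangement of $O$ inside $H$. I canonically select $\pi_O$ to be the permutation induced on $O$ by the first word $w_n$ in the enumeration whose restriction to $O$ is fixed-point-free. Defining $h(z) := \pi_{O_z}(z)$ for $z \in y$ (where $O_z$ denotes the $\langle f,g\rangle$-orbit of $z$) and $h(z) := z$ for $z \in x \setminus y$, one obtains a permutation of $x$ with $\mov(h) = y$, so $h \in \calspdf(x)$; and the entire recipe is a uniform class function of $(f,g)$, hence explicit.

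The principal obstacle is the finiteness-of-orbits step: the hypothesis $f,g \in \calspdf(x)$ rather than merely $\cals(x)$ is precisely what forces $\mov(f)\cup\mov(g)$ to be power Dedekind finite and thereby rules out denumerable orbits; without this, the orbit structure of $\langle f,g\rangle$ on $y$ could in principle be countably infinite and the pointwise derangement construction would collapse. The remaining ingredients---the existence of a derangement in any finite transitive group acting on a set of at least two points, and the canonical length-lexicographic word enumeration used to select one---are straightforward finite combinatorics requiring no form of choice.
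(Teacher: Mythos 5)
Your proof is correct, but it takes a genuinely different route from the paper's. The paper works with the intersection $y=\mov(f)\cap\mov(g)$ and performs a direct splice: it keeps $f$ on most of $\mov(f)$, uses the induced permutation $g\rhd w$ on the part $w$ of $\mov(g)\setminus\mov(f)$ whose $g$-orbits meet $\mov(g)\setminus\mov(f)$ in more than one point, and reroutes the remaining points $z$ (those with $\orb(g,z)\setminus y=\{z\}$) through $f\circ g$ while sending $g[u]$ back via $g^{-1}$; checking that the resulting map is a permutation with the required moved set is a short case analysis. You instead pass to the group generated by $f$ and $g$, observe that all of its orbits inside $\mov(f)\cup\mov(g)$ are finite (because that set is power Dedekind finite and orbits of a finitely generated group are countable), and then invoke Jordan's classical fact that a finite transitive permutation group on at least two points contains a derangement, selecting one canonically via a length-lexicographic word enumeration. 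Both arguments are fully explicit and choice-free. The paper's construction is more elementary---it needs no group theory and only the finiteness of the orbits of the single permutation $g$---while yours is more symmetric in $f$ and $g$, isolates cleanly the one place where $f,g\in\calspdf(x)$ rather than merely $\cals(x)$ is used, and visibly extends to any finite tuple of permutations in one step rather than by the iteration the paper performs in Lemma~\ref{sh44}. The steps you leave implicit (invariance of $\mov(f)\cup\mov(g)$ under both permutations, the stabilizer count giving a derangement, and the canonicity of the chosen word) all check out.
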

\begin{proof}
Let $f$, $g\in\calspdf(x)$. Let $y=\mov(f)\cap\mov(g)$, let
\[
u=\bigl\{z\in\mov(g)\setminus y\bigm|\orb(g,z)\setminus y=\{z\}\bigr\},
\]
and let $w=\mov(g)\setminus(y\cup u)$.
Then for all $z\in w$, $\orb(g,z)\setminus y\subseteq w$,
and thus $\orb(g\rhd w,z)=\orb(g,z)\cap w=\orb(g,z)\setminus y\neq\{z\}$,
which implies that $z\in\mov(g\rhd w)$. Hence $\mov(g\rhd w)=w$.
Note also that for all $z\in u$, $g(z)\in y$.
Now define $h$ to be the permutation of $x$ given by
\[
h(z)=
\begin{cases}
  f(z),         & \text{if $z\in\mov(f)\setminus g[u]$;}\\
  g^{-1}(z),    & \text{if $z\in g[u]$;}\\
  f(g(z)),      & \text{if $z\in u$;}\\
  (g\rhd w)(z), & \text{if $z\in w$;}\\
  z,            & \text{otherwise.}
\end{cases}
\]
Clearly, $\mov(h)=\mov(f)\cup\mov(g)$, and therefore $h\in\calspdf(x)$.
\end{proof}

\begin{lemma}\label{sh44}
For all cardinals $\mathfrak{a}$,
$\seq^{1\text{-}1}(\calspdf(\mathfrak{a}))\leqslant_{\mathrm{dfto}}\calspdf(\mathfrak{a})$.
\end{lemma}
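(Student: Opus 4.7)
The plan is to define $\Phi\colon\seq^{1\text{-}1}(\calspdf(x))\to\calspdf(x)$ by iterated application of Lemma~\ref{sh43}. Fix a set $x$ with $|x|=\mathfrak{a}$. For $\sigma=(f_0,\dots,f_{n-1})\in\seq^{1\text{-}1}(\calspdf(x))$, define $\Phi(\sigma)$ by recursion on $n$: set $h_0=\mathrm{id}_x$ and, for $k<n$, let $h_{k+1}\in\calspdf(x)$ be the permutation produced by Lemma~\ref{sh43} applied to $h_k$ and $f_k$; then put $\Phi(\sigma)=h_n$. A straightforward induction using Lemma~\ref{sh43} shows that $\mov(h_k)=\bigcup_{i<k}\mov(f_i)$, so $\mov(\Phi(\sigma))=\bigcup_{i<n}\mov(f_i)$. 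This is a finite union of power Dedekind finite sets and is therefore itself power Dedekind finite (by the closure property noted right after Kuratowski's theorem), so $\Phi(\sigma)\in\calspdf(x)$.

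The crucial point is that $\Phi$ is Dedekind finite to one. Fix $g\in\calspdf(x)$ and suppose $\sigma=(f_0,\dots,f_{n-1})\in\Phi^{-1}(g)$. Then $\mov(g)=\mov(\Phi(\sigma))=\bigcup_{i<n}\mov(f_i)$, so in particular $\mov(f_i)\subseteq\mov(g)$ for every $i<n$. Let
\[
T_g=\{f\in\calspdf(x)\mid\mov(f)\subseteq\mov(g)\}.
\]
Every $f\in T_g$ is the identity on $x\setminus\mov(g)$, so the restriction $f\mapsto f\upharpoonright\mov(g)$ is an injection of $T_g$ into $\cals(\mov(g))$. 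Since $\mov(g)$ is power Dedekind finite, Fact~\ref{sh11} gives that $|\mov(g)|!$ is Dedekind finite, and hence $T_g$ is Dedekind finite. The contrapositive of Fact~\ref{sh06} then yields that $\seq^{1\text{-}1}(T_g)$ is Dedekind finite, and since $\Phi^{-1}(g)\subseteq\seq^{1\text{-}1}(T_g)$, the preimage $\Phi^{-1}(g)$ is Dedekind finite, as required.

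The main technical obstacle is already absorbed into Lemma~\ref{sh43}, which supplies the ``support-preserving'' combination of two $\calspdf$-permutations; without it, producing a single permutation whose moved set is exactly $\bigcup_{i<n}\mov(f_i)$ would require redoing that combinatorics here. Once that step is granted, the rest is bookkeeping: the essential observation is that in any preimage of $g$ each $f_i$ is forced to lie in the Dedekind finite set $T_g$ (Dedekind finite because $\mov(g)$ is power Dedekind finite, via Fact~\ref{sh11}), and Fact~\ref{sh06} then transfers this to injective sequences.
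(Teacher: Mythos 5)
Your proposal is correct and follows essentially the same route as the paper: iterate Lemma~\ref{sh43} to build $\Phi$ with $\mov(\Phi(\sigma))=\bigcup_{i<n}\mov(f_i)$, then use Fact~\ref{sh11} and Fact~\ref{sh06} to see that each fiber is Dedekind finite. The only cosmetic difference is that you bound $\Phi^{-1}(g)$ by $\seq^{1\text{-}1}(T_g)$ for the Dedekind finite set $T_g$ of permutations moving only points of $\mov(g)$, whereas the paper injects the fiber directly into $\seq^{1\text{-}1}(\cals(\mov(g)))$ by restriction; these are the same argument.
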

\begin{proof}
Let $x$ be a set such that $|x|=\mathfrak{a}$.
By Lemma~\ref{sh43}, there exists a class function $G$ such that
for all $f$, $g\in\calspdf(x)$, $G(f,g)$ is defined and
is a permutation in $\calspdf(x)$ such that $\mov(G(f,g))=\mov(f)\cup\mov(g)$.
We define by recursion a function $\Phi$
from $\seq^{1\text{-}1}(\calspdf(x))$ into $\calspdf(x)$ as follows:
Take $\Phi(\emptyset)=\mathrm{id}_x$;
for all $n\in\omega$ and all $t\in\seq^{1\text{-}1}(\calspdf(x))$ with domain $n+1$,
we set $\Phi(t)=G(\Phi(t\upharpoonright n),t(n))$.
A routine induction shows that for all $t\in\seq^{1\text{-}1}(\calspdf(x))$,
\begin{equation}\label{sh45}
\mov(\Phi(t))=\bigcup_{i\in\dom(t)}\mov(t(i)).
\end{equation}
Now we show that $\Phi$ is a Dedekind finite to one map,
and thus complete the proof.
Take an arbitrary $h\in\calspdf(x)$ and let $y=\mov(h)\in\pdfin(x)$.
It suffices to show that
$u=\{t\in\seq^{1\text{-}1}(\calspdf(x))\mid\Phi(t)=h\}$ is Dedekind finite.
By \eqref{sh45}, for all $t\in u$ and all $i\in\dom(t)$, $\mov(t(i))\subseteq y$,
and hence $t(i)\upharpoonright y$ is a permutation of $y$.
Let $\Psi$ be the function on $u$ such that for all $t\in u$,
$\Psi(t)$ is the function defined on $\dom(t)$
given by $\Psi(t)(i)=t(i)\upharpoonright y$.
Clearly, $\Psi$ is an injection from $u$ into $\seq^{1\text{-}1}(\cals(y))$.
Since $y\in\pdfin(x)$, by Fact~\ref{sh11}, $\cals(y)$ is Dedekind finite,
and hence, by Fact~\ref{sh06},
$\seq^{1\text{-}1}(\cals(y))$ is Dedekind finite,
which implies that $u$ is also Dedekind finite.
\end{proof}

\begin{corollary}\label{sh46}
For all cardinals $\mathfrak{a}$, if $\mathfrak{a}!$ is Dedekind infinite,
then we have $\mathfrak{a}!\nleqslant_{\mathrm{dfto}}\seq^{1\text{-}1}(\calspdf(\mathfrak{a}))$.
\end{corollary}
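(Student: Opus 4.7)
The plan is to deduce Corollary~\ref{sh46} as an immediate consequence of Theorem~\ref{tspdf} combined with Lemma~\ref{sh44}, via the transitivity of $\leqslant_{\mathrm{dfto}}$ recorded in Fact~\ref{sh02}.

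First I would assume, towards a contradiction, that $\mathfrak{a}!$ is Dedekind infinite and that $\mathfrak{a}!\leqslant_{\mathrm{dfto}}\seq^{1\text{-}1}(\calspdf(\mathfrak{a}))$. By Lemma~\ref{sh44}, we already know that $\seq^{1\text{-}1}(\calspdf(\mathfrak{a}))\leqslant_{\mathrm{dfto}}\calspdf(\mathfrak{a})$. Chaining these two Dedekind finite to one maps via Fact~\ref{sh02} yields $\mathfrak{a}!\leqslant_{\mathrm{dfto}}\calspdf(\mathfrak{a})$.

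This directly contradicts Theorem~\ref{tspdf}, which states precisely that $\mathfrak{a}!\nleqslant_{\mathrm{dfto}}\calspdf(\mathfrak{a})$ whenever $\mathfrak{a}!$ is Dedekind infinite, completing the proof.

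There is essentially no obstacle here: all the work has been done in Lemma~\ref{sh44} (which packages an arbitrary finite injective sequence of permutations in $\calspdf(x)$ into a single permutation whose moved set is the union of the moved sets, and then uses that $\cals(y)$ is Dedekind finite for $y\in\pdfin(x)$ to control the fibers) and in Theorem~\ref{tspdf} (the first main theorem). The corollary is purely a transitivity statement, so the only thing to check is that $\leqslant_{\mathrm{dfto}}$ composes as claimed, which is exactly Fact~\ref{sh02}.
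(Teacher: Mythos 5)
Your proof is correct and is exactly the paper's argument: the paper also deduces Corollary~\ref{sh46} by composing the Dedekind finite to one map from Lemma~\ref{sh44} with a hypothetical one witnessing $\mathfrak{a}!\leqslant_{\mathrm{dfto}}\seq^{1\text{-}1}(\calspdf(\mathfrak{a}))$, using Fact~\ref{sh02}, and then contradicting Theorem~\ref{tspdf}.
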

\begin{proof}
This corollary follows from Lemma~\ref{sh44} and Theorem~\ref{tspdf}.
\end{proof}

\begin{theorem}\label{sh47}
For all cardinals $\mathfrak{a}$, if $\mathfrak{a}!$ is Dedekind infinite,
then we have $\mathfrak{a}!\nleqslant_{\mathrm{dfto}}\seq(\calspdf(\mathfrak{a}))$.
\end{theorem}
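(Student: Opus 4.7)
The plan is to show that $\seq(\calspdf(\mathfrak{a})) \leqslant_{\mathrm{dfto}} \calspdf(\mathfrak{a})$ under the hypothesis that $\mathfrak{a}!$ is Dedekind infinite; by Fact~\ref{sh02} combined with Theorem~\ref{tspdf}, this suffices.

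First I would chain Lemmas~\ref{sh08b} and~\ref{sh44}. Lemma~\ref{sh08b} furnishes an explicit injection $\iota \colon \seq(\calspdf(x)) \to \omega \times \seq^{1\text{-}1}(\calspdf(x))$, and Lemma~\ref{sh44} gives a Dedekind finite to one map $\Psi \colon \seq^{1\text{-}1}(\calspdf(x)) \to \calspdf(x)$. Composing $(\mathrm{id}_\omega \times \Psi) \circ \iota$ and invoking Fact~\ref{sh02} yields a Dedekind finite to one map $\seq(\calspdf(x)) \to \omega \times \calspdf(x)$, so $\seq(\calspdf(\mathfrak{a})) \leqslant_{\mathrm{dfto}} \aleph_0 \cdot \calspdf(\mathfrak{a})$. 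Next I would observe that the combining function $G$ from Lemma~\ref{sh43} is itself Dedekind finite to one: the fiber $G^{-1}[\{h\}]$ consists of pairs $(f_1, f_2)$ with $\mathrm{mov}(f_i) \subseteq \mathrm{mov}(h)$, and $(f_1, f_2) \mapsto (f_1 \upharpoonright \mathrm{mov}(h), f_2 \upharpoonright \mathrm{mov}(h))$ embeds it into $\cals(\mathrm{mov}(h))^2$, which is Dedekind finite by Fact~\ref{sh11}. Hence $\calspdf(\mathfrak{a})^2 \leqslant_{\mathrm{dfto}} \calspdf(\mathfrak{a})$.

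To bridge these two estimates I would use $\mathfrak{a}!$ Dedekind infinite to exhibit an injection $\omega \hookrightarrow \calspdf(x)$. By Theorem~\ref{ktfs} there is some $t \in \cals(x)$ with $\mathrm{mov}(t)$ power Dedekind infinite. If $t$ has a denumerable orbit then $x$ itself is Dedekind infinite, and the transpositions on that denumerable subset give $\omega \hookrightarrow \calsf(x) \subseteq \calspdf(x)$. Otherwise all orbits of $t$ are finite and $y_t = \{\orb(t,z) : z \in \mov(t)\}$ is power Dedekind infinite; here Kuratowski's theorem supplies a surjection $y_t \twoheadrightarrow \omega \times \omega$, from which one canonically extracts a denumerable sequence of pairwise disjoint power Dedekind finite sub-unions of orbits, giving $\omega$ distinct permutations with disjoint power Dedekind finite supports in $\calspdf(x)$. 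Once $\omega \leqslant \calspdf(\mathfrak{a})$ is in hand, $\aleph_0 \cdot \calspdf(\mathfrak{a}) \leqslant \calspdf(\mathfrak{a})^2 \leqslant_{\mathrm{dfto}} \calspdf(\mathfrak{a})$, completing the chain $\mathfrak{a}! \leqslant_{\mathrm{dfto}} \seq(\calspdf(\mathfrak{a})) \leqslant_{\mathrm{dfto}} \calspdf(\mathfrak{a})$, contradicting Theorem~\ref{tspdf}.

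The hardest step is producing the injection $\omega \hookrightarrow \calspdf(x)$ in the case of only finite orbits. One cannot simply use Fact~\ref{sh10}'s surjection $\calsf(x) \twoheadrightarrow \omega$, as that only yields $\aleph_0 \leqslant^\ast \calspdf(\mathfrak{a})$, which is insufficient to embed $\omega$. The construction must instead carefully exploit the explicit combinatorial structure of $t$ together with the Kuratowski surjection on $y_t$ (noting that $y_t$ itself need not be Dedekind infinite) to canonically isolate a denumerable family of pairwise disjoint power Dedekind finite sub-unions of orbits, whose associated induced permutations then form an actual denumerable sequence inside $\calspdf(x)$.
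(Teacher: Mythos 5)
Your reduction has a fatal gap at the bridging step. The claim you need --- that $\mathfrak{a}!$ being Dedekind infinite forces $\omega\preccurlyeq\calspdf(x)$ --- is not a theorem of $\mathsf{ZF}$, and consequently your master inequality $\seq(\calspdf(\mathfrak{a}))\leqslant_{\mathrm{dfto}}\calspdf(\mathfrak{a})$ is outright false whenever $\calspdf(\mathfrak{a})$ is Dedekind finite: by Fact~\ref{sh07} the left side is then Dedekind infinite while the right side is not, so Fact~\ref{sh03} rules out any Dedekind finite to one map between them. That this situation is consistent with the hypothesis can be seen by taking $D$ to be an infinite Dedekind finite set of reals (as in Cohen's first model) and $x=D\times2$ with $t$ the coordinate-swapping involution: $\mov(t)=x$ is power Dedekind infinite, so $\cals(x)$ is Dedekind infinite by Theorem~\ref{ktfs}; but every power Dedekind finite subset of $x$ projects onto a power Dedekind finite, hence by Lemma~\ref{sh52} finite, subset of $D$, so $\calspdf(x)=\calsf(x)$, and $\calsf(x)$ is Dedekind finite because a denumerable family of pairwise disjoint non-void finite subsets of $D$ would, by taking least elements in the real ordering, embed $\omega$ into $D$. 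This is exactly why your self-identified ``hardest step'' cannot be carried out: here $y_t\approx D$ is power Dedekind infinite yet admits no denumerable family of pairwise disjoint power Dedekind finite (equivalently, finite) subsets at all, canonical or otherwise, so no amount of care with the Kuratowski surjection will produce the required sequence in $\calspdf(x)$.

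The paper avoids this by splitting into cases. If $\calspdf(\mathfrak{a})$ is Dedekind infinite, Corollary~\ref{sh09} gives $\seq(\calspdf(\mathfrak{a}))=\seq^{1\text{-}1}(\calspdf(\mathfrak{a}))$ directly and one concludes from Lemma~\ref{sh44} and Theorem~\ref{tspdf}; your chain through Lemmas~\ref{sh08b}, \ref{sh44} and \ref{sh43} is correct and would also work in this case (your verification that the combining map $G$ is Dedekind finite to one is fine, using Facts~\ref{sh11} and~\ref{sh04}), just more circuitously. If $\calspdf(\mathfrak{a})$ is Dedekind finite, the paper instead uses Fact~\ref{sh08} to get $\seq(\calspdf(\mathfrak{a}))\leqslant_{\mathrm{dfto}}\aleph_0$ and Theorem~\ref{ktfs} to get $2^{\aleph_0}=\aleph_0!\leqslant\mathfrak{a}!$, so that the assumed inequality would yield $\aleph_0!\leqslant_{\mathrm{dfto}}\aleph_0=\calspdf(\aleph_0)$, contradicting Theorem~\ref{tspdf} applied to $\aleph_0$. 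You need some such substitute argument for this second case; your current plan cannot supply one.
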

\begin{proof}
Assume towards a contradiction that there exists a cardinal $\mathfrak{a}$
such that $\mathfrak{a}!$ is Dedekind infinite and
such that $\mathfrak{a}!\leqslant_{\mathrm{dfto}}\seq(\calspdf(\mathfrak{a}))$.
If $\calspdf(\mathfrak{a})$ is Dedekind infinite, then, by Corollary~\ref{sh09},
$\seq(\calspdf(\mathfrak{a}))=\seq^{1\text{-}1}(\calspdf(\mathfrak{a}))$,
and thus $\mathfrak{a}!\leqslant_{\mathrm{dfto}}\seq^{1\text{-}1}(\calspdf(\mathfrak{a}))$,
contradicting Corollary~\ref{sh46}.
Otherwise, by Theorem~\ref{ktfs} and Fact~\ref{sh08},
$\aleph_0!=2^{\aleph_0}\leqslant\mathfrak{a}!\leqslant_{\mathrm{dfto}}\seq(\calspdf(\mathfrak{a}))\leqslant_{\mathrm{dfto}}\aleph_0$,
which is also a contradiction.
\end{proof}

\begin{corollary}\label{sh50}
For all cardinals $\mathfrak{a}$, if $\mathfrak{a}!$ is Dedekind infinite,
then we have $\mathfrak{a}!\nleqslant_{\mathrm{dfto}}\aleph_0\cdot\calspdf(\mathfrak{a})$,
and hence $\aleph_0\cdot\calspdf(\mathfrak{a})<\mathfrak{a}!$.
\end{corollary}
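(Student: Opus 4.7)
The plan is to reduce the statement directly to Theorem~\ref{sh47} by noting that $\aleph_0\cdot\calspdf(\mathfrak{a})$ sits below $\seq(\calspdf(\mathfrak{a}))$ under the ordering $\leqslant$, and then to bracket $\aleph_0\cdot\calspdf(\mathfrak{a})$ from above by $\mathfrak{a}!$ using Theorem~\ref{sh48} in order to promote the failure of $\leqslant_{\mathrm{dfto}}$ into strict inequality.

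First, I would invoke Fact~\ref{sh07} applied to the cardinal $\calspdf(\mathfrak{a})$, which gives the injection-witnessed bound $\aleph_0\cdot\calspdf(\mathfrak{a})\leqslant\seq(\calspdf(\mathfrak{a}))$. Since any injection is in particular Dedekind-finite-to-one, this yields $\aleph_0\cdot\calspdf(\mathfrak{a})\leqslant_{\mathrm{dfto}}\seq(\calspdf(\mathfrak{a}))$. Suppose now toward a contradiction that $\mathfrak{a}!\leqslant_{\mathrm{dfto}}\aleph_0\cdot\calspdf(\mathfrak{a})$. By the transitivity of $\leqslant_{\mathrm{dfto}}$ recorded in Fact~\ref{sh02}, we would then have $\mathfrak{a}!\leqslant_{\mathrm{dfto}}\seq(\calspdf(\mathfrak{a}))$, contradicting Theorem~\ref{sh47}. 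This establishes the first assertion, $\mathfrak{a}!\nleqslant_{\mathrm{dfto}}\aleph_0\cdot\calspdf(\mathfrak{a})$.

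For the strict inequality $\aleph_0\cdot\calspdf(\mathfrak{a})<\mathfrak{a}!$, I would supply the upper bound separately via Theorem~\ref{sh48}: under the hypothesis that $\mathfrak{a}!$ is Dedekind infinite, it gives $2^{\aleph_0}\cdot\calspdf(\mathfrak{a})\leqslant\mathfrak{a}!$, whence $\aleph_0\cdot\calspdf(\mathfrak{a})\leqslant\mathfrak{a}!$. If one had $\mathfrak{a}!\leqslant\aleph_0\cdot\calspdf(\mathfrak{a})$ as well, then in particular $\mathfrak{a}!\leqslant_{\mathrm{dfto}}\aleph_0\cdot\calspdf(\mathfrak{a})$, which we just ruled out. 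Hence the inequality is strict.

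There is essentially no obstacle: Theorem~\ref{sh47} does the real work, and the only point requiring attention is the book-keeping across the three preorders $\leqslant$, $\leqslant_{\mathrm{pdfto}}$, $\leqslant_{\mathrm{dfto}}$, using the implications recorded right after the definition of these relations together with Fact~\ref{sh02}.
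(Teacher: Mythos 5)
Your proposal is correct and follows exactly the paper's own argument: Fact~\ref{sh07} applied to $\calspdf(\mathfrak{a})$ plus transitivity reduces the non-domination claim to Theorem~\ref{sh47}, and Theorem~\ref{sh48} supplies the upper bound needed to turn this into the strict inequality. The paper's proof is just a terser statement of the same chain of reductions.
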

\begin{proof}
For all cardinals $\mathfrak{a}$, if $\mathfrak{a}!$ is Dedekind infinite,
then, by Fact~\ref{sh07} and Theorem~\ref{sh47},
$\mathfrak{a}!\nleqslant_{\mathrm{dfto}}\aleph_0\cdot\calspdf(\mathfrak{a})$,
and therefore, by Theorem~\ref{sh48},
$\aleph_0\cdot\calspdf(\mathfrak{a})<\mathfrak{a}!$.
\end{proof}

\begin{corollary}\label{sh51}
For all cardinals $\mathfrak{a}$, if $\mathfrak{a}!$ is Dedekind infinite,
then we have $\mathfrak{a}!\nleqslant_{\mathrm{dfto}}\seq(\mathfrak{a})$,
and hence $\seq(\mathfrak{a})<\mathfrak{a}!$.
\end{corollary}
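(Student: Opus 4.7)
The plan is to reduce this corollary directly to Corollary~\ref{sh50} via the inequality $\seq(\mathfrak{a})\leqslant\aleph_0\cdot\calsf(\mathfrak{a})$ from Fact~\ref{sh13} and the trivial bound $\calsf(\mathfrak{a})\leqslant\calspdf(\mathfrak{a})$. Combining these gives $\seq(\mathfrak{a})\leqslant\aleph_0\cdot\calspdf(\mathfrak{a})$, so every cardinal sitting above $\seq(\mathfrak{a})$ in the $\leqslant_{\mathrm{dfto}}$ order also sits above $\aleph_0\cdot\calspdf(\mathfrak{a})$, by Fact~\ref{sh02} (transitivity of $\leqslant_{\mathrm{dfto}}$).

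So the first step is to assume $\mathfrak{a}!$ is Dedekind infinite and suppose for contradiction that $\mathfrak{a}!\leqslant_{\mathrm{dfto}}\seq(\mathfrak{a})$. Composing with the injection witnessing $\seq(\mathfrak{a})\leqslant\aleph_0\cdot\calspdf(\mathfrak{a})$ (which is in particular Dedekind finite to one), I obtain $\mathfrak{a}!\leqslant_{\mathrm{dfto}}\aleph_0\cdot\calspdf(\mathfrak{a})$. This contradicts Corollary~\ref{sh50}, yielding the desired $\mathfrak{a}!\nleqslant_{\mathrm{dfto}}\seq(\mathfrak{a})$.

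For the strict inequality $\seq(\mathfrak{a})<\mathfrak{a}!$, I first note that $\seq(\mathfrak{a})\leqslant\mathfrak{a}!$ holds: indeed, by Corollary~\ref{sh49}, $\seq(\mathfrak{a})\leqslant 2^{\aleph_0}\cdot\seq(\mathfrak{a})\leqslant\mathfrak{a}!$. (Alternatively, one can chain $\seq(\mathfrak{a})\leqslant\aleph_0\cdot\calspdf(\mathfrak{a})<\mathfrak{a}!$ via the injection part of Corollary~\ref{sh50}.) On the other hand, $\mathfrak{a}!\leqslant\seq(\mathfrak{a})$ would imply $\mathfrak{a}!\leqslant_{\mathrm{dfto}}\seq(\mathfrak{a})$, which was just ruled out. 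Hence $\seq(\mathfrak{a})<\mathfrak{a}!$.

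There is no real obstacle here; the entire argument is a two-line application of Fact~\ref{sh13} together with Corollary~\ref{sh50}. The only point that needs a moment of care is distinguishing $\leqslant$ from $\leqslant_{\mathrm{dfto}}$ when passing from the strong non-inequality $\mathfrak{a}!\nleqslant_{\mathrm{dfto}}\seq(\mathfrak{a})$ to the ordinary strict comparison $\seq(\mathfrak{a})<\mathfrak{a}!$, which is handled by observing that injections are in particular Dedekind finite to one.
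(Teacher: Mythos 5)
Your proof is correct and follows essentially the same route as the paper, whose entire proof of this corollary is the remark that it follows from Fact~\ref{sh13} and Corollary~\ref{sh50}; you have simply spelled out the composition $\seq(\mathfrak{a})\leqslant\aleph_0\cdot\calsf(\mathfrak{a})\leqslant\aleph_0\cdot\calspdf(\mathfrak{a})$ and the passage from the $\leqslant_{\mathrm{dfto}}$ non-inequality to the strict $<$. No issues.
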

\begin{proof}
This corollary follows from Fact~\ref{sh13} and Corollary~\ref{sh50}.
\end{proof}

\begin{corollary}\label{sh39}
For all non-zero cardinals $\mathfrak{a}$, $\mathfrak{a}!\neq\seq(\mathfrak{a})$.
\end{corollary}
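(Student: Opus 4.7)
The plan is to split on whether $\mathfrak{a}!$ is Dedekind infinite or Dedekind finite, handle each case with a one-line appeal to a previously established result, and observe that these two cases exhaust the possibilities for a non-zero cardinal.

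Suppose first that $\mathfrak{a}!$ is Dedekind infinite. Then Corollary~\ref{sh51} applies directly and yields $\seq(\mathfrak{a})<\mathfrak{a}!$, so in particular $\mathfrak{a}!\neq\seq(\mathfrak{a})$.

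Suppose instead that $\mathfrak{a}!$ is Dedekind finite. Since $\mathfrak{a}\neq 0$, Fact~\ref{sh07} guarantees that $\seq(\mathfrak{a})$ is Dedekind infinite. A cardinal cannot simultaneously be Dedekind finite and Dedekind infinite, so again $\mathfrak{a}!\neq\seq(\mathfrak{a})$.

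The two cases cover all non-zero cardinals $\mathfrak{a}$, so we are done. There is essentially no obstacle here: all the work has been done in assembling Corollary~\ref{sh51} and Fact~\ref{sh07}; the corollary is a short dichotomy argument combining these two results.
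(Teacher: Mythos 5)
Your proof is correct and is essentially the paper's argument: the paper assumes $\mathfrak{a}!=\seq(\mathfrak{a})$, notes via Fact~\ref{sh07} that $\mathfrak{a}!$ would then be Dedekind infinite, and contradicts Corollary~\ref{sh51}; your case split on the Dedekind (in)finiteness of $\mathfrak{a}!$ is just the same reasoning rearranged, using the same two ingredients.
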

\begin{proof}
For any non-zero cardinal $\mathfrak{a}$,
if $\mathfrak{a}!=\seq(\mathfrak{a})$, then,
by Fact~\ref{sh07}, $\mathfrak{a}!$ is Dedekind infinite,
contradicting Corollary~\ref{sh51}.
\end{proof}

We shall see in the next section that it is consistent with $\mathsf{ZF}$ that
there exists an infinite cardinal $\mathfrak{a}$ such that
$\mathfrak{a}!<\seq^{1\text{-}1}(\mathfrak{a})<\seq(\mathfrak{a})$.

\begin{corollary}\label{sh40}
For all cardinals $\mathfrak{a}$, if $\mathfrak{a}!$ is Dedekind infinite,
then we have $\mathfrak{a}!\nleqslant_{\mathrm{dfto}}\aleph_0\cdot\mathfrak{a}$,
and hence $\aleph_0\cdot\mathfrak{a}<\mathfrak{a}!$.
\end{corollary}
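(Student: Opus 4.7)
The plan is to derive this corollary as a quick consequence of Corollary~\ref{sh51} combined with the basic inequality $\aleph_0\cdot\mathfrak{a}\leqslant\seq(\mathfrak{a})$ supplied by Fact~\ref{sh07}. The strategy is identical in spirit to the way Corollary~\ref{sh51} was deduced from Corollary~\ref{sh50} via Fact~\ref{sh13}: pinch $\aleph_0\cdot\mathfrak{a}$ between $\mathfrak{a}!$ on the top and $\seq(\mathfrak{a})$ on the bottom, and use transitivity of $\leqslant_{\mathrm{dfto}}$.

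In detail, suppose $\mathfrak{a}!$ is Dedekind infinite. For the non-reducibility part, I would assume towards a contradiction that $\mathfrak{a}!\leqslant_{\mathrm{dfto}}\aleph_0\cdot\mathfrak{a}$. By Fact~\ref{sh07} there is an injection from $\omega\times x$ into $\seq(x)$ (where $|x|=\mathfrak{a}$), and every injection is in particular a Dedekind finite to one map, so $\aleph_0\cdot\mathfrak{a}\leqslant_{\mathrm{dfto}}\seq(\mathfrak{a})$. Combining these via Fact~\ref{sh02} (transitivity of $\leqslant_{\mathrm{dfto}}$) yields $\mathfrak{a}!\leqslant_{\mathrm{dfto}}\seq(\mathfrak{a})$, contradicting Corollary~\ref{sh51}. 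Hence $\mathfrak{a}!\nleqslant_{\mathrm{dfto}}\aleph_0\cdot\mathfrak{a}$.

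For the strict inequality $\aleph_0\cdot\mathfrak{a}<\mathfrak{a}!$, I would simply chain $\aleph_0\cdot\mathfrak{a}\leqslant\seq(\mathfrak{a})<\mathfrak{a}!$, where the first inequality is Fact~\ref{sh07} and the second is the strict part of Corollary~\ref{sh51}. There is essentially no obstacle here; the only thing to be careful about is that one really should invoke $\leqslant_{\mathrm{dfto}}$ transitivity (rather than just $\leqslant$ transitivity), and that the hypothesis ``$\mathfrak{a}!$ is Dedekind infinite'' is needed precisely so that Corollary~\ref{sh51} applies.
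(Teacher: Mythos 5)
Your proposal is correct and is exactly the paper's argument: the paper also deduces Corollary~\ref{sh40} from Fact~\ref{sh07} together with Corollary~\ref{sh51}, and your write-up merely fills in the routine details (transitivity of $\leqslant_{\mathrm{dfto}}$ and the chain $\aleph_0\cdot\mathfrak{a}\leqslant\seq(\mathfrak{a})<\mathfrak{a}!$).
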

\begin{proof}
This corollary follows from Fact~\ref{sh07} and Corollary~\ref{sh51}.
\end{proof}

\begin{corollary}\label{sh41}
For all cardinals $\mathfrak{a}$, $\mathfrak{a}!\neq\aleph_0\cdot\mathfrak{a}$.
\end{corollary}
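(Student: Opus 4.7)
The plan is to derive this as an immediate consequence of Corollary~\ref{sh40}, handling the trivial case $\mathfrak{a}=0$ separately. The structure is essentially parallel to the proof of Corollary~\ref{sh39}: assume equality, show that it forces $\mathfrak{a}!$ to be Dedekind infinite, and then quote Corollary~\ref{sh40} for the strict inequality $\aleph_0\cdot\mathfrak{a}<\mathfrak{a}!$.

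First I would dispose of $\mathfrak{a}=0$: here $\cals(\emptyset)=\{\emptyset\}$, so $\mathfrak{a}!=1$, while $\aleph_0\cdot\mathfrak{a}=0$, and these are distinct. Next, assume $\mathfrak{a}\neq 0$ and, for contradiction, that $\mathfrak{a}!=\aleph_0\cdot\mathfrak{a}$. Since $\aleph_0\leqslant\aleph_0\cdot\mathfrak{a}$ whenever $\mathfrak{a}\neq 0$, the cardinal $\aleph_0\cdot\mathfrak{a}$ is Dedekind infinite, and hence so is $\mathfrak{a}!$. Now Corollary~\ref{sh40} applies and yields $\aleph_0\cdot\mathfrak{a}<\mathfrak{a}!$, which directly contradicts the assumed equality.

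There is no real obstacle here; the only mild point is remembering to cover $\mathfrak{a}=0$, since the hypothesis ``$\mathfrak{a}!$ is Dedekind infinite'' needed to invoke Corollary~\ref{sh40} is derived from the equation $\mathfrak{a}!=\aleph_0\cdot\mathfrak{a}$ together with $\mathfrak{a}\neq 0$, and this derivation fails when $\mathfrak{a}=0$. All of the conceptual content has already been carried out in Theorem~\ref{sh47} and its corollaries, so the present corollary is a two-line deduction.
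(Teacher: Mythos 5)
Your proof is correct and is essentially the paper's own argument: assume $\mathfrak{a}!=\aleph_0\cdot\mathfrak{a}$, observe that this forces $\mathfrak{a}!$ to be Dedekind infinite, and then contradict Corollary~\ref{sh40}. The explicit treatment of $\mathfrak{a}=0$ is a harmless (and slightly more careful) elaboration of what the paper leaves implicit.
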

\begin{proof}
For every cardinal $\mathfrak{a}$, if $\mathfrak{a}!=\aleph_0\cdot\mathfrak{a}$,
then $\mathfrak{a}!$ is Dedekind infinite,
contradicting Corollary~\ref{sh40}.
\end{proof}

\subsection{Permutations that move finitely many elements}
Now, we focus our attention on cardinals bounded by $\calsf(\mathfrak{a})$.
The next theorem follows immediately from Theorem~\ref{tspdf}.

\begin{theorem}\label{tsfin}
For all cardinals $\mathfrak{a}$, if $\mathfrak{a}!$ is Dedekind infinite,
then we have $\mathfrak{a}!\nleqslant_{\mathrm{dfto}}\calsf(\mathfrak{a})$,
and hence $\calsf(\mathfrak{a})<\mathfrak{a}!$.
\end{theorem}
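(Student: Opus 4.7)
The plan is to derive Theorem~\ref{tsfin} directly from Theorem~\ref{tspdf} by means of the trivial inclusion $\calsf(x)\subseteq\calspdf(x)$. First I would observe that every finite set is power Dedekind finite (its power set is finite, hence certainly Dedekind finite), so any permutation whose moved set is finite has, \emph{a fortiori}, a power Dedekind finite moved set. Passing to cardinalities for any $x$ with $|x|=\mathfrak{a}$ gives $\calsf(\mathfrak{a})\leqslant\calspdf(\mathfrak{a})$; since every injection is in particular a Dedekind finite to one map, this upgrades to $\calsf(\mathfrak{a})\leqslant_{\mathrm{dfto}}\calspdf(\mathfrak{a})$.

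Next, suppose $\mathfrak{a}!$ is Dedekind infinite and assume towards a contradiction that $\mathfrak{a}!\leqslant_{\mathrm{dfto}}\calsf(\mathfrak{a})$. Composing a witnessing Dedekind finite to one map with the inclusion of the previous paragraph and invoking transitivity (Fact~\ref{sh02}), we obtain $\mathfrak{a}!\leqslant_{\mathrm{dfto}}\calspdf(\mathfrak{a})$, directly contradicting Theorem~\ref{tspdf}. This establishes the first assertion $\mathfrak{a}!\nleqslant_{\mathrm{dfto}}\calsf(\mathfrak{a})$.

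For the strict inequality $\calsf(\mathfrak{a})<\mathfrak{a}!$, note that $\calsf(\mathfrak{a})\leqslant\mathfrak{a}!$ holds in general (this was recorded right after the definitions of these cardinals). It therefore suffices to exclude $\mathfrak{a}!\leqslant\calsf(\mathfrak{a})$: any such injection would in particular be a Dedekind finite to one map, producing $\mathfrak{a}!\leqslant_{\mathrm{dfto}}\calsf(\mathfrak{a})$, which we have just ruled out.

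There is essentially no obstacle to overcome here; the theorem is a direct corollary of Theorem~\ref{tspdf}, in accordance with the author's remark that it ``follows immediately.'' The only conceptual content is the observation that finiteness implies power Dedekind finiteness, and the organisational point that the finite-to-one/Dedekind-finite-to-one hierarchy propagates through the inclusion $\calsf(x)\subseteq\calspdf(x)$.
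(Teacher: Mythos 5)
Your proof is correct and is exactly the argument the paper intends when it says Theorem~\ref{tsfin} ``follows immediately from Theorem~\ref{tspdf}'': the inclusion $\calsf(x)\subseteq\calspdf(x)$ (noted in the paper via $\calsf(\mathfrak{a})\leqslant\calspdf(\mathfrak{a})\leqslant\mathfrak{a}!$) plus transitivity of $\leqslant_{\mathrm{dfto}}$ from Fact~\ref{sh02}. Nothing further is needed.
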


\begin{corollary}\label{sh27}
For all sets $x$, if $\calsf(x)\neq\cals(x)$ then $|\calsf(x)|<|\cals(x)|$.
\end{corollary}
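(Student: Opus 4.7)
The plan is to mirror the proof of Corollary~\ref{sh26} almost verbatim, simply swapping the role of $\calspdf$ for $\calsf$. Fix a set $x$ with $\calsf(x) \neq \cals(x)$. Since $\calsf(x) \subseteq \cals(x)$ by definition, this hypothesis says that $\calsf(x)$ is a \emph{proper} subset of $\cals(x)$, and in particular the inclusion map yields $|\calsf(x)| \leqslant |\cals(x)|$, so the task reduces to ruling out equality.

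I would argue by contradiction: assume $|\calsf(x)| = |\cals(x)|$. Then $\cals(x)$ has cardinality equal to that of one of its proper subsets, so Dedekind's theorem (Theorem~\ref{dedt}) applies and tells us that $|\cals(x)|$ is Dedekind infinite. This is precisely the hypothesis under which Theorem~\ref{tsfin} operates, and invoking it with $\mathfrak{a} = |x|$ gives $\calsf(\mathfrak{a}) < \mathfrak{a}!$, i.e.\ $|\calsf(x)| < |\cals(x)|$. This contradicts the assumed equality, completing the argument.

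There is essentially no obstacle here: the heavy lifting has been done in Theorem~\ref{tspdf} (via the Cantor-style diagonal of Theorem~\ref{ctfs} combined with Lemma~\ref{sh17}), which already passed through Theorem~\ref{tsfin}. The only content of Corollary~\ref{sh27} beyond Theorem~\ref{tsfin} is the observation that the Dedekind-infiniteness hypothesis on $\cals(x)$ is automatically supplied, whenever $\calsf(x)$ fails to exhaust $\cals(x)$, by the equinumerosity assumption via Theorem~\ref{dedt}. Consequently the proof should be only a few lines long and structurally identical to that of Corollary~\ref{sh26}.
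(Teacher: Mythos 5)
Your proof is correct and is essentially identical to the paper's own argument: both fix a set $x$ with $\calsf(x)$ a proper subset of $\cals(x)$, assume $|\calsf(x)|=|\cals(x)|$ for contradiction, invoke Theorem~\ref{dedt} to conclude $\cals(x)$ is Dedekind infinite, and then apply Theorem~\ref{tsfin} to obtain $|\calsf(x)|<|\cals(x)|$, a contradiction. No gaps.
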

\begin{proof}
Let $x$ be a set such that $\calsf(x)$ is a proper subset of $\cals(x)$.
Assume towards a contradiction that $|\calsf(x)|=|\cals(x)|$.
Therefore, by Theorem~\ref{dedt}, $\cals(x)$ is Dedekind infinite,
and hence, by Theorem~\ref{tsfin}, $|\calsf(x)|<|\cals(x)|$,
which is a contradiction.
\end{proof}

Let $x$ be an arbitrary set and let $\mathfrak{a}=|x|$.
For any $n\in\omega$, let $\mathcal{S}_n(x)$ denote the set of
all permutations of $x$ which move at most $n$ elements of $x$,
and let $\mathcal{S}_n(\mathfrak{a})$ denote the cardinal of $\mathcal{S}_n(x)$.

\begin{corollary}\label{sh28}
For all $n\in\omega\setminus\{0\}$ and all cardinals $\mathfrak{a}>n$, $\mathcal{S}_n(\mathfrak{a})<\mathfrak{a}!$.
\end{corollary}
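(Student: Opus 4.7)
The plan is to combine the set-theoretic proper inclusion $\mathcal{S}_n(x)\subsetneq\cals(x)$ with Theorem~\ref{tsfin}, bridged by Dedekind's theorem, so that no case split on the size of $\mathfrak{a}$ is needed. First I would fix a set $x$ with $|x|=\mathfrak{a}$ and observe that, since $\mathfrak{a}>n$ and $n\in\omega$, the set $x$ admits a subset $A\subseteq x$ of size $n+1$ (if $|x|\leqslant n$ there would be a bijection between $x$ and $n$, contradicting $\mathfrak{a}\neq n$); then any $(n+1)$-cycle on $A$ extended by $\mathrm{id}_{x\setminus A}$ gives a permutation in $\cals(x)\setminus\mathcal{S}_n(x)$, so $\mathcal{S}_n(x)$ is a proper subset of $\cals(x)$. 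I would also note that every element of $\mathcal{S}_n(x)$ moves at most $n<\omega$ points, so $\mathcal{S}_n(x)\subseteq\calsf(x)$ and hence $\mathcal{S}_n(\mathfrak{a})\leqslant\calsf(\mathfrak{a})$.

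The main argument is then by contradiction. Suppose $\mathcal{S}_n(\mathfrak{a})\not<\mathfrak{a}!$. Since the inclusion $\mathcal{S}_n(x)\hookrightarrow\cals(x)$ yields $\mathcal{S}_n(\mathfrak{a})\leqslant\mathfrak{a}!$, this forces $\mathfrak{a}!\leqslant\mathcal{S}_n(\mathfrak{a})$, so by Cantor-Bernstein (Theorem~\ref{cbt}) we have $\mathcal{S}_n(\mathfrak{a})=\mathfrak{a}!$. Applying Theorem~\ref{dedt} to the proper containment $\mathcal{S}_n(x)\subsetneq\cals(x)$ between equinumerous sets would force $\mathfrak{a}!$ to be Dedekind infinite. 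But then Theorem~\ref{tsfin} gives $\calsf(\mathfrak{a})<\mathfrak{a}!$, and stringing inequalities yields $\mathcal{S}_n(\mathfrak{a})\leqslant\calsf(\mathfrak{a})<\mathfrak{a}!$, contradicting $\mathcal{S}_n(\mathfrak{a})=\mathfrak{a}!$.

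There is no real obstacle: the heavy lifting has been done in Theorem~\ref{tsfin}, and the remaining steps are elementary ZF manipulations. The only point that requires a moment's care is the extraction of an $(n+1)$-element subset from $\mathfrak{a}>n$ in the absence of choice, but this is immediate from the definition of cardinal comparison. The uniform argument simultaneously covers the cases of finite $\mathfrak{a}$, of infinite $\mathfrak{a}$ with $\mathfrak{a}!$ Dedekind infinite, and of infinite $\mathfrak{a}$ with $\mathfrak{a}!$ Dedekind finite, without any explicit dichotomy.
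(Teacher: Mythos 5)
Your proposal is correct and follows essentially the same route as the paper: establish $\mathcal{S}_n(\mathfrak{a})\leqslant\calsf(\mathfrak{a})\leqslant\mathfrak{a}!$ together with the proper inclusion $\mathcal{S}_n(x)\subsetneq\cals(x)$ (the paper encodes this as $\mathcal{S}_n(\mathfrak{a})+1\leqslant\calsf(\mathfrak{a})$), reduce the failure of strict inequality to $\mathcal{S}_n(\mathfrak{a})=\mathfrak{a}!$ via Cantor--Bernstein, invoke Theorem~\ref{dedt} to get that $\mathfrak{a}!$ is Dedekind infinite, and then contradict this with Theorem~\ref{tsfin}. The only cosmetic quibble is your parenthetical justification for extracting an $(n+1)$-element subset, which is slightly garbled but repairable (from $n\leqslant\mathfrak{a}$ and $\mathfrak{a}\neq n$ one extends any injection $n\to x$ by a point outside its range).
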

\begin{proof}
Since $\mathfrak{a}>n$,
$\mathcal{S}_n(\mathfrak{a})\leqslant\mathcal{S}_n(\mathfrak{a})+1\leqslant\calsf(\mathfrak{a})\leqslant\mathfrak{a}!$.
Assume towards a contradiction that $\mathcal{S}_n(\mathfrak{a})=\mathfrak{a}!$.
By Theorem~\ref{dedt}, $\mathfrak{a}!$ is Dedekind infinite,
and hence, by Theorem~\ref{tsfin},
$\mathcal{S}_n(\mathfrak{a})\leqslant\calsf(\mathfrak{a})<\mathfrak{a}!$,
which is a contradiction.
\end{proof}

Let $x$ be an arbitrary set and let $\mathfrak{a}=|x|$.
For any $n\in\omega$, let $[x]^n$ denote the set of all $n$-element subsets of $x$,
and let $[\mathfrak{a}]^n$ denote the cardinal of $[x]^n$.

\begin{fact}\label{sh29}
For all cardinals $\mathfrak{a}$, $[\mathfrak{a}]^2+1=\mathcal{S}_2(\mathfrak{a})$.
\end{fact}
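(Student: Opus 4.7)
The plan is to identify $\mathcal{S}_2(x)$ explicitly and then exhibit a bijection between it and $[x]^2\cup\{\emptyset\}$. Fix a set $x$ with $|x|=\mathfrak{a}$. First I would observe that a permutation $f$ of $x$ moves exactly one element is impossible: if $f(z)=v\neq z$ for some $z$, then by injectivity $f(v)\neq f(z)=v$, so at least two elements are moved. Hence $\mathcal{S}_2(x)$ is the disjoint union of $\{\mathrm{id}_x\}$ and the set $T$ of transpositions of $x$, i.e.\ permutations whose move set has cardinality exactly $2$.

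Next I would show that the assignment $\{z,v\}\mapsto(z;v)_x$ is a bijection from $[x]^2$ onto $T$. It is well defined because the notation $(z;v)_x$, as introduced in the preliminary section, yields the same transposition if one swaps $z$ and $v$; it is injective because a transposition determines its move set, and it is surjective because any $f\in T$ with $\mov(f)=\{z,v\}$ must satisfy $f(z)=v$ and $f(v)=z$, hence $f=(z;v)_x$. Combining this with the previous paragraph gives a bijection from $[x]^2\cup\{\emptyset\}$ (with $\emptyset\mapsto\mathrm{id}_x$) onto $\mathcal{S}_2(x)$, so $[\mathfrak{a}]^2+1=\mathcal{S}_2(\mathfrak{a})$. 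There is no real obstacle here; the only subtle point is ruling out permutations that move exactly one element, which is immediate from injectivity.
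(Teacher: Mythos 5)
Your proof is correct and is essentially the paper's argument run in the opposite direction: the paper uses the map $t\mapsto\mov(t)$ from $\mathcal{S}_2(x)$ onto $[x]^2\cup\{\emptyset\}$, whose inverse is exactly your assignment $\{z,v\}\mapsto(z;v)_x$ (and $\emptyset\mapsto\mathrm{id}_x$), with the same key observations that no permutation moves exactly one element and that a permutation is determined by its two-element move set.
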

\begin{proof}
For any set $x$, the function $g$ defined on $\mathcal{S}_2(x)$ given by $g(t)=\mov(t)$
is a bijection from $\mathcal{S}_2(x)$ onto $[x]^2\cup\{\emptyset\}$.
\end{proof}

\begin{corollary}\label{sh30}
For all cardinals $\mathfrak{a}$, $[\mathfrak{a}]^2<\mathfrak{a}!$.
\end{corollary}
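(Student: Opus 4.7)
The plan is to deduce Corollary \ref{sh30} almost immediately from the two results that precede it. By Fact \ref{sh29}, every cardinal $\mathfrak{a}$ satisfies $[\mathfrak{a}]^2\leqslant[\mathfrak{a}]^2+1=\mathcal{S}_2(\mathfrak{a})$, so it suffices to compare $\mathcal{S}_2(\mathfrak{a})$ with $\mathfrak{a}!$.

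For $\mathfrak{a}>2$ (which covers every infinite cardinal as well as the finite cardinals $\geqslant 3$), I would invoke Corollary \ref{sh28} with $n=2$ to obtain $\mathcal{S}_2(\mathfrak{a})<\mathfrak{a}!$. Combining this with the previous inequality and transitivity of $<$ (noted in the preliminaries as a consequence of Theorem~\ref{cbt}) yields $[\mathfrak{a}]^2<\mathfrak{a}!$. For the remaining small cases $\mathfrak{a}\in\{0,1,2\}$, a direct computation handles everything: $[0]^2=[1]^2=0$ with $0!=1!=1$, and $[2]^2=1$ with $2!=2$, so the strict inequality holds by inspection.

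There is really no obstacle here beyond noting that the hypothesis $\mathfrak{a}>n$ in Corollary~\ref{sh28} excludes the cases $\mathfrak{a}\leqslant 2$, which is why the separate finite verification is needed. Since Fact~\ref{sh29} crucially contributes the ``$+1$'' that turns the $\leqslant$ we would get from a bijection into a genuine strict inequality via Corollary~\ref{sh28}, the argument really is just the one-line chain $[\mathfrak{a}]^2<[\mathfrak{a}]^2+1=\mathcal{S}_2(\mathfrak{a})<\mathfrak{a}!$ in the non-trivial case.
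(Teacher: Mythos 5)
Your proof is correct and follows essentially the same route as the paper: both split into the cases $\mathfrak{a}\leqslant2$ and $\mathfrak{a}>2$, and in the main case chain $[\mathfrak{a}]^2\leqslant[\mathfrak{a}]^2+1=\mathcal{S}_2(\mathfrak{a})<\mathfrak{a}!$ via Fact~\ref{sh29} and Corollary~\ref{sh28}. The only cosmetic difference is that the paper disposes of the small cases with $[\mathfrak{a}]^2<[\mathfrak{a}]^2+1=\mathcal{S}_2(\mathfrak{a})\leqslant\mathfrak{a}!$ rather than by direct enumeration.
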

\begin{proof}
By Fact~\ref{sh29}, if $\mathfrak{a}\leqslant2$ then
$[\mathfrak{a}]^2<[\mathfrak{a}]^2+1=\mathcal{S}_2(\mathfrak{a})\leqslant\mathfrak{a}!$,
and if $\mathfrak{a}>2$ then, by Corollary~\ref{sh28},
$[\mathfrak{a}]^2\leqslant[\mathfrak{a}]^2+1=\mathcal{S}_2(\mathfrak{a})<\mathfrak{a}!$.
\end{proof}

In the next section, it will be shown that the following statement is consistent with $\mathsf{ZF}$:
There exists a Dedekind infinite cardinal $\mathfrak{a}$ such that $\mathfrak{a}!<[\mathfrak{a}]^3$,
$[\mathfrak{a}]^3\nleqslant_{\mathrm{dfto}}\mathfrak{a}!$, and $\mathfrak{a}!\leqslant^\ast\mathfrak{a}$.

\begin{lemma}\label{sh31}
For all cardinals $\mathfrak{a}$, $\bigl[[\mathfrak{a}]^2\bigr]^2+1\leqslant\mathcal{S}_5(\mathfrak{a})$.
\end{lemma}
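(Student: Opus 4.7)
The plan is to construct an explicit injection $\Phi \colon [[x]^2]^2 \cup \{\ast\} \to \mathcal{S}_5(x)$, where $|x|=\mathfrak{a}$ and $\ast$ is an auxiliary point accounting for the ``$+1$''. Set $\Phi(\ast)=\mathrm{id}_x$, and for each pair $\{P,Q\} \in [[x]^2]^2$ split by the size of $P\cup Q$, which is either $4$ (the \emph{disjoint case}) or $3$ (the \emph{overlap case}), since $P\ne Q$ and $|P|=|Q|=2$.

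In the disjoint case, writing $P=\{a,b\}$ and $Q=\{c,d\}$ with $a,b,c,d$ distinct, set $\Phi(\{P,Q\})=(a;b)_x\circ(c;d)_x$, the double transposition of support $P\cup Q$. This permutation moves exactly four elements, so it lies in $\mathcal{S}_4(x)\subseteq \mathcal{S}_5(x)$, and its two nontrivial orbits are precisely $P$ and $Q$, whence $\{P,Q\}$ is recovered canonically from the image. This portion of $\Phi$ is therefore injective, with image contained in the set of double transpositions.

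In the overlap case, let $s$ be the unique element of $P\cap Q$ and write $P=\{s,u\}$, $Q=\{s,v\}$; the pair is canonically encoded by the triple $(s,\{u,v\})$ on the $3$-set $T=\{s,u,v\}$. The plan is to attach to each such overlap pair a permutation of $x$ with support of size in $\{3,4,5\}$, built canonically from $\{P,Q\}$ so as to be distinguishable from every double transposition of the disjoint case and from the image of every other overlap pair. One natural ingredient is the composition $(s;u)_x \circ (s;v)_x$, which yields a $3$-cycle on $T$; however, the two possible orderings $(P,Q)$ versus $(Q,P)$ in this composition give the two distinct $3$-cycles on $T$, with neither canonically preferred.

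The \textbf{main obstacle} is precisely in the overlap case: only two $3$-cycles are supported on $T$, but for each $3$-set there are three possible shared elements $s\in T$, so $3$-cycles alone cannot encode all overlap pairs; and any transposition supported in $T$ may equally arise from another $3$-set sharing two elements with $T$, creating collisions. The encoding must therefore use permutations of support $\geq 4$ for at least some overlap pairs, with the extra support elements chosen canonically from the pair data $\{P,Q\}$ without relying on an ordering of the unordered pair $\{u,v\}$ (which is not canonically orderable in $\mathsf{ZF}$). The bound ``$5$'' in the lemma provides exactly the needed slack: $(3,2)$-type and $5$-cycle permutations are structurally distinct from the $(2,2)$-type double transpositions of the disjoint case. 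Once such an encoding is specified, injectivity of the full $\Phi$ follows by a case analysis of cycle types and orbit structures.
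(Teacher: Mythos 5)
Your proposal has a genuine gap: the overlap case, which you yourself flag as the main obstacle, is never actually resolved. You correctly observe that $3$-cycles on $T=\{s,u,v\}$ cannot do the job and that transpositions inside $T$ collide, and you correctly note that any construction must avoid ordering the unordered pair $\{u,v\}$; but the sentence ``Once such an encoding is specified, injectivity of the full $\Phi$ follows'' is doing all the work, and no encoding is specified. Moreover, the direction you gesture at --- choosing the extra support elements ``canonically from the pair data $\{P,Q\}$'' --- cannot succeed: the pair data determines only the three elements of $T$, and in $\mathsf{ZF}$ there is no way to extract a canonical fourth or fifth element of $x$ from a three-element set. The extra support must come from somewhere else.

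The paper's solution is to fix, once and for all, eight distinguished elements $z_i,v_i$ ($i<4$) of $x$ (possible when $\mathfrak{a}\geqslant 8$), and to send the overlap pair $\bigl\{\{a,b\},\{a,c\}\bigr\}$ to $(a;z_k;v_k)_x\circ(b;c)_x$, where $k<4$ is least with $\{a,b,c\}\cap\{z_k,v_k\}=\emptyset$. This has cycle type $(3,2)$ with support of size $5$, so it cannot collide with the double transpositions of the disjoint case; the shared element $a$ is recovered as the unique element of the $3$-cycle outside $\{z_k,v_k\}$, and $\{b,c\}$ is the support of the $2$-cycle, which is symmetric in $b$ and $c$ and so needs no ordering. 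Note also that this global choice of eight auxiliary elements forces a separate (trivial, finite) verification for $\mathfrak{a}<8$, which your proposal omits entirely. To repair your argument you would need both the fixed auxiliary markers and the small-cardinal case.
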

\begin{proof}
For $\mathfrak{a}<8$, an easy calculation shows that
$\bigl[[\mathfrak{a}]^2\bigr]^2+1\leqslant\mathcal{S}_5(\mathfrak{a})$.
Now assume that $\mathfrak{a}\geqslant8$ and
let $x$ be a set such that $|x|=\mathfrak{a}$.
Let $z_i$, $v_i$ ($i<4$) be eight distinct elements of $x$.
Let $f$ be the function on $\bigl[[x]^2\bigr]^2\cup\{\emptyset\}$ such that
$f(\emptyset)=\mathrm{id}_x$ and such that
for any four distinct elements $a$, $b$, $c$, $d$ of $x$,
\[
f\Bigl(\bigl\{\{a,b\},\{c,d\}\bigr\}\Bigr)=\bigl(a;b\bigr)_x\circ\bigl(c;d\bigr)_x
\]
and
\[
f\Bigl(\bigl\{\{a,b\},\{a,c\}\bigr\}\Bigr)=\bigl(a;z_k;v_k\bigr)_x\circ\bigl(b;c\bigr)_x
\]
where $k<4$ is the least natural number such that $\{a,b,c\}\cap\{z_k,v_k\}=\emptyset$.
Then it is easy to verify that $f$ is an injection
from $\bigl[[x]^2\bigr]^2\cup\{\emptyset\}$ into $\mathcal{S}_5(x)$,
and hence $\bigl[[\mathfrak{a}]^2\bigr]^2+1\leqslant\mathcal{S}_5(\mathfrak{a})$.
\end{proof}

\begin{corollary}\label{sh32}
For all cardinals $\mathfrak{a}$, $\bigl[[\mathfrak{a}]^2\bigr]^2<\mathfrak{a}!$.
\end{corollary}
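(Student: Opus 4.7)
The plan is to combine Lemma~\ref{sh31} with Corollary~\ref{sh28} applied at $n=5$. In the generic case $\mathfrak{a}>5$, Corollary~\ref{sh28} yields $\mathcal{S}_5(\mathfrak{a})<\mathfrak{a}!$, while Lemma~\ref{sh31} yields $\bigl[[\mathfrak{a}]^2\bigr]^2+1\leqslant\mathcal{S}_5(\mathfrak{a})$. Concatenating these and invoking the transitivity of $<$ past $\leqslant$ (as recorded right after Theorem~\ref{cbt}), I would write
\[
\bigl[[\mathfrak{a}]^2\bigr]^2 \leqslant \bigl[[\mathfrak{a}]^2\bigr]^2+1 \leqslant \mathcal{S}_5(\mathfrak{a}) < \mathfrak{a}!,
\]
whence $\bigl[[\mathfrak{a}]^2\bigr]^2<\mathfrak{a}!$ as required.

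The remaining task is to dispose of the six finite cardinals $\mathfrak{a}\in\{0,1,2,3,4,5\}$, where Corollary~\ref{sh28} is not applicable because it requires $\mathfrak{a}>n$. In this range both sides of the desired inequality are small natural numbers, so a direct arithmetic check for each of the six values is all that is needed. No real obstacle arises; the only point worth flagging is that it is precisely the restriction $\mathfrak{a}>n$ in Corollary~\ref{sh28}, combined with the particular value $n=5$ dictated by Lemma~\ref{sh31}, that forces the small-case bookkeeping.
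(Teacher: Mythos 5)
Your proposal is correct and follows essentially the same route as the paper: Lemma~\ref{sh31} plus Corollary~\ref{sh28} at $n=5$ for $\mathfrak{a}>5$, with the finitely many small cases handled separately. The paper dispatches the case $\mathfrak{a}\leqslant 5$ slightly more uniformly by reusing Lemma~\ref{sh31} itself, via $\bigl[[\mathfrak{a}]^2\bigr]^2<\bigl[[\mathfrak{a}]^2\bigr]^2+1\leqslant\mathcal{S}_5(\mathfrak{a})\leqslant\mathfrak{a}!$, but your direct arithmetic check of the six values is equally valid.
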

\begin{proof}
By Lemma~\ref{sh31}, if $\mathfrak{a}\leqslant5$ then
$\bigl[[\mathfrak{a}]^2\bigr]^2<\bigl[[\mathfrak{a}]^2\bigr]^2+1\leqslant\mathcal{S}_5(\mathfrak{a})\leqslant\mathfrak{a}!$,
and if $\mathfrak{a}>5$ then, by Corollary~\ref{sh28},
$\bigl[[\mathfrak{a}]^2\bigr]^2\leqslant\bigl[[\mathfrak{a}]^2\bigr]^2+1\leqslant\mathcal{S}_5(\mathfrak{a})<\mathfrak{a}!$.
\end{proof}

It will be shown in the next section that $\Bigl[\bigl[[\mathfrak{a}]^2\bigr]^2\Bigr]^2\leqslant\mathfrak{a}!$
for an arbitrary infinite cardinal $\mathfrak{a}$ cannot be proved in $\mathsf{ZF}$.

Let $x$ be an arbitrary set and let $\mathfrak{a}=|x|$.
Recall that for every $n\in\omega$, $x^n$~is the set of all functions from $n$ into $x$,
and $\mathfrak{a}^n$ is the cardinal of $x^n$.

\begin{lemma}\label{sh33}
For all $n\in\omega$ and all cardinals $\mathfrak{a}\geqslant2n(n+1)$,
$\mathfrak{a}^n\leqslant\mathcal{S}_{2n+1}(\mathfrak{a})$.
Moreover, from an $n\in\omega\setminus\{0\}$, a set $x$, and an injection $f:2n(n+1)\to x$,
one can explicitly define an injection $g:x^n\to\mathcal{S}_{2n+1}(x)\setminus\mathcal{S}_{2n}(x)$.
\end{lemma}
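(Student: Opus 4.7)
The case $n = 0$ is immediate: $\mathfrak{a}^0 = 1 = \mathcal{S}_1(\mathfrak{a})$, since any permutation moving at most one element is the identity. Fix $n \geqslant 1$, a set $x$, and an injection $f : 2n(n+1) \to x$, and relabel the anchor image as a rectangular array $a_{i,j}$ with $i \leqslant n$ and $j < 2n$ (there are exactly $(n+1) \cdot 2n = 2n(n+1)$ of them). For each $t \in x^n$ write $B_t := \{t(k) : k < n\}$, a subset of $x$ of size at most $n$; since $2n > n \geqslant |B_t|$, every row $i$ contains an anchor outside $B_t$, and I take the primary anchor $b_i(t)$ to be the one of least column index in row $i$ that lies outside $B_t$. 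The elements $b_0(t), \ldots, b_n(t)$ are then $n+1$ pairwise distinct anchors, each lying outside $B_t$.

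The plan is to encode each $t$ as a single $(2n+1)$-cycle of the form
\[
g(t) := \bigl(b_0(t);\, c_0;\, b_1(t);\, c_1;\, \ldots;\, b_{n-1}(t);\, c_{n-1};\, b_n(t)\bigr)_x,
\]
where $c_k = t(k)$ when $t(k)$ is a first occurrence in $t$ (i.e., when $t(k) \neq t(j)$ for all $j < k$) and otherwise $c_k$ is a canonical ``marker'' anchor drawn from row $k$ that encodes the back-reference $k_0 := \min\{j < k : t(j) = t(k)\}$. To make the encoding unambiguous I split each row $i$ into a primary half (columns $0$ through $n-1$) and a marker half (columns $n$ through $2n-1$); the default marker substituted for a repetition with back-reference $k_0$ is $a_{k, n+k_0}$, and an auxiliary substitution rule is applied when $t(k)$ itself happens to lie in the marker half of row $k$ or clashes with $b_k(t)$. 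By construction $g(t)$ is genuinely a $(2n+1)$-cycle, so $g(t) \in \mathcal{S}_{2n+1}(x) \setminus \mathcal{S}_{2n}(x)$.

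Injectivity is shown by giving an explicit decoding: from a cycle $\sigma$ in the image, first recover the canonical starting anchor $b_0(t)$ via a distinguishing property (for instance, the anchor of least column index in row $0$ whose $\sigma$-image fails to lie in row $1$), then traverse $\sigma$ to read off the alternating sequence $b_0(t), c_0, b_1(t), c_1, \ldots, b_n(t)$. Each $c_k$ is classified as a value or a marker by pulling it back through $f^{-1}$ and checking whether it sits in the marker half of row $k$; if it is a marker, its column index yields $k_0$ and hence $t(k) = t(k_0)$, and otherwise $t(k) = c_k$. The auxiliary rule is inverted in the same manner.

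The main obstacle is precisely the corner-case bookkeeping: a value $t(k)$ can happen to equal an anchor in the marker half of row $k$, threatening to confuse decoding, and primary anchors $b_i(t)$ can in principle clash with default markers. This is where the bound $\mathfrak{a} \geqslant 2n(n+1)$ is used: the $(n+1) \times 2n$ layout gives each row one used primary anchor, $n$ slots reserved for default markers, and a residual pool that acts as an ``escape hatch'' for the auxiliary substitutions needed to resolve the collisions. Once the substitution rule is written down carefully and injectivity is verified on these cases, the construction is manifestly explicit in $n$, $x$, and $f$, establishing both parts of the lemma.
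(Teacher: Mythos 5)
Your overall strategy (encode $t$ as a single $(2n+1)$-cycle alternating between anchors and entries of $t$, with repeated values replaced by back-reference markers) is the same as the paper's, and your first-occurrence device is exactly the map $h$ used there. But the proposal has a genuine gap: the entire difficulty of the lemma is the collision problem you name in your last paragraph, and you defer exactly that. The ``auxiliary substitution rule'' is never written down, and your list of corner cases is incomplete: for instance, a first-occurrence value $t(k)$ may equal the marker $a_{k',n+k_0'}$ actually used at a \emph{different} position $k'$, which produces a repeated entry in the cycle --- and the notation $(\,\cdot\,;\dots;\,\cdot\,)_x$ is only defined for pairwise distinct entries, so this is not merely a decoding ambiguity but a failure of $g(t)$ to be a $(2n+1)$-cycle at all. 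Your decoding criterion for locating $b_0(t)$ also fails as stated: a row-$0$ anchor $a_{0,j}$ with $j$ smaller than the column of $b_0(t)$ lies in $B_t$, hence equals some $t(k)$; if $k\geqslant1$ is its first occurrence then $\sigma(a_{0,j})=b_{k+1}(t)$ lies in row $k+1\neq1$, so the criterion wrongly selects $a_{0,j}$ (and anchors fixed by $\sigma$ have images in row $0$, failing the test as well).

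The paper dissolves all of these collisions at once with a pigeonhole your layout does not exploit: it partitions the $2n(n+1)$ anchors into $n+1$ blocks of $2n$ (block $i$ consisting of $n+1$ ``signature'' anchors $z_{i,j}$ and $n-1$ markers $v_{i,k}$) and, since $|\ran(t)|\leqslant n$, chooses the least index $m_t$ of a block entirely disjoint from $\ran(t)$. Every anchor appearing in the cycle is then drawn from block $m_t$ alone, so no anchor can equal any value of $t$, the $2n+1$ cycle entries are automatically distinct, and $m_t$ is recoverable from $\mov(g(t))$ because $\{z_{m_t,j}\mid j\leqslant n\}$ is the only full signature set that can fit inside a $(2n+1)$-element moved set. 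This is where the hypothesis $\mathfrak{a}\geqslant2n(n+1)$ is really used. To salvage your row-by-row selection you would have to exhibit the substitution rule explicitly and verify injectivity through the resulting case analysis; as written, the proof is not complete.
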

\begin{proof}
Let $n$ be a non-zero natural number,
and let $f$ be an injection from $2n(n+1)$ into $x$.
Without loss of generality, assume that $x\cap\omega=\emptyset$.
For any $i$, $j\leqslant n$ and $k<n-1$,
let $z_{i,j}=f(2ni+j)$ and let $v_{i,k}=f(2ni+n+k+1)$.
Then $z_{i,j}$, $v_{i,k}$ ($i$, $j\leqslant n$, $k<n-1$)
are pairwise distinct elements of $x$.
For each $t\in x^n$, let
\[
m_t=\min\bigl\{i\leqslant n\bigm|\ran(t)\cap
\bigl(\{z_{i,j}\mid j\leqslant n\}\cup\{v_{i,k}\mid k<n-1\}\bigr)=\emptyset\bigr\}.
\]

Let $h$ be the function on $x^n$ such that for all $t\in x^n$,
$h(t)$ is the function defined on $n$ given by
\[
h(t)(l)=
\begin{cases}
  t(l),                      & \text{if for all $k<l$, $t(k)\neq t(l)$;}\\
  \max\{k<l\mid t(k)=t(l)\}, & \text{otherwise.}
\end{cases}
\]
Then as in the proof of Lemma~\ref{sh08b},
$h$ is an injection from $x^n$ into the set
$\{u\mid u\text{ is an injection from }n\text{ into }x\cup(n-1)\}$.
Let $\Phi$ be the function on $x^n$ such that for all $t\in x^n$,
$\Phi(t)$ is the function defined on $n$ given by
\[
\Phi(t)(l)=
\begin{cases}
  h(t)(l),         & \text{if $h(t)(l)\in x$;}\\
  v_{m_t,h(t)(l)}, & \text{if $h(t)(l)\in n-1$.}
\end{cases}
\]
Clearly, for all $t\in x^n$, $\Phi(t)$ is an injection from $n$ into $x$.
Note that $\Phi$ need not be injective.

Now, let $g$ be the function defined on $x^n$ given by
\[
g(t)=\bigl(\Phi(t)(0);\dots;\Phi(t)(n-1);z_{m_t,0};\dots;z_{m_t,n}\bigr)_x.
\]
Clearly, for all $t\in x^n$, $g(t)\in\mathcal{S}_{2n+1}(x)\setminus\mathcal{S}_{2n}(x)$.
Moreover, $g$ is injective, since for all $t\in x^n$,
$t$ is uniquely determined by $g(t)$ in the following way:
First, $m_t$ is the unique $i\leqslant n$ such that
$\{z_{i,j}\mid j\leqslant n\}\subseteq\mov(g(t))$,
and $\Phi(t)$ is the function on $n$ such that
$\Phi(t)(l)=(g(t))^{(l+1)}(z_{m_t,n})$ for any $l<n$.
Then $h(t)$ is the function on $n$ such that
for all $l<n$, $h(t)(l)=\Phi(t)(l)$, if $\Phi(t)(l)\notin\{v_{m_t,k}\mid k<n-1\}$,
and $h(t)(l)$ is the unique $k<n-1$ for which $\Phi(t)(l)=v_{m_t,k}$, otherwise.
Finally, since $h$ is injective, $t$ is uniquely determined by $h(t)$,
and hence by $g(t)$.
\end{proof}

\begin{corollary}\label{sh35}
For all Dedekind infinite cardinals $\mathfrak{a}$, $\seq(\mathfrak{a})\leqslant\calsf(\mathfrak{a})$.
\end{corollary}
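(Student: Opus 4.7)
The plan is to assemble, for every $n\in\omega$, the injections provided by Lemma~\ref{sh33} into a single injection from $\seq(x)$ into $\calsf(x)$, exploiting the fact that the targets $\mathcal{S}_{2n+1}(x)\setminus\mathcal{S}_{2n}(x)$ for distinct $n$ are pairwise disjoint.

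First I would fix a set $x$ with $|x|=\mathfrak{a}$ and, using that $\mathfrak{a}$ is Dedekind infinite, an injection $\iota:\omega\to x$. For each $n\in\omega\setminus\{0\}$, the restriction $\iota\upharpoonright 2n(n+1)$ is an injection from $2n(n+1)$ into $x$, so the ``moreover'' clause of Lemma~\ref{sh33} produces an explicit injection $g_n:x^n\to\mathcal{S}_{2n+1}(x)\setminus\mathcal{S}_{2n}(x)$. I would then define $G:\seq(x)\to\calsf(x)$ by $G(\emptyset)=\mathrm{id}_x$ and $G(t)=g_{\dom(t)}(t)$ when $\dom(t)\geqslant 1$. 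This is well-defined because every $t\in\seq(x)$ with nonzero domain $n$ lies in $x^n$, and $g_n(t)\in\mathcal{S}_{2n+1}(x)\subseteq\calsf(x)$.

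For injectivity, I would observe that a permutation in $\mathcal{S}_{2n+1}(x)\setminus\mathcal{S}_{2n}(x)$ moves exactly $2n+1$ elements of $x$, while $\mathrm{id}_x$ moves none. Thus the sets $\{\mathrm{id}_x\}$ and $\mathcal{S}_{2n+1}(x)\setminus\mathcal{S}_{2n}(x)$ for $n\geqslant 1$ are pairwise disjoint, so if $G(t_1)=G(t_2)$ then comparing the number of moved elements forces $\dom(t_1)=\dom(t_2)$, and the injectivity of the corresponding $g_n$ (or the triviality of $x^0=\{\emptyset\}$) yields $t_1=t_2$.

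There is no serious obstacle here. The only mild point is that one must obtain the family $(g_n)_{n\geqslant 1}$ uniformly, without appealing to choice. This is exactly what the ``explicitly define'' language of Lemma~\ref{sh33} guarantees once a single $\iota:\omega\to x$ is fixed, since each required injection $2n(n+1)\to x$ is then produced canonically from $\iota$. Dedekind infinity of $\mathfrak{a}$ supplies such an $\iota$, so the construction of $G$ goes through in $\mathsf{ZF}$.
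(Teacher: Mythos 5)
Your proposal is correct and is essentially identical to the paper's own proof: both fix an injection $\iota:\omega\to x$, invoke the ``moreover'' clause of Lemma~\ref{sh33} to obtain the injections $x^n\to\mathcal{S}_{2n+1}(x)\setminus\mathcal{S}_{2n}(x)$ uniformly in $n$, and glue them (together with $\emptyset\mapsto\mathrm{id}_x$) into one injection, using the pairwise disjointness of the targets. No gaps.
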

\begin{proof}
Let $x$ be a set such that $|x|=\mathfrak{a}$,
and let $f$ be an injection from $\omega$ into $x$.
By the second part of Lemma~\ref{sh33},
there exists a class function $G$ such that
for all $n\in\omega\setminus\{0\}$ and all injections $g:2n(n+1)\to x$,
$G(n,x,g)$ is defined and is an injection from $x^n$
into $\mathcal{S}_{2n+1}(x)\setminus\mathcal{S}_{2n}(x)$.
Then
\[
h=\bigl\{(\emptyset,\mathrm{id}_x)\bigr\}\cup\bigcup_{n\in\omega\setminus\{0\}}G\bigl(n,x,f\upharpoonright 2n(n+1)\bigr)
\]
is an injection from $\seq(x)$ into $\calsf(x)$, and hence $\seq(\mathfrak{a})\leqslant\calsf(\mathfrak{a})$.
\end{proof}

It follows from Corollary~\ref{sh35} and Theorem~\ref{tsfin} that
$\seq(\mathfrak{a})<\mathfrak{a}!$ for any Dedekind infinite cardinal $\mathfrak{a}$,
however, this result is a special case of Corollary~\ref{sh51}.
The next result was also proved in \cite[Theorem~2.3]{SonpanowVejjajiva2018}.

\begin{corollary}\label{sh34}
For all $n\in\omega$ and all infinite cardinals $\mathfrak{a}$, $\mathfrak{a}^n<\mathfrak{a}!$.
\end{corollary}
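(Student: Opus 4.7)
The plan is to derive Corollary~\ref{sh34} as a direct consequence of the two results just proved: Lemma~\ref{sh33} (which embeds $\mathfrak{a}^n$ into $\mathcal{S}_{2n+1}(\mathfrak{a})$ when $\mathfrak{a}\geqslant 2n(n+1)$) and Corollary~\ref{sh28} (which gives $\mathcal{S}_k(\mathfrak{a})<\mathfrak{a}!$ whenever $k\in\omega\setminus\{0\}$ and $\mathfrak{a}>k$). The key observation is that for infinite $\mathfrak{a}$, both numerical side conditions $\mathfrak{a}\geqslant 2n(n+1)$ and $\mathfrak{a}>2n+1$ are automatic, since every natural number is strictly less than any infinite cardinal in $\mathsf{ZF}$.

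First I would dispose of the degenerate case $n=0$: we have $\mathfrak{a}^0=1$, while for any infinite $\mathfrak{a}$ the cardinal $\mathfrak{a}!$ is at least $2$ (indeed, by Fact~\ref{sh10}, $\mathfrak{a}!$ is power Dedekind infinite), so $\mathfrak{a}^0<\mathfrak{a}!$ trivially. Then I would handle the main case $n\geqslant 1$: since $\mathfrak{a}$ is infinite, $\mathfrak{a}\geqslant 2n(n+1)$, so Lemma~\ref{sh33} yields
\[
\mathfrak{a}^n\leqslant\mathcal{S}_{2n+1}(\mathfrak{a}).
\]
Again since $\mathfrak{a}$ is infinite, $\mathfrak{a}>2n+1$, so Corollary~\ref{sh28} gives $\mathcal{S}_{2n+1}(\mathfrak{a})<\mathfrak{a}!$. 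Chaining these, $\mathfrak{a}^n<\mathfrak{a}!$, as required.

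There is no real obstacle here; the work has already been done. The proof is essentially a one-line composition of the preceding lemma and corollary, once one notes that ``infinite'' kills both finite numerical thresholds $2n(n+1)$ and $2n+1$. The only minor bookkeeping is to separate out $n=0$, where Lemma~\ref{sh33} is vacuous in content but the conclusion still holds for trivial reasons.
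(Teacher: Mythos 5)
Your proof is correct and is essentially the paper's own argument: the paper proves this corollary in one line as $\mathfrak{a}^n\leqslant\mathcal{S}_{2n+1}(\mathfrak{a})<\mathfrak{a}!$ by Lemma~\ref{sh33} and Corollary~\ref{sh28}, exactly as you do. Your separate treatment of $n=0$ is harmless but not needed, since the first part of Lemma~\ref{sh33} covers $n=0$ and Corollary~\ref{sh28} applies with the nonzero index $2n+1=1$.
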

\begin{proof}
By Lemma~\ref{sh33} and Corollary~\ref{sh28},
$\mathfrak{a}^n\leqslant\mathcal{S}_{2n+1}(\mathfrak{a})<\mathfrak{a}!$.
\end{proof}

In \cite[Theorem~3.10]{SonpanowVejjajiva2017}, Sonpanow and Vejjajiva proved that
for all infinite sets $x$, if $x$ is \emph{almost even} in the sense that
there exists a permutation $f$ of~$x$ without fixed points such that $f\circ f=\mathrm{id}_x$,
then there are no finite-to-one maps from $\cals(x)$ into $x$.
This result is a special case of the next corollary.

\begin{corollary}\label{sh36}
For all infinite sets $x$, if there exists a permutation of $x$ without fixed points,
then for any $n\in\omega$, there are no power Dedekind finite to one maps from $\cals(x)$ into $x^n$.
\end{corollary}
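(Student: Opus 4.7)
The plan is to reduce to Corollary~\ref{sh51} by enlarging $x^n$ to $\seq(x)$. Assume toward contradiction that some $\Phi\colon\cals(x)\to x^n$ is a power Dedekind finite to one map. Since $x^n\subseteq\seq(x)$, composing $\Phi$ with the inclusion $x^n\hookrightarrow\seq(x)$ yields, by Fact~\ref{sh02}, a power Dedekind finite to one map, hence a Dedekind finite to one map, from $\cals(x)$ into $\seq(x)$.

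To apply Corollary~\ref{sh51} I need that $\cals(x)$ is Dedekind infinite. Since $x$ is infinite, Fact~\ref{sh10} tells us that $\cals(x)$ is power Dedekind infinite, so Fact~\ref{sh03} applied to $\Phi$ gives that $x^n$ is power Dedekind infinite. (For $n=0$ this is already absurd since $x^0$ is a singleton; for $n\geqslant 1$, Fact~\ref{sh04} then yields that $x$ itself is power Dedekind infinite.) Now the hypothesized fixed-point-free permutation $f$ of $x$ satisfies $\mov(f)=x$, so $\mov(f)$ is power Dedekind infinite, whence $f\in\cals(x)\setminus\calspdf(x)$. Thus $\calspdf(x)\neq\cals(x)$, and Theorem~\ref{ktfs} gives that $\cals(x)$ is Dedekind infinite, as required.

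With this in hand, Corollary~\ref{sh51} asserts, taking $\mathfrak{a}=|x|$, that $\mathfrak{a}!\nleqslant_{\mathrm{dfto}}\seq(\mathfrak{a})$, directly contradicting the Dedekind finite to one map from $\cals(x)$ into $\seq(x)$ exhibited in the first step.

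The main obstacle, really just bookkeeping, is propagating power Dedekind infiniteness from $\cals(x)$ down through $x^n$ to $x$ itself using the hypothesized pdfto map; once $x$ is known to be power Dedekind infinite, the fixed-point-free permutation supplies, via Theorem~\ref{ktfs}, the Dedekind infiniteness of $\cals(x)$ needed to invoke Corollary~\ref{sh51}. No new combinatorial construction is required beyond what has already been developed.
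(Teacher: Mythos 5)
Your proposal is correct and follows essentially the same route as the paper's proof: use Fact~\ref{sh10}, Fact~\ref{sh03}, and Fact~\ref{sh04} to propagate power Dedekind infiniteness from $\cals(x)$ down to $x$, conclude via the fixed-point-free permutation and Theorem~\ref{ktfs} that $\cals(x)$ is Dedekind infinite, and then contradict Corollary~\ref{sh51} using $x^n\subseteq\seq(x)$. Your explicit remarks on the $n=0$ case and on pdfto implying dfto are minor elaborations of steps the paper leaves implicit.
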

\begin{proof}
Assume towards a contradiction that there exists an infinite set $x$
such that $\mov(f)=x$ for some $f\in\cals(x)$ and
such that $\cals(x)\preccurlyeq_{\mathrm{pdfto}}x^n$ for some $n\in\omega$.
By Fact~\ref{sh10}, $\cals(x)$ is power Dedekind infinite,
and therefore, by Fact~\ref{sh03}, $x^n$ is power Dedekind infinite,
which implies that, by Fact~\ref{sh04}, $x$ is power Dedekind infinite.
Then, since $\mov(f)=x$, $f\in\cals(x)\setminus\calspdf(x)$,
and hence, by Theorem~\ref{ktfs}, $\cals(x)$ is Dedekind infinite. Now, we have that
$\cals(x)\preccurlyeq_{\mathrm{pdfto}}x^n\subseteq\seq(x)$,
contradicting Corollary~\ref{sh51}.
\end{proof}

Now it is natural to ask whether we can prove in $\mathsf{ZF}$ that
for all infinite cardinals $\mathfrak{a}$,
$\mathfrak{a}!\nleqslant_{\mathrm{fto}}\mathfrak{a}$.
It turns out that the answer is no,
and this is one of the main results of the present paper.
At present, we only discuss the relationship between
$\mathfrak{a}!\leqslant_{\mathrm{fto}}\mathfrak{a}$
and $\mathfrak{a}!<\aleph_0\cdot\mathfrak{a}$.
Note that, by Corollary~\ref{sh41},
$\mathfrak{a}!\neq\aleph_0\cdot\mathfrak{a}$ for any cardinal $\mathfrak{a}$.

\begin{lemma}\label{sh37}
For all cardinals $\mathfrak{a}$, if $\mathfrak{a}!<\aleph_0\cdot\mathfrak{a}$
then $\mathfrak{a}!\leqslant_{\mathrm{fto}}\mathfrak{a}$.
\end{lemma}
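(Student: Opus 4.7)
The plan is to start from an injection $\Phi : \cals(x) \to \omega \times x$ witnessing $\mathfrak{a}! \leqslant \aleph_0 \cdot \mathfrak{a}$, where $x$ is a set with $|x| = \mathfrak{a}$, and simply compose with the projection $\pi_2 : \omega \times x \to x$ onto the second coordinate. Setting $g = \pi_2 \circ \Phi$, the restriction of $\Phi$ to any fiber $g^{-1}[\{z\}]$ is an injection into $\omega \times \{z\}$, so each fiber of $g$ is equinumerous with a subset of $\omega$.

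The only thing left is to check that these fibers are actually finite, not merely at most countable. For this I would invoke Corollary~\ref{sh40}: if $\mathfrak{a}!$ were Dedekind infinite, it would give $\aleph_0 \cdot \mathfrak{a} < \mathfrak{a}!$, contradicting the standing hypothesis $\mathfrak{a}! < \aleph_0 \cdot \mathfrak{a}$. Hence $\mathfrak{a}!$ is Dedekind finite, and consequently every subset of $\cals(x)$ is Dedekind finite as well.

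Combining the two observations, each fiber $g^{-1}[\{z\}]$ is a Dedekind finite subset of $\cals(x)$ that embeds into $\omega$. But every Dedekind finite subset of $\omega$ is finite, since an infinite subset of $\omega$ is well-ordered of type $\omega$ and so is Dedekind infinite. It follows that $g$ is finite-to-one, which witnesses $\mathfrak{a}! \leqslant_{\mathrm{fto}} \mathfrak{a}$ and completes the proof. There is no real obstacle here: once Corollary~\ref{sh40} delivers the Dedekind finiteness of $\mathfrak{a}!$, the argument reduces to the elementary remark that projecting an injection $\cals(x) \to \omega \times x$ onto its second coordinate has fibers lying inside $\omega$, which under Dedekind finiteness collapse to finite sets.
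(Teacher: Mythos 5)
Your proof is correct and is essentially the paper's own argument: both take the injection $\cals(x)\to\omega\times x$, project onto the second coordinate, and use Corollary~\ref{sh40} to rule out Dedekind infiniteness of $\cals(x)$, whence each fiber, embedding into $\omega$, must be finite. The only cosmetic difference is that the paper phrases the finiteness check fiber-by-fiber while you first deduce Dedekind finiteness of all of $\cals(x)$; the content is identical.
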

\begin{proof}
Let $x$ be an arbitrary set.
If there is an injection $f:\cals(x)\to\omega\times x$,
then for all $z\in x$, $f^{-1}[\omega\times\{z\}]$ is finite,
since otherwise $\cals(x)$ would be Dedekind infinite,
contradicting Corollary~\ref{sh40}.
Hence the function that maps each $t\in\cals(x)$ to
the second component of $f(t)$ is a finite-to-one map from $\cals(x)$ into $x$,
and therefore $\cals(x)\preccurlyeq_{\mathrm{fto}}x$.
\end{proof}

\begin{lemma}\label{sh52}
All infinite subsets of $\wp(\omega)$ are power Dedekind infinite.
\end{lemma}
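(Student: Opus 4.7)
The definition reduces the claim to producing an injection $\omega\hookrightarrow\wp(A)$ for any infinite $A\subseteq\wp(\omega)$. The key observation is that the members of $A$ are themselves subsets of $\omega$, so each $n\in\omega$ canonically singles out the subset
\[
A_n=\{a\in A\mid n\in a\}
\]
of $A$. I would work with the function $f:\omega\to\wp(A)$ given by $f(n)=A_n$.

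The main step is to show that $\ran(f)$ is infinite. Suppose for contradiction that it is finite; then the equivalence relation $n\equiv m\Longleftrightarrow A_n=A_m$ partitions $\omega$ into finitely many classes $C_1,\dots,C_k$. Unwinding the definition, $n\equiv m$ is equivalent to saying that for every $a\in A$ we have $n\in a\leftrightarrow m\in a$, so each $a\in A$ either contains a whole class $C_i$ or is disjoint from it. Thus every $a\in A$ is uniquely determined by the subset $\{i\leqslant k\mid C_i\subseteq a\}$ of $\{1,\dots,k\}$, whence $|A|\leqslant2^k$, contradicting the infiniteness of $A$.

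With $\ran(f)$ known to be infinite, an injection $\omega\hookrightarrow\wp(A)$ follows by the standard least-preimage trick: the map $y\mapsto\min f^{-1}[\{y\}]$ embeds $\ran(f)$ into $\omega$ as an infinite set $N$, and composing the order isomorphism $\omega\to N$ with the inverse of this embedding yields the desired injection into $\wp(A)$. So $A$ is power Dedekind infinite. There is no genuine obstacle beyond spotting the right map; the whole proof is a single clean counting step once one shifts attention from $A$ to $\wp(A)$ and uses that the elements of $A$ serve as ``coordinates'' separating the points of $\omega$.
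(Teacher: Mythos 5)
Your proof is correct. The paper itself gives no argument here -- it simply cites Lemma~5 of Blass's manuscript on power-Dedekind finiteness -- and your argument is precisely the standard one behind that reference: send $n\in\omega$ to $\{a\in A\mid n\in a\}\in\wp(A)$, note that if this map had finite range then the induced finite partition of $\omega$ into classes $C_1,\dots,C_k$ would force every $a\in A$ to be a union of classes and hence $|A|\leqslant 2^k$, and otherwise extract an injection of $\omega$ into the countably infinite range via least preimages. All steps are choice-free, so your write-up is a complete self-contained proof of the lemma.
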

\begin{proof}
Cf.~\cite[Lemma~5]{Blass2013}.
\end{proof}

As a consequence of this lemma, we get that
for all subsets $x$ of $\wp(\omega)$,
$\pdfin(x)=\fin(x)$ and $\calspdf(x)=\calsf(x)$.

\begin{theorem}\label{sh38}
For all cardinals $\mathfrak{a}\leqslant2^{\aleph_0}$,
$\mathfrak{a}!<\aleph_0\cdot\mathfrak{a}$ iff $\mathfrak{a}!\leqslant_{\mathrm{fto}}\mathfrak{a}$.
\end{theorem}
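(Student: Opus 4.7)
The $(\Leftarrow)$ direction is already Lemma~\ref{sh37} and does not even require the hypothesis $\mathfrak{a}\leqslant 2^{\aleph_0}$. For the converse, once the trivial case of finite $\mathfrak{a}$ is dispatched, I would fix an infinite $x\subseteq\wp(\omega)$ with $|x|=\mathfrak{a}$ together with a finite-to-one map $f\colon\cals(x)\to x$, and aim to construct an injection $\Psi\colon\cals(x)\to\omega\times x$. Combined with Corollary~\ref{sh41}, this would yield the desired strict inequality $\mathfrak{a}!<\aleph_0\cdot\mathfrak{a}$.

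The first observation is that $\cals(x)$ must be Dedekind finite: the hypothesis gives $\cals(x)\preccurlyeq_{\mathrm{fto}}x\preccurlyeq\aleph_0\cdot x$, so if $\cals(x)$ were Dedekind infinite we would immediately contradict Corollary~\ref{sh40}. From Dedekind finiteness, Theorem~\ref{ktfs} yields $\cals(x)=\calspdf(x)$; and Lemma~\ref{sh52}, applied to subsets of $x\subseteq\wp(\omega)$, gives $\calspdf(x)=\calsf(x)$. So every permutation of $x$ moves only finitely many elements, a crucial structural feature of $x$.

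The heart of the argument is a canonical encoding that avoids any appeal to choice. The lexicographic order on characteristic functions is a canonical linear order on $\wp(\omega)$, hence restricts to a canonical linear order on $x$; ordering tuples first by length and then lexicographically extends this to a canonical linear order on $\seq(x)$, so every finite subset of $\seq(x)$ is canonically well-ordered. For each $t\in\calsf(x)$, listing $\mov(t)$ in the canonical order as $z_0<\cdots<z_{k-1}$ and forming the tuple $(z_0,\ldots,z_{k-1},t(z_0),\ldots,t(z_{k-1}))\in\seq(x)$ gives an injection $\calsf(x)\to\seq(x)$, since $t$ is determined by its values on $\mov(t)$.

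Finally, for each $z\in\ran(f)$ the fiber $f^{-1}[\{z\}]$ is a finite subset of $\cals(x)=\calsf(x)$, whose image in $\seq(x)$ under the above encoding is a finite, hence canonically well-ordered, subset; let $n_t\in\omega$ be the position of $t$ in this enumeration. Then $\Psi(t)=(n_t,f(t))$ is an injection $\cals(x)\to\omega\times x$, so $\mathfrak{a}!\leqslant\aleph_0\cdot\mathfrak{a}$, and Corollary~\ref{sh41} upgrades this to strict inequality. The only potentially subtle point is checking that the tuple encoding really does recover $t$ uniquely; beyond that, everything is bookkeeping once the canonical order inherited from $\wp(\omega)$ is fixed.
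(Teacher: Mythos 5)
Your proposal is correct and takes essentially the same route as the paper's proof: Dedekind finiteness of $\cals(x)$ via Corollary~\ref{sh40}, then $\cals(x)=\calspdf(x)=\calsf(x)$ via Theorem~\ref{ktfs} and Lemma~\ref{sh52}, then the canonical lexicographic order inherited from $\wp(\omega)$ used to well-order the finite fibers of $f$ and produce an injection of $\cals(x)$ into $\omega\times x$, finishing with Corollary~\ref{sh41}; the paper merely linearly orders $\cals(x)$ directly by comparing two permutations at their $r$-least point of disagreement, rather than detouring through an encoding into $\seq(x)$, which is a purely cosmetic difference. The only slip is notational: you have swapped the names of the two directions, since Lemma~\ref{sh37} is the implication from $\mathfrak{a}!<\aleph_0\cdot\mathfrak{a}$ to $\mathfrak{a}!\leqslant_{\mathrm{fto}}\mathfrak{a}$, which is indeed the one you correctly treat as already established.
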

\begin{proof}
Assume that $\mathfrak{a}\leqslant2^{\aleph_0}$ and
let $x$ be a subset of $\wp(\omega)$ such that $|x|=\mathfrak{a}$.
By Lemma~\ref{sh37}, $\mathfrak{a}!<\aleph_0\cdot\mathfrak{a}$
implies that $\mathfrak{a}!\leqslant_{\mathrm{fto}}\mathfrak{a}$.
For the other direction, assume that there is a finite-to-one map $f:\cals(x)\to x$.
By Corollary~\ref{sh40}, $\cals(x)$ is Dedekind finite,
and hence, by Theorem~\ref{ktfs}, $\calspdf(x)=\cals(x)$,
which implies that, by Lemma~\ref{sh52}, $\calsf(x)=\cals(x)$.

Let $r$ be the lexicographic ordering of $x$; that is,
\[
r=\bigl\{(z,v)\in x\times x\bigm|\exists n\in\omega\bigl(n\notin z\wedge n\in v\wedge n\cap z=n\cap v\bigr)\bigr\}.
\]
Let $s$ be the relation on $\cals(x)$ defined by
\[
(t,u)\in s\leftrightarrow\exists z\in x\bigl((t(z),u(z))\in r\wedge\forall v\in x\bigl((v,z)\in r\rightarrow t(v)=u(v)\bigr)\bigr).
\]
We claim that $s$ orders $\cals(x)$.
In fact, it is easy to verify that $s$ is irreflexive and transitive.
For trichotomy, let $t$, $u$ be two distinct elements of $\cals(x)$.
Then $\{z\in x\mid t(z)\neq u(z)\}$ is a non-void subset of $\mov(t)\cup\mov(u)$.
Since $\calsf(x)=\cals(x)$, $\mov(t)$ and $\mov(u)$ are finite,
and thus $\{z\in x\mid t(z)\neq u(z)\}$ has a least element $w$ with respect to $r$.
Now, if $(t(w),u(w))\in r$ then $(t,u)\in s$, and if $(u(w),t(w))\in r$ then $(u,t)\in s$.

Let $g$ be the function on $x$ such that for all $z\in x$,
$g(z)$ is the unique isomorphism of $\langle f^{-1}[\{z\}],s\rangle$
onto some natural number.
Then the function $h$ defined on $\cals(x)$ given by
\[
h(t)=\bigl(g(f(t))(t),f(t)\bigr)
\]
is an injection from $\cals(x)$ into $\omega\times x$,
and hence $\mathfrak{a}!\leqslant\aleph_0\cdot\mathfrak{a}$,
which implies that, by Corollary~\ref{sh41},
$\mathfrak{a}!<\aleph_0\cdot\mathfrak{a}$.
\end{proof}

\section{Permutation models}
In this section, we shall give a brief introduction to permutation models
(cf.~\cite[Chap.~8]{Halbeisen2017} or \cite[Chap.~4]{Jech1973}),
and derive some consistency results from a few well-known permutation models.
Permutation models are not models of $\mathsf{ZF}$;
they are models of the weaker theory $\mathsf{ZFA}$
(i.e., the Zermelo-Fraenkel set theory with atoms).
$\mathsf{ZFA}$ is characterized by the fact that
it admits objects other than sets, \emph{atoms}.
Atoms are objects which do not have any elements
but which are distinct from the void set;
they are not sets but can be members of sets.
The development of $\mathsf{ZFA}$ is essentially the same
as that of $\mathsf{ZF}$, and all proofs in the previous sections
can be carried out in $\mathsf{ZFA}$.

In $\mathsf{ZFA}$, for any transitive set $x$,
we define $\mathrm{V}^x_\alpha$ by recursion on $\alpha$ as follows:
$\mathrm{V}^x_0=x$; $\mathrm{V}^x_{\alpha+1}=\wp(\mathrm{V}^x_\alpha)$;
$\mathrm{V}^x_\alpha=\bigcup_{\beta<\alpha}\mathrm{V}^x_\beta$ when $\alpha$ is a limit ordinal.
Further, let $\mathrm{V}^x=\bigcup_{\alpha\in\mathrm{Ord}}\mathrm{V}^x_\alpha$.

Let $A$ be the set of atoms.
The axiom of foundation of $\mathsf{ZFA}$ guarantees
that $\mathrm{V}^A$ is the class of all objects.
The class $\mathrm{V}^\emptyset$ is a model
of $\mathsf{ZF}$ and is called the \emph{kernel}.
Note that all ordinals belong to the kernel.
Now every permutation $\pi$ of $A$ extends to an $\in$-automorphism of $\mathrm{V}^A$ by
\[
\pi(x)=\pi[x].
\]
A routine induction shows that for any permutation $\pi$ of $A$
and any $x$ in the kernel, we have $\pi(x)=x$.

Let $\mathcal{G}$ be a permutation group of $A$
(i.e., a group of permutations of $A$).
For each $x\in\mathrm{V}^A$, let
\[
\symg(x)=\bigl\{\pi\in\mathcal{G}\bigm|\pi(x)=x\bigr\};
\]
$\symg(x)$ is a subgroup of $\mathcal{G}$.
We say that a set $\mathfrak{F}$ of subgroups of $\mathcal{G}$ is
a \emph{normal filter} on $\mathcal{G}$ if for all subgroups $H$, $K$ of $\mathcal{G}$,
\begin{enumerate}[label=\upshape(\roman*)]
  \item $\mathcal{G}\in\mathfrak{F}$;
  \item if $H\in\mathfrak{F}$ and $H\subseteq K$ then $K\in\mathfrak{F}$;
  \item if $H\in\mathfrak{F}$ and $K\in\mathfrak{F}$ then $H\cap K\in\mathfrak{F}$;
  \item if $\pi\in\mathcal{G}$ and $H\in\mathfrak{F}$ then $\pi H\pi^{-1}\in\mathfrak{F}$;\label{sh53}
  \item for each $a\in A$, $\symg(a)\in\mathfrak{F}$.\label{sh54}
\end{enumerate}

Let $\mathfrak{F}$ be a normal filter on $\mathcal{G}$.
We say that $x\in\mathrm{V}^A$ is \emph{symmetric}
(with respect to $\mathfrak{F}$) if $\symg(x)\in\mathfrak{F}$.
By \ref{sh53}, for all $x\in\mathrm{V}^A$ and all $\pi\in\mathcal{G}$,
$x$ is symmetric if and only if $\pi(x)$ is symmetric.
By \ref{sh54}, each $a\in A$ is symmetric.
We say that $x\in\mathrm{V}^A$ is \emph{hereditarily symmetric}
(with respect to $\mathfrak{F}$) if $x$ as well as
each element of its transitive closure is symmetric.
Note that for all $x\in\mathrm{V}^A$ and all $\pi\in\mathcal{G}$,
$x$ is hereditarily symmetric if and only if $\pi(x)$ is hereditarily symmetric.
Note also that $A$ is hereditarily symmetric.

The \emph{permutation model} $\mathcal{V}$ (determined by $\mathfrak{F}$)
consists of all hereditarily symmetric objects.
Then each permutation $\pi\in\mathcal{G}$, when extended to $\mathrm{V}^A$ as above,
maps $\mathcal{V}$ onto itself and in fact is an $\in$-automorphism of $\mathcal{V}$.
Now it is easy to verify that $\mathcal{V}$ is a transitive model of $\mathsf{ZFA}$
containing $A$ and all elements of the kernel.

Most of the well-known permutation models are of the following simple type:
Let $\mathcal{G}$ be a permutation group of $A$.
A family $\mathcal{I}$ of subsets of $A$, for example $\mathcal{I}=\fin(A)$,
is a \emph{normal ideal} if for all subsets $B$, $C$ of $A$,
\begin{enumerate}
  \item $\emptyset\in\mathcal{I}$;
  \item if $B\in\mathcal{I}$ and $C\subseteq B$ then $C\in\mathcal{I}$;
  \item if $B\in\mathcal{I}$ and $C\in\mathcal{I}$ then $B\cup C\in\mathcal{I}$;
  \item if $\pi\in\mathcal{G}$ and $B\in\mathcal{I}$ then $\pi[B]\in\mathcal{I}$;
  \item for each $a\in A$, $\{a\}\in\mathcal{I}$.
\end{enumerate}
For each subset $B$ of $A$, let
\[
\fixg(B)=\bigl\{\pi\in\mathcal{G}\bigm|\forall a\in B\bigl(\pi(a)=a\bigr)\bigr\};
\]
$\fixg(B)$ is a subgroup of $\mathcal{G}$.
Define $\mathfrak{F}$ to be the filter on $\mathcal{G}$
generated by the subgroups $\{\fixg(B)\mid B\in\mathcal{I}\}$.
$\mathfrak{F}$ is a normal filter,
and so it determines a permutation model $\mathcal{V}$;
we say that $\mathcal{V}$ is the permutation model
\emph{determined} by $\mathcal{G}$ and $\mathcal{I}$.
Note that $x$ is symmetric (with respect to $\mathfrak{F}$)
if and only if there exists a $B\in\mathcal{I}$ such that
\[
\fixg(B)\subseteq\symg(x);
\]
we say that such a $B\in\mathcal{I}$ is a \emph{support} of $x$.
Note also that $\mathcal{I}\in\mathcal{V}$.

Although permutation models are not models of $\mathsf{ZF}$,
they indirectly give, via the Jech-Sochor theorem
(cf.~\cite[Theorem~17.2]{Halbeisen2017} or \cite[Theorem~6.1]{Jech1973}),
models of $\mathsf{ZF}$. The Jech-Sochor theorem provides embeddings
of arbitrarily large initial segments of permutation models into $\mathsf{ZF}$ models.
All statements whose consistency we prove in the present paper
depend only on a very small initial segment of the permutation model,
so they are preserved by the embedding and we thus
obtain their consistency with $\mathsf{ZF}$.

\subsection{The basic Fraenkel model}
Let the set $A$ of atoms be denumerable,
let $\mathcal{G}=\cals(A)$, and let $\mathcal{I}=\fin(A)$.
The permutation model determined by $\mathcal{G}$ and $\mathcal{I}$
is called the \emph{basic Fraenkel model}
(cf.~\cite[pp.~195--196]{Halbeisen2017} or \cite[\S4.3]{Jech1973}),
and is denoted by $\mathcal{V}_\mathrm{F}$ ($\mathrm{F}$ for Fraenkel).

In $\mathcal{V}_\mathrm{F}$, $A$ is \emph{amorphous} (cf.~\cite[Lemma~8.2]{Halbeisen2017});
that is, $A$ is infinite but every infinite subset of $A$ is co-finite.
Since it is obvious that all amorphous sets are power Dedekind finite,
we have that $A$ is power Dedekind finite, and therefore,
by Fact~\ref{sh11}, $\cals(A)$ is Dedekind finite.

Moreover, it is easy to verify that $A$ is \emph{strongly amorphous},
in the sense that $A$ is amorphous and for any partition $P$ of $A$,
all but finitely many elements of $P$ are singletons.
Hence, by the following fact, $\calsf(A)=\cals(A)$,
which implies that the existence of an infinite set $x$
such that $\calsf(x)=\cals(x)$ is consistent with $\mathsf{ZF}$.
Therefore, in Corollary~\ref{sh27}, the requirement that $\calsf(x)\neq\cals(x)$
cannot be replaced by the requirement that $x$ is infinite.

\begin{fact}\label{sh42}
For all strongly amorphous sets $x$, $\calsf(x)=\cals(x)$.
\end{fact}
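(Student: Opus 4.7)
The plan is to fix an arbitrary permutation $f\in\cals(x)$ and show directly that $\mov(f)$ is finite by analyzing the orbit decomposition. The orbits of $f$ form a partition $P=\{\orb(f,z)\mid z\in x\}$ of $x$, and the non-trivial orbits form a partition of $\mov(f)$. Since $x$ is strongly amorphous, by the very definition, all but finitely many members of $P$ are singletons; equivalently, there are only finitely many non-trivial orbits $O_1,\dots,O_n$, and $\mov(f)=O_1\cup\cdots\cup O_n$.

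It then remains to argue that each $O_i$ is finite. Here I would invoke amorphousness of $x$ itself: every orbit of a permutation is countable (as noted in the preliminaries before Fact~\ref{sh12}), so each $O_i$ is either finite or denumerable. A denumerable subset of an amorphous set is impossible, because any denumerable $D\subseteq x$ can be split into two disjoint infinite subsets $D_1,D_2$; amorphousness would force both $D_1$ and $D_2$ to be cofinite in $x$, contradicting $D_1\cap D_2=\emptyset$. Hence each $O_i$ is finite, and $\mov(f)$ is a finite union of finite sets, so $f\in\calsf(x)$.

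Since $f\in\cals(x)$ was arbitrary, this yields $\cals(x)\subseteq\calsf(x)$, and the reverse inclusion is immediate from the definition; therefore $\calsf(x)=\cals(x)$. There is no real obstacle in this argument beyond unwinding definitions; the only delicate point is the observation that amorphous sets contain no denumerable subsets, but that is a standard consequence of splitting a denumerable subset in two and applying the dichotomy ``finite or cofinite'' to each half.
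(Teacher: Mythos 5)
Your proof is correct and follows essentially the same route as the paper's: strong amorphousness gives only finitely many non-trivial orbits, and amorphousness (via Dedekind finiteness of $x$) rules out denumerable orbits, so $\mov(f)$ is a finite union of finite sets. The only cosmetic differences are the order of the two observations and that you spell out explicitly why an amorphous set has no denumerable subset, which the paper leaves implicit.
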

\begin{proof}
Let $x$ be a strongly amorphous set and let $f$ be a permutation of $x$.
If there is a $z\in x$ such that $\orb(f,z)$ is denumerable,
then $x$ is Dedekind infinite, contradicting the assumption that $x$ is amorphous.
Hence all orbits of $f$ are finite. Since the orbits of $f$ form a partition of $x$,
all but finitely many orbits of $f$ are singletons,
which implies that $f\in\calsf(x)$.
\end{proof}

\begin{lemma}\label{sh55}
Let $A$ be the set of atoms of $\mathcal{V}_\mathrm{F}$ and let $\mathfrak{a}=|A|$.
In $\mathcal{V}_\mathrm{F}$,
\begin{enumerate}[label=\upshape(\roman*)]
  \item $[\mathfrak{a}]^2\nleqslant_{\mathrm{fto}}\seq(\mathfrak{a})$;\label{sh61}
  \item $\seq^{1\text{-}1}(\mathfrak{a})\nleqslant\mathfrak{a}!$;\label{sh62}
  \item $\mathcal{S}_3(\mathfrak{a})\nleqslant2^{\mathfrak{a}+\aleph_0}$;\label{sh63}
  \item $[\mathfrak{a}]^3\nleqslant_{\mathrm{fto}}(\mathfrak{a}+\aleph_0)!$;\label{sh64}
  \item $([\mathfrak{a}]^2)^2\nleqslant(\mathfrak{a}+\aleph_0)!$.\label{sh65}
\end{enumerate}
\end{lemma}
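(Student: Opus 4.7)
The plan is to work inside $\mathcal{V}_\mathrm{F}$ and exploit the standard support principle: every object $x$ in the model has a finite support $F \subseteq A$, so $\pi(x) = x$ for all $\pi \in \fixg(F)$, and $\fixg(F)$ fixes $F \cup \omega$ pointwise while acting transitively on $A \setminus F$. In each of the five parts, I would assume for contradiction that the asserted map $f$ exists in $\mathcal{V}_\mathrm{F}$, fix a finite support $E$ of $f$, and derive a contradiction either by showing that the $f$-values on a large family of inputs collapse to a single value (for (i), (iii), (iv), (v)) or by a global counting overflow (for (ii)).

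For (i), given a finite-to-one $f\colon [A]^2 \to \seq(A)$ with support $E$ and any pair $\{a,b\} \in [A\setminus E]^2$, support considerations force $\ran(f(\{a,b\})) \subseteq E \cup \{a,b\}$, while the transposition $(a\ b)$ setwise fixes $\{a,b\}$ and hence fixes $f(\{a,b\})$ as a sequence, forcing $\ran(f(\{a,b\})) \subseteq E$. Then any $\pi \in \fixg(E)$ sending $\{a,b\}$ to another pair $\{c,d\} \in [A\setminus E]^2$ satisfies $f(\{c,d\}) = \pi(f(\{a,b\})) = f(\{a,b\})$, so $f$ is constant on the infinite set $[A\setminus E]^2$, yielding an infinite fiber. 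For (iii), for an injection $f\colon \mathcal{S}_3(A) \to \wp(A\cup\omega)$ and a 3-cycle $\sigma = (a\ b\ c)$ with $a,b,c \notin E$, the set $f(\sigma) \subseteq A\cup\omega$ is stable under $\fixg(E \cup \{a,b,c\})$ and under $\langle(a\ b\ c)\rangle$; decomposing $f(\sigma)$ into its intersections with $E$, $\{a,b,c\}$, $A\setminus(E\cup\{a,b,c\})$, and $\omega$, the second must be $\emptyset$ or $\{a,b,c\}$ and the third must be $\emptyset$ or $A\setminus(E\cup\{a,b,c\})$. Each of these four pieces is automatically invariant under $\tau = (b\ c)$ as well, so $\tau(f(\sigma)) = f(\sigma)$; but $\tau\sigma\tau^{-1} = \sigma^{-1}$, giving $f(\sigma^{-1}) = f(\sigma)$ and contradicting injectivity.

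Parts (iv) and (v) share a common template. In (iv), for $\{a,b,c\} \in [A\setminus E]^3$, $\sigma = f(\{a,b,c\})$ commutes with every element of $S_{\{a,b,c\}}$ extended by identity on the complement; the support analysis forces $\sigma$ to fix $A\setminus(E \cup \{a,b,c\})$ pointwise, and since $Z(S_3) = 1$, $\sigma$ also fixes $\{a,b,c\}$ pointwise. Thus $\sigma$ is determined by $\sigma\upharpoonright(E\cup\omega)$. Choosing $\pi \in \fixg(E)$ with $\pi(\{a,b,c\}) = \{a',b',c'\}$, $\pi$ fixes $E \cup \omega$ pointwise, so $\sigma\upharpoonright(E\cup\omega) = \sigma'\upharpoonright(E\cup\omega)$ and hence $f$ is constant on $[A\setminus E]^3$. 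In (v), for disjoint pairs $(\{a,b\},\{c,d\})$ with $a,b,c,d \in A\setminus E$, $\sigma = f(\{a,b\},\{c,d\})$ commutes with $S_{\{a,b\}} \times S_{\{c,d\}}$, whose centralizer in $S_{\{a,b,c,d\}}$ is itself, so $\sigma\upharpoonright\{a,b,c,d\}$ lies in this Klein 4-group (giving four possibilities), while the same symmetry argument as in (iv) pins $\sigma\upharpoonright(E\cup\omega)$ to a single permutation independent of the pair-pair. With only four possible values of $\sigma$ but infinitely many disjoint pair-pairs in $[A\setminus E]^2 \times [A\setminus E]^2$, injectivity of $f$ fails.

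Part (ii) is the main obstacle: the orbit/pattern arguments used in (i), (iii), (iv), (v) do not suffice because on each single length-$n$ slice of $\seq^{1\text{-}1}(A)$ genuine injections exist (e.g., $(a_1,\ldots,a_n) \mapsto (e\ a_1\ \cdots\ a_n)$ with $e \in E$), so one cannot collapse the values on a single slice. The plan is a global pigeonhole combining all lengths at once: for an injection $f\colon \seq^{1\text{-}1}(A) \to \cals(A)$ with support $E$ and any fixed $a \in A\setminus E$, every $t \in \seq^{1\text{-}1}(A)$ with $\ran(t) \subseteq E \cup \{a\}$ has $\mov(f(t)) \subseteq E \cup \ran(t) \subseteq E \cup \{a\}$, so $f$ restricts to an injection from such tuples into $\cals(E \cup \{a\})$. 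Setting $n = |E|+1$, the domain has cardinality $\sum_{k=0}^{n} n!/(n-k)! = n!\sum_{j=0}^{n} 1/j!$, which strictly exceeds $n!$ (the cardinality of the codomain) for every $n \geq 1$; this is the needed contradiction. The essential point is that one must gather tuples of \emph{all} lengths $0, 1, \ldots, n$ sharing a common range in $E \cup \{a\}$; no single length alone produces the overflow.
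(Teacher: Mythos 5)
Parts (i)--(iv) of your proposal are correct and essentially follow the paper's own route: in each case you fix a finite support, use the standard argument that a support of a permutation (or sequence, or subset) contains its moved points, and then either collapse $f$ to a constant on an infinite orbit or, in (ii), reduce to the finite inequality $\seq^{1\text{-}1}(n)>n!$. Your variations --- using the centralizer of $S_3$ in place of the paper's explicit swaps in (iv), the orbit decomposition of $f(\sigma)$ in (iii), and the set $E\cup\{a\}$ in place of the support $C$ itself in (ii) --- are all sound and cost nothing.

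Part (v), however, has a genuine gap in its last step. You correctly establish that $\sigma_p=f(p)$ for $p=(\{a,b\},\{c,d\})$ decomposes as a fixed permutation $\theta$ of $E\cup\omega$ (independent of $p$), the identity off $E\cup\omega\cup\{a,b,c,d\}$, and an element $\epsilon_p$ of the Klein group $V_p=\{\mathrm{id},(a;b),(c;d),(a;b)(c;d)\}$. But the asserted pigeonhole ``only four possible values of $\sigma$ against infinitely many pair-pairs'' is false: the four candidates for $\sigma_p$ depend on the atoms $a,b,c,d$, and for pair-pairs with disjoint supports the candidate sets meet only in the single permutation $\theta\cup\mathrm{id}$. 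So across infinitely many disjoint pair-pairs there are infinitely many candidate values, and no collision is forced (for instance, nothing you have proved rules out $\epsilon_p$ being the transposition of the first pair of $p$, which produces pairwise distinct $\sigma_p$'s for pair-pairs with distinct first coordinates). The missing ingredient, which is how the paper closes the argument, is to first rule out $\epsilon_p$ fixing any of the four atoms: if $u(a_i)=a_i$, then swapping $a_i$ with a fresh atom $e$ fixes $u$ (since $u$ fixes both $a_i$ and $e$) but moves the input, contradicting injectivity of $f$; this pins $\epsilon_p=(a;b)(c;d)$. Only then does the permutation $(a;b_0\text{-style})$ swap work: $(a_0;b_0)_A\circ(a_1;b_1)_A$ commutes with $u$ but interchanges the two coordinates of the input, contradicting injectivity. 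Your argument can be repaired along exactly these lines, but as written the concluding step does not go through.
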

\begin{proof}
(i) Assume towards a contradiction that there exists
a finite-to-one map $f\in\mathcal{V}_\mathrm{F}$ from $[A]^2$ into $\seq(A)$.
Let $B\in\fin(A)$ be a support of $f$.
Let us fix two distinct elements $a$, $b$ of
$A\setminus B$ and consider the sequence $t=f(\{a,b\})$.
If there is an $n\in\dom(t)$ such that $t(n)\in A\setminus(B\cup\{a,b\})$,
then take an arbitrary $c\in A\setminus(B\cup\{a,b,t(n)\})$ and let $\pi=(t(n);c)_A$.
Note that $\pi\in\fixg(B\cup\{a,b\})$ but $\pi$ moves $t$,
contradicting the assumption that $B$ is a support of $f$.
Hence $t\in\seq(B\cup\{a,b\})$.
If there is an $m\in\dom(t)$ such that $t(m)\in\{a,b\}$,
then $\sigma=(a;b)_A$ is a member of $\fixg(B)$ such that
$\sigma(\{a,b\})=\{a,b\}$ and $\sigma(t)\neq t$, which is also a contradiction.
Thus we have
\begin{equation}\label{sh56}
\forall a,b\in A\setminus B\bigl(a\neq b\rightarrow f(\{a,b\})\in\seq(B)\bigr).
\end{equation}
Now, for any $p$, $q\in[A\setminus B]^2$,
since it is easy to see that there exists a permutation $\tau\in\fixg(B)$
such that $\tau(p)=q$, by \eqref{sh56}, we have $f(p)=f(q)$.
Therefore, $f$ maps all elements of $[A\setminus B]^2$ to the same element of $\seq(B)$,
contradicting the fact that $[A\setminus B]^2$ is infinite and $f$ is finite-to-one.

(ii) Assume towards a contradiction that there exists
an injection $g\in\mathcal{V}_\mathrm{F}$ from $\seq^{1\text{-}1}(A)$ into $\cals(A)$.
Let $C\in\fin(A)$ be a support of $g$.
Without loss of generality, assume that $C\neq\emptyset$.
Let us fix an arbitrary $t\in\seq^{1\text{-}1}(C)$ and consider the permutation $u=g(t)$.
If there exists a $c\in\mov(u)\setminus C$,
then take an arbitrary $d\in A\setminus(C\cup\{c,u(c)\})$ and let $\pi=(c;d)_A$.
Note that $\pi\in\fixg(C\cup\{u(c)\})$ but $\pi$ moves $u$, which is a contradiction.
Thus we get that for all $t\in\seq^{1\text{-}1}(C)$, $\mov(g(t))\subseteq C$.
Hence the function $f$ defined on $\seq^{1\text{-}1}(C)$
given by $f(t)=g(t)\upharpoonright C$ is an injection
from $\seq^{1\text{-}1}(C)$ into $\cals(C)$.
Thus, if we take $n=|C|$, then $n\neq0$ and
$\seq^{1\text{-}1}(n)\leqslant n!$, which is absurd.

(iii) Assume towards a contradiction that there exists
an injection $h\in\mathcal{V}_\mathrm{F}$ from $\mathcal{S}_3(A)$ into $\wp(A\cup\omega)$.
Let $D\in\fin(A)$ be a support of $h$.
Take three distinct elements $a$, $b$, $c$ of $A\setminus D$,
let $\pi=(a;b;c)_A$, and let $\sigma=(b;a;c)_A$.
Then $\pi$, $\sigma\in\fixg(D)$, and hence $\pi(h)=\sigma(h)=h$.
Since $\pi(\pi)=\sigma(\pi)=\pi$, we get $\pi(h(\pi))=\sigma(h(\pi))=h(\pi)$.
Hence, if $a\in h(\pi)$ then $b=\pi(a)\in h(\pi)$,
and if $b\in h(\pi)$ then $a=\sigma(b)\in h(\pi)$;
that is, $a\in h(\pi)\leftrightarrow b\in h(\pi)$.
Thus, if we set $\tau=(a;b)_A$, then $\tau(h(\pi))=h(\pi)$.
Since $\tau\in\fixg(D)$, $\tau(h)=h$,
and hence $h(\pi)=\tau(h(\pi))=h(\tau(\pi))=h(\sigma)$,
contradicting that $h$ is injective.

(iv) Assume towards a contradiction that there exists
a finite-to-one map $f\in\mathcal{V}_\mathrm{F}$ from $[A]^3$ into $\cals(A\cup\omega)$.
Let $B\in\fin(A)$ be a support of $f$.
Let us now fix three distinct elements $a$, $b$, $c$ of
$A\setminus B$ and consider the permutation $u=f(\{a,b,c\})$.
If there is a $d\in\mov(u)\setminus(B\cup\omega\cup\{a,b,c\})$,
then take an arbitrary $e\in A\setminus(B\cup\{a,b,c,d,u(d)\})$ and let $\pi=(d;e)_A$.
Note that $\pi\in\fixg(B\cup\{a,b,c\})$, $\pi(d)\neq d$, and $\pi(u(d))=u(d)$.
Hence $\pi$ moves $u$, contradicting the assumption that $B$ is a support of $f$.
Therefore $\mov(u)\subseteq B\cup\omega\cup\{a,b,c\}$.
If there is a $v\in\mov(u)\cap\{a,b,c\}$,
then, since $\{a,b,c\}\setminus\{v,u(v)\}\neq\emptyset$,
take a $w\in\{a,b,c\}\setminus\{v,u(v)\}$ and let $\sigma=(v;w)_A$.
Note that $\sigma\in\fixg(B)$, $\sigma(\{a,b,c\})=\{a,b,c\}$,
and $\sigma(u)\neq u$, which is also a contradiction.
Therefore $\mov(u)\subseteq B\cup\omega$. Thus we have
\begin{equation}\label{sh57}
\forall t\in[A\setminus B]^3\bigl(\mov(f(t))\subseteq B\cup\omega\bigr).
\end{equation}
Now, for any $p$, $q\in[A\setminus B]^3$,
since it is easy to see that there exists a permutation $\tau\in\fixg(B)$
such that $\tau(p)=q$, by \eqref{sh57}, we have $f(p)=f(q)$.
Therefore, $f$ maps all elements of $[A\setminus B]^3$ to the same element of $\cals(A\cup\omega)$,
contradicting the fact that $[A\setminus B]^3$ is infinite and $f$ is finite-to-one.

(v) Assume towards a contradiction that there exists
an injection $g\in\mathcal{V}_\mathrm{F}$ from $[A]^2\times[A]^2$ into $\cals(A\cup\omega)$.
Let $C\in\fin(A)$ be a support of $g$.
Take four distinct elements $a_0$, $a_1$, $b_0$, $b_1$ of $A\setminus C$,
and let $u=g(\{a_0,a_1\},\{b_0,b_1\})$.
If there is a $c\in\mov(u)\setminus(C\cup\omega\cup\{a_0,a_1,b_0,b_1\})$,
then take an arbitrary $d\in A\setminus(C\cup\{a_0,a_1,b_0,b_1,c,u(c)\})$ and let $\pi=(c;d)_A$.
Then we have that $\pi\in\fixg(C\cup\{a_0,a_1,b_0,b_1\})$, $\pi(c)\neq c$, and $\pi(u(c))=u(c)$.
Thus $\pi$ moves $u$, contradicting the assumption that $C$ is a support of $g$.
Therefore we have
\begin{equation}\label{sh58}
\mov(u)\subseteq C\cup\omega\cup\{a_0,a_1,b_0,b_1\}.
\end{equation}
We claim that
\begin{equation}\label{sh59}
\forall i\leqslant1\bigl(u(a_i)=a_{1-i}\text{ and }u(b_i)=b_{1-i}\bigr).
\end{equation}
In fact, if $u(a_i)\notin\{a_0,a_1\}$,
then $(a_0;a_1)_A\in\fixg(C)$ fixes $(\{a_0,a_1\},\{b_0,b_1\})$ but moves $u$,
contradicting the assumption that $C$ is a support of $g$.
Thus we have $u(a_i)\in\{a_0,a_1\}$.
Moreover, $u(a_i)\neq a_i$, since otherwise,
if we take an arbitrary $e\in A\setminus(C\cup\{a_0,a_1,b_0,b_1\})$,
then, by \eqref{sh58}, $u(e)=e$, and thus $(a_i;e)_A\in\fixg(C)$
fixes $u$ but moves $(\{a_0,a_1\},\{b_0,b_1\})$, contradicting that $g$ is injective.
Hence $u(a_i)=a_{1-i}$. Similarly $u(b_i)=b_{1-i}$, and \eqref{sh59} is proved.
Therefore, if we set $\sigma=(a_0;b_0)_A\circ(a_1;b_1)_A$,
then, by \eqref{sh59}, $\sigma(u)=u$,
but $\sigma(\{a_0,a_1\},\{b_0,b_1\})=(\{b_0,b_1\},\{a_0,a_1\})\neq(\{a_0,a_1\},\{b_0,b_1\})$,
contradicting again the assumption that $g$ is injective.
\end{proof}

Now we derive some consistency results from Lemma~\ref{sh55}.

\begin{proposition}\label{sh60}
The following statements are consistent with $\mathsf{ZF}$:
\begin{enumerate}[label=\upshape(\roman*)]
  \item There exists an infinite cardinal $\mathfrak{a}$ such that
        $\mathfrak{a}!$ and $\seq^{1\text{-}1}(\mathfrak{a})$ are incomparable
        and such that $\mathfrak{a}!$ and $\seq(\mathfrak{a})$ are incomparable.\label{sh67}
  \item There exists a Dedekind infinite cardinal $\mathfrak{b}$ such that
        $\mathfrak{b}!$ and $2^\mathfrak{b}$ are incomparable,
        $\mathfrak{b}!$ and $[\mathfrak{b}]^3$ are incomparable,
        and $[\mathfrak{b}]^3\nleqslant_{\mathrm{fto}}\mathfrak{b}!$.\label{sh68}
  \item There exists a Dedekind infinite cardinal $\mathfrak{c}$ such that
        $([\mathfrak{c}]^2)^2\nleqslant\mathfrak{c}!$.\label{sh66}
  \item There exists a Dedekind infinite cardinal $\mathfrak{d}$ such that
        $\Bigl[\bigl[[\mathfrak{d}]^2\bigr]^2\Bigr]^2\nleqslant\mathfrak{d}!$.\label{sh69}
\end{enumerate}
\end{proposition}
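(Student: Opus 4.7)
The plan is to verify all four statements inside the basic Fraenkel model $\mathcal{V}_{\mathrm{F}}$ constructed just above; since each of them depends only on a bounded initial segment of $\mathcal{V}_{\mathrm{F}}$, the Jech--Sochor theorem then transfers them to models of $\mathsf{ZF}$. Lemma~\ref{sh55} packages the technical heart of the matter, so the proof largely reduces to choosing the right cardinal in each case and composing ingredients.

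For \ref{sh67}, take $\mathfrak{a}=|A|$, which is infinite. Lemma~\ref{sh55}\ref{sh62} yields $\seq^{1\text{-}1}(\mathfrak{a})\nleqslant\mathfrak{a}!$; since $\seq^{1\text{-}1}(\mathfrak{a})\leqslant\seq(\mathfrak{a})$, this automatically upgrades to $\seq(\mathfrak{a})\nleqslant\mathfrak{a}!$. For the reverse directions, the transposition map $\{z,v\}\mapsto(z;v)_{A}$ has empty support, hence lives in $\mathcal{V}_{\mathrm{F}}$, and witnesses $[\mathfrak{a}]^{2}\leqslant\mathfrak{a}!$. Combined with Lemma~\ref{sh55}\ref{sh61} and the fact that injections are finite-to-one, this forces $\mathfrak{a}!\nleqslant\seq(\mathfrak{a})$ and a fortiori $\mathfrak{a}!\nleqslant\seq^{1\text{-}1}(\mathfrak{a})$.

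For \ref{sh68} and \ref{sh66}, take $\mathfrak{b}=\mathfrak{c}=\mathfrak{a}+\aleph_{0}$, which is Dedekind infinite. Precomposing with the canonical inclusions $[A]^{3}\hookrightarrow[A\cup\omega]^{3}$ and $([A]^{2})^{2}\hookrightarrow([A\cup\omega]^{2})^{2}$ lifts Lemma~\ref{sh55}\ref{sh64} and \ref{sh55}\ref{sh65} to $[\mathfrak{b}]^{3}\nleqslant_{\mathrm{fto}}\mathfrak{b}!$ and $([\mathfrak{c}]^{2})^{2}\nleqslant\mathfrak{c}!$; the second of these is exactly \ref{sh66}. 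For the remainder of \ref{sh68}, the chain $\mathcal{S}_{3}(\mathfrak{a})\leqslant\mathfrak{a}!\leqslant\mathfrak{b}!$ together with Lemma~\ref{sh55}\ref{sh63} gives $\mathfrak{b}!\nleqslant 2^{\mathfrak{b}}$, and then $[\mathfrak{b}]^{3}\leqslant 2^{\mathfrak{b}}$ implies $\mathfrak{b}!\nleqslant[\mathfrak{b}]^{3}$; conversely, $[\mathfrak{a}]^{3}\leqslant 2^{\mathfrak{a}}\leqslant 2^{\mathfrak{b}}$ together with $[\mathfrak{a}]^{3}\nleqslant\mathfrak{b}!$ (Lemma~\ref{sh55}\ref{sh64} again) shows $2^{\mathfrak{b}}\nleqslant\mathfrak{b}!$.

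The real obstacle is \ref{sh69}, since the triply nested pair functor is not directly handled by Lemma~\ref{sh55}. The plan is to establish, still inside $\mathcal{V}_{\mathrm{F}}$, the strengthening $\bigl[\bigl[[A]^{2}\bigr]^{2}\bigr]^{2}\nleqslant(\mathfrak{a}+\aleph_{0})!$ by the same support/symmetry method used for Lemma~\ref{sh55}\ref{sh65}, but now iterated at three nested levels. Given a hypothetical injection $g\in\mathcal{V}_{\mathrm{F}}$ from $\bigl[\bigl[[A]^{2}\bigr]^{2}\bigr]^{2}$ into $\cals(A\cup\omega)$ with finite support $C$, pick eight distinct atoms $a_{0},\dots,a_{7}$ outside $C$, set
\[
X=\bigl\{\bigl\{\{a_{0},a_{1}\},\{a_{2},a_{3}\}\bigr\},\bigl\{\{a_{4},a_{5}\},\{a_{6},a_{7}\}\bigr\}\bigr\},
\]
and let $u=g(X)$. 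Transpositions $(c;d)_{A}$ of atoms far from $C\cup\{a_{0},\dots,a_{7}\}$ first force $\mov(u)\subseteq C\cup\omega\cup\{a_{0},\dots,a_{7}\}$. The stabiliser of $X$ inside $\fixg(C)$, restricted to $\{a_{0},\dots,a_{7}\}$, is a wreath-product-type group of order $128$ with three nested $\mathbb{Z}/2$-levels: swaps within each innermost pair, swaps of paired pairs inside each block, and the top-level swap of the two blocks. Iterating the Lemma~\ref{sh55}\ref{sh65} style of argument once per level progressively pins down $u$ on $\{a_{0},\dots,a_{7}\}$ until one exhibits a stabiliser element of $X$ that fails to fix $u$, contradicting injectivity of $g$. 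With the lemma in hand, take $\mathfrak{d}=\mathfrak{a}+\aleph_{0}$ and precompose with the canonical inclusion $\bigl[\bigl[[A]^{2}\bigr]^{2}\bigr]^{2}\hookrightarrow\bigl[\bigl[[\mathfrak{d}]^{2}\bigr]^{2}\bigr]^{2}$ to conclude. The only real difficulty is the bookkeeping for the three-level nested stabiliser; it is mechanical but error-prone.
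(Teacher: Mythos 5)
Parts \ref{sh67}--\ref{sh66} of your argument are correct and essentially identical to the paper's proof: the same choice of cardinals in $\mathcal{V}_\mathrm{F}$, the same use of Lemma~\ref{sh55}, and the same transfers along the inclusions $[A]^3\hookrightarrow[A\cup\omega]^3$ and $([A]^2)^2\hookrightarrow([A\cup\omega]^2)^2$ (the paper gets $[\mathfrak{a}]^2\leqslant\mathfrak{a}!$ from Corollary~\ref{sh30} rather than from the explicit transposition map, but your map works just as well).

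Part \ref{sh69} is where there is a genuine gap. You propose to prove a new lemma, $\bigl[\bigl[[A]^2\bigr]^2\bigr]^2\nleqslant(\mathfrak{a}+\aleph_0)!$ in $\mathcal{V}_\mathrm{F}$, by a three-level nested support argument, but you only sketch it and explicitly defer the combinatorics; the lemma is therefore asserted, not proved. Moreover your description of the endgame is off: you say you will exhibit ``a stabiliser element of $X$ that fails to fix $u$,'' but any $\pi\in\fixg(C)$ with $\pi(X)=X$ automatically satisfies $\pi(u)=\pi(g(X))=g(\pi(X))=u$, so such an element cannot arise; the contradiction with injectivity must come from the opposite kind of element, one that fixes $u$ but \emph{moves} $X$. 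Since the top level here is an unordered pair (unlike the ordered pair in Lemma~\ref{sh55}\ref{sh65}), the block-swap stabilises $X$, and the needed witness is a cross-block swap such as $(a_2;a_4)_A\circ(a_3;a_5)_A$ after one has pinned $u$ down to the product of the four innermost transpositions. All of this can be made to work, but none of it is necessary: the paper derives \ref{sh69} from \ref{sh66} purely in $\mathsf{ZF}$, using that $x^2\subseteq[2\times x]^2$ and $2\times y\preccurlyeq[y]^2$ for infinite $y$, whence $([\mathfrak{d}]^2)^2\leqslant\bigl[2\cdot[\mathfrak{d}]^2\bigr]^2\leqslant\Bigl[\bigl[[\mathfrak{d}]^2\bigr]^2\Bigr]^2$, so that $\Bigl[\bigl[[\mathfrak{d}]^2\bigr]^2\Bigr]^2\leqslant\mathfrak{d}!$ would contradict \ref{sh66} directly. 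You should either adopt that reduction or write out the three-level symmetry argument in full.
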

\begin{proof}
By the Jech-Sochor theorem, it suffices to show that
there are such cardinals in $\mathcal{V}_\mathrm{F}$.
Let $A$ be the set of atoms of $\mathcal{V}_\mathrm{F}$ and let $\mathfrak{a}=|A|$.

(i) Note that $\seq^{1\text{-}1}(\mathfrak{a})\leqslant\seq(\mathfrak{a})$
and that, by Corollary~\ref{sh30}, $[\mathfrak{a}]^2\leqslant\mathfrak{a}!$.
By Lemma~\ref{sh55}\ref{sh61}, $[\mathfrak{a}]^2\nleqslant\seq(\mathfrak{a})$,
and hence $\mathfrak{a}!\nleqslant\seq(\mathfrak{a})$
and $\mathfrak{a}!\nleqslant\seq^{1\text{-}1}(\mathfrak{a})$.
By Lemma~\ref{sh55}\ref{sh62}, $\seq^{1\text{-}1}(\mathfrak{a})\nleqslant\mathfrak{a}!$,
and thus $\seq(\mathfrak{a})\nleqslant\mathfrak{a}!$, which completes the proof of (i).

(ii) Let $\mathfrak{b}=\mathfrak{a}+\aleph_0$.
Note that $\mathfrak{b}$ is Dedekind infinite, $[\mathfrak{b}]^3\leqslant2^\mathfrak{b}$,
and $\mathcal{S}_3(\mathfrak{b})\leqslant\mathfrak{b}!$.
By Lemma~\ref{sh55}\ref{sh63}, $\mathcal{S}_3(\mathfrak{b})\nleqslant2^\mathfrak{b}$,
and thus $\mathfrak{b}!\nleqslant2^\mathfrak{b}$ and $\mathfrak{b}!\nleqslant[\mathfrak{b}]^3$.
By Lemma~\ref{sh55}\ref{sh64}, $[\mathfrak{b}]^3\nleqslant_{\mathrm{fto}}\mathfrak{b}!$,
and hence $[\mathfrak{b}]^3\nleqslant\mathfrak{b}!$ and $2^\mathfrak{b}\nleqslant\mathfrak{b}!$,
which completes the proof of (ii).

(iii) Let $\mathfrak{c}=\mathfrak{a}+\aleph_0$.
By Lemma~\ref{sh55}\ref{sh65}, $([\mathfrak{c}]^2)^2\nleqslant\mathfrak{c}!$.

(iv) Let $\mathfrak{d}=\mathfrak{a}+\aleph_0$.
For any set $x$, since all elements of $x^2$ are $2$-element subsets of $2\times x$,
we have $x^2\subseteq[2\times x]^2$.
Since it is easy to verify that $2\times y\preccurlyeq[y]^2$ for any infinite set $y$,
we get that
\[
\bigl([\mathfrak{d}]^2\bigr)^2\leqslant\bigl[2\cdot[\mathfrak{d}]^2\bigr]^2\leqslant\Bigl[\bigl[[\mathfrak{d}]^2\bigr]^2\Bigr]^2.
\]
Now $\Bigl[\bigl[[\mathfrak{d}]^2\bigr]^2\Bigr]^2\nleqslant\mathfrak{d}!$ follows from \ref{sh66}.
\end{proof}

\begin{xrem}
It is provable in $\mathsf{ZF}$ that
for all infinite cardinals $\mathfrak{a}$ and all natural numbers $n$,
$\mathfrak{a}^n\leqslant\seq^{1\text{-}1}(\mathfrak{a})$ (cf.~\cite[Lemma~2.5]{Rubin1971}).
Proposition~\ref{sh60}\ref{sh67} shows that, in Corollary~\ref{sh34},
we cannot replace $\mathfrak{a}^n$ by $\seq^{1\text{-}1}(\mathfrak{a})$.
Proposition~\ref{sh60}\ref{sh66} shows that we cannot generalize Corollary~\ref{sh30}
by proving that $([\mathfrak{a}]^2)^2<\mathfrak{a}!$,
even for Dedekind infinite cardinals $\mathfrak{a}$;
it also shows that, in Theorem~\ref{sh47},
we cannot conclude that $\seq(\calspdf(\mathfrak{a}))<\mathfrak{a}!$,
since $([\mathfrak{a}]^2)^2\leqslant\seq(\calspdf(\mathfrak{a}))$.
Proposition~\ref{sh60}\ref{sh69} is the consistency result stated after Corollary~\ref{sh32}.
\end{xrem}

\begin{proposition}\label{sh70}
The following statement is consistent with $\mathsf{ZF}$:
There is a Dedekind infinite cardinal $\mathfrak{b}$ such that
$\seq(\mathfrak{b})<[\mathfrak{b}]^2$ and
$[\mathfrak{b}]^2\nleqslant_{\mathrm{fto}}\seq(\mathfrak{b})$.
\end{proposition}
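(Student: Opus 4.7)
I will follow the template of Proposition~\ref{sh60}: construct a permutation model $\mathcal{V}$ with set of atoms $A$, verify both required cardinal inequalities inside $\mathcal{V}$ for $\mathfrak{b}=\mathfrak{a}+\aleph_0$ (where $\mathfrak{a}=|A|$), and invoke the Jech--Sochor theorem to transfer the consistency to $\mathsf{ZF}$. Dedekind infiniteness of $\mathfrak{b}$ is automatic from the $\aleph_0$ summand.

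For $[\mathfrak{b}]^2\nleqslant_{\mathrm{fto}}\seq(\mathfrak{b})$, I will adapt the support-and-swap argument of Lemma~\ref{sh55}\ref{sh61} from $\seq(A)$ to $\seq(A\cup\omega)$. Assume for contradiction that $f\in\mathcal{V}$ is a finite-to-one map from $[A\cup\omega]^2$ into $\seq(A\cup\omega)$ with finite support $B\subseteq A$, and fix $\{a,b\}\in[A\setminus B]^2$. First show $\ran(f(\{a,b\}))\subseteq B\cup\omega\cup\{a,b\}$ by invariance under $\fixg(B\cup\{a,b\})$; then rule out $a$ and $b$ from $\ran(f(\{a,b\}))$ using the transposition $(a;b)_A\in\fixg(B)$, which fixes $\{a,b\}$ but would move any occurrence of $a$ or $b$ in the sequence. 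Hence $f(\{a,b\})\in\seq(B\cup\omega)$. Since $\fixg(B)$ acts transitively on $[A\setminus B]^2$, $f$ must be constant on this infinite set, contradicting finite-to-one.

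For $\seq(\mathfrak{b})\leq[\mathfrak{b}]^2$, apply Corollary~\ref{sh09} and Lemma~\ref{sh08b} to reduce the task to injecting $\aleph_0\cdot\seq^{1\text{-}1}(A)$ into $[A\cup\omega]^2$. The plan is to choose a permutation model in which $A$ carries enough auxiliary structure (for example a pair partition as in the second Fraenkel model, a linear ordering as in Mostowski's ordered model, or a custom refinement) to provide a $\mathcal{V}$-definable injection $\iota\colon\seq^{1\text{-}1}(A)\to A$; combined with the $\aleph_0$ factor, one then maps $(n,t)\mapsto\{n,\iota(t)\}\in[A\cup\omega]^2$.

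The main obstacle is exhibiting the injection $\seq(\mathfrak{b})\leq[\mathfrak{b}]^2$: in the basic Fraenkel model used in Proposition~\ref{sh60}, a standard support-and-swap calculation shows that $A^2$, and hence a fortiori $\seq(A\cup\omega)$, fails to inject into $[A\cup\omega]^2$. Accordingly, the chosen permutation model must combine enough rigidity on $A$ to support the sequence-to-atom encoding $\iota$ with enough residual symmetry (namely transitivity of $\fixg(B)$ on $[A\setminus B]^2$) to keep the support-and-swap argument for the non-existence of the finite-to-one map intact. Balancing these two requirements is the principal difficulty.
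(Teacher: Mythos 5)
There is a genuine gap, and it is exactly the one you name yourself at the end: the positive half, $\seq(\mathfrak{b})\leqslant[\mathfrak{b}]^2$, is never established. Your plan is to find a permutation model whose atoms carry enough structure to define an injection $\iota\colon\seq^{1\text{-}1}(A)\to A$ while $\fixg(B)$ still acts transitively on $[A\setminus B]^2$, but no such model is exhibited, and the tension is real rather than merely a matter of bookkeeping. Indeed, with your choice $\mathfrak{b}=\mathfrak{a}+\aleph_0$ in the basic Fraenkel model the positive half provably fails: if $g$ were an injection from $\seq^{1\text{-}1}(A)$ into $[A\cup\omega]^2$ with support $B$ and $a,b,c\in A\setminus B$ are distinct, then a support argument gives $g(\langle a,b,c\rangle)\subseteq B\cup\omega\cup\{a,b,c\}$, and applying the three transpositions of $\{a,b,c\}$ forces $g(\langle a,b,c\rangle)$ to meet each of $\{a,b\}$, $\{b,c\}$, $\{a,c\}$ in exactly one point, which is impossible. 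So a proof along your lines would require a genuinely new model whose existence you have not demonstrated; as written, the argument does not close.

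The paper's proof sidesteps the difficulty entirely and needs no new model. It stays in the basic Fraenkel model but sets $\mathfrak{b}=\seq(\mathfrak{a})$ rather than $\mathfrak{a}+\aleph_0$. By Fact~\ref{sh07}, $\mathfrak{b}$ is Dedekind infinite, and by Ellentuck's idempotence law $\seq(\seq(\mathfrak{a}))=\seq(\mathfrak{a})$ (Lemma~\ref{sh08a}) one gets $\seq(\mathfrak{b})=\mathfrak{b}$, so both required assertions collapse to statements about $\mathfrak{b}$ itself: $\seq(\mathfrak{b})<[\mathfrak{b}]^2$ becomes $\mathfrak{b}<[\mathfrak{b}]^2$, and $[\mathfrak{b}]^2\nleqslant_{\mathrm{fto}}\seq(\mathfrak{b})$ becomes $[\mathfrak{b}]^2\nleqslant_{\mathrm{fto}}\mathfrak{b}$. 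The latter follows from Lemma~\ref{sh55}\ref{sh61} (which you do cite correctly for the negative half), since $\mathfrak{a}\leqslant\mathfrak{b}$ gives $[\mathfrak{a}]^2\leqslant[\mathfrak{b}]^2$ and hence $[\mathfrak{b}]^2\leqslant_{\mathrm{fto}}\mathfrak{b}$ would yield $[\mathfrak{a}]^2\leqslant_{\mathrm{fto}}\seq(\mathfrak{a})$; the former then needs only the easy $\mathfrak{b}\preccurlyeq[\mathfrak{b}]^2$ for Dedekind infinite $\mathfrak{b}$. The lesson is that the ``rigidity versus homogeneity'' trade-off you identify can be avoided by choosing $\mathfrak{b}$ to be a fixed point of $\seq$, which is what makes the injection you were struggling to build unnecessary.
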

\begin{proof}
Let $A$ be the set of atoms of $\mathcal{V}_\mathrm{F}$,
let $\mathfrak{a}=|A|$, and let $\mathfrak{b}=\seq(\mathfrak{a})$.
Then by Fact~\ref{sh07}, $\mathfrak{b}$ is Dedekind infinite,
and by Lemma~\ref{sh08a}, $\seq(\mathfrak{b})=\mathfrak{b}$.
By Lemma~\ref{sh55}\ref{sh61}, $[\mathfrak{a}]^2\nleqslant_{\mathrm{fto}}\mathfrak{b}$,
and hence $[\mathfrak{b}]^2\nleqslant_{\mathrm{fto}}\mathfrak{b}$ and $\mathfrak{b}<[\mathfrak{b}]^2$.
Therefore, we get that $\seq(\mathfrak{b})=\mathfrak{b}<[\mathfrak{b}]^2$
and $[\mathfrak{b}]^2\nleqslant_{\mathrm{fto}}\mathfrak{b}=\seq(\mathfrak{b})$.
\end{proof}

\subsection{The ordered Mostowski model}
Let the set $A$ of atoms be denumerable,
and let $<_\mathrm{M}$ be an ordering of $A$ with order type that of the rational numbers.
Let $\mathcal{G}$ be the group of all automorphisms of $\langle A,<_\mathrm{M}\rangle$
and let $\mathcal{I}=\fin(A)$.
The permutation model determined by $\mathcal{G}$ and $\mathcal{I}$
is called the \emph{ordered Mostowski model}
(cf.~\cite[pp.~198--202]{Halbeisen2017} or \cite[\S4.5]{Jech1973}),
and is denoted by $\mathcal{V}_\mathrm{M}$ ($\mathrm{M}$ for Mostowski).

Clearly, the relation $<_\mathrm{M}$ belongs to
the model $\mathcal{V}_\mathrm{M}$ (cf.~\cite[Lemma~8.10]{Halbeisen2017}).
In $\mathcal{V}_\mathrm{M}$, $A$ is infinite
but power Dedekind finite (cf.~\cite[Lemma~8.13]{Halbeisen2017}),
and therefore, by Fact~\ref{sh11}, $\cals(A)$ is Dedekind finite.

\begin{fact}\label{sh71}
Let $A$ be the set of atoms of $\mathcal{V}_\mathrm{M}$.
In $\mathcal{V}_\mathrm{M}$, $\calsf(A)=\cals(A)$.
\end{fact}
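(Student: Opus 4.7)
The plan is to take an arbitrary $f \in \cals(A)$ in $\mathcal{V}_\mathrm{M}$, fix a finite support $B \in \fin(A)$ for $f$, and show directly that $\mov(f) \subseteq B$, so $\mov(f)$ is finite and $f \in \calsf(A)$. The key structural fact I will exploit is that $\langle A, <_\mathrm{M}\rangle$ has the order type of $\mathbb{Q}$, so removing any finite $B$ leaves finitely many open intervals, each again of order type $\mathbb{Q}$, and the group $\fixg(B)$ acts on each such interval as the full group of order-automorphisms of a dense linear order without endpoints; in particular, it acts transitively on each interval and, more generally, can move any single element of such an interval to any other while fixing finitely many prescribed points.

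First I would record the equivariance consequence: since $B$ is a support of $f$, every $\pi \in \fixg(B)$ satisfies $\pi(f) = f$, i.e., $f \circ \pi = \pi \circ f$. Now suppose towards a contradiction that there exists $a \in A \setminus B$ with $f(a) = b \neq a$, and split into two cases according to whether $b \in B$ or not.

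In the case $b \in B$, let $I$ be the unique open interval of $A \setminus B$ containing $a$. For every $a' \in I$, by homogeneity of $\mathbb{Q}$-type orders there is $\pi \in \fixg(B)$ with $\pi(a) = a'$; applying the equivariance above gives $f(a') = f(\pi(a)) = \pi(f(a)) = \pi(b) = b$. Hence $f^{-1}[\{b\}] \supseteq I$, which is infinite, contradicting the injectivity of $f$. In the case $b \in A \setminus B$ with $b \neq a$, consider the decomposition of $A \setminus (B \cup \{a\})$ into open intervals; the interval containing $b$ is again of order type $\mathbb{Q}$, so I can pick $\pi \in \fixg(B \cup \{a\})$ with $\pi(b) \neq b$. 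Then $\pi \in \fixg(B)$ and $\pi(a) = a$, so equivariance yields $b = f(a) = f(\pi(a)) = \pi(f(a)) = \pi(b) \neq b$, a contradiction.

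Both cases being impossible, $f$ fixes every atom outside $B$, so $\mov(f) \subseteq B$ is finite and $f \in \calsf(A)$. The main obstacle, and the only nontrivial input, is the homogeneity claim about $\fixg(B)$: once one knows that removing a finite subset of a $\mathbb{Q}$-like order leaves pieces on which the order-automorphism group still acts transitively (and in fact can independently move single points while fixing any chosen finite set of others), the argument is purely a support/equivariance chase. No calculation is needed beyond invoking this homogeneity twice.
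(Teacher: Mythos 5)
Your proof is correct and rests on exactly the same ingredients as the paper's: a finite support $B$, the equivariance $\pi(f)=f$ for $\pi\in\fixg(B)$, and the homogeneity of a $\mathbb{Q}$-like order after deleting finitely many points. The paper avoids your case split on whether $f(a)\in B$ by simply choosing, for $a\in\mov(f)\setminus B$, a single $\pi\in\fixg(B\cup\{f(a)\})$ with $\pi(a)\neq a$, which yields the contradiction uniformly; otherwise the arguments coincide.
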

\begin{proof}
Let $f\in\mathcal{V}_\mathrm{M}$ be a permutation of $A$,
and let $B\in\fin(A)$ be a support of $f$.
If there exists an $a\in\mov(f)\setminus B$,
then take a $\pi\in\fixg(B\cup\{f(a)\})$ such that $\pi(a)\neq a$.
Thus $\pi$ moves $f$, contradicting the assumption that $B$ is a support of $f$.
Therefore $\mov(f)\subseteq B$, and hence $f\in\calsf(A)$.
\end{proof}

\begin{lemma}\label{sh72}
For all non-zero cardinals $\mathfrak{a}$,
if there are $x$, $r$ such that $|x|=\mathfrak{a}$ and $r$ is an ordering of $x$,
then $\calsf(\mathfrak{a})\leqslant\seq^{1\text{-}1}(\mathfrak{a})\leqslant\seq(\mathfrak{a})=\aleph_0\cdot\calsf(\mathfrak{a})$.
Moreover, if in addition $\mathfrak{a}$ is a Dedekind finite cardinal,
then we have that $\calsf(\mathfrak{a})<\seq^{1\text{-}1}(\mathfrak{a})<\seq(\mathfrak{a})=\aleph_0\cdot\calsf(\mathfrak{a})$.
\end{lemma}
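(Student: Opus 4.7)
My plan is to reduce everything to Fact~\ref{sh13} together with one explicit new injection. Fact~\ref{sh13} already supplies $\seq(\mathfrak{a})\leqslant\aleph_0\cdot\calsf(\mathfrak{a})$, and $\seq^{1\text{-}1}(x)\subseteq\seq(x)$ supplies $\seq^{1\text{-}1}(\mathfrak{a})\leqslant\seq(\mathfrak{a})$ for free. So the substantive pieces are (a) $\calsf(\mathfrak{a})\leqslant\seq^{1\text{-}1}(\mathfrak{a})$ and (b) the reverse bound $\aleph_0\cdot\calsf(\mathfrak{a})\leqslant\seq(\mathfrak{a})$. Fixing $x$ with $|x|=\mathfrak{a}$ and an ordering $r$ of $x$, for (a) I would define $F:\calsf(x)\to\seq^{1\text{-}1}(x)$ by $F(\mathrm{id}_x)=\emptyset$ and, otherwise,
\[
F(\sigma)=(\sigma(z_0),\dots,\sigma(z_{n-1})),
\]
where $z_0,\dots,z_{n-1}$ is the $r$-increasing enumeration of $\mov(\sigma)$. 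Because $\sigma$ restricts to a permutation of the finite set $\mov(\sigma)$, the sequence $F(\sigma)$ is injective; and from $F(\sigma)$ one recovers $\mov(\sigma)$ as its range, hence the $z_i$ by $r$-order, and hence $\sigma$ itself (all points off $\mov(\sigma)$ being fixed). For (b), chaining $\calsf(\mathfrak{a})\leqslant\seq^{1\text{-}1}(\mathfrak{a})\leqslant\seq(\mathfrak{a})$ with $\aleph_0\leqslant\seq(\mathfrak{a})$ (Fact~\ref{sh07}) and using $\seq(\mathfrak{a})^2\leqslant\seq(\seq(\mathfrak{a}))=\seq(\mathfrak{a})$ (Lemma~\ref{sh08a}) yields $\aleph_0\cdot\calsf(\mathfrak{a})\leqslant\seq(\mathfrak{a})$; Theorem~\ref{cbt} then closes the equality $\seq(\mathfrak{a})=\aleph_0\cdot\calsf(\mathfrak{a})$.

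For the \emph{moreover} clause, suppose in addition that $\mathfrak{a}$ is Dedekind finite (and non-zero). Then $\seq^{1\text{-}1}(\mathfrak{a})<\seq(\mathfrak{a})$ is immediate from Corollary~\ref{sh09}, which gives $\seq^{1\text{-}1}(\mathfrak{a})\neq\seq(\mathfrak{a})$. For $\calsf(\mathfrak{a})<\seq^{1\text{-}1}(\mathfrak{a})$, I would observe that the image of $F$ is a proper subset of $\seq^{1\text{-}1}(x)$: for any $z\in x$, the length-one sequence $(z)$ is not in $\ran(F)$, since $F(\sigma)=(z)$ would simultaneously require $z\in\mov(\sigma)$ and $\sigma(z)=z$. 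If the equality $|\calsf(x)|=|\seq^{1\text{-}1}(x)|$ held, then $\ran(F)$ and $\seq^{1\text{-}1}(x)$ would share the same cardinality, so by Theorem~\ref{dedt} the set $\seq^{1\text{-}1}(x)$ would be Dedekind infinite; by Fact~\ref{sh06} this would force $x$ to be Dedekind infinite, contradicting the hypothesis on $\mathfrak{a}$.

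The principal obstacle is the construction of $F$; once it is in place, everything reduces to previously catalogued facts. The key observation making $F$ well-defined and injective is that a permutation with finite moved-set permutes that set, so the moved-set is recoverable as the range of the coded sequence, with no need for auxiliary ordinal or natural-number indices.
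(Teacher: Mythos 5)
Your proof is correct and takes essentially the same approach as the paper: your map $F$ is precisely the paper's injection $g(t)=t\circ(f(t))^{-1}$ (the sequence of $\sigma$-images of the $r$-increasing enumeration of $\mov(\sigma)$), and your treatment of the strict inequalities --- via the omitted length-one sequences, Theorem~\ref{dedt}, Fact~\ref{sh06}, and Corollary~\ref{sh09} --- matches the paper's argument exactly. The only cosmetic difference is that you derive $\aleph_0\cdot\calsf(\mathfrak{a})\leqslant\seq(\mathfrak{a})$ through Lemma~\ref{sh08a}, whereas the paper uses $\aleph_0\cdot\seq^{1\text{-}1}(\mathfrak{a})=\seq(\mathfrak{a})$ from Lemma~\ref{sh08b}; both are valid.
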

\begin{proof}
Let $\mathfrak{a}$ be a non-zero cardinal,
let $x$ be a set such that $|x|=\mathfrak{a}$,
and let $r$ be an ordering of $x$.
Let $f$ be the function defined on $\calsf(x)$ such that for all $t\in\calsf(x)$,
$f(t)$ is the unique isomorphism of $\langle\mov(t),r\rangle$ onto some natural number.
Then the function $g$ defined on $\calsf(x)$ given by
\[
g(t)=t\circ(f(t))^{-1}
\]
is an injection from $\calsf(x)$ into $\seq^{1\text{-}1}(x)$, since for all $t\in\calsf(x)$,
$f(t)$ is the unique isomorphism of $\langle\ran(g(t)),r\rangle$ onto some natural number,
and $t$ is the permutation of $x$ given by
\[
t(z)=
\begin{cases}
  g(t)(f(t)(z)), & \text{if $z\in\ran(g(t))$;}\\
  z,             & \text{otherwise.}
\end{cases}
\]
Note that $g$ is not surjective, since every sequence of length $1$ is not in the range of $g$.
Hence $\calsf(\mathfrak{a})\leqslant\seq^{1\text{-}1}(\mathfrak{a})$,
and therefore, by Lemma~\ref{sh08b},
$\aleph_0\cdot\calsf(\mathfrak{a})\leqslant\aleph_0\cdot\seq^{1\text{-}1}(\mathfrak{a})=\seq(\mathfrak{a})$,
which implies that, by Fact~\ref{sh13} and Theorem~\ref{cbt},
$\seq(\mathfrak{a})=\aleph_0\cdot\calsf(\mathfrak{a})$.
Moreover, if $\mathfrak{a}$ is Dedekind finite,
then, by Fact~\ref{sh06}, $\seq^{1\text{-}1}(\mathfrak{a})$ is Dedekind finite,
and thus, since $g$ is not surjective,
$\calsf(\mathfrak{a})<\seq^{1\text{-}1}(\mathfrak{a})$ follows from Theorem~\ref{dedt}.
Finally, since $\mathfrak{a}\neq0$ and $\mathfrak{a}$ is Dedekind finite,
by Corollary~\ref{sh09}, $\seq^{1\text{-}1}(\mathfrak{a})<\seq(\mathfrak{a})$.
\end{proof}

\begin{proposition}\label{sh73}
The following statement is consistent with $\mathsf{ZF}$: There is an infinite cardinal $\mathfrak{a}$ such that
$2^\mathfrak{a}<\mathfrak{a}!<\seq^{1\text{-}1}(\mathfrak{a})<\seq(\mathfrak{a})=\aleph_0\cdot\mathfrak{a}!$.
\end{proposition}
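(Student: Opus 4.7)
The plan is to apply the Jech--Sochor theorem to the ordered Mostowski model $\mathcal{V}_\mathrm{M}$: I will verify all four (in)equalities for $\mathfrak{a}:=|A|$, where $A$ is the set of atoms. Since $A$ is power Dedekind finite (hence Dedekind finite) and is ordered by $<_\mathrm{M}$, the ``moreover'' clause of Lemma~\ref{sh72} yields $\calsf(\mathfrak{a})<\seq^{1\text{-}1}(\mathfrak{a})<\seq(\mathfrak{a})=\aleph_0\cdot\calsf(\mathfrak{a})$, and Fact~\ref{sh71} replaces $\calsf(\mathfrak{a})$ by $\mathfrak{a}!$; together they deliver the three rightmost (in)equalities of the chain at once.

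For $\mathfrak{a}!\not\leqslant2^\mathfrak{a}$ a short support-counting argument will suffice. Suppose $f\in\mathcal{V}_\mathrm{M}$ is an injection from $\cals(A)$ into $\wp(A)$ with support $B\in\fin(A)$, and pick any finite $C\supseteq B$ with $|C|=10$. The $|C|!$ permutations of $C$ (extended by the identity on $A\setminus C$) all lie in $\cals(A)$, and $f$ sends them to subsets of $A$ whose supports are contained in $C$. The orbits of $\fixg(C)$ acting on $A$ are the $|C|$ singletons inside $C$ together with the $|C|+1$ open intervals of $A\setminus C$, so a $C$-supported subset is a union of some of these $2|C|+1$ orbits, giving at most $2^{2|C|+1}$ possibilities. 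Injectivity of $f$ would force $10!\leqslant2^{21}$, which is false.

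The main effort goes into $2^\mathfrak{a}\leqslant\mathfrak{a}!$, for which I construct an injection $\Phi\colon\wp(A)\to\cals(A)$ inside $\mathcal{V}_\mathrm{M}$. Each $S\in\wp(A)$ admits a unique minimal finite support $F_S$, and $S$ is determined by the pair $(F_S,p_S)$ in which $p_S\in\{0,1\}^{2|F_S|+1}$ records which of the points of $F_S$ and which of the $|F_S|+1$ open intervals of $A\setminus F_S$ belong to $S$. Fix a parameter $B^*\in\fin(A)$ with $|B^*|=14$, and set $E_S:=F_S\cup B^*$, $U:=F_S\setminus B^*$, $T:=F_S\cap B^*$. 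Using $<_\mathrm{M}$ to canonically enumerate both the pairs $(T,p_S)$ (as $T$ ranges over subsets of $B^*$ and $p_S$ over compatible patterns) and the derangements of any finite ordered subset of $A$, I define $\Phi(S)$ to be the derangement of $E_S$ (extended by the identity on $A\setminus E_S$) whose rank equals that of $(T,p_S)$. For fixed $U$ the number of pairs is at most
\[
\sum_{t=0}^{14}\binom{14}{t}\,2^{2(|U|+t)+1}=2^{2|U|+1}\cdot5^{14},
\]
while the number of derangements of the $(|U|+14)$-element set $E_S$ is at least $(|U|+14)!/3$; the base case $|U|=0$ gives $14!/3>2\cdot5^{14}$, and each further element of $U$ multiplies the derangement side by at least $15$ but the options side by only $4$, so the required inequality persists. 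From $\Phi(S)$ one reads off $E_S=\mov(\Phi(S))$, then $U=E_S\setminus B^*$, then $(T,p_S)$ via the rank, and hence $S$; so $\Phi$ is injective, and since $\Phi$ uses only $B^*$ as an external parameter it has support $B^*$ and therefore lies in $\mathcal{V}_\mathrm{M}$.

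The main obstacle will be this last construction: organising the canonical enumerations inside the model (so that the resulting $\Phi$ really has support $B^*$) and verifying the counting inequality uniformly in $|U|$. The two preceding steps then fall out from Fact~\ref{sh71}, Lemma~\ref{sh72}, and the short support-orbit count.
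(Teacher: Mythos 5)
Your proposal is correct, and for the chain $\mathfrak{a}!<\seq^{1\text{-}1}(\mathfrak{a})<\seq(\mathfrak{a})=\aleph_0\cdot\mathfrak{a}!$ it is exactly the paper's argument: apply the ``moreover'' clause of Lemma~\ref{sh72} to the power Dedekind finite ordered set $\langle A,<_\mathrm{M}\rangle$ and then replace $\calsf(\mathfrak{a})$ by $\mathfrak{a}!$ via Fact~\ref{sh71}. The only divergence is at the leftmost inequality: the paper simply cites Dawson and Howard for $2^\mathfrak{a}<\mathfrak{a}!$ in $\mathcal{V}_\mathrm{M}$, whereas you reprove it from scratch. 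Both halves of your reproof are sound. The non-comparability direction $\mathfrak{a}!\nleqslant2^\mathfrak{a}$ via counting $\fixg(C)$-orbits ($|C|$ singletons plus $|C|+1$ intervals, hence at most $2^{2|C|+1}$ sets supported by $C$, against $|C|!$ permutations of $C$) is fine, except that you should take $|C|=\max(|B|,10)$ rather than exactly $10$, since $B$ may already be large; the inequality $n!>2^{2n+1}$ holds for all $n\geqslant10$, so nothing is lost. The injection $\wp(A)\to\cals(A)$ is the more delicate part, and your treatment is essentially the Dawson--Howard construction: it relies on the standard fact that every set in $\mathcal{V}_\mathrm{M}$ has a \emph{least} support (which you should cite or note explicitly, as the paper never uses it), on the $<_\mathrm{M}$-canonicity of the enumerations (which makes $\Phi$ equivariant under $\fixg(B^*)$ because an order isomorphism $E_S\to\pi[E_S]$ carries the $k$-th derangement to the $k$-th derangement), and on the counting inequality $D_{|U|+14}\geqslant2^{2|U|+1}\cdot5^{14}$, whose inductive step really uses only $D_{n+1}\geqslant nD_n\geqslant4D_n$ (your ``multiplies by at least $15$'' is off by the $\pm1$ in $D_{n+1}=(n+1)D_n+(-1)^{n+1}$, but far more than the needed factor $4$ survives). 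So your proof is a self-contained variant of the paper's; what it buys is independence from the external reference, at the cost of roughly a page of combinatorics that the paper outsources.
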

\begin{proof}
Let $A$ be the set of atoms of $\mathcal{V}_\mathrm{M}$ and let $\mathfrak{a}=|A|$.
In $\mathcal{V}_\mathrm{M}$, $<_\mathrm{M}$ is an ordering of $A$.
Since $A$ is power Dedekind finite, by Lemma~\ref{sh72}, we have that
$\calsf(\mathfrak{a})<\seq^{1\text{-}1}(\mathfrak{a})<\seq(\mathfrak{a})=\aleph_0\cdot\calsf(\mathfrak{a})$,
and hence, by Fact~\ref{sh71},
$\mathfrak{a}!<\seq^{1\text{-}1}(\mathfrak{a})<\seq(\mathfrak{a})=\aleph_0\cdot\mathfrak{a}!$.
Finally, $2^\mathfrak{a}<\mathfrak{a}!$ was proved in \cite{DawsonHoward1976}.
\end{proof}

Proposition~\ref{sh73} is the consistency result stated after Corollary~\ref{sh39}.
For more cardinal relations that hold in $\mathcal{V}_\mathrm{M}$, see \cite[p.~249]{HalbeisenShelah2001}.

\subsection{A Shelah-type permutation model}
In \cite[\S1]{HalbeisenShelah1994}, Shelah constructed a permutation model in which
there exists an infinite cardinal $\mathfrak{a}$ such that $\seq(\mathfrak{a})<\fin(\mathfrak{a})$.
Later, in \cite[\S7.3]{HalbeisenShelah2001}, a similar model was constructed
in order to show that the existence of an infinite cardinal $\mathfrak{a}$
such that $\mathfrak{a}^2<[\mathfrak{a}]^2$ is consistent with $\mathsf{ZF}$.
Recently, in \cite{Halbeisen2018}, Halbeisen generalized these two results
by proving that the existence of an infinite cardinal $\mathfrak{a}$
such that $\seq(\mathfrak{a})<[\mathfrak{a}]^2$ and
$[\mathfrak{a}]^2\nleqslant_{\mathrm{fto}}\seq(\mathfrak{a})$
is consistent with $\mathsf{ZF}$. These permutation models are called
\emph{Shelah-type permutation models} (cf.~\cite[pp.~209--211]{Halbeisen2017}).
The atoms of Shelah-type permutation models are always constructed by recursion,
where every atom encodes certain sets of atoms on a lower level.

Here we construct a Shelah-type permutation model in which
there exists a Dedekind infinite cardinal $\mathfrak{a}$ such that $\mathfrak{a}!<[\mathfrak{a}]^3$,
$[\mathfrak{a}]^3\nleqslant_{\mathrm{dfto}}\mathfrak{a}!$, and $\mathfrak{a}!\leqslant^\ast\mathfrak{a}$.
The proof of Proposition~\ref{sh70} shows that, in the basic Fraenkel model,
there already exists a Dedekind infinite cardinal $\mathfrak{b}$ such that
$\seq(\mathfrak{b})<[\mathfrak{b}]^2$ and $[\mathfrak{b}]^2\nleqslant_{\mathrm{fto}}\seq(\mathfrak{b})$.
Hence, in such a case, we do not really need to construct new models.
However, for our purpose here, the proof of Proposition~\ref{sh70} does not work,
because, unlike the case for $\seq(\mathfrak{a})$, $(\mathfrak{a}!)!=\mathfrak{a}!$ does not hold;
in fact, by Corollary~\ref{sh30}, $\mathfrak{a}!<(\mathfrak{a}!)!$ for any infinite cardinal $\mathfrak{a}$.

In this subsection, we shall work in $\mathsf{ZFA}+\mathsf{AC}$.
For a set $x$, let $\calsc(x)$ be the set of
all permutations of $x$ which move only countably many elements.
The atoms of this Shelah-type permutation model are constructed as follows:
\begin{enumerate}[label=\upshape(\roman*)]
  \item $A_0$ is an arbitrary uncountable set of atoms.
  \item $\mathcal{G}_0=\cals(A_0)$.
  \item $A_{n+1}=A_n\cup\{(n,u,i)\mid u\in\calsc(A_n)\text{ and }i<3\}$.
  \item $\mathcal{G}_{n+1}$ is the subgroup of $\cals(A_{n+1})$ such that
        for all $h\in\cals(A_{n+1})$, $h\in\mathcal{G}_{n+1}$ if and only if
        there exists a $g\in\mathcal{G}_n$ such that
        \begin{itemize}
          \item $g=h\upharpoonright A_n$;
          \item for all $u\in\calsc(A_n)$, there exists a permutation $p$ of $\{0,1,2\}$ such that
                $h(n,u,i)=(n,g\circ u\circ g^{-1},p(i))$ for any $i<3$.
        \end{itemize}
\end{enumerate}
Let $A=\bigcup_{n\in\omega}A_n$. For each triple $(n,u,i)\in A$
we assign a new atom $a_{n,u,i}$ and define the set of atoms
by stipulating $\tilde{A}=A_0\cup\{a_{n,u,i}\mid(n,u,i)\in A\}$.
However, for the sake of simplicity we shall work with $A$
as the set of atoms rather than with $\tilde{A}$.
Now, let
\[
\mathcal{G}=\bigl\{\pi\in\cals(A)\bigm|\forall n\in\omega\bigl(\pi\upharpoonright A_n\in\mathcal{G}_n\bigr)\bigr\},
\]
and let
\[
\mathcal{I}=\bigl\{B\subseteq A\bigm|\exists n\in\omega\bigl(B\text{ is a countable subset of }A_n\bigr)\bigr\}.
\]
Obviously, $\mathcal{G}$ is a permutation group of $A$, and $\mathcal{I}$ is a normal ideal.
The permutation model determined by $\mathcal{G}$ and $\mathcal{I}$
is denoted by $\mathcal{V}_\mathrm{S}$ ($\mathrm{S}$ for Shelah).

We say that a subset $C$ of $A$ is \emph{closed} if for all triples $(n,u,i)\in C$,
$\mov(u)\subseteq C$ and $\{(n,u,j)\mid j<3\}\subseteq C$.
The \emph{closure} of a subset $B$ of $A$ is the least closed set that includes $B$.
Since we are working in $\mathsf{ZFA}+\mathsf{AC}$, it is easy to verify that
the closure of a countable subset of $A$ is also countable,
and therefore for all $B\in\mathcal{I}$, the closure of $B$ belongs to $\mathcal{I}$.

\begin{lemma}\label{sh74}
For all closed subsets $C$ of $A$ and all $m\in\omega$,
every $g\in\mathcal{G}_m$ fixing $C\cap A_m$ pointwise
extends to a permutation $\pi\in\fixg(C)$.
\end{lemma}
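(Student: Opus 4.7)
The plan is to build, by recursion on $n \in \omega$, a chain of permutations $\pi_n \in \mathcal{G}_n$ with $\pi_{n+1} \upharpoonright A_n = \pi_n$, each of which fixes $C \cap A_n$ pointwise, and then to take $\pi = \bigcup_{n \in \omega} \pi_n$. For $n \leqslant m$, I set $\pi_n = g \upharpoonright A_n$; this lies in $\mathcal{G}_n$ by the recursive description of $\mathcal{G}_m$, and since $C \cap A_n \subseteq C \cap A_m$, the hypothesis on $g$ gives that $\pi_n$ fixes $C \cap A_n$ pointwise. For the inductive step from $n \geqslant m$ to $n+1$, I define $\pi_{n+1}$ to agree with $\pi_n$ on $A_n$ and to send each new atom $(n, u, i) \in A_{n+1} \setminus A_n$ to $(n, \pi_n \circ u \circ \pi_n^{-1}, i)$; in other words, I take the third-coordinate permutation $p$ to be the identity for every $u \in \calsc(A_n)$. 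Since conjugation by $\pi_n$ is a bijection of $\calsc(A_n)$, the map $\pi_{n+1}$ is a permutation of $A_{n+1}$, and it lies in $\mathcal{G}_{n+1}$ directly by the definition of that group.

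The main verification is that $\pi_{n+1}$ fixes $C \cap A_{n+1}$ pointwise. For atoms in $C \cap A_n$ this is the inductive hypothesis. For a triple $(n, u, i) \in C$, closedness of $C$ gives $\mov(u) \subseteq C \cap A_n$, and so $\pi_n$ fixes every element of $\mov(u)$; since $\pi_n$ is a permutation of $A_n$ that fixes $\mov(u)$ pointwise, it also preserves $A_n \setminus \mov(u)$ setwise. A short case distinction on whether $z \in \mov(u)$ then shows that $\pi_n(u(\pi_n^{-1}(z))) = u(z)$ for every $z \in A_n$, so $\pi_n \circ u \circ \pi_n^{-1} = u$ and $\pi_{n+1}(n, u, i) = (n, u, i)$ as required.

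Finally, $\pi = \bigcup_{n \in \omega} \pi_n$ is a permutation of $A$, and since $\pi \upharpoonright A_n = \pi_n \in \mathcal{G}_n$ for every $n$, one has $\pi \in \mathcal{G}$; as $C = \bigcup_n (C \cap A_n)$, $\pi$ fixes $C$ pointwise, whence $\pi \in \fixg(C)$. I do not anticipate a serious obstacle: the only delicate point is the conjugation identity $\pi_n \circ u \circ \pi_n^{-1} = u$, which rests entirely on the closedness of $C$, and the freedom to take $p = \mathrm{id}$ in the definition of $\mathcal{G}_{n+1}$ makes the inductive step essentially automatic.
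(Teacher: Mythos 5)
Your proof is correct and follows essentially the same route as the paper: the paper also defines the extension by recursion, sending $(m+n,u,i)$ to $(m+n,h_n\circ u\circ h_n^{-1},i)$ with the identity on the third coordinate, and uses closedness of $C$ to get $\mov(u)\subseteq C\cap A_{m+n}$ and hence $h_n\circ u\circ h_n^{-1}=u$. The only cosmetic difference is that you start the chain at level $0$ rather than at level $m$, and you spell out the conjugation identity in slightly more detail.
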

\begin{proof}
We define $h_n\in\mathcal{G}_{m+n}$ by recursion on $n$ as follows:
$h_0=g$; $h_{n+1}$ is the permutation of $A_{m+n+1}$
such that $h_n=h_{n+1}\upharpoonright A_{m+n}$
and such that for all $u\in\calsc(A_{m+n})$,
$h_{n+1}(m+n,u,i)=(m+n,h_n\circ u\circ h_n^{-1},i)$ for any $i<3$.
Now we prove by induction on $n$ that $h_n$ fixes $C\cap A_{m+n}$ pointwise.
By the assumption, $h_0$ fixes $C\cap A_m$ pointwise.
Assume, as an induction hypothesis, that $h_n$ fixes $C\cap A_{m+n}$ pointwise.
Then $h_{n+1}$ fixes $C\cap A_{m+n}$ pointwise, since $h_{n+1}$ extends $h_n$.
For any $(m+n,u,i)\in C$, since $C$ is closed,
we have that $\mov(u)\subseteq C\cap A_{m+n}$,
and therefore $h_n\circ u\circ h_n^{-1}=u$,
which implies that $h_{n+1}(m+n,u,i)=(m+n,u,i)$.
Hence $h_{n+1}$ fixes $C\cap A_{m+n+1}$ pointwise.
Let $\pi=\bigcup_{n\in\omega}h_n$.
Then $\pi\in\mathcal{G}$ extends $g$ and fixes $C$ pointwise.
\end{proof}

\begin{lemma}\label{sh75}
For all closed subsets $C$ of $A$ and all $n\in\omega$,
if $a$, $b$ are two distinct elements of $A$
such that $a\in A_{n+1}\setminus(A_n\cup C)$ and $b\in A_{n+1}\cup C$,
then there exists a permutation $\pi\in\fixg(C\cup A_n\cup\{b\})$ such that $\pi(a)\neq a$.
\end{lemma}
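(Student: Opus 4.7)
The plan is to construct a permutation $h\in\mathcal{G}_{n+1}$ that moves $a$ while fixing the relevant part of $C\cup A_n\cup\{b\}$ at level $n+1$, and then extend $h$ to the desired $\pi\in\mathcal{G}$ via Lemma~\ref{sh74}. First I would write $a=(n,u,i)$ for some $u\in\calsc(A_n)$ and $i<3$, using that $a\in A_{n+1}\setminus A_n$. Because $C$ is closed and $a\notin C$, closedness guarantees that none of the three triples $(n,u,0),(n,u,1),(n,u,2)$ belongs to $C$, which gives complete freedom to permute them without violating the fixing condition on $C$.

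Next I would define $h$ on $A_{n+1}$ by setting $h\upharpoonright A_n=\mathrm{id}_{A_n}$, and for each $u'\in\calsc(A_n)$ choosing a permutation $p_{u'}$ of $\{0,1,2\}$ so that $h(n,u',j)=(n,u',p_{u'}(j))$. For every $u'\neq u$ I would take $p_{u'}=\mathrm{id}$. The definition of $p_u$ splits into cases: if $b=(n,u,j)$ for some $j\neq i$, let $k$ be the unique element of $\{0,1,2\}\setminus\{i,j\}$ and take $p_u$ to be the transposition swapping $i$ and $k$ (so $p_u(i)\neq i$ but $p_u(j)=j$); otherwise let $p_u$ be any non-identity permutation of $\{0,1,2\}$ that moves $i$, for instance a 3-cycle. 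Taking $g=\mathrm{id}_{A_n}\in\mathcal{G}_n$ in the definition of $\mathcal{G}_{n+1}$ gives $g\circ u'\circ g^{-1}=u'$ for every $u'$, confirming that $h\in\mathcal{G}_{n+1}$.

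To finish, observe that $C\cup A_n$ is closed (a union of two closed sets) and that $h$ fixes $(C\cup A_n)\cap A_{n+1}$ pointwise: $A_n$ is fixed by construction, and any triple $(n,u',j')\in C$ must have $u'\neq u$ (else closedness would force $a\in C$), so $p_{u'}=\mathrm{id}$ fixes it. Applying Lemma~\ref{sh74} with $m=n+1$ then yields $\pi\in\fixg(C\cup A_n)$ extending $h$. This $\pi$ satisfies $\pi(a)=h(a)=(n,u,p_u(i))\neq a$, and fixes $b$ either because $b\in C$ or because $b\in A_{n+1}$ and $h$ was designed to fix $b$ in that case. The main obstacle I anticipate is precisely the subcase $b=(n,u,j)$ with $j\neq i$: one cannot simply take the closure of $C\cup A_n\cup\{b\}$ and invoke Lemma~\ref{sh74} on it, because closedness would drag $a$ into that closure and any $\pi$ fixing it pointwise would then also fix $a$. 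This forces the delicate choice of $p_u$ above, which exploits the existence of the third index $k$ at level $n+1$ above $u$ in order to move $a$ while simultaneously fixing $b$.
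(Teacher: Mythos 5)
Your proof is correct and follows essentially the same route as the paper's: write $a=(n,u,i)$, permute only the third coordinates of the triples above $u$ (using the spare third index to move $a$ while fixing $b$ when $b$ is a sibling triple), and extend via Lemma~\ref{sh74}. The paper merely packages your case split more compactly, taking $l$ least with $(n,u,l)\notin\{a,b\}$ and using the transposition $(i;l)_3$, and applies Lemma~\ref{sh74} to $C$ alone (the pointwise fixing of $A_n$ then being inherited from the level-$(n+1)$ permutation) rather than to the closed set $C\cup A_n$.
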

\begin{proof}
Let $a=(n,t,j)$, where $t\in\calsc(A_n)$ and $j<3$.
Let $l<3$ be the least natural number such that $(n,t,l)\notin\{a,b\}$ and let $p=(j;l)_3$.
Since $a\notin C$ and $C$ is closed, $(n,t,l)\notin C$.
Let $g$ be the permutation of $A_{n+1}$ such that $g$ fixes $A_n$ pointwise
and such that for all $u\in\calsc(A_n)$ and all $i<3$,
$g(n,u,i)=(n,u,p(i))$, if $u=t$, and $g(n,u,i)=(n,u,i)$, otherwise.
Then $g\in\mathcal{G}_{n+1}$ fixes $A_{n+1}\setminus\{a,(n,t,l)\}$ pointwise.
By Lemma~\ref{sh74}, $g$ extends to some $\pi\in\fixg(C)$.
Then $\pi\in\fixg(C\cup A_n\cup\{b\})$ and $\pi(a)=(n,t,l)\neq a$.
\end{proof}

\begin{lemma}\label{sh76}
In $\mathcal{V}_\mathrm{S}$, $\cals(A)=\{u\in\cals(A)\mid\mov(u)\in\mathcal{I}\}$.
\end{lemma}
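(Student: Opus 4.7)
The plan is to show that any $u \in \cals(A)$ lying in $\mathcal{V}_{\mathrm{S}}$ satisfies $\mov(u) \subseteq C$ for some $C \in \mathcal{I}$, which immediately gives $\mov(u) \in \mathcal{I}$. Take any support $B \in \mathcal{I}$ of $u$ and replace it by its closure $C$: this $C$ remains a countable subset of some $A_N$, is closed, and still supports $u$ (since $\fixg(C) \subseteq \fixg(B)$). Suppose towards a contradiction that some $a \in \mov(u) \setminus C$ exists, and set $b := u(a) \neq a$. The entire proof reduces to producing a single $\pi \in \fixg(C)$ with $\pi(a) \neq a$ and $\pi(b) = b$: for then the commutation relation $\pi \circ u = u \circ \pi$ forced by $\pi \in \symg(u)$ yields $b = \pi(b) = \pi(u(a)) = u(\pi(a))$, and the injectivity of $u$ makes $\pi(a) = a$, contradicting $\pi(a) \neq a$.

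Step~1 (locating $b$). Observe first that every $\pi \in \fixg(C \cup \{a\})$ fixes $b$: since $\pi(a) = a$ and $\pi \circ u = u \circ \pi$, we get $\pi(b) = \pi(u(a)) = u(\pi(a)) = u(a) = b$. Let $C^{\ast}$ denote the closure of $C \cup \{a\}$, which still belongs to $\mathcal{I}$. Were $b$ outside $C^{\ast}$, one could build a permutation in $\fixg(C^{\ast}) \subseteq \fixg(C \cup \{a\})$ that moves $b$ (by a transposition in $A_0 \setminus C^{\ast}$ extended via Lemma~\ref{sh74} when $b \in A_0$, and by Lemma~\ref{sh75} playing the role of its ``$a$'' for $b$ and any second argument in $A_{n+1} \cup C^{\ast}$ distinct from $b$ when $b \in A_{n+1} \setminus A_n$), contradicting the previous sentence. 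Hence $b \in C^{\ast}$; since $C$ is already closed, $C^{\ast}$ equals $C$ together with the closure $\overline{\{a\}}$ of the singleton $\{a\}$.

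Step~2 (constructing $\pi$). If $b \in C$, then every $\pi \in \fixg(C)$ already fixes $b$, so it suffices to move $a$: use a transposition in $A_0$ extended by Lemma~\ref{sh74} if $a \in A_0$, or apply Lemma~\ref{sh75} directly if $a$ sits at a positive level. Otherwise $b \in \overline{\{a\}} \setminus C$. The case $a \in A_0$ is impossible now, since then $\overline{\{a\}} = \{a\}$ would force $b = a$; so $a = (n, v, i) \in A_{n+1} \setminus A_n$ for some $n \geq 0$. The closure $\overline{\{a\}}$ is contained in $\{(n, v, 0), (n, v, 1), (n, v, 2)\} \cup A_n$, so $b$ is either $(n, v, j)$ for some $j \in \{0, 1, 2\} \setminus \{i\}$, or else an element of $A_n$. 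Pick $k \in \{0, 1, 2\} \setminus \{i\}$ with the extra requirement $k \neq j$ in the first subcase (possible because three indices are available), and define $g \in \mathcal{G}_{n+1}$ to be the identity on $A_n$, to swap $(n, v, i)$ with $(n, v, k)$, and to fix every other element of $A_{n+1} \setminus A_n$. Closedness of $C$ together with $a \notin C$ forces $(n, v, k) \notin C$, so $g$ fixes $C \cap A_{n+1}$; apply Lemma~\ref{sh74} to extend $g$ to some $\pi \in \fixg(C)$. Then $\pi(a) = (n, v, k) \neq a$ while $\pi(b) = g(b) = b$ in either subcase (the swap never touches $b$).

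The main obstacle is Step~1, in which an abstract group-theoretic invariance — every member of $\fixg(C \cup \{a\})$ fixes $b$ — must be converted into a concrete structural statement, namely that $b$ lies in the closure of $C \cup \{a\}$ inside $A$. Once that is done, the rest is a neat exploitation of the three-copy design of the atoms of $\mathcal{V}_{\mathrm{S}}$: the three labels $i \in \{0, 1, 2\}$ attached to each permutation $v$ provide just enough slack to move $a$ while pinning $b$, and Lemma~\ref{sh74} then propagates this local construction to a genuine element of $\fixg(C) \subseteq \mathcal{G}$.
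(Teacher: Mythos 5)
Your proof is correct. It rests on the same technical devices as the paper's own argument: Lemma~\ref{sh74} for extending a level-$m$ automorphism fixing $C\cap A_m$ to an element of $\fixg(C)$, Lemma~\ref{sh75} for moving a single atom of $A_{n+1}\setminus(A_n\cup C)$ while fixing $C\cup A_n$ and one further point, and the observation that any $\pi\in\fixg(B)$ which fixes one of $a$, $u(a)$ while moving the other already moves $u$. The difference is organizational. The paper runs a flat, symmetric three-case analysis on the levels of $a$ and $b=u(a)$, and in each case moves whichever of the two is conveniently placed. You instead first prove the structural statement that $b$ must lie in the closure of $C\cup\{a\}$ (your Step~1), which rules out in one stroke all configurations where $b$ sits outside $C\cup\overline{\{a\}}$ (including the paper's Case~3, where $b$ lives at a strictly higher level than $a$), and then always move $a$ while pinning $b$. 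Your Step~2 is sound: the computation $\overline{\{a\}}\subseteq\{(n,v,j)\mid j<3\}\cup A_n$ is correct, the sibling swap lies in $\mathcal{G}_{n+1}$ and fixes $C\cap A_{n+1}$ because closedness of $C$ together with $a\notin C$ forces $(n,v,k)\notin C$, and Lemma~\ref{sh74} propagates it to $\fixg(C)$. Note that in that final subcase you are rebuilding by hand exactly the permutation constructed inside the proof of Lemma~\ref{sh75}; since $b\in\overline{\{a\}}\subseteq A_{n+1}\cup C$ you could have cited that lemma directly. What your route buys is the cleaner intermediate principle that the ``support'' of $u(a)$ is contained in the closure of a support of $u$ together with $\{a\}$; what it costs is the extra closure computation, which the paper's symmetric case split avoids.
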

\begin{proof}
Let $u\in\mathcal{V}_\mathrm{S}$ be a permutation of $A$,
and let $B\in\mathcal{I}$ be a support of $u$.
Let $C$ be the closure of $B$. Then $C\in\mathcal{I}$.
Assume towards a contradiction that there exists an $a\in\mov(u)\setminus C$. Let $b=u(a)\neq a$.

If $a\in A_0$ and $b\in A_0\cup C$,
then take an arbitrary $c\in A_0\setminus(C\cup\{a,b\})$ and let $g=(a;c)_{A_0}$.
By Lemma~\ref{sh74}, $g$ extends to some $\pi\in\fixg(C)$.
Then $\pi(a)=c\neq a$ and $\pi(b)=b$.
Hence $\pi$ moves $u$, contradicting the assumption that $B$ is a support of $u$.

If there is an $n\in\omega$ such that $a\in A_{n+1}\setminus A_n$ and $b\in A_{n+1}\cup C$,
then by Lemma~\ref{sh75}, there is a permutation $\sigma\in\fixg(C\cup\{b\})$ such that $\sigma(a)\neq a$.
Hence $\sigma$ moves $u$, contradicting the assumption that $B$ is a support of $u$.

Thus, $b\notin C$ and there exists an $m\in\omega$
such that $b\in A_{m+1}\setminus A_m$ and $a\in A_m$.
Again by Lemma~\ref{sh75}, there is a permutation $\tau\in\fixg(C\cup\{a\})$ such that $\tau(b)\neq b$.
Hence $\tau$ moves $u$, which is also a contradiction.

Therefore, we have $\mov(u)\subseteq C$, and hence $\mov(u)\in\mathcal{I}$.
\end{proof}

For all $n\in\omega$, since $\pi[A_n]=A_n$ for any $\pi\in\mathcal{G}$,
$A_n\in\mathcal{V}_\mathrm{S}$, and therefore
the function that maps each $n\in\omega$ to $A_n$ belongs to $\mathcal{V}_\mathrm{S}$.
For every $B\in\mathcal{I}$, let $k_B$ be the least $n\in\omega$ such that $B\subseteq A_n$.
Since for all $B\in\mathcal{I}$ and all $\pi\in\mathcal{G}$, $k_B=k_{\pi[B]}$,
the function that maps each $B\in\mathcal{I}$ to $k_B$ belongs to $\mathcal{V}_\mathrm{S}$.

\begin{lemma}\label{sh77}
Let $A$ be the set of atoms of $\mathcal{V}_\mathrm{S}$ and let $\mathfrak{a}=|A|$.
In $\mathcal{V}_\mathrm{S}$,
\begin{enumerate}[label=\upshape(\roman*)]
  \item $\mathfrak{a}$ is Dedekind infinite;
  \item $\mathfrak{a}!\leqslant[\mathfrak{a}]^3$ and $\mathfrak{a}!\leqslant^\ast\mathfrak{a}$;
  \item $[\mathfrak{a}]^3\nleqslant_{\mathrm{dfto}}\mathfrak{a}!$.
\end{enumerate}
\end{lemma}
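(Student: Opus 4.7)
The plan is to address the three parts in sequence, each exploiting the three-copy structure built into the atoms.

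For \textup{(i)}, any denumerable $\{d_n\mid n\in\omega\}\subseteq A_0$ yields the enumeration $n\mapsto d_n$, whose support $\{d_n\mid n\in\omega\}$ is a countable subset of $A_0$ and hence belongs to $\mathcal{I}$; the enumeration therefore lies in $\mathcal{V}_\mathrm{S}$ and injects $\omega$ into $A$.

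For \textup{(ii)}, given $v\in\cals(A)$ in $\mathcal{V}_\mathrm{S}$, Lemma~\ref{sh76} furnishes a least $n_v$ with $\mov(v)\subseteq A_{n_v}$, so $v\upharpoonright A_{n_v}\in\calsc(A_{n_v})$. Define $F(v)=\{(n_v,v\upharpoonright A_{n_v},i)\mid i<3\}$. One recovers $v$ from $F(v)$, so $F$ is injective; and every $\pi\in\mathcal{G}$ preserves the level stratification and, by the recursion defining $\mathcal{G}_{n+1}$, permutes the three copies attached to each $u$ as a block, yielding $F(\pi(v))=\pi(F(v))$, so $F\in\mathcal{V}_\mathrm{S}$. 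This proves $\mathfrak{a}!\leqslant[\mathfrak{a}]^3$. Dually, the map sending each $(n,u,i)\in A\setminus A_0$ to the extension of $u$ to a permutation of $A$ by the identity on $A\setminus A_n$, and each $a\in A_0$ to $\mathrm{id}_A$, is a symmetric surjection from $A$ onto $\cals(A)$, giving $\mathfrak{a}!\leqslant^\ast\mathfrak{a}$.

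For \textup{(iii)}, the main assertion, suppose toward a contradiction that $f\in\mathcal{V}_\mathrm{S}$ is a Dedekind finite to one map from $[A]^3$ into $\cals(A)$ with support $B\in\mathcal{I}$ of closure $C$. I will show that $f$ is constant on $[A_0\setminus C]^3$ and that this set is Dedekind infinite in $\mathcal{V}_\mathrm{S}$. Fix $T=\{x_1,x_2,x_3\}\in[A_0\setminus C]^3$; since $T\subseteq A_0$ is already closed, $B\cup T$ is a support of $f(T)$ with closure $C\cup T$, so by the argument of Lemma~\ref{sh76} we obtain $\mov(f(T))\subseteq C\cup T$. By Lemma~\ref{sh74}, every permutation of $T$ extended by the identity on $A_0\setminus T$ lifts to some $\pi\in\fixg(C)$ that fixes $T$ setwise, forcing $\pi(f(T))=f(T)$. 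A case analysis on whether $f(T)(x_i)$ lies in $C$ or in $T$---the option $f(T)(x_i)\in C$ is excluded, because it would force every permutation of $T$ to fix $x_i$, which is impossible in a three-element set---shows $f(T)$ maps $T$ into itself, so $f(T)\upharpoonright T$ is a permutation of $T$ commuting with the entire symmetric group on $T$. Since the centre of the symmetric group on a three-element set is trivial, $f(T)\upharpoonright T=\mathrm{id}_T$, whence $\mov(f(T))\subseteq C$. A second application of Lemma~\ref{sh74} shows that $\fixg(C)$ acts transitively on $[A_0\setminus C]^3$, and since every $\pi\in\fixg(C)$ fixes $C$ pointwise and hence conjugates every permutation supported in $C$ to itself, $f(T)$ does not depend on $T$; call this common value $v$. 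Finally, any denumerable $\{d_k\}_{k\in\omega}\subseteq A_0\setminus C$ yields the denumerable symmetric family $\{\{d_{3k},d_{3k+1},d_{3k+2}\}\mid k\in\omega\}\subseteq f^{-1}[\{v\}]$, contradicting the Dedekind-finite-to-one hypothesis. The principal obstacle is the centralizer computation: one must rule out $f(T)$ sending any $x_i$ into $C$ before invoking the triviality of the centre of the symmetric group on $T$.
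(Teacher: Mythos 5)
Your proof is correct and follows essentially the same route as the paper's: the injection $u\mapsto\{(n_u,u\upharpoonright A_{n_u},i)\mid i<3\}$ for (ii), and for (iii) the reduction to $\mov(f(T))\subseteq C$, the transitivity of $\fixg(C)$ on $[A_0\setminus C]^3$ via Lemma~\ref{sh74}, and the Dedekind infiniteness of $[A_0\setminus C]^3$. Your local variations -- obtaining $\mov(f(T))\subseteq C\cup T$ by applying the support argument of Lemma~\ref{sh76} to the support $B\cup T$, ruling out $\mov(f(T))\cap T\neq\emptyset$ via the triviality of the centre of the symmetric group on three letters instead of the paper's explicit transpositions, and exhibiting a direct symmetric surjection for $\mathfrak{a}!\leqslant^\ast\mathfrak{a}$ rather than reading it off the pairwise disjointness of the triples -- are all sound and merely cosmetic.
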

\begin{proof}
(i) Let $q$ be an injection from $\omega$ into $A_0$.
Then $\ran(q)\in\mathcal{I}$, which implies that $q\in\mathcal{V}_\mathrm{S}$.
Hence, in $\mathcal{V}_\mathrm{S}$, $A$ is Dedekind infinite.

(ii) Let $\Phi$ be the function defined on $\{u\in\cals(A)\mid\mov(u)\in\mathcal{I}\}$ given by
\[
\Phi(u)=\bigl\{(k_{\mov(u)},u\upharpoonright A_{k_{\mov(u)}},i)\bigm|i<3\bigr\}.
\]
Then $\Phi$ is an injection from $\{u\in\cals(A)\mid\mov(u)\in\mathcal{I}\}$ into $[A]^3$
and the sets in the range of $\Phi$ are pairwise disjoint.
It is easy to verify that $\Phi\in\mathcal{V}_\mathrm{S}$.
In $\mathcal{V}_\mathrm{S}$, by Lemma~\ref{sh76},
$\cals(A)=\{u\in\cals(A)\mid\mov(u)\in\mathcal{I}\}$,
and thus $\Phi$ is an injection from $\cals(A)$ into $[A]^3$,
which implies that $\mathfrak{a}!\leqslant[\mathfrak{a}]^3$.
Since the sets in the range of $\Phi$ are pairwise disjoint,
we have $\mathfrak{a}!\leqslant^\ast\mathfrak{a}$.

(iii) Assume towards a contradiction that there exists a function $f\in\mathcal{V}_\mathrm{S}$
from $[A]^3$ into $\cals(A)$ such that
\begin{equation}\label{sh78}
\text{in $\mathcal{V}_\mathrm{S}$, $f$ is a Dedekind finite to one map.}
\end{equation}
Let $B\in\mathcal{I}$ be a support of $f$,
and let $C$ be the closure of $B$. Then $C\in\mathcal{I}$.

Let us now fix three distinct elements $a$, $b$, $c$ of
$A_0\setminus C$ and consider the permutation $u=f(\{a,b,c\})$.
We claim that
\begin{equation}\label{sh79}
\mov(u)\subseteq C\cup A_0.
\end{equation}
Assume towards a contradiction that there exists a $d\in\mov(u)\setminus(C\cup A_0)$.

If there is an $n\in\omega$ such that $d\in A_{n+1}\setminus A_n$ and $u(d)\in A_{n+1}\cup C$,
then by Lemma~\ref{sh75}, there exists a permutation
$\pi_0\in\fixg(C\cup A_0\cup\{u(d)\})$ such that $\pi_0(d)\neq d$.
Hence $\pi_0$ fixes $\{a,b,c\}$ but moves $u$,
contradicting the assumption that $B$ is a support of $f$.

Thus, $u(d)\notin C$ and there exists an $m\in\omega$
such that $u(d)\in A_{m+1}\setminus A_m$ and $d\in A_m$.
By Lemma~\ref{sh75}, there is a permutation
$\pi_1\in\fixg(C\cup A_0\cup\{d\})$ such that $\pi_1(u(d))\neq u(d)$.
Hence $\pi_1$ fixes $\{a,b,c\}$ but moves $u$,
contradicting again the assumption that $B$ is a support of $f$.
Thus \eqref{sh79} is proved.

If there exists an $e\in\mov(u)\setminus(C\cup\{a,b,c\})$,
then $u(e)\in\mov(u)$, and therefore, by \eqref{sh79},
$e\in A_0\setminus(C\cup\{a,b,c\})$ and $u(e)\in C\cup A_0$.
Take an arbitrary $v\in A_0\setminus(C\cup\{a,b,c,e,u(e)\})$ and let $g_0=(e;v)_{A_0}$.
By Lemma~\ref{sh74}, $g_0$ extends to a permutation $\sigma_0\in\fixg(C)$.
Then $\sigma_0\in\fixg(C\cup\{a,b,c\})$, $\sigma_0(e)=v\neq e$, and $\sigma_0(u(e))=u(e)$.
Hence $\sigma_0$ fixes $\{a,b,c\}$ but moves $u$,
contradicting that $B$ is a support of $f$.
Therefore $\mov(u)\subseteq C\cup\{a,b,c\}$.

If there exists a $z\in\mov(u)\cap\{a,b,c\}$,
then $u(z)\in\mov(u)$, and hence, by \eqref{sh79}, $u(z)\in C\cup A_0$.
Take a $w\in\{a,b,c\}\setminus\{z,u(z)\}$ and let $g_1=(z;w)_{A_0}$.
Again by Lemma~\ref{sh74}, $g_1$ extends to some $\sigma_1\in\fixg(C)$.
Then $\sigma_1(z)=w\neq z$ and $\sigma_1(u(z))=u(z)$.
Hence $\sigma_1(\{a,b,c\})=\{a,b,c\}$ but $\sigma_1(u)\neq u$, which is also a contradiction.
Therefore $\mov(u)\subseteq C$. Thus we have
\begin{equation}\label{sh80}
\forall t\in[A_0\setminus C]^3\bigl(\mov(f(t))\subseteq C\bigr).
\end{equation}

For any $t_0$, $t_1\in[A_0\setminus C]^3$,
it is easy to see that there exists an $h\in\mathcal{G}_0$ such that
$h$ fixes $C\cap A_0$ pointwise and such that $h[t_0]=t_1$.
By Lemma~\ref{sh74}, $h$ extends to a permutation $\tau\in\fixg(C)$.
Then $\tau(f)=f$ and $\tau(t_0)=t_1$, and hence, by \eqref{sh80}, $f(t_0)=f(t_1)$.
Therefore, $f$ maps all elements of $[A_0\setminus C]^3$ to the same element of $\cals(A)$.
Since $A_0$ is uncountable and $C$ is countable,
there exists an injection $p$ from $\omega$ into $A_0\setminus C$.
Then $\ran(p)\in\mathcal{I}$, which implies that $p\in\mathcal{V}_\mathrm{S}$.
Thus, in $\mathcal{V}_\mathrm{S}$, $A_0\setminus C$ is Dedekind infinite,
and hence $[A_0\setminus C]^3$ is Dedekind infinite,
contradicting \eqref{sh78}.
\end{proof}

\begin{theorem}\label{sh81}
The following statement is consistent with $\mathsf{ZF}$:
There exists a Dedekind infinite cardinal $\mathfrak{a}$ such that $\mathfrak{a}!<[\mathfrak{a}]^3$,
$[\mathfrak{a}]^3\nleqslant_{\mathrm{dfto}}\mathfrak{a}!$, and $\mathfrak{a}!\leqslant^\ast\mathfrak{a}$.
\end{theorem}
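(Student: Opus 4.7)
The plan is straightforward: all the substantive work has already been done in Lemma~\ref{sh77}, and the theorem follows by assembling its conclusions together with the Jech-Sochor transfer. First I would let $\mathfrak{a}=|A|$ inside $\mathcal{V}_\mathrm{S}$, where $A$ is the set of atoms of the Shelah-type model constructed above. By Lemma~\ref{sh77}(i), $\mathfrak{a}$ is Dedekind infinite in $\mathcal{V}_\mathrm{S}$. By Lemma~\ref{sh77}(ii), $\mathfrak{a}!\leqslant[\mathfrak{a}]^3$ and $\mathfrak{a}!\leqslant^\ast\mathfrak{a}$ both hold in $\mathcal{V}_\mathrm{S}$.

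For the strict inequality $\mathfrak{a}!<[\mathfrak{a}]^3$, I would observe that every injection is in particular a finite-to-one map, hence a Dedekind finite to one map, so that $\mathfrak{b}\leqslant\mathfrak{c}$ always implies $\mathfrak{b}\leqslant_{\mathrm{dfto}}\mathfrak{c}$. Thus Lemma~\ref{sh77}(iii), which says $[\mathfrak{a}]^3\nleqslant_{\mathrm{dfto}}\mathfrak{a}!$, immediately upgrades to $[\mathfrak{a}]^3\nleqslant\mathfrak{a}!$, and together with $\mathfrak{a}!\leqslant[\mathfrak{a}]^3$ this yields $\mathfrak{a}!<[\mathfrak{a}]^3$ in $\mathcal{V}_\mathrm{S}$. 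At this stage, all four desired assertions about $\mathfrak{a}$ hold simultaneously in the permutation model.

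Finally, as explained in the preliminary discussion of permutation models in this section, although $\mathcal{V}_\mathrm{S}$ is a model of $\mathsf{ZFA}$ rather than $\mathsf{ZF}$, each of the four assertions ($\mathfrak{a}$ is Dedekind infinite, $\mathfrak{a}!\leqslant[\mathfrak{a}]^3$, $[\mathfrak{a}]^3\nleqslant_{\mathrm{dfto}}\mathfrak{a}!$, $\mathfrak{a}!\leqslant^\ast\mathfrak{a}$) is a statement about sets of bounded rank above $A$, so it is preserved under the Jech-Sochor embedding. Applying the Jech-Sochor theorem to a sufficiently large initial segment of $\mathcal{V}_\mathrm{S}$ therefore produces a model of $\mathsf{ZF}$ witnessing the conjunction of the three required properties, which completes the proof.

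I do not anticipate any real obstacle here, since the entire construction was carried out in Lemma~\ref{sh77}; the only point requiring a moment's care is the conversion of $\nleqslant_{\mathrm{dfto}}$ into $\nleqslant$, which is immediate from the remark that injections are Dedekind finite to one.
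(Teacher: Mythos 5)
Your proposal is correct and follows essentially the same route as the paper: cite Lemma~\ref{sh77} for all four facts in $\mathcal{V}_\mathrm{S}$, note that an injection is in particular a Dedekind finite to one map so that $[\mathfrak{a}]^3\nleqslant_{\mathrm{dfto}}\mathfrak{a}!$ yields $[\mathfrak{a}]^3\nleqslant\mathfrak{a}!$ and hence $\mathfrak{a}!<[\mathfrak{a}]^3$, and transfer to $\mathsf{ZF}$ via the Jech-Sochor theorem. No gaps.
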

\begin{proof}
Let $A$ be the set of atoms of $\mathcal{V}_\mathrm{S}$ and let $\mathfrak{a}=|A|$.
Then by Lemma~\ref{sh77}, $\mathfrak{a}$ is Dedekind infinite,
$\mathfrak{a}!\leqslant[\mathfrak{a}]^3$, $\mathfrak{a}!\leqslant^\ast\mathfrak{a}$,
and $[\mathfrak{a}]^3\nleqslant_{\mathrm{dfto}}\mathfrak{a}!$.
Since $[\mathfrak{a}]^3\nleqslant_{\mathrm{dfto}}\mathfrak{a}!$,
we have that $[\mathfrak{a}]^3\nleqslant\mathfrak{a}!$,
and therefore $\mathfrak{a}!<[\mathfrak{a}]^3$.
\end{proof}

\section{A new permutation model}
In this section, we construct a permutation model in which
there exists an infinite cardinal $\mathfrak{a}$ such that
$\mathfrak{a}!\leqslant_{\mathrm{fto}}\mathfrak{a}$.
By Corollary~\ref{sh40} and Corollary~\ref{sh36},
such an infinite cardinal $\mathfrak{a}$ must be such that
$\mathfrak{a}!$ is Dedekind finite and such that
any permutation of a set of cardinality $\mathfrak{a}$ must fix at least one point.
Also, by Fact~\ref{sh10} and Fact~\ref{sh03},
such an infinite cardinal must be power Dedekind infinite.
The strategy of our construction is as follows:

We construct step-by-step an infinite lattice $A$ with a least element
such that every initial segment determined by an element of $A$ is finite.
The permutation model will then be determined by
the group of all automorphisms of $A$ and the normal ideal $\fin(A)$.
The lattice $A$ is constructed in a way such that it has enough automorphisms
(but not too much) to guarantee that every permutation of $A$
which has a finite support moves only finitely many elements.
Since the function that maps each finite subset of $A$ to
its least upper bound is a finite-to-one map from $\fin(A)$ into $A$,
by Fact~\ref{sh01}, it holds in the permutation model that
$\cals(A)=\calsf(A)\preccurlyeq_{\mathrm{fto}}\fin(A)\preccurlyeq_{\mathrm{fto}}A$.

In what follows, we consider a covering condition for partially ordered sets,
and then define the notion of a building block, which will be used in the construction of $A$.
Finally, we prove that $A$ has the desired properties.

\subsection{A covering condition}
Let $\langle P,<\rangle$ be a partially ordered set;
that is, $<$ is irreflexive and transitive.
For all $a$, $b\in P$, $a\leqslant b$ means that $a<b$ or $a=b$,
the \emph{initial segment} determined by $b$ is the set $\{c\in P\mid c\leqslant b\}$,
and the \emph{(closed) interval} from $a$ to $b$ is the set
$[a,b]=\{c\in P\mid a\leqslant c\leqslant b\}$.
We say that $\langle P,<\rangle$ is \emph{locally finite}
if for all $a$, $b\in P$, $[a,b]$ is finite.
Notice that if $\langle P,<\rangle$ has a least element,
then $\langle P,<\rangle$ is locally finite if and only if
every initial segment determined by an element of $P$ is finite.
For $a$, $b\in P$, we say that $a$ is \emph{covered} by $b$ (or $b$ \emph{covers} $a$),
denoted by $a\lessdot b$, if $a<b$ but $a<c<b$ for no $c\in P$.
For $b\in P$, we write $\cov(b)$ for the set $\{c\in P\mid c\lessdot b\}$
(i.e., the elements of $P$ covered by $b$).
A \emph{saturated chain} in an interval $[a,b]$ is a sequence $t\in\seq(P)$ of
length (i.e., the domain of $t$) $n>0$ such that
$t(0)=a$, $t(n-1)=b$, and $t(i)\lessdot t(i+1)$ for any $i<n-1$.

For all subsets $M$ of $P$, the least upper bound and the greatest lower bound of $M$,
if they exist, are denoted by $\sup M$ and $\inf M$, respectively.
Note that if $\langle P,<\rangle$ has a least element,
then the least upper bound of $\emptyset$ exists
and is the least element of $\langle P,<\rangle$.
We say that $\langle P,<\rangle$ is a \emph{lattice} if
any two elements of $P$ have a least upper bound and a greatest lower bound.
Note that if $\langle P,<\rangle$ is a lattice,
then any non-void finite subset $M$ of $P$ has
a least upper bound and a greatest lower bound.

\begin{fact}\label{sh82}
Let $\langle P,<\rangle$ be a locally finite lattice with a least element and let $\mathfrak{a}=|P|$.
Then $\fin(\mathfrak{a})\leqslant_{\mathrm{fto}}\mathfrak{a}$.
\end{fact}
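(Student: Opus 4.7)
The plan is to exhibit the obvious candidate and verify it works. Since $\langle P,<\rangle$ is a lattice with a least element, every finite subset $M\subseteq P$ has a supremum: $\sup\emptyset$ is the least element of $P$, and the supremum of any non-empty finite set exists by induction using binary suprema. So I would define
\[
\Phi:\fin(P)\to P,\qquad \Phi(M)=\sup M,
\]
and claim that $\Phi$ is a finite-to-one map, which (taking $|P|=\mathfrak{a}$) yields $\fin(\mathfrak{a})\leqslant_{\mathrm{fto}}\mathfrak{a}$.

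To verify this, fix $b\in P$ and consider $\Phi^{-1}[\{b\}]$. If $\Phi(M)=b$ then every element of $M$ is $\leqslant b$, so $M$ is a subset of the initial segment $I_b=\{c\in P\mid c\leqslant b\}$. Since $\langle P,<\rangle$ has a least element $\bot$ and is locally finite, $I_b=[\bot,b]$ is finite; hence $\Phi^{-1}[\{b\}]\subseteq\wp(I_b)$ is finite as well.

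I do not expect a genuine obstacle here. The only things to check are that $\Phi$ is well-defined as a function (which uses both the lattice hypothesis and the presence of a least element, to handle the empty set) and that local finiteness plus a least element force every initial segment to be finite; both are immediate from the definitions given just above the statement.
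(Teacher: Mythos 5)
Your proposal is correct and is exactly the paper's argument: the paper also takes the map $M\mapsto\sup M$ from $\fin(P)$ into $P$ and observes it is finite-to-one. Your verification that each fibre lies inside the finite initial segment $[\bot,b]$ is the (implicit) justification the paper leaves to the reader.
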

\begin{proof}
The function that maps each $M\in\fin(P)$ to $\sup M$
is a finite-to-one map from $\fin(P)$ into $P$,
and hence $\fin(\mathfrak{a})\leqslant_{\mathrm{fto}}\mathfrak{a}$.
\end{proof}

\begin{definition}
A partially ordered set $\langle P,<\rangle$
satisfies the \emph{finitary lower covering condition}
if for all $M\in\fin(P)$ containing at least two elements,
\begin{equation}\label{sh83}
\exists b\in P\forall a\in M(a\lessdot b)\rightarrow\exists c\in P\forall a\in M(c\lessdot a).
\end{equation}
\end{definition}

\begin{xrem}
Let $\langle P,<\rangle$ be a lattice.
Then the statement that \eqref{sh83} holds for all $M\in[P]^2$ is equivalent to
the condition ($\xi'$) of \cite[p.~14]{Birkhoff1967}, which is in turn equivalent to
the usual \emph{lower covering condition} (cf.~\cite[p.~213]{Gratzer1998})
if $\langle P,<\rangle$ is locally finite. Locally finite lattices
satisfying the lower covering condition are often called \emph{Birkhoff lattices}.
\end{xrem}

\begin{lemma}\label{sh84}
Let $\langle P,<\rangle$ be a locally finite partially ordered set with a least element.
If $\langle P,<\rangle$ satisfies the finitary lower covering condition,
then the Jordan-Dedekind chain condition holds in $\langle P,<\rangle$;
that is, for any $a$, $b\in P$ such that $a\leqslant b$,
all saturated chains in $[a,b]$ have the same length.
\end{lemma}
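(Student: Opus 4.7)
My strategy is to reduce the Jordan-Dedekind condition to showing that a height function $h:P\to\omega$ is well-defined. Let $0$ denote the least element of $\langle P,<\rangle$, and for each $b\in P$ set $h(b)$ to be the common length of saturated chains from $0$ to $b$ (once this common length is shown to exist). Granted the well-definedness of $h$, the general case of Jordan-Dedekind falls out easily: if $a\lessdot b$ in $P$, then appending $b$ to any saturated chain from $0$ to $a$ yields a saturated chain from $0$ to $b$, so $h(b)=h(a)+1$; then, reading off an arbitrary saturated chain $t=(t(0),\dots,t(n-1))$ in $[a,b]$ cover by cover forces $n=h(b)-h(a)+1$, which depends only on $a$ and $b$.

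\textbf{The main step.} To show that $h$ is well-defined, I proceed by strong induction on $|[0,b]|$, which is finite by local finiteness. The case $b=0$ is trivial. For $b\neq 0$, consider two saturated chains $s=(s_0,\dots,s_{m-1})$ and $t=(t_0,\dots,t_{n-1})$ from $0$ to $b$; note that $s_{m-2}$ and $t_{n-2}$ are both covered by $b$. If $s_{m-2}=t_{n-2}$, the induction hypothesis applied to $[0,s_{m-2}]$ (a proper subset of $[0,b]$) immediately gives $m-1=n-1$. Otherwise, applying the finitary lower covering condition to the two-element set $\{s_{m-2},t_{n-2}\}$ produces some $c\in P$ with $c\lessdot s_{m-2}$ and $c\lessdot t_{n-2}$. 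Since $0\leqslant c<b$, we have $c\in[0,b]$; a routine induction on $|[0,c]|$ using local finiteness produces a saturated chain from $0$ to $c$, of some length $k$. Appending $s_{m-2}$ gives a saturated chain from $0$ to $s_{m-2}$ of length $k+1$, and the induction hypothesis applied at $s_{m-2}$ forces $m-1=k+1$; by symmetry $n-1=k+1$, and hence $m=n$.

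\textbf{Main obstacle.} The only non-trivial point is the case $s_{m-2}\neq t_{n-2}$ above, where the finitary lower covering condition is precisely the tool needed to synchronise the two chains one step below $b$ via a common lower cover $c$. Without this hypothesis, the two chains could in principle ``bifurcate'' just below $b$ with no way to compare their remaining lengths; with it, the auxiliary chain through $c$ reduces both cases to the inductive hypothesis at a strictly smaller element. Everything else — the existence of saturated chains from $0$ to any element and the passage from well-definedness of $h$ to the full Jordan-Dedekind condition — is entirely mechanical.
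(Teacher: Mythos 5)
Your proof is correct and follows essentially the same route as the paper's: both reduce to comparing saturated chains from the least element to $b$, and, when the two chains' penultimate elements differ, invoke the finitary lower covering condition to produce a common lower cover $c$ and then compare lengths via an auxiliary saturated chain through $c$. The only difference is cosmetic — you induct on $|[0,b]|$ while the paper inducts on the chain length — and both inductions go through.
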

\begin{proof}
Let $o$ be the least element of $\langle P,<\rangle$.
Clearly, it suffices to prove that for any $b\in P$,
all saturated chains in $[o,b]$ have the same length.
Now, we prove by induction on $n>0$ that for any $b\in P$,
if there exists a saturated chain in $[o,b]$ of length $n$,
then all saturated chains in $[o,b]$ have length $n$.
The case $n=1$ is obvious. For any $b\in P$,
let $t$ be a saturated chain in $[o,b]$ of length $n+1$ (where $n>0$),
let $u$ be an arbitrary saturated chain in $[o,b]$,
and let the length of $u$ be $m+1$. It suffices to show that $m=n$.
Clearly $m\neq0$. If $t(n-1)=u(m-1)$ then by the induction hypothesis
all saturated chains in $[o,u(m-1)]$ have length $n$, and therefore $m=n$.
Otherwise, since $b=t(n)=u(m)$ covers both $t(n-1)$ and $u(m-1)$,
by \eqref{sh83}, there exists a $c\in P$ covered by both $t(n-1)$ and $u(m-1)$.
Since $o\leqslant c$, we can find a saturated chain $s$ in $[o,c]$.
By the induction hypothesis, all saturated chains in $[o,t(n-1)]$ have length $n$,
and therefore, since $t(n-1)$ covers $c$, the length of $s$ is $n-1$.
Since $u(m-1)$ covers $c$, again by the induction hypothesis,
all saturated chains in $[o,u(m-1)]$ have length $n$, and therefore $m=n$.
\end{proof}

Let $\langle P,<\rangle$ be a locally finite partially ordered set with a least element~$o$,
and assume that $\langle P,<\rangle$ satisfies the finitary lower covering condition.
By Lemma~\ref{sh84}, for any $b\in P$,
all saturated chains in the interval $[o,b]$ have the same length $n>0$;
the \emph{height} of $b$, denoted by $\hght(b)$, is defined to be $n-1$.
Notice that for all $a$, $b\in P$, $a\lessdot b$ if and only if $a<b$ and $\hght(a)+1=\hght(b)$.

Clearly, if $\langle P,<\rangle$ is a locally finite lattice with a least element,
then $\langle P,<\rangle$ satisfies the finitary lower covering condition
if and only if for all $b\in P$ such that $\cov(b)$ contains at least two elements,
\begin{equation}\label{sh85}
\forall a\in\cov(b)\bigl(\inf\cov(b)\lessdot a\bigr).
\end{equation}

\begin{lemma}\label{sh86}
Let $\langle P,<\rangle$ be a locally finite lattice with a least element.
If $\langle P,<\rangle$ satisfies the finitary lower covering condition,
then for all $a$, $b\in P$ such that $a<b$ but not $a\leqslant\inf\cov(b)$,
\begin{enumerate}[label=\upshape(\roman*)]
  \item there exists a $c\leqslant\inf\cov(b)$ such that $c\lessdot a$
        and for all $d\leqslant\inf\cov(b)$, if $d<a$ then $d\leqslant c$;\label{sh87}
  \item $\inf\cov(a)\leqslant\inf\cov(b)$;\label{sh88}
  \item there exists a unique saturated chain in $[a,b]$.\label{sh91}
\end{enumerate}
\end{lemma}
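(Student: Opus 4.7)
The plan is to establish (i), (ii), (iii) simultaneously by induction on $\hght(b)-\hght(a)$, which equals the length of any saturated chain in $[a,b]$ minus one by Lemma~\ref{sh84}. Two preliminary observations make the induction run. First, the hypothesis $a\not\leqslant\inf\cov(b)$ forces $\cov(b)$ to contain at least two elements, since otherwise $\cov(b)=\{x\}$ gives $\inf\cov(b)=x$ and any saturated chain from $a$ to $b$ forces $a\leqslant x$. Hence \eqref{sh85} applies to $b$. Second, for every $b'\in\cov(b)$ the sub-claim $\inf\cov(b')\leqslant\inf\cov(b)$ holds: \eqref{sh85} gives $\inf\cov(b)\lessdot b'$, so $\inf\cov(b)\in\cov(b')$.

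The pivotal intermediate result is that there exists a unique $b'\in\cov(b)$ with $a\leqslant b'$. Existence follows by choosing a saturated chain from $a$ to $b$ and taking its penultimate element. For uniqueness, suppose $a\leqslant b'$ and $a\leqslant b''$ with $b'\neq b''$ in $\cov(b)$. Since $b'$ and $b''$ have equal height, $b'\wedge b''<b'$; combining $\inf\cov(b)\leqslant b'\wedge b''$ with $\inf\cov(b)\lessdot b'$ yields $b'\wedge b''=\inf\cov(b)$, so $a\leqslant\inf\cov(b)$, a contradiction.

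In the base case $a\lessdot b$, take $c=\inf\cov(b)$. Since $a\in\cov(b)$, \eqref{sh85} gives $c\lessdot a$, and then $c=a\wedge\inf\cov(b)$, making (i), (ii), (iii) immediate (the unique chain being $(a,b)$, and $c\in\cov(a)$ giving $\inf\cov(a)\leqslant c$). For the inductive step, let $b'$ be the unique cover of $b$ above $a$. Since $a<b'$ and $a\not\leqslant\inf\cov(b')$ by the sub-claim, the induction hypothesis applies to $[a,b']$. Statement (iii) for $[a,b]$ now follows because every saturated chain in $[a,b]$ has penultimate element in $\cov(b)$ above $a$, hence equal to $b'$, and its restriction to $[a,b']$ is unique by the hypothesis. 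Statement (ii) follows from $\inf\cov(a)\leqslant\inf\cov(b')\leqslant\inf\cov(b)$.

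The main obstacle is (i) in the inductive step, since the candidate $c^{\ast}=a\wedge\inf\cov(b')$ supplied by the hypothesis for $[a,b']$ does not by itself control elements $d\leqslant\inf\cov(b)$ that fail to lie below $\inf\cov(b')$. The resolution is to take $c=a\wedge\inf\cov(b)$ instead; then the maximality clause is automatic from the definition of meet, and the covering relation $c\lessdot a$ is deduced by sandwiching: since $\inf\cov(b')\leqslant\inf\cov(b)$ gives $c^{\ast}\leqslant c$, and $c<a$ follows from $a\not\leqslant\inf\cov(b)$, the covering $c^{\ast}\lessdot a$ provided by the induction hypothesis forces $c=c^{\ast}$, and hence $c\lessdot a$. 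The crucial point worth flagging is that the right $c$ is defined intrinsically from $\inf\cov(b)$ rather than propagated from $b'$, with the IH-candidate $c^{\ast}$ serving only as a lower pinch to certify the covering relation.
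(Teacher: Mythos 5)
Your proof is correct, but it is organized quite differently from the paper's. The paper proves \ref{sh87} first, by an induction that moves \emph{upward} from $a$: it picks some $v$ with $a\lessdot v<b$, applies the inductive hypothesis to the pair $(v,b)$, and then invokes the finitary lower covering condition \eqref{sh83} directly on the two-element set $\{a,w\}$ (both covered by $v$) to produce $c=\inf\{a,w\}$; item \ref{sh88} is then read off from \ref{sh87}, and item \ref{sh91} is proved by a separate argument that takes two distinct saturated chains, looks at the largest index where they disagree, and derives $a\leqslant\inf\cov(b)$ via \eqref{sh85} and \ref{sh88}. You instead run a single downward induction on $\hght(b)-\hght(a)$, pivoting on the observation that there is a \emph{unique} element of $\cov(b)$ above $a$ (whose uniqueness you extract from \eqref{sh85} together with $a\nleqslant\inf\cov(b)$); this makes \ref{sh91} fall out of the induction for free rather than requiring a separate contradiction argument, and it packages \ref{sh87} as the statement $a\wedge\inf\cov(b)\lessdot a$, certified by pinching against the inductive candidate $c^{\ast}$. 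Two small points you leave implicit but which are one-line checks: that the $c^{\ast}$ furnished by the inductive hypothesis really equals $a\wedge\inf\cov(b')$ (its two defining properties force $c^{\ast}\leqslant a\wedge\inf\cov(b')\leqslant c^{\ast}$), and that $\cov(b)\neq\emptyset$ in your first observation (immediate from $a<b$ and local finiteness). Your route buys a cleaner, unified treatment of \ref{sh91}; the paper's buys a proof of \ref{sh87} that uses the covering condition only in its raw form \eqref{sh83} without first establishing the uniqueness of the cover above $a$.
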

\begin{proof}
(i) Fix an arbitrary $b\in P$.
We prove by induction on $n<\hght(b)$ that
for all $a\in P$ such that $a<b$ but not $a\leqslant\inf\cov(b)$,
if $\hght(a)=\hght(b)-n-1$, then there exists a $c\leqslant\inf\cov(b)$
such that $c\lessdot a$ and for all $d\leqslant\inf\cov(b)$, if $d<a$ then $d\leqslant c$.
If $n=0$, then $a\lessdot b$ and it suffices to take $c=\inf\cov(b)$.
Now, let $\hght(a)=\hght(b)-n-2$, where $n+1<\hght(b)$.
Let $v\in P$ be such that $v<b$ and $a\lessdot v$.
By the induction hypothesis, there exists a $w\leqslant\inf\cov(b)$
such that $w\lessdot v$ and for all $d\leqslant\inf\cov(b)$, if $d<v$ then $d\leqslant w$.
Take $c=\inf\{a,w\}$. Then $c\leqslant w\leqslant\inf\cov(b)$ and, by \eqref{sh83}, $c\lessdot a$.
Moreover, for all $d\leqslant\inf\cov(b)$, if $d<a$, then, since $d<a\leqslant v$,
we get $d\leqslant w$, and therefore $d\leqslant c$, which completes the proof of (i).

(ii) By \ref{sh87}, there exists a $c\leqslant\inf\cov(b)$ such that $c\lessdot a$,
and therefore, $\inf\cov(a)\leqslant c\leqslant\inf\cov(b)$.

(iii) Assume towards a contradiction that
there are two distinct saturated chains $t$, $u$ in $[a,b]$.
Then by Lemma~\ref{sh84}, $t$ and $u$ have the same length $n>0$.
Let $m=\max\{i<n\mid t(i)\neq u(i)\}$. Since $b=t(n-1)=u(n-1)$, $m<n-1$.
Let $c=t(m+1)=u(m+1)$. Then $c$ covers both $t(m)$ and $u(m)$.
By \eqref{sh85}, $\inf\cov(c)$ is covered by $t(m)$ and $u(m)$.
Thus $\inf\cov(c)=\inf\{t(m),u(m)\}$, and hence $a\leqslant\inf\cov(c)$.
If $c=b$ or $c\leqslant\inf\cov(b)$, then $a\leqslant\inf\cov(b)$, which is a contradiction.
Otherwise, $c<b$ but not $c\leqslant\inf\cov(b)$, and thus, by \ref{sh88},
$a\leqslant\inf\cov(c)\leqslant\inf\cov(b)$, which is also a contradiction.
\end{proof}

\subsection{Building blocks}
We define the notion of a building block as follows:

\begin{definition}
A \emph{building block} is a non-void finite lattice $\langle P,<\rangle$
satisfying the finitary lower covering condition
and such that for all $b\in P$, if $\hght(b)=2$ then $|\cov(b)|=4$,
and if $\hght(b)>2$ then
\begin{equation}\label{sh89}
\text{for all }c\lessdot\inf\cov(b),\ \bigl|\{a\in\cov(b)\mid\inf\cov(a)=c\}\bigr|=4.
\end{equation}
\end{definition}

Let $\langle P,<\rangle$ be a building block, let $e$ be the greatest element of $\langle P,<\rangle$,
and let $o$ be the least element of $\langle P,<\rangle$.
Clearly, for all $b\in P$, if $\hght(b)=0$ then $b=o$ and $\cov(b)=\emptyset$,
if $\hght(b)=1$ then $\cov(b)=\{o\}$, and if $\hght(b)=2$ then $\inf\cov(b)=o$.
Note also that for all $b\in P$ such that $\hght(b)\geqslant2$, $|\cov(b)|\geqslant4$,
and hence, by \eqref{sh85}, $\inf\cov(b)$ is covered by every $a\in\cov(b)$.

Let $Q=\{c\in P\mid c\leqslant\inf\cov(e)\}$. Note that $\langle Q,<\rangle$ is a building block.
Let $a\in P\setminus(Q\cup\{e\})$.
By Lemma~\ref{sh86}\ref{sh91}, there exists a unique saturated chain in $[a,e]$,
and therefore there exists a unique $c\in P$ such that $a\lessdot c$;
we~use $\scc(a)$ to denote the unique $c\in P$ such that $a\lessdot c$. Clearly,
\begin{equation}\label{sh92}
\scc(a)\in P\setminus Q\wedge a\lessdot\scc(a)\wedge\forall b\in P\bigl(a<b\leftrightarrow\scc(a)\leqslant b\bigr).
\end{equation}
Let $\prd(a)=\inf\cov(\scc(a))$. We claim that
\begin{equation}\label{sh90}
\prd(a)\in Q\wedge\prd(a)\lessdot a\wedge\forall d\in Q\bigl(d<a\leftrightarrow d\leqslant\prd(a)\bigr).
\end{equation}
In fact, by Lemma~\ref{sh86}\ref{sh88}, $\prd(a)\in Q$.
Since $a\notin Q$, we have that $a\neq o$,
and hence $\hght(a)\geqslant1$ and $\hght(\scc(a))\geqslant2$,
which implies that $\prd(a)\lessdot a$.
On the other hand, by Lemma~\ref{sh86}\ref{sh87}, there is a $c\in Q$
such that $c\lessdot a$ and for all $d\in Q$, if $d<a$ then $d\leqslant c$.
Since $\prd(a)\in Q$ and $\prd(a)\lessdot a$, we have $\prd(a)=c$,
and hence for all $d\in Q$, $d<a$ if and only if $d\leqslant\prd(a)$.
Thus \eqref{sh90} is proved. By \eqref{sh90},
$\prd(a)$ is the unique $c\in Q$ such that $c\lessdot a$.
Notice that if $\hght(a)\geqslant2$ then $\inf\cov(a)\lessdot\prd(a)$,
and hence if $\hght(a)>2$ then $\inf\cov(\prd(a))\lessdot\inf\cov(a)$.

Let $C=\{b\in P\setminus Q\mid\hght(b)=2\}$, and let
\[
D=\bigl\{(b,c)\in(P\setminus Q)\times P\bigm|\hght(b)>2\text{ and }c\lessdot\inf\cov(b)\bigr\}.
\]
For any $b\in C$, let $k_b=|\cov(b)\setminus Q|$, and for any $(b,c)\in D$, let
\[
l_{b,c}=\bigl|\{a\in\cov(b)\mid\inf\cov(a)=c\}\setminus Q\bigr|.
\]
Then it is easy to verify that for all $b\in C$,
\begin{equation}\label{sh93}
k_b=
\begin{cases}
  3, & \text{if $b\neq e$;}\\
  4, & \text{if $b=e$,}
\end{cases}
\end{equation}
and that for all $(b,c)\in D$,
\begin{equation}\label{sh94}
l_{b,c}=
\begin{cases}
  3, & \text{if $b\neq e$ and $\inf\cov(\prd(b))=c$;}\\
  4, & \text{if $b\neq e$ and $\inf\cov(\prd(b))\neq c$;}\\
  4, & \text{if $b=e$.}
\end{cases}
\end{equation}

Let $\sigma$ be a function defined on $C$ such that for all $b\in C$,
$\sigma(b)$ is a bijection from $\cov(b)\setminus Q$ onto $k_b$,
and let $\tau$ be a function defined on $D$ such that for all $(b,c)\in D$,
$\tau(b,c)$ is a bijection from $\{a\in\cov(b)\mid\inf\cov(a)=c\}\setminus Q$ onto $l_{b,c}$.
Such functions $\sigma$ and $\tau$ exist since $P$ is finite.
Let $p$ be a function on $C$ such that for all $b\in C$, $p(b)$ is a permutation of $k_b$,
and let $q$ be a function on $D$ such that for all $(b,c)\in D$, $q(b,c)$ is a permutation of $l_{b,c}$.
Let $f$ be an automorphism of $\langle Q,<\rangle$.
We shall define an automorphism $g$ of $\langle P,<\rangle$ extending $f$ as follows:

For each $d\in Q$, let $g(d)=f(d)$. Take $g(e)=e$.
Now, we assume that $a\in P\setminus(Q\cup\{e\})$
and that for all $b\in P\setminus Q$ such that $\hght(b)=\hght(a)+1$,
$g(b)$ is defined and we have:
\begin{gather}
g(b)\in P\setminus Q\wedge\hght(g(b))=\hght(b);\label{sh95}\\
\inf\cov(g(b))=f(\inf\cov(b));\label{sh96}\\
b\neq e\rightarrow\prd(g(b))=f(\prd(b)).\label{sh97}
\end{gather}
Take $b=\scc(a)$. Then by \eqref{sh92}, $a\lessdot b\in P\setminus Q$,
and thus $\hght(b)=\hght(a)+1$,
which implies that $g(b)$ is defined and \eqref{sh95}--\eqref{sh97} hold.
We consider the following two cases:

If $\hght(a)=1$, then $\hght(b)=2$ and hence $b\in C$.
By \eqref{sh95}, $g(b)\in P\setminus Q$ and $\hght(g(b))=\hght(b)=2$, and thus $g(b)\in C$.
Since $g(b)=e$ if and only if $b=e$, by \eqref{sh93}, $k_{g(b)}=k_b$.
Now we define
\begin{equation}\label{sh98}
g(a)=\bigl(\sigma(g(b))\bigr)^{-1}\Bigl(p(b)\bigl(\sigma(b)(a)\bigr)\Bigr).
\end{equation}
Then $g(a)\in\cov(g(b))\setminus Q$, and thus $\hght(g(a))=\hght(g(b))-1=1=\hght(a)$.
Since $\cov(g(a))=\cov(a)=\{o\}$ and $\prd(g(a))=\prd(a)=o$,
we get that \eqref{sh95}--\eqref{sh97} hold with $b$ replaced by $a$. Notice that
\begin{equation}\label{sh99}
\scc(g(a))=g(\scc(a)).
\end{equation}

If $\hght(a)>1$, then $\hght(b)>2$. Let $c=\inf\cov(a)$.
Then by \eqref{sh90}, we have $c\lessdot\prd(a)\in Q$, and hence $c\in Q$ and $(b,c)\in D$.
By \eqref{sh95}, $g(b)\in P\setminus Q$ and $\hght(g(b))=\hght(b)>2$.
Since $f$ is an automorphism of $\langle Q,<\rangle$,
we have $f(c)\lessdot f(\inf\cov(b))$, and thus, by \eqref{sh96},
$f(c)\lessdot\inf\cov(g(b))$, which implies that $(g(b),f(c))\in D$.
Since $\hght(g(b))=\hght(b)$, $g(b)=e$ if and only if $b=e$.
By \eqref{sh97} and the assumption that $f$ is an automorphism of $\langle Q,<\rangle$,
if $b\neq e$, then $\inf\cov(\prd(g(b)))=f(c)$ if and only if
$f(\inf\cov(\prd(b)))=f(c)$ if and only if $\inf\cov(\prd(b))=c$.
Thus by \eqref{sh94}, $l_{g(b),f(c)}=l_{b,c}$. Now we define
\begin{equation}\label{sh100}
g(a)=\bigl(\tau(g(b),f(c))\bigr)^{-1}\Bigl(q(b,c)\bigl(\tau(b,c)(a)\bigr)\Bigr).
\end{equation}
Then $g(a)\in\{v\in\cov(g(b))\mid\inf\cov(v)=f(c)\}\setminus Q$;
that is, $g(a)\in P\setminus Q$, $g(a)\lessdot g(b)$, and $\inf\cov(g(a))=f(\inf\cov(a))$.
Thus $\hght(g(a))=\hght(a)$ and
\begin{equation}\label{sh101}
\scc(g(a))=g(\scc(a)).
\end{equation}
Hence, by \eqref{sh96}, $\prd(g(a))=\inf\cov(g(b))=f(\inf\cov(b))=f(\prd(a))$,
and therefore we get that \eqref{sh95}--\eqref{sh97} hold with $b$ replaced by $a$.

Therefore, for all $b\in P\setminus Q$,
$g(b)$ is defined and \eqref{sh95}--\eqref{sh97} hold.
Also, by \eqref{sh99} and \eqref{sh101}, for all $a\in P\setminus(Q\cup\{e\})$,
\begin{equation}\label{sh104}
\scc(g(a))=g(\scc(a)).
\end{equation}

We still have to prove that $g$ is an automorphism of $\langle P,<\rangle$.
For this, we first prove that $g$ is injective.
Since $g\upharpoonright Q=f$ is injective, by \eqref{sh95},
it suffices to show that $g\upharpoonright(P\setminus Q)$ is injective.
We prove by induction on $n<\hght(e)$ that for all $a_0$, $a_1\in P\setminus Q$
such that $\hght(a_0)=\hght(e)-n$, if $g(a_0)=g(a_1)$ then $a_0=a_1$.
The case $n=0$ is obvious.
Let $n<\hght(e)-1$ and let $a_0$, $a_1\in P\setminus Q$ be such that
$\hght(a_0)=\hght(e)-n-1$ and $g(a_0)=g(a_1)$.
We have to prove that $a_0=a_1$.
By \eqref{sh95}, $\hght(a_1)=\hght(g(a_1))=\hght(g(a_0))=\hght(a_0)<\hght(e)$,
and hence $a_0$, $a_1\in P\setminus(Q\cup\{e\})$.
Let $b_0=\scc(a_0)$ and let $b_1=\scc(a_1)$.
Then by \eqref{sh92}, $b_0$, $b_1\in P\setminus Q$ and $\hght(b_0)=\hght(a_0)+1=\hght(e)-n$,
and hence, by the induction hypothesis, if $g(b_0)=g(b_1)$ then $b_0=b_1$.
By \eqref{sh104}, $g(b_0)=\scc(g(a_0))=\scc(g(a_1))=g(b_1)$, and thus $b_0=b_1$.
We consider the following two cases:

If $\hght(a_0)=1$, then $\hght(a_1)=1$.
Since $g(a_0)=g(a_1)$ and $b_0=b_1$, by \eqref{sh98},
$p(b_0)(\sigma(b_0)(a_0))=p(b_1)(\sigma(b_1)(a_1))$,
and therefore $\sigma(b_0)(a_0)=\sigma(b_1)(a_1)$, which implies that $a_0=a_1$.

If $\hght(a_0)>1$, then we have that $\hght(a_1)>1$.
Let $c_0=\inf\cov(a_0)$ and let $c_1=\inf\cov(a_1)$.
By \eqref{sh96}, $f(c_0)=\inf\cov(g(a_0))=\inf\cov(g(a_1))=f(c_1)$, and therefore $c_0=c_1$.
Since $g(a_0)=g(a_1)$, $b_0=b_1$, and $c_0=c_1$, by \eqref{sh100},
we have that $q(b_0,c_0)(\tau(b_0,c_0)(a_0))=q(b_1,c_1)(\tau(b_1,c_1)(a_1))$,
and therefore $\tau(b_0,c_0)(a_0)=\tau(b_1,c_1)(a_1)$, which implies that $a_0=a_1$.

Thus $g$ is injective, and hence $g$ is a permutation of $P$ since $P$ is finite.
It remains to show that for all $a$, $b\in P$,
\begin{equation}\label{sh102}
a<b\leftrightarrow g(a)<g(b).
\end{equation}
Let $a$, $b\in P$. If $b\in Q\cup\{e\}$, then obviously \eqref{sh102} holds.
Suppose that $b\in P\setminus(Q\cup\{e\})$.
Then by \eqref{sh95}, we have $g(b)\in P\setminus(Q\cup\{e\})$.
If $a\in Q$, then $g(a)=f(a)\in Q$, and therefore, by \eqref{sh90} and \eqref{sh97},
\[
a<b\leftrightarrow a\leqslant\prd(b)\leftrightarrow g(a)\leqslant\prd(g(b))\leftrightarrow g(a)<g(b).
\]
Thus if $a\in Q$ then \eqref{sh102} holds.
Also, if $a=e$, then \eqref{sh102} holds trivially.
Assume that $a\in P\setminus(Q\cup\{e\})$ and that
for all $c\in P\setminus Q$ such that $\hght(c)=\hght(a)+1$,
$c<b$ if and only if $g(c)<g(b)$.
Then by \eqref{sh92} and the injectivity of $g$,
we get that $\scc(a)\leqslant b$ if and only if $g(\scc(a))\leqslant g(b)$.
By~\eqref{sh95}, we have $g(a)\in P\setminus(Q\cup\{e\})$,
and therefore, by \eqref{sh92} and \eqref{sh104},
\[
a<b\leftrightarrow \scc(a)\leqslant b\leftrightarrow\scc(g(a))\leqslant g(b)\leftrightarrow g(a)<g(b).
\]
Thus \eqref{sh102} is proved.
We use $\Phi(P,<,\sigma,\tau,p,q,f)$ to denote the function $g$.
Hence we have proved that
\begin{equation}\label{sh103}
\text{$\Phi(P,<,\sigma,\tau,p,q,f)$ is an automorphism of $\langle P,<\rangle$ extending $f$.}
\end{equation}

Now, let $p_0$ be the function on $C$ such that for all $b\in C$, $p_0(b)=\mathrm{id}_{k_b}$,
and let $q_0$ be the function on $D$ such that for all $(b,c)\in D$, $q_0(b,c)=\mathrm{id}_{l_{b,c}}$.
Let $\Psi(P,<,\sigma,\tau,f)=\Phi(P,<,\sigma,\tau,p_0,q_0,f)$. Hence, by \eqref{sh103}, we have that
\begin{equation}\label{sh106}
\text{$\Psi(P,<,\sigma,\tau,f)$ is an automorphism of $\langle P,<\rangle$ extending $f$.}
\end{equation}

\begin{lemma}\label{sh105}
Let $\langle P,<\rangle$ be a building block,
let $e$ be the greatest element of~$\langle P,<\rangle$,
and let $Q=\{c\in P\mid c\leqslant\inf\cov(e)\}$.
For all $a\in P\setminus(Q\cup\{e\})$ and all $d\in P\setminus\{a\}$
such that either $\hght(d)\geqslant\hght(a)$ or $d\in Q$,
there exists an automorphism $g$ of $\langle P,<\rangle$
fixing $Q\cup\{d\}$ pointwise and such that $g(a)\neq a$.
\end{lemma}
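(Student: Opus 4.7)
The plan is to apply the extension machinery $\Phi(P,<,\sigma,\tau,p,q,f)$ from \eqref{sh103} with $f=\mathrm{id}_{Q}$, choosing $p$ and $q$ to be identity almost everywhere but with a single non-trivial transposition localized at the ``parent orbit'' of $a$. Note first that $a\notin Q$ forces $a\neq o$ (the least element lies in $Q$), so $\hght(a)\geqslant 1$.

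If $\hght(a)=1$, let $b^{*}=\scc(a)\in C$ and $i^{*}=\sigma(b^{*})(a)$; since $k_{b^{*}}\geqslant 3$ by \eqref{sh93}, pick $j\in k_{b^{*}}\setminus\{i^{*}\}$ that also avoids $\sigma(b^{*})(d)$ in case $d\in\cov(b^{*})\setminus Q$, and set $p(b^{*})=(i^{*};j)_{k_{b^{*}}}$ with all other values of $p$ and $q$ equal to the identity. If $\hght(a)>1$, proceed analogously with $b^{*}=\scc(a)$, $c^{*}=\inf\cov(a)$, $i^{*}=\tau(b^{*},c^{*})(a)$; invoke $l_{b^{*},c^{*}}\geqslant 3$ from \eqref{sh94} to pick $j\in l_{b^{*},c^{*}}\setminus\{i^{*}\}$ that also avoids $\tau(b^{*},c^{*})(d)$ in case $d\in\{a'\in\cov(b^{*})\mid\inf\cov(a')=c^{*}\}\setminus Q$, and take $q(b^{*},c^{*})=(i^{*};j)_{l_{b^{*},c^{*}}}$ with all other $p,q$ values identity. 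In either case let $g=\Phi(P,<,\sigma,\tau,p,q,f)$; by \eqref{sh103}, $g$ is an automorphism of $\langle P,<\rangle$ extending $\mathrm{id}_{Q}$, so $g$ fixes $Q$ pointwise automatically.

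The core verification is a downward induction on $\hght(b)$ showing that $g(b)=b$ for every $b\in P$ with $b\in Q\cup\{e\}$ or $\hght(b)>\hght(a)$. The inductive step uses formula \eqref{sh100}, together with the observations that $\inf\cov(b)\in Q$ (applying Lemma~\ref{sh86}\ref{sh88} to $b$ and $e$), so that $f(\inf\cov(b))=\inf\cov(b)$, and that $\scc(b)\neq b^{*}$ whenever $\hght(b)>\hght(a)$, so the single non-identity value of $p$ or $q$ is never triggered. Having established this, $g(d)=d$ follows by splitting on whether $d$ has height exactly $\hght(a)$ and shares the parent orbit of $a$ (a ``sibling''), or not: the non-sibling case is immediate from the induction or from the identity value of $p(\scc(d))$ or $q(\scc(d),\inf\cov(d))$, and the sibling case from the deliberate choice of $j$. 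Finally, direct evaluation via \eqref{sh98} or \eqref{sh100} yields $g(a)=\sigma(b^{*})^{-1}(j)\neq a$ or $g(a)=\tau(b^{*},c^{*})^{-1}(j)\neq a$.

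The crucial point is the sibling sub-case: one must transpose $i^{*}$ with some $j\neq i^{*}$ while simultaneously avoiding $\sigma(b^{*})(d)$ or $\tau(b^{*},c^{*})(d)$, and this is possible precisely because the bounds $k_{b}\geqslant 3$ and $l_{b,c}\geqslant 3$ from \eqref{sh93}--\eqref{sh94} guarantee at least a third available element. This is exactly why the building block definition demands four-element covers in \eqref{sh89} rather than merely two.
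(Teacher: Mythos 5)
Your proposal is correct and follows essentially the same route as the paper's proof: in both cases one sets $g=\Phi(P,<,\sigma,\tau,p,q,\mathrm{id}_Q)$ with a single transposition $(i;j)$ planted at $\scc(a)$ (resp.\ at $(\scc(a),\inf\cov(a))$), chooses $j$ so that $(\sigma(b_0))^{-1}(j)$ resp.\ $(\tau(b_0,c_0))^{-1}(j)$ avoids $\{a,d\}$ using $k_b,l_{b,c}\geqslant3$, and then checks by the downward induction on height that everything of height greater than $\hght(a)$, and hence $d$, is fixed while $a$ is moved. The only difference is that you spell out the ``routine induction'' the paper leaves implicit; no gap.
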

\begin{proof}
Let $\sigma$ and $\tau$ be functions as above.
Let $b_0=\scc(a)$. We consider the following two cases:

If $\hght(a)=1$, then let $i=\sigma(b_0)(a)<k_{b_0}$ and
let $j<k_{b_0}$ be the least natural number such that $(\sigma(b_0))^{-1}(j)\notin\{a,d\}$.
Let $p$ be the function on $C$ such that for all $b\in C$,
\[
p(b)=
\begin{cases}
  (i;j)_{k_b},     & \text{if $b=b_0$;}\\
  \mathrm{id}_{k_b}, & \text{otherwise,}
\end{cases}
\]
and let $q$ be the function on $D$ such that for all $(b,c)\in D$,
$q(b,c)$ is the identity permutation of $l_{b,c}$.
Let $g=\Phi(P,<,\sigma,\tau,p,q,\mathrm{id}_Q)$.
Then by \eqref{sh103}, $g$ is an automorphism of $\langle P,<\rangle$ fixing $Q$ pointwise.
By \eqref{sh100} and a routine induction, we get that
for all $v\in P\setminus Q$ such that $\hght(v)>1$, $g(v)=v$.
Therefore, by \eqref{sh98}, $g(a)=(\sigma(b_0))^{-1}(j)\neq a$ and
for all $w\in P\setminus Q$ such that $\hght(w)=1$,
if $w\notin\{a,(\sigma(b_0))^{-1}(j)\}$, then $g(w)=w$. Hence $g(d)=d$.

If $\hght(a)>1$, then let $c_0=\inf\cov(a)$, let $i=\tau(b_0,c_0)(a)<l_{b_0,c_0}$,
and let~$j<l_{b_0,c_0}$ be the least natural number such that $(\tau(b_0,c_0))^{-1}(j)\notin\{a,d\}$.
Let $p$ be the function on $C$ such that for all $b\in C$, $p(b)$ is the identity permutation of $k_b$,
and let $q$ be the function on $D$ such that for all $(b,c)\in D$,
\[
q(b,c)=
\begin{cases}
  (i;j)_{l_{b,c}},     & \text{if $b=b_0$ and $c=c_0$;}\\
  \mathrm{id}_{l_{b,c}}, & \text{otherwise.}
\end{cases}
\]
Let $g=\Phi(P,<,\sigma,\tau,p,q,\mathrm{id}_Q)$.
By \eqref{sh103}, $g$ is an automorphism of $\langle P,<\rangle$ fixing $Q$ pointwise.
By \eqref{sh100} and a routine induction, we get that
for all $v\in P\setminus Q$ such that $\hght(v)>\hght(a)$, $g(v)=v$.
Therefore, again by \eqref{sh100}, $g(a)=(\tau(b_0,c_0))^{-1}(j)\neq a$ and
for all $w\in P\setminus Q$ such that $\hght(w)=\hght(a)$,
if~$w\notin\{a,(\tau(b_0,c_0))^{-1}(j)\}$, then $g(w)=w$.
Since $d\notin\{a,(\tau(b_0,c_0))^{-1}(j)\}$ and
either $\hght(d)\geqslant\hght(a)$ or $d\in Q$, we have $g(d)=d$.
\end{proof}

\subsection{The permutation model}
For any quintuple $(x_0,x_1,x_2,x_3,x_4)$ and for any $j<5$, let $\proj_j(x_0,x_1,x_2,x_3,x_4)=x_j$.
Let $o$ be an arbitrary atom. The atoms of the permutation model are constructed as follows:
\begin{enumerate}[label=\upshape(\roman*)]
  \item $e_0=o$, $A_0=\{o\}$, and ${\lessdot_0}=\emptyset$.
  \item $e_1=(0,0,\emptyset,o,3)$, $A_1=\{o,e_1\}$, and ${\lessdot_1}=\{(o,e_1)\}$.
  \item For any $n\geqslant1$, $e_{n+1}=(n,n,\emptyset,e_{n-1},3)$
        and $A_{n+1}=A_n\cup\bigcup_{i\leqslant n}B_{n,i}$,
        where $B_{n,i}$ is defined by recursion on $i\leqslant n$ as follows:
        \begin{itemize}
          \item $B_{n,0}=\{e_{n+1}\}$;
          \item $B_{n,n}=\{(n,0,b,o,j)\mid b\in B_{n,n-1}\wedge j<3\}$;
          \item $B_{n,i}=\{(n,n-i,b,c,j)\mid b\in B_{n,i-1}\wedge c\lessdot_n\proj_3 b\wedge j<L_{b,c}\}$,
                where $0<i<n$ and
                \[
                L_{b,c}=
                \begin{cases}
                  3, & \text{if $b\neq e_{n+1}$ and $\proj_3\proj_3\proj_2b=c$;}\\
                  4, & \text{if $b\neq e_{n+1}$ and $\proj_3\proj_3\proj_2b\neq c$;}\\
                  3, & \text{if $b=e_{n+1}$ and $c=e_{n-2}$;}\\
                  4, & \text{if $b=e_{n+1}$ and $c\neq e_{n-2}$.}
                \end{cases}
                \]
        \end{itemize}
  \item For any $n\geqslant1$, $\lessdot_{n+1}$ is defined as follows:
        \begin{align*}
        {\lessdot_{n+1}}={\lessdot_n} & \cup\bigl\{(e_n,e_{n+1})\bigr\} \\
                                      & \cup\bigl\{(\proj_3\proj_2a,a)\bigm|a\in A_{n+1}\setminus(A_n\cup\{e_{n+1}\})\bigr\} \\
                                      & \cup\bigl\{(a,\proj_2a)\bigm|a\in A_{n+1}\setminus(A_n\cup\{e_{n+1}\})\bigr\}.
        \end{align*}
  \item For any $n\in\omega$, $<_n$ is the transitive closure of $\lessdot_n$;
        that is, for all $a$, $b$, $a<_nb$ if and only if there exists a sequence $t$ of length $m>1$
        such that $t(0)=a$, $t(m-1)=b$, and $t(j)\lessdot_nt(j+1)$ for any $j<m-1$.
        Such a sequence $t$ is called a $\lessdot_n$-chain from $a$ to $b$.
\end{enumerate}
Let $A=\bigcup_{n\in\omega}A_n$ and let ${<}=\bigcup_{n\in\omega}{<_n}$.
For the sake of simplicity we shall work with $A$ as the set of atoms.
Let $\mathcal{G}$ be the group of all automorphisms of $\langle A,<\rangle$ and let $\mathcal{I}=\fin(A)$.
The permutation model determined by $\mathcal{G}$ and $\mathcal{I}$
is denoted by $\mathcal{V}_\mathcal{S}$ ($\mathcal{S}$ for the operator $\mathcal{S}$).

Clearly, for all $n\in\omega$, $e_n\in A_n$ and for all $a\in A_{n+1}\setminus A_n$,
$\proj_0a=n$ and if $n\geqslant1$ then $n-\proj_1a$ is the unique $i\leqslant n$ such that $a\in B_{n,i}$.
Therefore, for all $n\geqslant1$, $A_n$ and $\bigcup_{i\leqslant n}B_{n,i}$ are disjoint,
and the sets $B_{n,i}$ ($i\leqslant n$) are pairwise disjoint.
Notice that for all $n\geqslant1$ and all $a\in A_{n+1}\setminus(A_n\cup\{e_{n+1}\})$,
\begin{gather}
\proj_2a\in A_{n+1}\setminus A_n\wedge\proj_1\proj_2a=\proj_1a+1;\label{sh107}\\
\proj_1a>0\rightarrow\proj_3a\lessdot_n\proj_3\proj_2a.\label{sh108}
\end{gather}

\begin{lemma}\label{sh109}
For all $n\in\omega$, $\langle A_n,<_n\rangle$ is a building block,
$\lessdot_n$ is the covering relation of $<_n$,
$o$ is the least element of $\langle A_n,<_n\rangle$,
$e_n$ is the greatest element of $\langle A_n,<_n\rangle$,
and for all $a\in A_n\setminus\{o\}$, $\hght(a)=\proj_1a+1$ and $\inf\cov(a)=\proj_3a$.
\end{lemma}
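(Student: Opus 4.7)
The plan is to induct on $n$. The base cases $n=0$ and $n=1$ are immediate verifications: $\langle\{o\},\emptyset\rangle$ is the trivial one-element lattice, and $\langle\{o,e_1\},\{(o,e_1)\}\rangle$ is the two-element chain, both building blocks since the cardinality conditions are vacuous at heights at most $1$.

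For the inductive step, fix $n\geqslant 1$ and assume the lemma for $A_n$. Two structural observations drive the argument: first, for every $a\in A_{n+1}\setminus(A_n\cup\{e_{n+1}\})$, one has $\proj_2 a\in A_{n+1}\setminus A_n$ while $\proj_3 a$ and $\proj_3\proj_2 a$ both lie in $A_n$; second, the edges added to $\lessdot_n$ to form $\lessdot_{n+1}$ are of three kinds, namely $(e_n,e_{n+1})$, the $A_n$-to-new edges $(\proj_3\proj_2 a,a)$, and the new-to-new edges $(a,\proj_2 a)$. Since no new edge terminates above an element of $A_n\setminus\{o\}$, each such element retains its cover set $\cov_{<_{n+1}}(a)=\cov_{<_n}(a)$, inheriting $\hght(a)=\proj_1 a+1$ and $\inf\cov(a)=\proj_3 a$ from the induction hypothesis. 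The key step is then to identify the cover sets of the new elements: $\cov(a)=\{o\}$ for $a\in B_{n,n}$; $\cov(a)=\{\proj_3\proj_2 a\}\cup\{a'\in B_{n,i+1}:\proj_2 a'=a\}$ for $a\in B_{n,i}$ with $0<i<n$; and $\cov(e_{n+1})=\{e_n\}\cup B_{n,1}$. From this, ascending induction on $\proj_1 a$ (equivalently, descending on $i$) yields $\hght(a)=\proj_1 a+1$, and forces $\lessdot_{n+1}$ to be exactly the covering relation of $<_{n+1}$, since every new edge joins endpoints of heights differing by $1$.

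The infimum formula $\inf\cov(a)=\proj_3 a$ for each new $a$ is obtained by showing $\proj_3 a$ is a lower bound of $\cov(a)$ (directly for the side-cover $\proj_3\proj_2 a$, since $\proj_3 a\lessdot_n\proj_3\proj_2 a$ by construction; and via the inductive formula $\inf\cov(a')=\proj_3 a'\lessdot_n\proj_3 a$ for each cover $a'\in B_{n,i+1}$ of $a$), followed by a matching upper-bound argument using the lower covering condition in $\langle A_n,<_n\rangle$ from the induction hypothesis. Finally, the building-block cardinality conditions for $b\in A_{n+1}$ with $\hght(b)\geqslant 2$ split into the $A_n$-case (handled by induction) and the case $b\in A_{n+1}\setminus A_n$, where the sizes of $\{a\in\cov(b):\inf\cov(a)=c\}$ read off directly as the constants $L_{b,c}$ prescribed in the construction, once the single $A_n$-side cover $\proj_3\proj_2 b$ (which corresponds to $c=\inf\cov(\prd(b))$) is accounted for; the definition of $L_{b,c}$ was arranged precisely so that this count equals $4$ in every case.

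The main obstacle is bookkeeping: one must carefully coordinate the tree-like structure of the new elements $B_{n,i}$ (linked via $\proj_2$) with their $A_n$-footprints (linked via $\proj_3$), especially when verifying $\inf\cov(a)=\proj_3 a$ and the $4$-element splitting, since both depend on the offset built into $L_{b,c}$ (the case $\proj_3\proj_3\proj_2 b=c$ giving $3$ rather than $4$) exactly compensating for the extra $A_n$-side cover $\proj_3\proj_2 b$ of $b$.
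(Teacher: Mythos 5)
Your outline follows essentially the same route as the paper's proof: induction on $n$, the observation that every new edge of $\lessdot_{n+1}$ has its upper endpoint outside $A_n$ (so that $<_{n+1}$ and the cover sets restricted to $A_n$ are untouched), the explicit identification of $\cov(a)$ for the new elements, the height computation via $\proj_1$, and the count of $\{a\in\cov(b)\mid\inf\cov(a)=c\}$ in which the offset in $L_{b,c}$ compensates for the single old cover $\proj_3\proj_2 b$. All of that is correct and matches the paper's equations \eqref{sh112}--\eqref{sh115} and the final case analysis.

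However, there is one genuine gap: a building block is by definition a \emph{lattice}, and you never verify that $\langle A_{n+1},<_{n+1}\rangle$ is one. Finiteness together with a least and a greatest element gives joins once meets exist, but the existence of a greatest lower bound for an \emph{incomparable} pair is not automatic in a finite poset (a five-element poset with two incomparable middle elements each below two incomparable top elements already fails), and the lattice property is needed downstream (Corollary~\ref{sh110} and Fact~\ref{sh82}, where $M\mapsto\sup M$ is the finite-to-one map). The paper's argument here rests on a point absent from your sketch: each $a\in A_{n+1}\setminus(A_n\cup\{e_{n+1}\})$ has a \emph{unique} $\lessdot_{n+1}$-chain up to $e_{n+1}$ (the iterates of $\proj_2$), so two incomparable new elements have no common lower bound among the new elements at all; hence the meet of any pair reduces to a meet computed in $A_n$ after replacing each new element by its footprint $\proj_3\proj_2a$, using the fact that every old element below a new $a$ lies below $\proj_3\proj_2a$. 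Relatedly, before speaking of heights you must check that $<_{n+1}$, defined as a transitive closure, is irreflexive (i.e., that $\lessdot_{n+1}$ is acyclic); this follows from the same rank function $a\mapsto\proj_1a+1$ you use, but it has to come first, since the Jordan--Dedekind argument that makes $\hght$ well defined presupposes a partial order satisfying the finitary lower covering condition.
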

\begin{proof}
We prove this lemma by induction on $n$. The cases $n=0$ and $n=1$ are obvious.
Next, for the inductive step, let $n\geqslant1$ and
assume that the assertion holds for $n$.
We prove that the assertion holds for $n+1$ as follows:

We first make some basic observations about $\lessdot_{n+1}$-chains.
Note that $\proj_3b\in A_n$ for any $b\in A_{n+1}\setminus A_n$, and hence,
by \eqref{sh107}, $\proj_3\proj_2a\in A_n$ for any $a\in A_{n+1}\setminus(A_n\cup\{e_{n+1}\})$.
Now, let $a<_{n+1}b$, and let $t$ be a $\lessdot_{n+1}$-chain of length $m>1$ from $a$ to $b$.
By \eqref{sh107} and the definition of $\lessdot_{n+1}$, if $b\in A_n$,
then $\ran(t)\subseteq A_n$ and $t$ is a $\lessdot_n$-chain from $a$ to $b$. Thus we have
\begin{equation}\label{sh114}
b\in A_n\wedge a<_{n+1}b\rightarrow a\in A_n\wedge a<_nb.
\end{equation}
If $a\in A_{n+1}\setminus A_n$, then $t[m-1]\subseteq A_{n+1}\setminus(A_n\cup\{e_{n+1}\})$, $t(m-1)\in A_{n+1}\setminus A_n$,
and for all $j<m-1$, $t(j+1)=\proj_2t(j)$ and thus $\proj_1t(j+1)=\proj_1t(j)+1$,
which implies that $m=\proj_1b-\proj_1a+1$ and hence $t$ is uniquely determined by $a$ and $b$. Therefore, we have that
\begin{equation}\label{sh113}
\begin{split}
&\text{if $a\in A_{n+1}\setminus A_n$ and $a<_{n+1}b$, then $\proj_1a<\proj_1b$,}\\
&\text{and there exists a unique $\lessdot_{n+1}$-chain from $a$ to $b$.}
\end{split}
\end{equation}
If $a$, $b\in A_{n+1}\setminus(A_n\cup\{e_{n+1}\})$, then $\ran(t)\subseteq A_{n+1}\setminus(A_n\cup\{e_{n+1}\})$
and $t(j+1)=\proj_2t(j)$ for any $j<m-1$, and therefore, by \eqref{sh107} and \eqref{sh108},
$\proj_3\proj_2t(j)\lessdot_n\proj_3\proj_2t(j+1)$ for any $j<m-1$. Thus we have
\begin{equation}\label{sh116}
a,b\in A_{n+1}\setminus(A_n\cup\{e_{n+1}\})\wedge a<_{n+1}b\rightarrow\proj_3\proj_2a<_n\proj_3\proj_2b.
\end{equation}

We claim that for all $a\in A_{n+1}\setminus(A_n\cup\{e_{n+1}\})$,
\begin{equation}\label{sh111}
\forall d\in A_n\bigl(d<_{n+1}a\rightarrow d\leqslant_n\proj_3\proj_2a\bigr).
\end{equation}
In fact, let $d\in A_n$, let $t$ be a $\lessdot_{n+1}$-chain of length $m>1$ from $d$ to $a$,
and let $i=\min\{j<m\mid t(j)\in A_{n+1}\setminus A_n\}$. Clearly, $i>0$, $t[i]\subseteq A_n$,
$t[m\setminus i]\subseteq A_{n+1}\setminus(A_n\cup\{e_{n+1}\})$, $d\leqslant_n t(i-1)$,
$t(i-1)=\proj_3\proj_2t(i)$, and $t(i)\leqslant_{n+1}a$, which implies that,
by \eqref{sh116}, $\proj_3\proj_2t(i)\leqslant_n\proj_3\proj_2a$.
Therefore, $d\leqslant_n t(i-1)=\proj_3\proj_2t(i)\leqslant_n\proj_3\proj_2a$. Thus \eqref{sh111} is proved.

Now we prove that $\langle A_{n+1},<_{n+1}\rangle$ is a partially ordered set.
Since $<_{n+1}$ is the transitive closure of $\lessdot_{n+1}$,
it suffices to prove that $<_{n+1}$ is irreflexive.
Assume towards a contradiction that there is a $b\in A_{n+1}$ such that $b<_{n+1}b$.
If $b\in A_n$, then, by \eqref{sh114}, $b<_nb$, contradicting the assumption that $<_n$ is irreflexive.
Otherwise, by \eqref{sh113}, $\proj_1b<\proj_1b$, which is also a contradiction.

$A_{n+1}$ is finite since $A_n$ is finite.
Since $o$ is the least element of $\langle A_n,<_n\rangle$,
$o\leqslant_ne_n\lessdot_{n+1}e_{n+1}$ and
$o\leqslant_n\proj_3\proj_2a\lessdot_{n+1}a$ for any $a\in A_{n+1}\setminus(A_n\cup\{e_{n+1}\})$,
which implies that $o$ is also the least element of $\langle A_{n+1},<_{n+1}\rangle$.
Since $e_n$ is the greatest element of $\langle A_n,<_n\rangle$,
we have $d\leqslant_ne_n\lessdot_{n+1}e_{n+1}$ for any $d\in A_n$.
For any $a\in A_{n+1}\setminus(A_n\cup\{e_{n+1}\})$,
the sequence $t$ of length $n-\proj_1a+1$ such that $t(0)=a$ and
$t(j+1)=\proj_2t(j)$ for any $j<n-\proj_1a$ is a $\lessdot_{n+1}$-chain from $a$ to $e_{n+1}$,
and therefore $a<_{n+1}e_{n+1}$, which implies that $e_{n+1}$ is the greatest element of $\langle A_{n+1},<_{n+1}\rangle$.

We prove that $\lessdot_{n+1}$ is the covering relation of $<_{n+1}$; that is, for all $a$, $b$,
\[
a\lessdot_{n+1}b\leftrightarrow a<_{n+1}b\wedge\neg\exists c\bigl(a<_{n+1}c<_{n+1}b\bigr).
\]
Clearly, if $a<_{n+1}b$ but $a<_{n+1}c<_{n+1}b$ for no $c\in A_{n+1}$, then $a\lessdot_{n+1}b$.
For the other direction, assume towards a contradiction that
$a\lessdot_{n+1}b$ and $a<_{n+1}c<_{n+1}b$ for some $c\in A_{n+1}$.
We consider the following four cases:
If $a\lessdot_nb$, then $b\in A_n$ and thus, by \eqref{sh114}, $a<_nc<_nb$,
contradicting the assumption that $\lessdot_n$ is the covering relation of $<_n$.
If $a=e_n$ and $b=e_{n+1}$, then we have that $c\in A_{n+1}\setminus(A_n\cup\{e_{n+1}\})$ and
the sequence $t$ of length $n-\proj_1c+1$ such that $t(0)=c$ and
$t(j+1)=\proj_2t(j)$ for any $j<n-\proj_1c$ is a $\lessdot_{n+1}$-chain from $c$ to $e_{n+1}$,
and thus, by \eqref{sh111} and \eqref{sh116}, we get that
$e_n\leqslant_n\proj_3\proj_2c\leqslant_n\proj_3\proj_2t(n-\proj_1c-1)=\proj_3e_{n+1}=e_{n-1}$, which is absurd.
If $b\in A_{n+1}\setminus(A_n\cup\{e_{n+1}\})$ and $a=\proj_3\proj_2b$,
then, since $c\in A_n$ implies that, by \eqref{sh111}, $c\leqslant_n\proj_3\proj_2b=a$,
we have $c\in A_{n+1}\setminus(A_n\cup\{e_{n+1}\})$,
and thus, by \eqref{sh111} and \eqref{sh116}, $a\leqslant_n\proj_3\proj_2c<_n\proj_3\proj_2b=a$, which is absurd.
Finally, if $a\in A_{n+1}\setminus(A_n\cup\{e_{n+1}\})$ and $b=\proj_2a$, then, by \eqref{sh113} and \eqref{sh107},
we have that $\proj_1a<\proj_1c<\proj_1b=\proj_1a+1$, which is also a contradiction.

Now we prove that $\langle A_{n+1},<_{n+1}\rangle$ is a lattice.
Since $A_{n+1}$ is finite and $\langle A_{n+1},<_{n+1}\rangle$ has a greatest element,
we only need to prove that any two elements of $A_{n+1}$ have a greatest lower bound.
Let $a$, $b\in A_{n+1}$. If $a\leqslant_{n+1}b$ or $b\leqslant_{n+1}a$,
then obviously $a$ and $b$ have a greatest lower bound.
Now, suppose that $a$ and $b$ are incomparable. If $a$, $b\in A_n$, then,
by \eqref{sh114}, the greatest lower bound of $a$ and $b$ in $\langle A_n,<_n\rangle$
is also their greatest lower bound in $\langle A_{n+1},<_{n+1}\rangle$.
If $a\in A_n$ and $b\in A_{n+1}\setminus(A_n\cup\{e_{n+1}\})$, then, by \eqref{sh114} and \eqref{sh111},
the greatest lower bound of $a$ and $\proj_3\proj_2b$ in $\langle A_n,<_n\rangle$
is also the greatest lower bound of $a$ and $b$ in $\langle A_{n+1},<_{n+1}\rangle$.
Finally, we claim that if $a$, $b\in A_{n+1}\setminus(A_n\cup\{e_{n+1}\})$,
then the greatest lower bound of $\proj_3\proj_2a$ and $\proj_3\proj_2b$ in $\langle A_n,<_n\rangle$
is the greatest lower bound of $a$ and $b$ in $\langle A_{n+1},<_{n+1}\rangle$.
By \eqref{sh111}, it suffices to show that for all $d\in A_{n+1}$,
if $d\leqslant_{n+1}a$ and $d\leqslant_{n+1}b$, then $d\in A_n$.
In fact, for all $c\in A_{n+1}\setminus(A_n\cup\{e_{n+1}\})$, by \eqref{sh113},
there exists a unique $\lessdot_{n+1}$-chain from $c$ to $e_{n+1}$,
and thus, since $a$ and $b$ are incomparable,
it cannot happen that $c\leqslant_{n+1}a$ and $c\leqslant_{n+1}b$ simultaneously.

We prove that $\langle A_{n+1},<_{n+1}\rangle$ satisfies the finitary lower covering condition.
Since $\langle A_n,<_n\rangle$ satisfies the finitary lower covering condition, by \eqref{sh114},
it suffices to prove that for all $b\in A_{n+1}\setminus A_n$
such that $\cov(b)$ contains at least two elements, \eqref{sh85} holds.
Since $\cov(b)$ contains at least two elements, by the definition of $\lessdot_{n+1}$,
$\cov(b)$ contains some element of $A_{n+1}\setminus(A_n\cup\{e_{n+1}\})$.
Let $a$ be an arbitrary element of $A_{n+1}\setminus(A_n\cup\{e_{n+1}\})$ such that $a\lessdot_{n+1}b$.
Then $b=\proj_2a$ and thus $\proj_3b=\proj_3\proj_2a\lessdot_{n+1}a$.
Note that if $b=e_{n+1}$ then $\cov(b)\cap A_n=\{e_n\}$ and $\proj_3b=e_{n-1}\lessdot_ne_n$,
and if $b\in A_{n+1}\setminus(A_n\cup\{e_{n+1}\})$ then $\cov(b)\cap A_n=\{\proj_3\proj_2b\}$ and,
by \eqref{sh107} and \eqref{sh108}, $\proj_3b\lessdot_n\proj_3\proj_2b$.
Therefore, $\proj_3b=\inf\cov(b)$ and \eqref{sh85} holds.
Hence we have proved that $\langle A_{n+1},<_{n+1}\rangle$ satisfies the finitary lower covering condition and
\begin{equation}\label{sh112}
\forall b\in A_{n+1}\setminus A_n\bigl(|\cov(b)|\geqslant2\rightarrow\inf\cov(b)=\proj_3b\bigr).
\end{equation}

Now, by Lemma~\ref{sh84}, in $\langle A_{n+1},<_{n+1}\rangle$,
the height of $b$ is well-defined for any $b\in A_{n+1}$.
Notice that for all $d\in A_n$, by \eqref{sh114},
the height of $d$ in $\langle A_{n+1},<_{n+1}\rangle$ is
the same as its height in $\langle A_n,<_n\rangle$. We claim that
\begin{equation}\label{sh115}
\forall a\in A_{n+1}\setminus\{o\}\bigl(\hght(a)=\proj_1a+1\bigr).
\end{equation}
Since in $\langle A_n,<_n\rangle$, $\hght(a)=\proj_1a+1$ for any $a\in A_n\setminus\{o\}$,
it suffices to prove that for all $b\in A_{n+1}\setminus A_n$, $\hght(b)=\proj_1b+1$.
Let $b\in A_{n+1}\setminus A_n$. If $b=e_{n+1}$, then, since $e_n\lessdot_{n+1}e_{n+1}$,
$\hght(b)=\hght(e_n)+1=\proj_1e_n+2=n+1=\proj_1b+1$.
Otherwise, the sequence $t$ of length $n-\proj_1b+1$ such that $t(0)=b$ and
$t(j+1)=\proj_2t(j)$ for any $j<n-\proj_1b$ is a $\lessdot_{n+1}$-chain from $b$ to $e_{n+1}$,
which implies that $\hght(b)+n-\proj_1b=\hght(e_{n+1})$ and hence $\hght(b)=\proj_1b+1$. Thus \eqref{sh115} is proved.

Finally, we prove that $\langle A_{n+1},<_{n+1}\rangle$ is a building block
and that for all $a\in A_{n+1}\setminus\{o\}$, $\inf\cov(a)=\proj_3a$.
Since $\langle A_n,<_n\rangle$ is a building block and in $\langle A_n,<_n\rangle$,
$\inf\cov(a)=\proj_3a$ for any $a\in A_n\setminus\{o\}$, by \eqref{sh114},
it suffices to prove that for all $b\in A_{n+1}\setminus A_n$, $\inf\cov(b)=\proj_3b$,
if $\hght(b)=2$ then $|\cov(b)|=4$, and if $\hght(b)>2$ then \eqref{sh89} holds.
Let $b\in A_{n+1}\setminus A_n$. We consider the following three cases:

If $\hght(b)=1$, then, by \eqref{sh115}, $\proj_1b=0$, and hence $\inf\cov(b)=o=\proj_3b$.

If $\hght(b)=2$, then, by \eqref{sh115}, $\proj_1b=1$, which implies that $b\in B_{n,n-1}$
and $\cov(b)\cap(A_{n+1}\setminus A_n)=\{(n,0,b,o,j)\mid j<3\}$.
Since $\cov(b)\cap A_n$ is a singleton, we have $|\cov(b)|=4$,
and therefore, by \eqref{sh112}, $\inf\cov(b)=\proj_3b$.

If $\hght(b)>2$, then, by \eqref{sh115}, $\proj_1b>1$. We further consider two subcases:

If $b=e_{n+1}$, then we have $n=\proj_1b>1$ and therefore $\proj_3b=e_{n-1}\neq o$.
For all $c\lessdot_n\proj_3b$, if $c=e_{n-2}$ then $L_{b,c}=3$ and hence
\[
\bigl|\{a\in\cov(b)\mid\proj_3a=c\}\bigr|=\bigl|\{e_n\}\cup\{(n,n-1,b,c,j)\mid j<L_{b,c}\}\bigr|=4,
\]
and if $c\neq e_{n-2}$ then $L_{b,c}=4$ and hence
\[
\bigl|\{a\in\cov(b)\mid\proj_3a=c\}\bigr|=\bigl|\{(n,n-1,b,c,j)\mid j<L_{b,c}\}\bigr|=4.
\]
Thus we have $|\cov(b)|\geqslant4$, which implies that, by \eqref{sh112}, $\inf\cov(b)=\proj_3b$.

If $b\in A_{n+1}\setminus(A_n\cup\{e_{n+1}\})$, then,
by \eqref{sh108}, $\proj_3b\lessdot_n\proj_3\proj_2b\lessdot_{n+1}b$,
which implies that $\hght(\proj_3b)=\hght(b)-2>0$ and hence we have $\proj_3b\neq o$.
For all $c\lessdot_n\proj_3b$, if $\proj_3\proj_3\proj_2b=c$ then $L_{b,c}=3$ and hence
\begin{multline*}
\bigl|\{a\in\cov(b)\mid\proj_3a=c\}\bigr|\\
=\bigl|\{\proj_3\proj_2b\}\cup\{(n,\proj_1b-1,b,c,j)\mid j<L_{b,c}\}\bigr|=4,
\end{multline*}
and if $\proj_3\proj_3\proj_2b\neq c$ then $L_{b,c}=4$ and hence
\[
\bigl|\{a\in\cov(b)\mid\proj_3a=c\}\bigr|=\bigl|\{(n,\proj_1b-1,b,c,j)\mid j<L_{b,c}\}\bigr|=4.
\]
Thus we have $|\cov(b)|\geqslant4$, which implies that, by \eqref{sh112}, $\inf\cov(b)=\proj_3b$.

Now, since in all cases we have $\inf\cov(b)=\proj_3b$,
we can replace $\proj_3b$ by $\inf\cov(b)$ and $\proj_3a$ by $\inf\cov(a)$ in the above two subcases,
and hence \eqref{sh89} holds in both subcases, which completes the proof.
\end{proof}

\begin{corollary}\label{sh110}
$\langle A,<\rangle$ is a locally finite lattice with a least element.
\end{corollary}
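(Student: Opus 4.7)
The plan is to show, piece by piece, that the nested structure $\{\langle A_n,<_n\rangle\}_{n\in\omega}$ assembles coherently into $\langle A,<\rangle$. First I would record the monotonicity $A_n\subseteq A_{n+1}$ and ${\lessdot_n}\subseteq{\lessdot_{n+1}}$, which are immediate from the construction, and hence also ${<_n}\subseteq{<_{n+1}}$. Combined with \eqref{sh114}, this shows that $<_n$ is exactly the restriction of $<$ to $A_n\times A_n$.

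The poset structure then comes cheaply: irreflexivity of each $<_n$ (part of Lemma~\ref{sh109}) transfers to $<$, and transitivity of $<$ follows by passing to a common index $\max(m,n)$ before applying transitivity of $<_{\max(m,n)}$. The atom $o\in A_0$ lies in every $A_n$ and is the least element of $\langle A_n,<_n\rangle$ by Lemma~\ref{sh109}, so it is the least element of $\langle A,<\rangle$.

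For the lattice property I fix $a,b\in A$, choose $n$ with $a,b\in A_n$, and let $d\in A_n$ be their least upper bound in $\langle A_n,<_n\rangle$. I claim $d$ remains the least upper bound in $\langle A,<\rangle$. Since ${<_n}\subseteq{<}$, $d$ is certainly an upper bound; for minimality, any other upper bound $e$ lies in some $A_m$ with $m\geqslant n$, so by induction on $m-n$ it suffices to verify that $d$ is still the lub in $\langle A_{n+1},<_{n+1}\rangle$. If $d'$ were a strictly smaller upper bound in $A_{n+1}$, then $d'\in A_{n+1}\setminus A_n$ (else \eqref{sh114} would put it in $A_n$ and contradict minimality of $d$ there); moreover $d'\neq e_{n+1}$, because $d'\leqslant_{n+1}d\in A_n$ would then force $e_{n+1}\in A_n$. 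Applying \eqref{sh111} to $a\leqslant_{n+1}d'$ and $b\leqslant_{n+1}d'$ shows that $\proj_3\proj_2d'\in A_n$ is an upper bound of $\{a,b\}$ in $A_n$, whence $d\leqslant_n\proj_3\proj_2d'\lessdot_{n+1}d'$, contradicting $d'\leqslant_{n+1}d$. Hence the lub is preserved; the argument for greatest lower bounds is dual (or one may deduce glbs from lubs via the greatest element $e_m$ in each $A_m$).

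Finally, local finiteness follows from iterating \eqref{sh114}: if $b\in A_n$, then every $c\leqslant b$ belongs to $A_n$, so any interval $[a,b]$ is contained in the finite set $A_n$. The one non-routine step is the lub-preservation argument, and equation \eqref{sh111} is exactly the mechanism that prevents a newly introduced atom from sneaking between two old elements and lowering their supremum.
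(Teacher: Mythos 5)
Your proof is correct and takes essentially the same route as the paper's, which simply combines Lemma~\ref{sh109} with \eqref{sh114}: initial segments are absorbed into the finite lattices $\langle A_n,<_n\rangle$, so local finiteness, the least element, and the preservation of bounds all follow. One small simplification: in your lub-preservation step the case $d'\in A_{n+1}\setminus A_n$ cannot actually occur, since \eqref{sh114} applied to $d'<_{n+1}d$ with $d\in A_n$ already forces $d'\in A_n$ (and $d'<_nd$), so the detour through \eqref{sh111} is unnecessary and the first case alone finishes the argument.
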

\begin{proof}
By Lemma~\ref{sh109}, for all $n\in\omega$,
$\langle A_n,<_n\rangle$ is a finite lattice
and $o$ is the least element of $\langle A_n,<_n\rangle$.
Hence, by \eqref{sh114}, $\langle A,<\rangle$ is a locally finite lattice
and $o$ is the least element of $\langle A,<\rangle$.
\end{proof}

\begin{lemma}\label{sh117}
For all $m\in\omega$, every automorphism of $\langle A_m,<_m\rangle$
extends to an automorphism of $\langle A,<\rangle$.
\end{lemma}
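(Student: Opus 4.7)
The plan is to extend $f$ by iterating the operator $\Psi$ from~\eqref{sh106} along the nested chain $A_m\subseteq A_{m+2}\subseteq A_{m+4}\subseteq\cdots$, whose union is $A$ because the sequence $\langle A_n\rangle_{n\in\omega}$ is increasing.

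The key preliminary observation is that for every $n\geqslant 2$, when $A_n$ is viewed as the building block supplied by Lemma~\ref{sh109}, its bottom part $Q=\{c\in A_n\mid c\leqslant_n\inf\cov(e_n)\}$ is exactly $A_{n-2}$, and $<_n$ restricted to $A_{n-2}$ coincides with $<_{n-2}$. Lemma~\ref{sh109} together with the construction of $e_n$ gives $\inf\cov(e_n)=\proj_3 e_n=e_{n-2}$; \eqref{sh114} shows that the down-set of $e_{n-2}$ inside $A_n$ is contained in $A_{n-2}$, and it obviously fills $A_{n-2}$ since $e_{n-2}=\max A_{n-2}$, while the order-coincidence is once more~\eqref{sh114}. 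Consequently every automorphism of $\langle A_{n-2},<_{n-2}\rangle$ legitimately serves as an ``$f$''-input to the $\Psi$-construction applied to the building block $A_n$.

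With this in place, using $\mathsf{AC}$ (available in this section) I would select for each $n\geqslant 2$ auxiliary enumerations $\sigma_n,\tau_n$ of the sort required by $\Psi$ on $A_n$; such choices exist because each $A_n$ is finite. Given an automorphism $f$ of $\langle A_m,<_m\rangle$, define recursively $f_0=f$ and
\[
f_{k+1}=\Psi\bigl(A_{m+2(k+1)},<_{m+2(k+1)},\sigma_{m+2(k+1)},\tau_{m+2(k+1)},f_k\bigr).
\]
By~\eqref{sh106} and the preliminary observation, each $f_{k+1}$ is an automorphism of $\langle A_{m+2(k+1)},<_{m+2(k+1)}\rangle$ extending $f_k$. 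The desired extension of $f$ to $A$ is then $g=\bigcup_{k\in\omega}f_k$; that $g$ is an automorphism of $\langle A,<\rangle$ follows at once from the fact that every element and every pair of related elements of $A$ lies in some $A_{m+2k}$.

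The main obstacle is the ``parity mismatch'' between the tower $A_0\subset A_1\subset A_2\subset\cdots$ and the tower of \emph{building-block extensions}: the $Q$-part of $A_n$ is $A_{n-2}$, not $A_{n-1}$, so $\Psi$ cannot extend an automorphism along a single step $A_n\subset A_{n+1}$. Recognising that one must iterate along the cofinal subchain $A_m\subset A_{m+2}\subset\cdots$ is the crucial point; once this is observed, the rest of the argument is a mechanical iteration of the established machinery~\eqref{sh106}.
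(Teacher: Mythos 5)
Your proposal is correct and follows essentially the same route as the paper: identify $A_{n-2}$ as the $Q$-part of the building block $\langle A_n,<_n\rangle$ (via $\inf\cov(e_n)=e_{n-2}$ and \eqref{sh114}) and iterate $\Psi$ along the cofinal subchain $A_m\subset A_{m+2}\subset\cdots$, taking the union at the end. The only cosmetic difference is that the paper avoids any appeal to choice by defining the enumerations $\sigma_n,\tau_n$ canonically via the last coordinate $\proj_4a$ of each atom, whereas you select them with $\mathsf{AC}$; since each $A_n$ is finite and explicitly coded, the canonical definition is preferable but your version is also fine in the ambient theory.
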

\begin{proof}
Let $m\in\omega$ and let $g$ be an automorphism of $\langle A_m,<_m\rangle$.
We define an automorphism $\pi$ of $\langle A,<\rangle$ extending $g$ as follows:

For each $n\in\omega$, let
\[
C_n=\bigl\{b\in A_{m+2n+2}\setminus A_{m+2n}\bigm|\proj_1b=1\bigr\},
\]
let
\[
D_n=\bigl\{(b,c)\bigm|b\in A_{m+2n+2}\setminus A_{m+2n}\wedge\proj_1b>1\wedge c\lessdot_{m+2n}\proj_3b\bigr\},
\]
let $\sigma_n$ be the function on $C_n$ such that for all $b\in C_n$,
$\sigma_n(b)$ is the function defined on $\{a\in A_{m+2n+2}\setminus A_{m+2n}\mid a\lessdot_{m+2n+2}b\}$
given by $\sigma_n(b)(a)=\proj_4a$,
and let $\tau_n$ be the function on $D_n$ such that for all $(b,c)\in D_n$,
$\tau_n(b,c)$ is the function defined on $\{a\in A_{m+2n+2}\setminus A_{m+2n}\mid a\lessdot_{m+2n+2}b\wedge\proj_3a=c\}$
given by $\tau_n(b,c)(a)=\proj_4a$.
We define $h_n$ by recursion on $n$ as follows:
\begin{align*}
  h_0     & =g; \\
  h_{n+1} & =\Psi(A_{m+2n+2},<_{m+2n+2},\sigma_n,\tau_n,h_n),
\end{align*}
where $\Psi$ is defined before Lemma~\ref{sh105}.
Therefore, by Lemma~\ref{sh109} and \eqref{sh106}, a routine induction shows that for all $n\in\omega$,
$h_{n+1}$ is an automorphism of $\langle A_{m+2n+2},<_{m+2n+2}\rangle$ extending $h_n$.
It suffices to take $\pi=\bigcup_{n\in\omega}h_n$.
\end{proof}

\begin{lemma}\label{sh118}
In $\mathcal{V}_\mathcal{S}$, $\cals(A)=\calsf(A)$.
\end{lemma}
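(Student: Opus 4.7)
The plan is to take an arbitrary $f \in \cals(A)$ lying in $\mathcal{V}_\mathcal{S}$, fix a finite support $B \in \fin(A)$ of $f$ and some $m \in \omega$ with $B \subseteq A_m$, and show $\mov(f) \subseteq A_m$---which is finite since $A_m$ is, by Lemma~\ref{sh109}. Suppose toward a contradiction that some $a \in \mov(f) \setminus A_m$ exists; set $d = f(a) \neq a$, and for $x \in A$ write $n(x) = \min\{j \in \omega \mid x \in A_j\}$, so $n(a) \geq m+1$. Choose $k = \max(n(a), n(d)) + 1$; then $a, d \in A_{k-1}$, $B \subseteq A_m \subseteq A_{k-2}$, and neither $a$ nor $d$ equals $e_k$. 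In the notation of Lemma~\ref{sh105} applied with $P = A_k$ (a building block by Lemma~\ref{sh109}), the set $Q$ coincides with $A_{k-2}$.

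The goal is to apply Lemma~\ref{sh105} in $\langle A_k, <_k \rangle$ to produce an automorphism $g$ of $\langle A_k, <_k \rangle$ that fixes $Q$ pointwise together with one of $\{a, d\}$ while moving the other; extending $g$ to some $\pi \in \mathcal{G}$ via Lemma~\ref{sh117}, and noting $B \subseteq Q$, I get $\pi \in \fixg(B)$, hence $\pi(f) = f$. But $\pi$ fixes exactly one of $\{a, d\}$ and moves the other, and a short check of $\pi \circ f$ against $f \circ \pi$ at the fixed element exposes the contradiction. The case split is by comparing $n(a)$ and $n(d)$. If $n(a) > n(d)$, then $a \notin Q$ while $d \in Q$, so Lemma~\ref{sh105} applies with moved element $a$ and fixed element $d$, the condition ``$d \in Q$'' being satisfied. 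The case $n(a) < n(d)$ is symmetric. When $n(a) = n(d) = k-1$ neither element lies in $Q$; here I compare $\hght(a)$ with $\hght(d)$ (the heights agree with those in $A$, by Lemma~\ref{sh109}) and move the one of smaller height, fixing the other, which satisfies the height hypothesis of Lemma~\ref{sh105}.

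The principal difficulty is the equal-$n$ case, where $Q$-membership is unavailable and the argument must lean entirely on the height trichotomy that Lemma~\ref{sh105} permits: the disjunction ``$\hght(d) \geq \hght(a)$ or $d \in Q$'' is just flexible enough to cover every height configuration once the $n$-comparison is exhausted. Verifying that the extension supplied by Lemma~\ref{sh117} indeed lies in $\fixg(B)$, and that the contradiction emerges from a single evaluation of $\pi(f)$ at one of $\{a, d\}$, is then routine.
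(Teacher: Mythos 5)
Your proposal is correct and takes essentially the same route as the paper's proof: the paper likewise fixes a finite support $B\subseteq A_k$, assumes some $a\in\mov(u)\setminus A_k$ with $b=u(a)$, and splits into the same cases (one of $a,b$ lying on a strictly lower level, or both on the same level compared via $\proj_1$, i.e.\ via height) in order to apply Lemma~\ref{sh105} inside a suitable $A_{n+2}$ with $Q=A_n$ and then extend by Lemma~\ref{sh117}. The only cosmetic differences are your indexing by $n(\cdot)=\proj_0(\cdot)+1$ instead of $\proj_0$, and that in the case where the \emph{fixed} point is $d=f(a)$ the conjugation identity is most naturally evaluated at the moved point $a$ (using injectivity of $f$) rather than at $d$ --- a triviality you rightly label routine.
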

\begin{proof}
Let $u\in\mathcal{V}_\mathcal{S}$ be a permutation of $A$,
and let $B\in\fin(A)$ be a support of $u$.
Let $k$ be the least natural number such that $B\subseteq A_k$.
We claim that
\[
\mov(u)\subseteq A_k.
\]
In fact, assume towards a contradiction that there is an $a\in\mov(u)\setminus A_k$.
Let $n=\proj_0a$ and let $b=u(a)\neq a$. Then $a\in A_{n+1}\setminus A_n$ and hence $k\leqslant n$.

If $b\in A_n$, or if $b\in A_{n+1}\setminus A_n$ and $\proj_1b\geqslant\proj_1a$,
then, by Lemma~\ref{sh109} and Lemma~\ref{sh105},
there exists an automorphism $g$ of $\langle A_{n+2},<_{n+2}\rangle$
fixing $A_n\cup\{b\}$ pointwise and such that $g(a)\neq a$.
By Lemma~\ref{sh117}, $g$ extends to an automorphism $\pi$ of $\langle A,<\rangle$.
Then we have $\pi\in\fixg(B\cup\{b\})$ and $\pi(a)\neq a$.
Hence $\pi$ moves $u$, contradicting the assumption that $B$ is a support of $u$.

Thus, $b\notin A_n$, and if $b\in A_{n+1}\setminus A_n$ then $\proj_1b<\proj_1a$.
Let $m=\proj_0b$. Then $b\in A_{m+1}\setminus A_m$ and hence $n\leqslant m$,
which implies that either $a\in A_m$ or $a\in A_{m+1}\setminus A_m$ and $\proj_1a>\proj_1b$.
Hence, by Lemma~\ref{sh109} and Lemma~\ref{sh105},
there exists an automorphism $h$ of $\langle A_{m+2},<_{m+2}\rangle$
fixing $A_m\cup\{a\}$ pointwise and such that $h(b)\neq b$.
By Lemma~\ref{sh117}, $h$ extends to an automorphism $\sigma$ of $\langle A,<\rangle$.
Then we have $\sigma\in\fixg(B\cup\{a\})$ and $\sigma(b)\neq b$.
Hence $\sigma$ moves $u$, contradicting again the assumption that $B$ is a support of $u$.

Thus $\mov(u)\subseteq A_k$. Since $A_k$ is finite, we have $u\in\calsf(A)$.
\end{proof}

\begin{corollary}\label{sh119}
Let $A$ be the set of atoms of $\mathcal{V}_\mathcal{S}$ and let $\mathfrak{a}=|A|$.
In $\mathcal{V}_\mathcal{S}$, we have $\mathfrak{a}!\leqslant_{\mathrm{fto}}\mathfrak{a}$.
\end{corollary}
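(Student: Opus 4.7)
The plan is to chain together three facts that are already in place. By Lemma~\ref{sh118}, inside $\mathcal{V}_\mathcal{S}$ we have $\cals(A)=\calsf(A)$, so $\mathfrak{a}!=\calsf(\mathfrak{a})$. Then I would invoke Fact~\ref{sh01}, which supplies a finite-to-one map $t\mapsto\mov(t)$ from $\calsf(A)$ into $\fin(A)$; this map is defined purely from $A$, hence belongs to $\mathcal{V}_\mathcal{S}$. Finally, by Corollary~\ref{sh110}, $\langle A,<\rangle$ is a locally finite lattice with least element, so Fact~\ref{sh82} gives a finite-to-one map $M\mapsto\sup M$ from $\fin(A)$ into $A$; this map is definable from the order $<$, which is itself hereditarily symmetric (being fixed pointwise by every $\pi\in\mathcal{G}$, as $\mathcal{G}$ consists of automorphisms of $\langle A,<\rangle$), so it too lies in $\mathcal{V}_\mathcal{S}$.

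Composing these two finite-to-one maps via Fact~\ref{sh02} gives, inside the model, a finite-to-one map from $\cals(A)$ into $A$, which is exactly $\mathfrak{a}!\leqslant_{\mathrm{fto}}\mathfrak{a}$.

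There is essentially no obstacle left at this stage: all the substantive work has been done in Lemma~\ref{sh118} (permutations in the model move only finitely many atoms) and in the construction of $\langle A,<\rangle$ together with Corollary~\ref{sh110} (the lattice structure). The only thing to check, beyond quoting these, is that the two witnessing maps are hereditarily symmetric, and this is immediate because they are defined uniformly from $A$ and $<$, both of which are fixed by every element of $\mathcal{G}$.
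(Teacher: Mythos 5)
Your proposal is correct and follows exactly the paper's own proof: Lemma~\ref{sh118} gives $\mathfrak{a}!=\calsf(\mathfrak{a})$, and then Fact~\ref{sh01}, Corollary~\ref{sh110}, and Fact~\ref{sh82} are chained to obtain $\calsf(\mathfrak{a})\leqslant_{\mathrm{fto}}\fin(\mathfrak{a})\leqslant_{\mathrm{fto}}\mathfrak{a}$. Your extra remark that the witnessing maps are definable from $A$ and $<$ and hence lie in $\mathcal{V}_\mathcal{S}$ is a sensible point that the paper leaves implicit.
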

\begin{proof}
By Lemma~\ref{sh118}, $\mathfrak{a}!=\calsf(\mathfrak{a})$,
and by Fact~\ref{sh01}, $\calsf(\mathfrak{a})\leqslant_{\mathrm{fto}}\fin(\mathfrak{a})$.
Also, by Corollary~\ref{sh110} and Fact~\ref{sh82}, we have $\fin(\mathfrak{a})\leqslant_{\mathrm{fto}}\mathfrak{a}$.
Therefore, we get that $\mathfrak{a}!=\calsf(\mathfrak{a})\leqslant_{\mathrm{fto}}\fin(\mathfrak{a})\leqslant_{\mathrm{fto}}\mathfrak{a}$.
\end{proof}

Now the following theorem immediately follows from Corollary~\ref{sh119} and the Jech-Sochor theorem.

\begin{theorem}\label{sh120}
The following statement is consistent with $\mathsf{ZF}$:
There exists an infinite cardinal $\mathfrak{a}$ such that $\mathfrak{a}!\leqslant_{\mathrm{fto}}\mathfrak{a}$.
\end{theorem}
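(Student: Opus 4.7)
The plan is to obtain the theorem as an immediate consequence of Corollary~\ref{sh119} together with the Jech-Sochor transfer theorem, so essentially no further construction is needed beyond what has already been done. The statement ``there exists an infinite cardinal $\mathfrak{a}$ such that $\mathfrak{a}!\leqslant_{\mathrm{fto}}\mathfrak{a}$'' unpacks to the existence of an infinite set $x$, the set $\cals(x)$, and a finite-to-one function $f:\cals(x)\to x$; this is a statement whose truth is determined by a fixed small initial segment of the cumulative hierarchy above $x$ (rank at most $\omega$ over $x$), so it is boundable in the sense required by Jech-Sochor.

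First I would record that, by Corollary~\ref{sh119}, the permutation model $\mathcal{V}_\mathcal{S}$ already contains an infinite set, namely the set $A$ of atoms, together with a finite-to-one map from $\cals(A)$ to $A$; indeed such a map is given by postcomposing the finite-to-one map $\calsf(A)\to\fin(A)$ of Fact~\ref{sh01} (applicable since $\cals(A)=\calsf(A)$ in the model by Lemma~\ref{sh118}) with the finite-to-one map $\fin(A)\to A$ sending each finite subset to its supremum in the locally finite lattice $\langle A,<\rangle$ of Corollary~\ref{sh110}. Thus $\mathcal{V}_\mathcal{S}$ satisfies the relevant sentence.

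Next, I would invoke the Jech-Sochor theorem (cited in the excerpt immediately before \S4.1) to produce a symmetric submodel of a generic extension of the ground $\mathsf{ZF}$ universe in which an isomorphic copy of a sufficiently large initial segment of $\mathcal{V}_\mathcal{S}$ is embedded. Because the sentence ``there exists an infinite $x$ and a finite-to-one $f:\cals(x)\to x$'' is preserved under such embeddings (the witnesses $x$, $\cals(x)$, and $f$ all lie at bounded rank above $x$), the resulting $\mathsf{ZF}$ model satisfies the same sentence, establishing the desired consistency with $\mathsf{ZF}$.

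Since the real work — building $\langle A,<\rangle$ as a locally finite lattice with enough automorphisms that every finitely supported permutation moves only finitely many atoms, and verifying this in Lemma~\ref{sh118} — has already been carried out, there is no remaining obstacle; the only care needed is to check that the witness sentence is of the syntactic form to which Jech-Sochor applies, which it clearly is.
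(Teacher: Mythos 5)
Your proposal is correct and follows exactly the paper's own route: Theorem~\ref{sh120} is deduced immediately from Corollary~\ref{sh119} together with the Jech--Sochor transfer theorem. The additional remarks you make about the boundedness of the witnessing sentence are just an explicit spelling-out of the observation the paper already records at the end of its general discussion of permutation models.
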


\section{Conclusion}
Three open problems posed in \cite{Tachtsis2018} are solved in this paper:

(i) As a special case of Corollary~\ref{sh36}, we get that for all infinite sets $x$,
if there exists a permutation of $x$ without fixed points, then there are no finite-to-one maps from $\cals(x)$ into $x$.
This answers one of the open problems posed in (2) of \cite[\S4]{Tachtsis2018}.

(ii) Theorem~\ref{sh120} shows that it is not provable in $\mathsf{ZF}$ that
for all infinite sets $x$, there are no finite-to-one maps from $\cals(x)$ into $x$.
This answers the open problem (7) of \cite[\S4]{Tachtsis2018},
since it is obvious that for all infinite sets $x$, there exists a finite-to-one map from $\cals(x)$ into $x$
if and only if there exists a finite-to-one surjection from $\cals(x)$ onto $x$.

(iii) It follows from Lemma~\ref{sh77} that in the Shelah-type permutation model $\mathcal{V}_\mathrm{S}$,
there exists an infinite set $A$ such that there exists a surjection from $A$ onto $\cals(A)$.
Since it follows from Cantor's theorem that there are no surjections from $A$ onto $\wp(A)$,
we get that there are no surjections from $\cals(A)$ onto $\wp(A)$.
This answers the open problem (8) of \cite[\S4]{Tachtsis2018}.

Also, it follows from Corollary~\ref{sh40} that for all infinite sets $x$,
if $\cals(x)$ is Dedekind infinite, then there are no finite-to-one maps from $\cals(x)$ into $x$.
This is a generalization of Theorem~3.2 of \cite{Tachtsis2018}.

In what follows, we list some open problems which are of interest for future work,
and then summarize the relationships between $\mathfrak{a}!$ and some other cardinals considered in this paper.
Finally, we make a comparison of these relationships with those between $2^\mathfrak{a}$ and some other cardinals.

\subsection{Open problems}
Now, we propose four open problems as follows:

\begin{question}\label{sh121}
Is it consistent with $\mathsf{ZF}$ that there is an infinite cardinal $\mathfrak{a}$
such that $\mathfrak{a}!<\aleph_0\cdot\mathfrak{a}$?
\end{question}

By Lemma~\ref{sh37}, an affirmative answer to this question
would yield a generalization of Theorem~\ref{sh120}.

\begin{question}\label{sh122}
Is it consistent with $\mathsf{ZF}$ that there exists an infinite cardinal $\mathfrak{a}\leqslant2^{\aleph_0}$
such that $\mathfrak{a}!\leqslant_{\mathrm{fto}}\mathfrak{a}$?
\end{question}

By Theorem~\ref{sh38}, an affirmative answer to this question
would give an affirmative answer to Question~\ref{sh121}.

\begin{question}\label{sh123}
Does $\mathsf{ZF}$ prove that $\mathfrak{a}!\neq\seq^{1\text{-}1}(\mathfrak{a})$ for any cardinal $\mathfrak{a}\neq0$?
\end{question}

In \cite[Theorem~4]{HalbeisenShelah1994}, Halbeisen and Shelah proved in $\mathsf{ZF}$ that
$2^\mathfrak{a}\neq\seq^{1\text{-}1}(\mathfrak{a})$ for any cardinal $\mathfrak{a}\geqslant2$.
It is natural to ask whether we can replace $2^\mathfrak{a}$ by $\mathfrak{a}!$ in this theorem.

\begin{question}\label{sh124}
Is it consistent with $\mathsf{ZF}$ that there exists a cardinal $\mathfrak{a}$
such that $\mathfrak{a}!=[\mathfrak{a}]^3$?
\end{question}

Note that, by Theorem~\ref{sh81}, the existence of an infinite cardinal $\mathfrak{a}$
such that $\mathfrak{a}!<[\mathfrak{a}]^3$ is consistent with $\mathsf{ZF}$.

\subsection{Summary}
Now we summarize the results obtained in the previous sections.
For all cardinals $\mathfrak{a}$, if $\mathfrak{a}!$ is Dedekind infinite,
then $\mathfrak{a}!$ cannot be too small, in the following sense:
\begin{itemize}
  \item $\mathfrak{a}!\nleqslant_{\mathrm{dfto}}\seq(\calspdf(\mathfrak{a}))$ (cf.~Theorem~\ref{sh47});
  \item $2^{\aleph_0}\cdot\mathfrak{a}\leqslant2^{\aleph_0}\cdot\seq(\mathfrak{a})
         \leqslant2^{\aleph_0}\cdot\calspdf(\mathfrak{a})\leqslant\mathfrak{a}!$ (cf.~Theorem~\ref{sh48});
  \item $\aleph_0\cdot\mathfrak{a}\leqslant\seq(\mathfrak{a})\leqslant\aleph_0\cdot\calspdf(\mathfrak{a})<\mathfrak{a}!$
        (cf.~Corollary~\ref{sh50}).
\end{itemize}
However, if we replace the requirement that $\mathfrak{a}!$ is Dedekind infinite
by the requirement that $\mathfrak{a}$ is infinite,
then it may consistently happen that $\mathfrak{a}!\leqslant_{\mathrm{fto}}\mathfrak{a}$ (cf.~Theorem~\ref{sh120})
and that $\mathfrak{a}!<\seq^{1\text{-}1}(\mathfrak{a})<\seq(\mathfrak{a})$ (cf.~Proposition~\ref{sh73}).
It is an open problem whether it may consistently happen that $\mathfrak{a}!<\aleph_0\cdot\mathfrak{a}$ (cf.~Question~\ref{sh121}).
Nevertheless, for all infinite cardinals $\mathfrak{a}$, we have:
\begin{itemize}
  \item $\mathfrak{a}!\neq\seq(\mathfrak{a})$ (cf.~Corollary~\ref{sh39});
  \item $\mathfrak{a}!\neq\aleph_0\cdot\mathfrak{a}$ (cf.~Corollary~\ref{sh41});
  \item $\mathfrak{a}^n<\mathfrak{a}!$ (cf.~Corollary~\ref{sh34});
  \item $[\mathfrak{a}]^2\leqslant\bigl[[\mathfrak{a}]^2\bigr]^2<\mathfrak{a}!$ (cf.~Corollary~\ref{sh32}).
\end{itemize}
It is an open problem whether $\mathfrak{a}!\neq\seq^{1\text{-}1}(\mathfrak{a})$ is provable (cf.~Question~\ref{sh123}).
Even for Dedekind infinite cardinals $\mathfrak{a}$,
it is not provable that $\Bigl[\bigl[[\mathfrak{a}]^2\bigr]^2\Bigr]^2\leqslant\mathfrak{a}!$
nor that $([\mathfrak{a}]^2)^2\leqslant\mathfrak{a}!$ (cf.~Proposition~\ref{sh60}),
and it may consistently happen that $\mathfrak{a}!<[\mathfrak{a}]^3$
and $\mathfrak{a}!\leqslant^\ast\mathfrak{a}$ (cf.~Theorem~\ref{sh81}).
It is an open problem whether or not it may consistently happen
that $\mathfrak{a}!=[\mathfrak{a}]^3$ (cf.~Question~\ref{sh124}).

For cardinals $\mathfrak{b}$, $\mathfrak{c}$,
we write $\mathfrak{b}\parallel\mathfrak{c}$ to express that $\mathfrak{b}$ and $\mathfrak{c}$ are incomparable.
For infinite cardinals $\mathfrak{a}$, it may consistently happen that $\mathfrak{a}!\parallel\seq^{1\text{-}1}(\mathfrak{a})$,
$\mathfrak{a}!\parallel\seq(\mathfrak{a})$, $\mathfrak{a}!\parallel[\mathfrak{a}]^3$,
and $\mathfrak{a}!\parallel2^\mathfrak{a}$ (cf.~Lemma~\ref{sh55} and Proposition~\ref{sh60}).
Also, by Lemma~\ref{sh37}, Fact~\ref{sh10}, Fact~\ref{sh03}, and Fact~\ref{sh11},
we have that $\mathfrak{a}!\parallel\aleph_0\cdot\mathfrak{a}$ for any infinite but power Dedekind finite cardinal $\mathfrak{a}$,
and therefore it may consistently happen that $\mathfrak{a}!\parallel\aleph_0\cdot\mathfrak{a}$.

Now, for infinite cardinals $\mathfrak{a}$, we list all the possible relationships between $\mathfrak{a}!$ and
$\mathfrak{a}$, $\aleph_0\cdot\mathfrak{a}$, $\seq^{1\text{-}1}(\mathfrak{a})$, $\seq(\mathfrak{a})$,
$[\mathfrak{a}]^3$, or $2^\mathfrak{a}$ in the following table.

\begin{table}[b]
{\renewcommand{\arraystretch}{1.5}
\begin{tabular}{ | l | c | c | c | c | c | c | } \hline
                    & $\mathfrak{a}$                & $\aleph_0\cdot\mathfrak{a}$   & $\seq^{1\text{-}1}(\mathfrak{a})$
                    & $\seq(\mathfrak{a})$          & $[\mathfrak{a}]^3$            & $2^\mathfrak{a}$                   \\ \hline
\,$\mathfrak{a}!>$  & \makebox[43pt][c]{\checkmark} & \makebox[43pt][c]{\checkmark} & \makebox[43pt][c]{\checkmark}
                    & \makebox[43pt][c]{\checkmark} & \makebox[43pt][c]{\checkmark} & \makebox[43pt][c]{\checkmark}      \\ \hline
\,$\mathfrak{a}!=$                         & \textsf{X} & \textsf{X} & \textbf{?} & \textsf{X} & \textbf{?} & \checkmark \\ \hline
\,$\mathfrak{a}!<$                         & \textsf{X} & \textbf{?} & \checkmark & \checkmark & \checkmark & \checkmark \\ \hline
\,$\mathfrak{a}!\,\parallel$               & \textsf{X} & \checkmark & \checkmark & \checkmark & \checkmark & \checkmark \\ \hline
\,$\mathfrak{a}!\leqslant_{\mathrm{fto}}$  & \checkmark & \checkmark & \checkmark & \checkmark & \checkmark & \checkmark \\ \hline
\,$\mathfrak{a}!\leqslant^\ast$            & \checkmark & \checkmark & \checkmark & \checkmark & \checkmark & \checkmark \\ \hline
\end{tabular}
}
\end{table}

We should also mention that it is consistent with $\mathsf{ZF}$
that there exists a power Dedekind infinite cardinal $\mathfrak{a}$
such that $\mathfrak{a}!\leqslant_{\mathrm{fto}}\aleph_0$.
The sketch of the proof is as follows:
Consider the permutation model $\mathcal{N}2(3)$ in \cite{HowardRubin1998}.
In this permutation model, the set $A$ of atoms is the union of
a denumerable set $B$ of pairwise disjoint $3$-element sets,
$\mathcal{G}$ is the group of all permutations of $A$ that leave $B$ pointwise fixed,
and $\mathcal{I}$ is the normal ideal $\fin(A)$.
It is easy to verify that in $\mathcal{N}2(3)$,
we have $\cals(A)=\calsf(A)$ and there exists a three-to-one surjection from $A$ onto $\omega$.
Therefore, if we take $\mathfrak{a}=|A|$, then we have that $\mathfrak{a}$ is power Dedekind infinite,
$\mathfrak{a}\leqslant_{\mathrm{fto}}\aleph_0$, and $\mathfrak{a}!=\calsf(\mathfrak{a})$,
which implies that, by Fact~\ref{sh01},
$\mathfrak{a}!=\calsf(\mathfrak{a})\leqslant_{\mathrm{fto}}\fin(\mathfrak{a})\leqslant_{\mathrm{fto}}\fin(\aleph_0)=\aleph_0$.

\subsection{Comparison with powers}
The relationships between $2^\mathfrak{a}$ and some other cardinals are studied in
\cite{Specker1954,HalbeisenShelah1994,HalbeisenShelah2001,Forster2003,VejjajivaPanasawatwong2014,Shen2017}.
In \cite[Proposition~3.13]{Shen2017}, the first author proved that
$2^\mathfrak{a}\nleqslant_{\mathrm{dfto}}\seq^{1\text{-}1}(\pdfin(\mathfrak{a}))$
for any power Dedekind infinite cardinal $\mathfrak{a}$.
In fact, for power Dedekind infinite cardinals $\mathfrak{a}$, we have:
\begin{itemize}
  \item $2^\mathfrak{a}\nleqslant_{\mathrm{dfto}}\seq(\pdfin(\mathfrak{a}))$, $\pdfin(\seq(\mathfrak{a}))$,
        $\fin(\pdfin(\mathfrak{a}))$, $\pdfin(\fin(\mathfrak{a}))$;
  \item $2^{\aleph_0}\cdot\mathfrak{a}\leqslant2^{\aleph_0}\cdot\pdfin(\mathfrak{a})\leqslant2^\mathfrak{a}$ (cf.~\cite[Lemma~3.18]{Shen2017});
  \item $\aleph_0\cdot\mathfrak{a}\leqslant\aleph_0\cdot\pdfin(\mathfrak{a})<2^\mathfrak{a}$ (cf.~\cite[Proposition~3.19]{Shen2017}).
\end{itemize}
We shall omit the proof here. It is an open problem whether it is provable in $\mathsf{ZF}$ that
$2^\mathfrak{a}\nleqslant\pdfin(\pdfin(\mathfrak{a}))$ for any power Dedekind infinite cardinal $\mathfrak{a}$.
Hence, $2^\mathfrak{a}$ has stronger properties than $\mathfrak{a}!$, in the sense that
the requirement that $\mathfrak{a}$ is power Dedekind infinite is weaker than
the requirement that $\mathfrak{a}!$ is Dedekind infinite (cf.~Fact~\ref{sh11}),
and $2^\mathfrak{a}\nleqslant_{\mathrm{dfto}}\seq(\pdfin(\mathfrak{a}))$ is stronger than
$2^\mathfrak{a}\nleqslant_{\mathrm{dfto}}\seq(\calspdf(\mathfrak{a}))$ (cf.~Fact~\ref{sh14}).
Also, for infinite cardinals $\mathfrak{a}$, we have:
\begin{itemize}
  \item $2^\mathfrak{a}\nleqslant_{\mathrm{pdfto}}\mathfrak{a}^n$ (cf.~\cite[Proposition~3.11]{Shen2017});
  \item $2^\mathfrak{a}\nleqslant_{\mathrm{fto}}\aleph_0\cdot\mathfrak{a}$;
  \item $\fin(\mathfrak{a})<2^\mathfrak{a}$ (cf.~\cite[Theorem~3]{HalbeisenShelah1994});
  \item $2^\mathfrak{a}\neq\seq^{1\text{-}1}(\mathfrak{a})$ (cf.~\cite[Theorem~4]{HalbeisenShelah1994});
  \item $2^\mathfrak{a}\neq\seq(\mathfrak{a})$ (cf.~\cite[Theorem~5]{HalbeisenShelah1994}).
\end{itemize}
We also omit the proof here.
Notice that, even for infinite cardinals $\mathfrak{a}$,
$2^\mathfrak{a}$ has stronger properties than $\mathfrak{a}!$,
in the sense that it may consistently happen that
$\mathfrak{a}!\leqslant_{\mathrm{fto}}\mathfrak{a}$ (cf.~Theorem~\ref{sh120})
and that $\calsf(\mathfrak{a})=\mathfrak{a}!$ (cf.~Fact~\ref{sh71}).
Nevertheless, it may consistently happen that
$2^\mathfrak{a}<\calsf(\mathfrak{a})=\mathfrak{a}!<\seq^{1\text{-}1}(\mathfrak{a})<\seq(\mathfrak{a})$
(cf.~Fact~\ref{sh71} and Proposition~\ref{sh73})
and hence, by Fact~\ref{sh01}, $2^\mathfrak{a}\leqslant_{\mathrm{fto}}\fin(\mathfrak{a})$.

For the relation $\leqslant^\ast$, on the one hand,
it may consistently happen that $\mathfrak{a}!\leqslant^\ast\mathfrak{a}$ (cf.~Theorem~\ref{sh81}),
and on the other hand, by Cantor's theorem,
we have $2^\mathfrak{a}\nleqslant^\ast\mathfrak{a}$ for any cardinal $\mathfrak{a}$.
Moreover, for all infinite cardinals $\mathfrak{a}$ and all cardinals $\mathfrak{b}\leqslant_{\mathrm{pdfto}}\mathfrak{a}$,
we have $2^\mathfrak{a}\nleqslant^\ast\mathfrak{b}$ (cf.~\cite[Theorem~5.3]{Shen2017}).
Note that in \cite[Theorem~1]{HalbeisenShelah1994}, Halbeisen and Shelah proved that
the existence of an infinite cardinal $\mathfrak{a}$ such that
$2^\mathfrak{a}\leqslant^\ast\fin(\mathfrak{a})$ is consistent with $\mathsf{ZF}$.
Now we propose three open problems concerning the relation $\leqslant^\ast$ as follows:

\begin{question}\label{sh125}
Is it consistent with $\mathsf{ZF}$ that there is an infinite cardinal $\mathfrak{a}$
such that $2^\mathfrak{a}\leqslant^\ast\mathfrak{a}^2$?
\end{question}

This question is known as the dual Specker problem and is asked in \cite{Truss1973}
(cf.~also \cite[p.~133]{Halbeisen2017} or \cite[Problem~5.8]{Shen2017}).

\begin{question}\label{sh126}
Is it consistent with $\mathsf{ZF}$ that there is an infinite cardinal $\mathfrak{a}$
such that $2^\mathfrak{a}\leqslant^\ast[\mathfrak{a}]^2$?
\end{question}

This question is asked in \cite{Halbeisen2018}.
Notice that an affirmative answer to this question
would give an affirmative answer to Question~\ref{sh125}.

\begin{question}\label{sh127}
Is it consistent with $\mathsf{ZF}$ that there is an infinite cardinal $\mathfrak{a}$
such that $2^\mathfrak{a}\leqslant^\ast\aleph_0\cdot\mathfrak{a}$?
\end{question}

In fact, an affirmative answer to this question
would give an affirmative answer to Question~\ref{sh126}.
The sketch of the proof is as follows:
Note that for all power Dedekind infinite cardinals $\mathfrak{a}$,
$\aleph_0\cdot\mathfrak{a}\leqslant^\ast[\mathfrak{a}]^2$.
Hence, we only need to prove that for all infinite cardinals $\mathfrak{a}$,
if $2^\mathfrak{a}\leqslant^\ast\aleph_0\cdot\mathfrak{a}$,
then $\mathfrak{a}$ is power Dedekind infinite.
Let $x$ be a set such that $|x|=\mathfrak{a}$,
and let $f$ be a surjection from $\omega\times x$ onto $\wp(x)$.
Then we can explicitly define a surjection $g\subseteq f$ from a subset of $\omega\times x$ onto $\wp(x)$
such that for all $z\in x$, $g\upharpoonright(\omega\times\{z\})$ is injective.
If $\wp(x)$ is Dedekind finite, then for all $z\in x$, $\dom(g)\cap(\omega\times\{z\})$ is finite,
and hence there is a finite-to-one map from $\dom(g)$ into $x$,
contradicting Theorem~5.3 of \cite{Shen2017}. Therefore, we get that $x$ is power Dedekind infinite,
and hence $\mathfrak{a}$ is power Dedekind infinite.

Finally, for infinite cardinals $\mathfrak{a}$, we list all the possible relationships
between $2^\mathfrak{a}$ and $\mathfrak{a}$, $\aleph_0\cdot\mathfrak{a}$, $\mathfrak{a}^2$,
$\fin(\mathfrak{a})$, $\seq^{1\text{-}1}(\mathfrak{a})$, or $\seq(\mathfrak{a})$ in the following table.

\begin{table}[b]
{\renewcommand{\arraystretch}{1.5}
\begin{tabular}{ | l | c | c | c | c | c | c | } \hline
                     & $\mathfrak{a}$                & $\aleph_0\cdot\mathfrak{a}$       & $\mathfrak{a}^2$
                     & $\fin(\mathfrak{a})$          & $\seq^{1\text{-}1}(\mathfrak{a})$ & $\seq(\mathfrak{a})$           \\ \hline
\,$2^\mathfrak{a}>$  & \makebox[43pt][c]{\checkmark} & \makebox[43pt][c]{\checkmark} & \makebox[43pt][c]{\checkmark}
                     & \makebox[43pt][c]{\checkmark} & \makebox[43pt][c]{\checkmark} & \makebox[43pt][c]{\checkmark}      \\ \hline
\,$2^\mathfrak{a}=$                         & \textsf{X} & \textsf{X} & \textsf{X} & \textsf{X} & \textsf{X} & \textsf{X} \\ \hline
\,$2^\mathfrak{a}<$                         & \textsf{X} & \textsf{X} & \textsf{X} & \textsf{X} & \checkmark & \checkmark \\ \hline
\,$2^\mathfrak{a}\,\parallel$               & \textsf{X} & \checkmark & \checkmark & \textsf{X} & \checkmark & \checkmark \\ \hline
\,$2^\mathfrak{a}\leqslant_{\mathrm{fto}}$  & \textsf{X} & \textsf{X} & \textsf{X} & \checkmark & \checkmark & \checkmark \\ \hline
\,$2^\mathfrak{a}\leqslant^\ast$            & \textsf{X} & \textbf{?} & \textbf{?} & \checkmark & \checkmark & \checkmark \\ \hline
\end{tabular}
}
\end{table}

\subsection*{Acknowledgements}
We would like to give thanks to Professor Qi Feng and Professor Liuzhen Wu
for their advice and encouragement during the preparation of this paper.
Both authors were partially supported by NSFC No.~11871464.


\normalsize

\end{document}